\title[Parabolic geometries on varieties]%
{Holomorphic parabolic geometries on smooth projective varieties}
\DeclareRobustCommand{\scotsMc}{\scotsMcx{c}}
\DeclareRobustCommand{\scotsMC}{\scotsMcx{\textsc{c}}}
\DeclareRobustCommand{\scotsMcx}[1]{%
  M%
  \raisebox{\dimexpr\fontcharht\font`M-\height}{%
    \check@mathfonts\fontsize{\sf@size}{0}\selectfont
    \kern.3ex\underline{\kern-.3ex #1\kern-.3ex}\kern.3ex
  }%
}
\def\expandafter\@uclclist\expandafter{%
  \@uclclist\scotsMc\scotsMC
}
\newcommand{\noop}[1]{}
\author{Benjamin \scotsMc{}Kay}
\address{School of Mathematical Sciences,  University College Cork, Cork, Ireland}
\email{b.mckay@ucc.ie}
\thanks{This research was supported in part by the International Centre for Theoretical Sciences (ICTS) during a visit for participating in the program - Analytic and Algebraic Geometry (Code: ICTS/aag2018/03).
Thanks to Indranil Biswas, Anca Musta\c{t}\u{a} and Andrei Musta\c{t}\u{a} for help with algebraic geometry, and to Sorin Dumitrescu for help with Cartan geometries.
This article/publication is based upon work from COST Action CaLISTA CA21109 supported by COST (European Cooperation in Science and Technology). \url{www.cost.eu}.
}
\keywords{complex projective manifold, parabolic geometry}
\subjclass[2000]{Primary 53C56; Secondary 32Q57, 53A55}
\date{\today}
\tikzset{
	background rectangle/.style={
		shade,
		top color=olive!20,
		bottom color=white,
		draw=olive!15,
		very thick,
		rounded corners}}
\NewDocumentCommand\drawroots{mm}%
{%
\begin{tikzpicture}[baseline=-.5]
\begin{rootSystem}{#1}
\roots
\parabolic{#2}
\parabolicgrading
\end{rootSystem}
\end{tikzpicture}
}%
\newtheorem{theorem}{Theorem}
\newtheorem{corollary}{Corollary}
\newtheorem{lemma}{Lemma}
\newtheorem{proposition}{Proposition}
\theoremstyle{remark}
\newtheorem{conjecture}{Conjecture}
\newtheorem{example}{Example}
\newcounter{remarkCounter}
\NewDocumentCommand\pr{m}{\ensuremath{\left(#1\right)}}
\NewDocumentCommand\curly{m}{\ensuremath{\left\{#1\right\}}}
\NewDocumentCommand\of{m}{\ensuremath{\!\pr{#1}}}
\RenewDocumentCommand\C{o}%
{%
	\ensuremath%
	{%
		\IfValueTF{#1}%
		{%
			\mathbb{C}^{#1}%
		}%
		{%
			\mathbb{C}%
		}%
	}%
}%
\NewDocumentCommand\R{o}%
{%
	\ensuremath%
	{%
		\IfValueTF{#1}%
		{%
			\mathbb{R}^{#1}%
		}%
		{%
			\mathbb{R}%
		}%
	}%
}%
\NewDocumentCommand\Z{o}%
{%
	\ensuremath%
	{%
		\IfValueTF{#1}%
		{%
			\mathbb{Z}^{#1}%
		}%
		{%
			\mathbb{Z}%
		}%
	}%
}%
\NewDocumentCommand\Q{m}%
{%
	\ensuremath%
	{%
		\IfValueTF{#1}%
		{%
			\mathbb{Q}^{#1}%
		}%
		{%
			\mathbb{Q}%
		}%
	}%
}%
\NewDocumentCommand\OO{D(){0}O{1}}%
{
\ensuremath{
\mathscr{O}%
\ifnum\pdfstrcmp{#1}{0}=0{}\else{(#1)}\fi
\ifnum\pdfstrcmp{#2}{1}=0{}\else{
	\ifnum\pdfstrcmp{#2}{0}=0{}\else{^{\oplus #2}}\fi
}\fi
}
}
\NewDocumentCommand\rank{m}{\ensuremath{\operatorname{rank}{#1}}}
\NewDocumentCommand\Proj{m}{\ensuremath{\mathbb{P}^{#1}}}
\NewDocumentCommand\Lie{mo}{\ensuremath{\mathfrak{\MakeLowercase{#1}}\IfValueT{#2}{_{#2}}}}
\NewDocumentCommand\SL{m}{\ensuremath{\operatorname{SL}_{#1}}}
\NewDocumentCommand\LieSL{m}{\ensuremath{\mathfrak{sl}_{#1}}}
\NewDocumentCommand\PSL{m}{\ensuremath{\mathbb{P}\operatorname{SL}_{#1}}}
\NewDocumentCommand\PSO{m}{\ensuremath{\mathbb{P}\operatorname{SO}_{#1}}}
\NewDocumentCommand\drop{mm}{\ensuremath{#2^{\ifnum\pdfstrcmp{#1}{i}=0{\hat\imath}\else{\ifnum\pdfstrcmp{#1}{j}=0{\hat\jmath}\else{#1}\fi}\fi}}}
\NewDocumentCommand\Presentation{mo}{\Braket{#1\IfValueT{#2}{|#2}}}
\NewDocumentCommand\nForms{omm}
{%
\IfValueTF{#1}
{\ensuremath{\vb{#1}^{#2}}}
{\ensuremath{\Omega^{#2}_{#3}}}
}%
\NewDocumentCommand\holForms{omm}
{%
\IfValueTF{#1}
{\ensuremath{\vb{#1}^{#2}}}
{\ensuremath{\Omega_{\text{hol}}^{#2}}}
}%
\NewDocumentCommand\Lm{mm}{\ensuremath{\Lambda^{#1}#2}}
\NewDocumentCommand\Lmtop{m}{\ensuremath{\Lambda^{\operatorname{top}}\of{#1}}}
\NewDocumentCommand\cohomology{omm}{\ensuremath{H_{\IfValueT{#1}{\IfStrEq{#1}{db}{\bar\partial}{#1}}}^{#2}\of{#3}}}
\NewDocumentCommand\LieDer{}{\ensuremath{\mathcal L}}
\NewDocumentCommand\hook{}{\ensuremath{\mathbin{ \hbox{\vrule height1.4pt
        width4pt depth-1pt \vrule height4pt width0.4pt depth-1pt}}}}
\NewDocumentCommand\Symp{m}{\ensuremath{\operatorname{Sp}_{#1}}}
\NewDocumentCommand\homotopyGroup{mm}{\ensuremath{\pi_{#1}({#2})}}
\NewDocumentCommand\fundamentalGroup{m}{\ensuremath{\homotopyGroup{1}{#1}}}
\NewDocumentCommand\MakeLie{m}{\expandafter\def\csname Lie#1\endcsname{\Lie{#1}}}
\NewDocumentCommand\lb{mm}{\ensuremath{\left[{#1}{#2}\right]}}
\NewDocumentCommand\Kaehler{}{K\"ahler\xspace}
\NewDocumentCommand\DashedArrow{}{\mathrel{\ThisStyle{\ooalign{$\SavedStyle\rightarrow$\cr%
  \hfil\textcolor{white}{\rule{2\LMpt}{1\LMex}}\kern2\LMpt\hfil}}}}
\NewDocumentCommand\graded{mm}{\ensuremath{\operatorname{gr}_{#1}{#2}}}
\NewDocumentCommand\prodquot{mmm}{\ensuremath{#1 \times^{#3}\! #2}}
\NewDocumentCommand\rationalMap{omm}{\ensuremath{\IfValueTF{#1}{#1 \colon #2\DashedArrow #3}{#2\DashedArrow #3}}}
\NewDocumentCommand\G{}{\mathscr{G}}
\NewDocumentCommand\prol{omo}{#2^{(\IfValueTF{#1}{#1}{1})\IfValueT{#3}{#3}}}
\NewDocumentCommand\omQ{m}{\omega_{#1}}
\newcommand{\omX}{\omQ{-}}
\newcommand{\omN}{\omQ{+}}
\newcommand{\omZ}{\omQ{0}}
\newcommand{\GQ}[1]{G_{#1}}
\newcommand{\GZ}{\GQ{0}}
\newcommand{\GN}{\GQ{+}}
\newcommand{\LieGQ}[1]{{\LieG}_{#1}}
\newcommand{\LieGZ}{\LieGQ{0}}
\newcommand{\LieGN}{\LieGQ{+}}
\newcommand{\LieGX}{\LieGQ{-}}
\NewDocumentCommand\slope{m}{\ensuremath{\mu\IfValueT{#1}{\left(#1\right)}}}
\NewDocumentCommand\op{m}{#1^{\text{op}}}
\NewDocumentCommand\cb{O{1}m}%
{\ensuremath{%
\ifnum\pdf@strcmp{#1}{0}=\z@%
  {\OO{}_{#2}}%
  \else{
    \ifnum\pdf@strcmp{#1}{1}=\z@%
    {}%
    \else{
        \ifnum\pdf@strcmp{#1}{-}=\z@%
            {-}%
        \else{
            \ifnum\pdf@strcmp{#1}{-1}=\z@%
                {-}%
            \else%
            {#1}%
            \fi
            }%
        \fi
        }%
    \fi
    K_{#2}
    }
\fi
}
}
\NewDocumentCommand\acb{O{1}m}{\newcount\n \n=#1 \multiply\n by -1 \cb[\number\n]{#2}}
\newcommand{\amal}[3]{\ensuremath{{#1} \mathbin{\times^{#2}} \! #3}}
\newcommand*{\Shaf}[2][.]%
{
\operatorname{\text{\foreignlanguage{russian}{ш}}}\ifnum\pdf@strcmp{#1}{.}=\z@\else_{#1}\fi\ifnum\pdf@strcmp{#2}{}=\z@\else\! #2\fi
}
\newcommand*{\Ii}[2][.]%
{
\operatorname{Ii}\ifnum\pdf@strcmp{#1}{.}=\z@\else_{#1}\fi#2
}
\newcommand{\sumOver}[2]%
{
\sum_{\mathclap{\substack{#1}}}#2
}
\newcommand*{\sesq}[2]{\ensuremath{\Braket{#1|#2}}}
\newcommand*{\isesq}[3]{\ensuremath{\Braket{#1|#2|#3}}}
\newcommand*{\KillingForm}[2]%
{
\ensuremath{
#1\cdot#2
}
}
\NewDocumentCommand\KillingSquare{m}%
{%
\ensuremath{#1^2}%
}%
\newcommand{\rtsp}[2]{\ensuremath{{#1}_{#2}}}
\newcommand{\rtspMax}[1]{\ensuremath{{#1}_{\text{max}}}}
\newcommand{\rtspMin}[1]{\ensuremath{{#1}_{\text{min}}}}
\def\lst{B,G,H,K,L,P,Q,S,Z}
\lst\do{\expandafter\MakeLie \i}
\newcommand*{\Roots}{\Delta}
\newcommand*{\cptRts}{\Delta_0}
\newcommand*{\ncptPosRts}{\Delta_+}
\newcommand*{\ncptNegRts}{\Delta_-}
\newcommand*{\ParaRts}{\Delta_{\ge 0}}
\newcommand*{\dimC}[1]{\ensuremath{\dim_{\C} \! #1}}
\NewDocumentCommand\vb{m}{\ensuremath{\bm{#1}}}
\newcommand*{\HH}[1]{\ensuremath{\check #1}}
\newcommand*{\Ha}[1]{\HH{\alpha}}
\newcommand*{\Hb}[1]{\HH{\beta}}
\newcommand*{\Hc}[1]{\HH{\gamma}}
\newcommand*{\Hd}[1]{\HH{\delta}}
\newcommand*{\He}[1]{\HH{\varepsilon}}
\newcommand*{\XX}[1]{\ensuremath{e_{#1}}}
\NewDocumentCommand\Pheight{m}{\ensuremath{#1 \cdot e}}
\renewcommand*{\aa}{\alpha}
\newcommand*{\bb}{\beta}
\newcommand*{\cc}{\gamma}
\newcommand*{\dd}{\varepsilon}
\newcommand*{\ee}{\sigma}
\newcommand*{\om}[1]{\omega^{#1}}
\newcommand*{\nator}{t}
\def\@tocline#1#2#3#4#5#6#7{\relax
  \ifnum #1>\c@tocdepth 
  \else
    \par \addpenalty\@secpenalty\addvspace{#2}%
    \begingroup \hyphenpenalty\@M
    \@ifempty{#4}{%
      \@tempdima\csname r@tocindent\number#1\endcsname\relax
    }{%
      \@tempdima#4\relax
    }%
    \parindent\z@ \leftskip#3\relax \advance\leftskip\@tempdima\relax
    #5\leavevmode\hskip-\@tempdima #6\nobreak\relax
    ,~#7\par
    \endgroup
  \fi}
\newcolumntype{C}{>{\columncolor[gray]{.9}}>{$}c<{$}}
\newcolumntype{L}{>{\columncolor[gray]{.9}}>{$}l<{$}}
\newcolumntype{R}{>{\columncolor[gray]{.9}}>{$}r<{$}}
\NewDocumentEnvironment{longtabl}{m}%
{%
\begin{longtable}{#1}%
}%
{%
\end{longtable}%
}%
\begin{document}
\begin{abstract}   
We classify the holomorphic parabolic geometries on compact complex manifolds of general type.
We accomplish this by bounding the numerical dimension of any smooth projective variety in terms of geometric invariants of the flag variety model of any holomorphic parabolic geometry on that variety.
Along the way, we uncover foliations and fibrations on smooth projective varieties that admit holomorphic parabolic geometries.
\end{abstract}
\maketitle
\begin{center}
\tableofcontents
\end{center}
\section{Introduction}
Recall the \emph{infinite groups of Lie and Cartan} \cite{Cartan1904,Cartan1905,Cartan1909,Singer.Sternberg:1965}; roughly speaking these are the geometric structures which locally have infinite dimensional flexibility.
Their germs were classified by Lie and Cartan.
Consider the old story of these geometric structures as they appear on algebraic varieties: they include holomorphic volume forms (so the varieties are Calabi--Yau manifolds \cite{Joyce:2000,Yau:1977,Yau:1978}), holomorphic symplectic forms (so the varieties are hyperk\"ahler manifolds \cite{HKLR:1987,Joyce:2000}), holomorphic contact structures (so the varieties are predicted by the Lebrun--Salamon conjecture \cite{BW:2022,Lebrun.Salamon:1994}), and holomorphic foliations \cite{Baum/Bott:1972,Lins.Neto.Scardua:2020,Painleve:2022}.
Their study forms a large part of the history of algebraic geometry over the last hundred years.
By contrast, there has been very little study of rigid holomorphic geometric structures on algebraic varieties.

Now consider that other old story: the theory of rigid, real or complex, geometric structures.
In some of the most profound research in differential geometry of the 20\textsuperscript{th} century, \'Elie Cartan developed methods to compute differential invariants of the most frequently encountered geometric structures on manifolds \cite{Cartan:30,Cartan:136,Cartan:136bis,Cartan:174,Cartan:1992}.
Roughly speaking, these geometric structures are the ones which admit a homogeneous ``model'' example, now known as \emph{Cartan geometries}.

In recent years, \v{C}ap and Slov\'ak \cite{Cap/Slovak:2009} and their collaborators have significantly advanced Cartan's methods, making heavy use of representation theory.
Their work applies to the most important class of Cartan geometries (after pseudo-Riemannian geometries): the \emph{parabolic geometries}.
Parabolic geometries are Cartan geometries whose model is a flag variety.
More concretely, they include conformal Riemannian and conformal pseudo-Riemannian geometry, projective connections, Levi nondegenerate CR hypersurface geometry, path geometries, and many others.
Roughly speaking, a flag variety is a very flexible homogeneous space: you can hold a point of a flag variety fixed, and still have freedom to move the flag variety using a subgroup of the automorphism group containing a Borel subgroup.
For example, conformal Riemannian geometry is ``modelled'' on the sphere with its conformal geometry, a maximal Lie group action on the sphere, while projective connections are modelled on real projective space with its usual projective geometry, a maximal Lie group action on projective space.

Global questions about Cartan geometries have advanced slowly, by comparison to Riemannian geometry, because suitable analytic tools are not available.
C.~N.~Yang \cite{Yang:1980} wrote that mathematics and physics are like two leaves with a common stem
\[
\begin{tikzpicture}
\node{\pgfornament[width = 1cm]{30}};
\fill[
white
] 
(-.55,.5) -- (-.3,.35) -- (0,.1) -- (.2,.03) -- (.385,.0117) 
-- (.38,-.02) -- (.2,-.05) -- (0,-.1) -- (-.3,-.36) -- (-.55,-.5) -- cycle;
\end{tikzpicture}
\]
Rather than looking at all of mathematics and physics, Besse \cite{Besse:1987} p.~11 wrote that we can see this picture as a comparison of Riemannian and Lorentzian geometry.
More precisely, Riemannian and Lorentzian geometry begin with a common stem: the same theory of differential invariants (Levi--Civita connection, torsion, curvature), but they become very different in their global analysis.
Cartan geometry is like a stem with infinitely many leaves, one for each homogeneous model: a common theory of differential invariants, but few common global tools.

Now bring our two old stories together to tell a new story: consider the theory of holomorphic rigid geometric structures on algebraic varieties.
Many years ago \cite{McKay:2004}, I began to look for a unified global theory of holomorphic parabolic geometries on compact complex manifolds, where algebraic geometry provides common global tools.
I hope to find the most beautiful examples of parabolic geometries hiding in plain sight among well known algebraic varieties.
The purpose of this paper is to narrow the search.

Holomorphic parabolic geometries on complex manifolds first arose in works of Poincar\'e \cite{Poincare:1985} in his efforts to uniformize all compact Riemann surfaces, beautifully explained for the modern reader by St.~Gervais \cite{StGervais:2016}.
Poincar\'e recognized that holomorphic linear second order ordinary differential equations are geometric objects on Riemann surfaces: their local solutions have ratios mapping to the projective line, giving a holomorphic parabolic geometry locally modelled on the projective line.
Conversely, every holomorphic parabolic geometry on a Riemann surface arises uniquely from a second order linear ordinary differential equation, i.e. a \(1\)-dimensional linear system.

Study of higher dimensional holomorphic parabolic geometry began almost a century later \cite{Gunning:1978,KobayashiOchiai:1980,Kobayashi/Ochiai:1981,Kobayashi1981,Kobayashi1982} in a burst of activity which immediately faded away.
Every flag variety is the model of a type of holomorphic parabolic geometry, and so has such a geometry on it.
Complex tori carry translation invariant holomorphic parabolic geometries \cite{Biswas.Dumitrescu:2017,Dumitrescu/McKay:2016}.
Every Hermitian symmetric variety carries a natural holomorphic parabolic geometry.
Klingler \cite{Klingler:2001,McKay:2016} proved that its K\"ahler metric is encoded in that holomorphic parabolic geometry.
For example, the following are identical:
\begin{itemize}
\item
smooth ball quotients
\item
ample smooth projective varieties which satisfy the Bogomolov--Miyaoka--Yau inequality \cite{Greb.Kebekus.Taji:2018}
\item
compact complex manifolds of general type which admit holomorphic projective connections.
\end{itemize}
We see the central role of holomorphic parabolic geometries in algebraic geometry.

As pointed out to me by Sorin Dumitrescu, we could see Kodaira's classification of compact complex surfaces \cite{Kodaira:1960,Kodaira:1963,Kodaira:1964,Kodaira:1966,Kodaira:1968a,Kodaira:1968b} as a reflection of the theory of holomorphic Cartan geometries: there is a holomorphic Cartan geometry on a compact complex surface if and only if there is a flat one, and then that surface is a quotient of an open set in the homogeneous model surface, and this occurs (according to an unpublished theorem of Dumitrescu and myself) precisely  when Kodaira defines that surface as that quotient.

Kobayashi, Ochiai and his collaborators \cite{Kobayashi:1984,KobayashiOchiai:1980,Kobayashi1981,Kobayashi/Ochiai:1981,Kobayashi1982} classified holomorphic parabolic geometries on compact complex surfaces.
(On curves, all Cartan geometries are classified \cite{Dumitrescu:2001c,McKay2011b}.)
These results suggested that the holomorphic parabolic geometries on smooth projective varieties might be no more than products of the examples we have mentioned above, so the largest part of the story should be just the theory of locally Hermitian symmetric varieties, itself one of the deepest and most active fields of research in modern mathematics. 
But Jahnke and Radloff \cite{Jahnke/Radloff:2004,Jahnke/Radloff:2015} found very different new examples of holomorphic parabolic geometries arising on complex torus bundles over Shimura varieties.

In this paper, our main goal is to classify the holomorphic parabolic geometries on smooth projective varieties of general type, so that we can see that they are essentially those arising from Hermitian symmetric varieties.
Along the way, we will recall how to classify holomorphic parabolic geometries on compact complex manifolds with \(c_1>0\), or with \(c_1=0\), or even with \(c_1\ge 0\).
We will see that the smooth projective varieties which admit holomorphic parabolic geometries are either those of general type, or have holomorphic fibrations and holomorphic foliations, generalizing those of Jahnke and Radloff, about which we can obtain a great deal of information.
For example (with terminology defined below):
\begin{theorem}
Take an irreducible flag variety \((X,G)\) of dimension two or more.
Suppose that \(M\) is a smooth projective variety bearing a holomorphic parabolic geometry modelled on that flag variety.
If \((X,G)\) is a cominuscule variety (i.e. a compact Hermitian symmetric space) then
\begin{itemize}
\item \(M=X\) with its standard flat holomorphic parabolic geometry or
\item \(M\) is a Hermitian symmetric variety with Hermitian symmetric structure modelled on the noncompact dual to \((X,G)\), hence flat or
\item \(M\) is an abelian variety with translation invariant (perhaps not flat) holomorphic Cartan geometry (explicitly classified, as we will see) or
\item after perhaps replacing \(M\) by a finite covering space, \(M\) is an abelian group scheme with a positive dimensional fiber and a positive dimensional base or
\item
\(M\) is a counterexample to the abundance conjecture.
\end{itemize}
If \((X,G)\) is not a cominuscule variety then 
\begin{itemize}
\item \(M=X\) with its standard flat holomorphic parabolic geometry or
\item \(M\) is an abelian variety with translation invariant, nowhere flat, holomorphic Cartan geometry (explicitly classified, as we will see) or
\item after perhaps replacing \(M\) by a finite covering space, \(M\) is an abelian group scheme, with a positive dimensional fiber and a positive dimensional base, and every holomorphic parabolic geometry on \(M\) is nowhere flat or
\item
\(M\) is a counterexample to the abundance conjecture.
\end{itemize}
\end{theorem}
The abelian group schemes with nowhere flat geometries are the surprise of this paper, although we are not able to prove that they actually exist.

The results are similar to my earlier work with Indranil Biswas on the flat holomorphic parabolic geometries \cite{Biswas.McKay:2024}, but the proofs below are far more difficult, and I am not able to prove anything about developable subvarieties.
It is a long established tradition in the theory of Cartan geometries that the first steps are made purely for flat parabolic geometries, and then followed up by similar results for curved parabolic geometries, and finally for curved Cartan geometries of more general models.

In section~\vref{section:state.of.art} we will summarize almost everything known at the moment about the classification of holomorphic parabolic geometries on smooth projective varieties, including the theorems of this paper.
\subsection{Background}
It seems likely that most readers of this paper are differential geometers, interested in Cartan geometries, and not algebraic geometers, so I provide complete definitions and detailed references for standard results in complex algebraic geometry (which, for me, were difficult to gather together), while I make free use of my book \cite{McKay:2023} as a reference on Cartan geometries.
The reader might also look at \v{C}ap and Slov\'ak \cite{Cap/Slovak:2009} and Sharpe \cite{Sharpe:2002} for more information on Cartan geometries.
\section{The structure of this paper}
In section~\vref{Cartan.geometries}, we give the definition of a holomorphic Cartan geometry, and of lifting and dropping of Cartan geometries, which relate geometries in different dimensions via fibrations.

In section~\vref{section:state.of.art},  we summarize almost everything known at the moment about the classification of holomorphic parabolic geometries on smooth projective varieties, including the theorems of this paper, and define (with references) all of the algebraic geometry terminology and state all of the theorems from algebraic geometry which we will need.
From that point on, we will see that can assume that we are studying a smooth projective variety \(M\), containing no rational curves, with a holomorphic parabolic geometry, and we allow ourselves to assume that the abundance conjecture (defined below) holds for \(M\).
For the benefit of readers more comfortable with locally homogeneous geometry, we explain how to apply our results in the special (but more intuitive) circumstance of a flat holomorphic parabolic geometry, and we use this to derive some known and some new results about quotients of nonclassical flag domains.

In section~\vref{section:flag.varieties}, we define (with references) all of the flag variety terminology and state all of the theorems from the theory of flag varieties which we will need. 
Besides the standard material, we only need a few elementary computational lemmas which are new to this paper.

In section~\vref{section:pseudoeffective}, we recall the concept of pseudoeffective line bundle, a sort of weak nonnegative curvature condition, and explain how it relates to minimality of Cartan geometries, and recall a test for pseudoeffectivity of line bundles due to Campana and Peternell.

In section~\vref{section:brackets}, we recall a theorem of Demailly relating pseudoeffectivity of line bundles and the Frobenius theorem.

In section~\vref{section:pseudoeff.parabolic}, we see how Campana and Peternell's theorem gives rise to pseudoeffective line bundles to which we can apply Demailly's theorem to prove bracket closure of some holomorphic subbundles of the tangent bundle of our variety.
The relevant relationships between line bundles and subbundles require some theory of roots of simple Lie algebras.
At this step, we need that the holomorphic Cartan geometry is parabolic.

In section~\vref{section:structure.equations}, we write out the structure equations of any holomorphic parabolic geometry in detail in terms of a Chevalley basis for the Lie algebra of the model.
This is also new, and we will see that it is useful for computations.

In section~\vref{section:bracket.closed.geometries}, we use these structure equations to unravel, with unfortunately long computations, the curvature equations arising from those subbundles being bracket closed.

In section~\vref{section:boosh}, we see that these curvature equations give rise to a canonical reduction of structure group of any bracket closed parabolic geometry.
This step is motivated by the method of equivalence of Cartan: using some nowhere trivial local information, we can reduce structure group.
The resulting geometric structure, which we call the \emph{boosh} of the parabolic geometry, lives on the same variety \(M\), and is a sort of Cartan geometry (in a certain sense made precise below) with a more rigid model than the original parabolic geometry.
(The model is no longer a homogeneous space, so we need to allow a broader concept of Cartan geometry.)

In section~\vref{section:Chern.boosh}, we compute the Chern class equations on \(M\) arising from the boosh geometry, which are surprisingly stronger than those arising directly from the original parabolic geometry.
We use Beauville's work on the Baum--Bott theorem to make more Chern class equations on \(M\).

In section~\vref{section:bundles.on.tori}, we recall work of Biswas and Upmeier about holomorphic connections on holomorphic principal bundles over tori.
We recall work of Biswas and the author which shows that, if a manifold admits a holomorphic Cartan geometry, then its compact complex submanifolds with torsion canonical bundle are complex tori.
We combine these by showing that the generic fiber of the Iitaka fibration of \(M\) is a complex torus.

In section~\vref{section:Weyl}, we define the Weyl structures of a Cartan geometry, a special kind of reduction of structure group.
We recall that these are identified with choices of holomorphic connection on the canonical bundle.
We apply Weyl structures on open sets to prove that the Iitaka fibration of \(M\) is an abelian group scheme (defined below), a very special type of complex torus fibration.

In section~\vref{section:examples}, we look at the geometries which arose in Cartan's famous \(5\)-variable paper \cite{Cartan:30}, to see that we have made some progress, but we are still left without a complete classification of these types of geometries on smooth projective varieties.

\section{Cartan geometries}\label{Cartan.geometries}
A \emph{complex homogeneous space} \((X,G)\) is a complex manifold \(X\) acted on holomorphically and transitively by a complex Lie group \(G\).
It is a \emph{flag variety} if \(X\) is a simply connected smooth projective variety and \(G\) is semisimple.
We will always pick a point \(x_0\in X\), and often write the stabilizer of \(x_0\in X\) as \(P\subseteq G\).
A \emph{holomorphic Cartan geometry} modelled on \((X,G)\), also called an \((X,G)\)-geometry, on a complex manifold \(M\) is a holomorphic principal \(P\)-bundle \(\G_P\to M\) with a holomorphic connection on \(\G_G:=\amal{\G_P}{G}{P}\) so that the obvious \(P\)-equivariant holomorphic map \(x\in\G_P\mapsto(x,1)G\in\G_G\) is nowhere tangent to the horizontal spaces of that connection.
Denote the connection \(1\)-form as \(\omega\in\nForms{1}{\G_G}\otimes\LieG\).
We always denote the pullback of \(\omega\) to \(\G_P\) also as \(\omega\), and call it the \emph{Cartan connection}, and denote \(\G_P\) by \(\G\).

If \(X\to X'\) is a \(G\)-equivariant holomorphic map of complex homogeneous spaces, say taking points \(x_0\in X\) to \(x_0'\in X'\), with stabilizers \(P:=G^{x_0}\subseteq P':=G^{x'_0}\), and \(\G\to M'\) is an \((X',G)\)-geometry, then there is an associated \((X,G)\)-geometry, the \emph{lift}, defined by taking \(M:=\G/P\), with obvious quotient map \(\G\to M=\G/P\), and the same holomorphic \(1\)-form.
The lift is also called the \emph{correspondence space} \v{C}ap \& Slov{\'a}k \cite{Cap/Slovak:2009} p.~99, Definition~1.5.13. 
Recall that, in my terminology \cite{McKay:2023}, a Cartan geometry \emph{drops} to another one if the first is isomorphic to the lift of the second.
Dropping, not lifting, is the most important Cartan geometry concept in this paper.
A \emph{minimal geometry} is a holomorphic Cartan geometry which does not drop to a holomorphic Cartan geometry on a lower dimensional manifold.
Clearly the classification of holomorphic Cartan geometries follows from the classification of the minimal ones.
\section{The state of the art: geometries on varieties}%
\label{section:state.of.art}
We set the stage by summarizing most of what is known, and what will will prove below, about Cartan geometries on smooth projective varieties.
\subsection{Projectivity}
A \emph{smooth projective variety} is a compact complex manifold which admits a holomorphic embedding into a complex projective space.
A \emph{Moishezon manifold} is a connected compact complex manifold bimeromorphic to a complex projective variety \cite{Ueno:1975} p. 26. 
\begin{theorem}[Biswas \& McKay \cite{Biswas.McKay:2016} p. 3 corollary 2]\label{theorem:Moishezon}
If a Moishezon manifold \(M\) admits a holomorphic Cartan geometry then \(M\) is a connected smooth complex projective variety.
If the holomorphic Cartan geometry on \(M\) drops to a complex manifold \(M'\) then \(M'\) is also a connected smooth complex projective variety.
\end{theorem}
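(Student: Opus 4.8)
The plan is to argue by contradiction, using the rigidity of the Cartan connection along rational curves; connectedness and smoothness of \(M\) are built into the hypotheses (a Moishezon manifold is by definition a connected compact complex manifold), so the real content is that \(M\) is \emph{projective}. Suppose it is not. Then, by the structure theory of non-projective Moishezon manifolds — the obstruction to projectivity being detected by the geometry of curves, and bend-and-break producing rational curves out of such obstructions — \(M\) contains a rational curve along which \(TM\) fails to be nef; the prototype is Hironaka's non-projective Moishezon threefold, where projectivity fails precisely because of \(\mathbb{P}^1\)'s with normal bundle \(\mathcal{O}(-1)\oplus\mathcal{O}(-1)\), along which \(f^*TM\cong\mathcal{O}(2)\oplus\mathcal{O}(-1)\oplus\mathcal{O}(-1)\) is not globally generated. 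Making this algebro-geometric input precise — ``\(M\) non-projective and Moishezon \(\Rightarrow\) \(M\) carries a rational curve along which \(TM\) is not nef'' — is, I expect, the technical heart of the argument (cf.\ Moishezon, Peternell, and Koll\'ar on rational curves and projectivity of Moishezon manifolds).

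The other half — which does the actual killing and is comparatively soft — is that no manifold with a holomorphic Cartan geometry can carry such a curve, because for \emph{any} nonconstant holomorphic map \(f\colon\mathbb{P}^1\to M\) (for instance, the normalization of a rational curve in \(M\)) the bundle \(f^*TM\) is globally generated. To see this, pull \(\G_G=\amal{\G}{G}{P}\) with its holomorphic connection \(\omega\) back along \(f\): the curvature of \(f^*\omega\) is a holomorphic \(2\)-form valued in \(\mathfrak{g}\), hence zero, so \(f^*\G_G\) is a flat holomorphic principal \(G\)-bundle over the simply connected curve \(\mathbb{P}^1\), hence holomorphically trivial with trivial connection. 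The reduction of structure group to \(P\) given by \(f^*\G\subseteq f^*\G_G\) then becomes a holomorphic section of the trivial bundle with fibre \(G/P=X\), i.e.\ a holomorphic developing map \(\delta\colon\mathbb{P}^1\to X\) into the flag variety \(X\), and unwinding the identifications yields \(f^*TM\cong\delta^*TX\). On a flag variety \(TX\) is a quotient of the trivial bundle \(X\times\mathfrak{g}\) by the fundamental vector fields of the transitive \(G\)-action, hence globally generated, so \(\delta^*TX\cong\bigoplus_i\mathcal{O}_{\mathbb{P}^1}(a_i)\) with all \(a_i\ge 0\), and some \(a_i\ge 2\) because \(df\colon T\mathbb{P}^1=\mathcal{O}_{\mathbb{P}^1}(2)\to f^*TM\) is a nonzero map out of a line bundle. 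Thus every rational curve in a compact complex manifold carrying a holomorphic Cartan geometry is free, and \(-K_M\) is positive along it. (This does not forbid rational curves altogether — \(X\) itself carries its standard \((X,G)\)-geometry and is swept out by rational curves — which is why the Moishezon hypothesis cannot be dropped.) Applied to the bad curve of the previous paragraph, this is a contradiction, so \(M\) is projective.

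For the second assertion, suppose the geometry on \(M\) drops to a complex manifold \(M'\). By definition \(M\) is the lift of an \((X',G)\)-geometry \(\G\to M'\) attached to some \(G\)-equivariant map \(X\to X'\) with stabilizers \(P\subseteq P'\); thus \(M=\G/P\to\G/P'=M'\) is the evident quotient map, a surjective holomorphic map — indeed a holomorphic fibre bundle with fibre \(P'/P\) — so \(M'\) is a compact complex manifold and a holomorphic image of \(M\). By the first part \(M\) is already a connected smooth projective variety, hence its image \(M'\) is Moishezon (the class of Moishezon spaces is closed under surjective holomorphic images), and \(M'\) carries the dropped holomorphic Cartan geometry \(\G\to M'\). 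Applying the first part to \(M'\) finishes the proof.
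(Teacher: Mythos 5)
First, a remark on the comparison itself: the paper does not prove this theorem at all --- it imports it verbatim from \cite{Biswas.McKay:2016} --- so your proposal has to be measured against the proof given there. Your ``soft half'' is correct and is in fact the central differential-geometric lemma of that proof: on \(\mathbb{P}^1\) there are no nonzero holomorphic \(2\)-forms, so the pulled-back connection on \(f^*\G_G\) is flat, the bundle trivializes over the simply connected base, the \(P\)-reduction becomes a developing map \(\delta\colon\mathbb{P}^1\to X\) with \(f^*TM\cong\delta^*TX\), and global generation of \(TX\) gives \(f^*TM\cong\bigoplus_i\mathcal{O}(a_i)\) with all \(a_i\ge 0\) and some \(a_i\ge 2\). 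That part is sound.

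The gap is exactly where you flagged it, and it is genuine. The algebro-geometric input you need --- ``a non-projective Moishezon manifold contains a rational curve along which \(TM\) is \emph{not} nef'' --- is strictly stronger than what the literature provides. The theorem of Peternell and Koll\'ar says only that a Moishezon manifold containing \emph{no} rational curve is projective; it gives no control whatsoever on \(TM\) along the rational curve whose existence it asserts in the contrapositive. Your own (correct) observation that \(X=G/P\) carries a Cartan geometry while being swept out by free rational curves shows that the weak statement cannot feed your contradiction: freeness of every rational curve is perfectly compatible with the presence of many rational curves. One could imagine repairing the argument by producing a nonzero numerically trivial effective \(1\)-cycle of \emph{rational} curves on a non-projective Moishezon manifold and pairing it against \(-K_M\) (your computation gives \(-K_M\cdot C=\sum_i a_i\ge 2>0\) on every rational curve), but that refined cycle-theoretic statement is again something you would have to locate or prove; as written, the first half of your proof does not close.

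The actual proof in \cite{Biswas.McKay:2016} takes a different route that needs only the weak projectivity criterion: a rational curve in \(M\) is tame (Moishezon manifolds are dominated by projective ones), and any tame rational curve forces the geometry to \emph{drop}, exhibiting \(M\) as a holomorphic fiber bundle with positive-dimensional flag-variety fibers over a lower-dimensional Moishezon manifold carrying the dropped geometry; one inducts on dimension until the base contains no rational curves, applies the Peternell--Koll\'ar criterion there, and climbs back up using the fact that a flag-variety bundle over a projective variety is projective (relative ampleness of the fiberwise anticanonical bundle). Your treatment of the second assertion --- \(M'\) is the holomorphic image of the now-projective \(M\), hence Moishezon, and carries the dropped geometry, so the first part applies --- is fine modulo the first part.
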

Hence the classification of holomorphic Cartan geometries on Moishezon manifolds follows immediately from the classification of minimal geometries on connected smooth complex projective varieties.
Henceforth we focus on the latter classification exclusively, but we note that all of our results about smooth projective varieties immediately generalize, by this theorem, to Moishezon manifolds.
\subsection{Positive Ricci}
A rational curve in a complex manifold is \emph{tame} if it lies in a compact irreducible component of the Doaudy space of deformations \cite[p. 50, Cor. 6.0.3]{Barlet.Magnusson:2019}.
If a compact complex manifold is dominated by a compact K\"ahler manifold, then every rational curve is tame \cite[p. 189]{Fujiki:1982}.
A compact complex manifold is \emph{tamely rationally connected} if two generic points lie on a connected finite union of tame rational curves.
\begin{example}
Campana \cite{Campana:1992} proved that any connected compact complex manifold with \(c_1>0\) has chains of tame rational curves connecting any two of its points.
Any connected compact complex manifold with a Ricci positive K\"ahler--Einstein metric has \(c_1>0\), so the following theorem classifies the holomorphic parabolic geometries on compact positive Ricci K\"ahler--Einstein manifolds.
\end{example}
\begin{theorem}[Biswas \& McKay \cite{Biswas.McKay:2016} p. 2 theorem 2]
Any tamely rationally connected compact complex manifold \(M\) admits a holomorphic Cartan geometry, say with homogeneous model \((X,G)\), if and only if \(M\) is biholomorphic to \(X\), in which case the geometry is isomorphic to the model geometry and \((X,G)\) is a flag variety.
\end{theorem}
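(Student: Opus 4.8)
The plan is to prove both directions by reducing to rigidity of the model geometry on rational curves. One direction is trivial: if $M$ is biholomorphic to $X$, transport the model geometry along the biholomorphism to get a holomorphic Cartan geometry on $M$; since $X$ is a flag variety, $(X,G)$ is a flag variety by hypothesis. So the substance is the forward direction: assume $M$ is tamely rationally connected and carries a holomorphic Cartan geometry modelled on some $(X,G)$, and deduce $M\cong X$ with the model geometry and $(X,G)$ a flag variety.

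First I would observe that a holomorphic Cartan geometry on $M$, restricted to any rational curve $C\subseteq M$, produces a holomorphic Cartan geometry on $\Proj 1$ (or its normalization), modelled on $(X,G)$. The key local fact I would invoke is that a holomorphic Cartan geometry on $\Proj 1$ forces rigidity: the Cartan connection $\omega$ trivializes $T\G$, so $\G$ is parallelizable, and a parallelizable compact manifold fibering over $\Proj 1$ with fiber $P$ is very constrained. More concretely, the developing map of such a geometry along $C$ must be an immersion $C\to X$ (by the Cartan-geometry condition that $\omega$ is nowhere tangent to the horizontal spaces), and rationality of $C$ together with compactness forces the developing map to extend. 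This is the step where I expect to lean hardest on \cite{McKay:2023}: I would cite the structural result that a holomorphic Cartan geometry on a tame rational curve has trivial monodromy and develops as a closed immersion onto a rational curve in $X$. Since $X$ must contain rational curves through the images, $X$ itself is covered by rational curves, hence is a flag variety or more generally projective with $P$ parabolic — and in the semisimple-model case this pins down $(X,G)$ as a flag variety via Theorem~\ref{theorem:Moishezon} applied to $X$ (which is Moishezon, being a homogeneous space dominated by a rational curve chain).

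Next, having established $(X,G)$ is a flag variety, I would globalize. The tame rational connectedness lets me connect any two points of $M$ by a chain of tame rational curves; along each the geometry develops into $X$, and the transition on overlaps is by an element of $G$ (holonomy of the flat-on-curves structure). I would then argue that the developing map globalizes to a holomorphic map $\operatorname{dev}\colon M\to X$: the obstruction lies in $H^1$ of a sheaf that vanishes because $M$ is covered by rational curves on which the geometry is rigid, forcing triviality of the relevant cocycle. The map $\operatorname{dev}$ is an immersion everywhere (the Cartan condition), hence a finite map onto $X$ since $\dim M=\dim X$; because $X$ is simply connected (flag variety) and $M$ is compact, $\operatorname{dev}$ is a biholomorphism. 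Pulling back the model geometry via $\operatorname{dev}$ and comparing Cartan connections shows the geometry on $M$ is the model geometry.

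The main obstacle is the globalization step: passing from ``flat and rigid along each tame rational curve'' to ``a genuine global developing map and holonomy homomorphism.'' Along the way one must control how the local developments patch over the (possibly singular, possibly positive-dimensional) locus where chains of rational curves meet, and handle non-reduced or reducible members of the family of rational curves — this is exactly where tameness (compactness of the component of the Douady space) is essential, and where I would expect to invoke deformation-theoretic vanishing rather than attempt an explicit cocycle computation.
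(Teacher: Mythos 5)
The paper does not actually prove this statement: it is quoted verbatim from Biswas \& McKay \cite{Biswas.McKay:2016} with a page reference, so there is no in-paper proof to compare against. Judged on its own, your proposal has a genuine gap, and it sits exactly where you yourself locate ``the main obstacle.'' Your strategy is built on developing maps, but a developing map exists only for \emph{flat} Cartan geometries, and flatness of the geometry on \(M\) is part of the \emph{conclusion} of the theorem (the geometry is isomorphic to the model), not something you may assume. What survives is the local step: the pullback of the geometry to a single rational curve is automatically flat (the curvature is semibasic over \(M\), hence vanishes when pulled back to a one-dimensional base), so each tame rational curve does develop to a holomorphic map \(\Proj{1}\to X\). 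But the globalization --- ``the obstruction lies in \(H^1\) of a sheaf that vanishes'' --- is not an argument: no sheaf is identified, no cocycle is defined, and for a curved geometry there simply is no global developing map to patch together. Two of your intermediate inferences are also false as stated: a complex homogeneous space containing (or even covered by) rational curves need not be a flag variety (consider \(\Proj{1}\times T\) for a torus \(T\), with \(G=\PSL{2}\times T\)), and \(\G|_C\) is not a compact parallelizable manifold since \(P\) is noncompact.

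The actual mechanism in \cite{Biswas.McKay:2016} avoids developing maps entirely. The definition of a Cartan geometry used here gives a holomorphic connection on \(\G_G\to M\), hence on the associated adjoint bundle \(\amal{\G}{P}{\LieG}\to M\); a holomorphic vector bundle on \(\Proj{1}\) admitting a holomorphic connection is trivial, so this bundle is trivial on (the normalization of) every rational curve, and therefore \(TM\cong\amal{\G}{P}{\LieG/\LieP}\) restricted to any rational curve is globally generated with a subsheaf \(\OO(2)\) coming from the tangent line. This controls the splitting type of \(TM\) along rational curves; combined with tameness (compactness of the Douady component, which rules out breaking) and deformation theory of rational curves, it produces a holomorphic fiber bundle with flag variety fibers onto which the geometry drops, and tame rational connectedness forces the base to be a point, giving \(M\cong X\) with the model geometry and, only then, that \((X,G)\) is a flag variety. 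If you want to repair your write-up, replace the developing-map globalization with this bundle-splitting and deformation-theoretic argument.
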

\subsection{Ricci flat}
A complex Lie group \(G\) is \emph{affine} if some finite index subgroup of \(G\) has a morphism of complex Lie groups to a complex linear algebraic group, inducing an isomorphism of Lie algebras. (This is not quite the same as the definition of \emph{affine} of Biswas \& Dumitrescu \cite{Biswas.Dumitrescu:2017}, but I think it is what they meant to write.)
\begin{example}
Any complex linear algebraic group is affine, and hence any complex semisimple Lie group is affine.
\end{example}
\begin{example}
Any complex Lie group which admits a holomorphic representation, injective on its Lie algebra, as unipotent complex linear transformations, is affine; see Milne \cite{Milne:2017} p. 292 corollary 14.38.
\end{example}
\begin{example}
Any complex Lie group with finite fundamental group and finitely many components is affine; see Hilgert \& Neeb \cite{Hilgert.Neeb:2012} p. 602 corollary 16.3.9.
\end{example}
Suppose that \(G\) is affine, say with finite index subgroup \(G_0 \subset G\) having morphism \(G_0 \to \bar{G}_0\) to a complex linear algebraic group, an isomorphism on Lie algebras.
If \(G \to P \to M\) is a holomorphic principal \(G\)-bundle, then on the finite unramified covering space \(\hat{M}:= P/G_0\), we have a holomorphic principal \(G_0\)-bundle \(G_0 \to P \to \hat{M}\), and an associated principal \(\bar{G}_0\)-bundle \(\bar{P}:=\amal{P}{G_0}{\bar{G}_0}\).
Every holomorphic connection on \(P \to M\) pulls back to a holomorphic connection on \(P \to \hat{M}\), and conversely by averaging over the deck transformations.
Every holomorphic connection on \(P \to \hat{M}\) induces a holomorphic connection on \(\bar{P}\), and every holomorphic connection on \(\bar{P} \to \hat{M}\) pulls back to a holomorphic connection on \(P \to \hat{M}\).
We can replace \(G_0\) by a finite index subgroup, so assume that \(\bar{G}_0\) is connected.
So, up to finite covering, the existence of a holomorphic connection, or of a holomorphic flat connection, is the same for the original bundle as for the associated bundle with connected complex linear algebraic structure group.
\begin{theorem}[Biswas \& McKay \cite{Biswas.McKay:2010}]\label{thm:cpt.Kaehler.c.1.0}
A compact connected K\"ahler manifold \(M\) with \(c_1(M)=0\) admits a holomorphic Cartan geometry if and only if \(M\) has a finite unramified covering by a complex torus.
The holomorphic Cartan geometry then pulls back to that covering.
\end{theorem}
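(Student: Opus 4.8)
The plan is to prove the two implications separately; throughout, $n:=\dimC{M}$.

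\emph{The ``if'' direction.} Suppose $\pi\colon T\to M$ is a finite unramified covering with $T=\C[n]/\Lambda$ a complex torus and finite deck group $\Gamma$, so $M=T/\Gamma$. Every biholomorphism of $T$ lifts to $\C[n]$ with $\Lambda$-periodic partial derivatives, which therefore descend to the compact $T$ and are constant, so the lift --- hence the biholomorphism --- is complex-affine. Thus $\Gamma$ acts on $T$ by complex-affine transformations and so preserves the translation-invariant flat holomorphic affine connection on $T$; that connection descends to $M$, exhibiting on $M$ a holomorphic Cartan geometry modelled on $\pr{\C[n],\C[n]\rtimes\GL{n}}$. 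The same remark, or just functoriality of Cartan geometries under local biholomorphisms, shows that a holomorphic Cartan geometry on $M$ always pulls back to $T$, which settles the ``pulls back'' clause.

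\emph{The ``only if'' direction, reduced to Chern classes.} Assume $M$ is compact connected \Kaehler with $c_1(M)=0$, carrying a holomorphic Cartan geometry $\pr{\G,\omega}$ modelled on $\pr{X,G}$, $P=G^{x_0}$; the cases $n\le 1$ being trivial, take $n\ge 2$. The point is that $\omega$ is, by definition, a holomorphic connection on the principal $G$-bundle $\G_G=\amal{\G}{P}{G}$, so every holomorphic vector bundle associated to $\G_G$ inherits a holomorphic connection and hence, by Atiyah's theorem, has vanishing rational Chern classes --- in particular the adjoint bundle $\ad(\G_G)=\amal{\G}{P}{\LieG}$ does. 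Now the Cartan connection also supplies the fundamental exact sequence of holomorphic vector bundles
\[
0\longrightarrow\ad(\G)\longrightarrow\ad(\G_G)\longrightarrow TM\longrightarrow 0,
\]
with $\ad(\G)=\amal{\G}{P}{\LieP}$ and $TM\cong\amal{\G}{P}{\pr{\LieG/\LieP}}$, this last identification being exactly the defining feature of a Cartan geometry. By the Whitney formula $c(TM)\,c(\ad(\G))=c(\ad(\G_G))=1$, so to conclude $c_2(M)=0$ it is enough to prove $c_1(\ad(\G))=c_2(\ad(\G))=0$.

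\emph{The crux.} Write $P=L\ltimes U$ for the Levi decomposition, $U$ the unipotent radical. The lower central series of $\mathfrak{u}$ inside $\LieP$ is a $P$-invariant filtration of $\LieP$ whose successive quotients are representations of $L$ pulled back along $P\to L$; hence $\ad(\G)$ carries a holomorphic filtration whose graded pieces are bundles associated to the principal $L$-bundle $\G/U$. It therefore suffices to produce a holomorphic connection on $\G/U$, for then all Chern classes of those graded pieces vanish, $c(\ad(\G))=1$, and $c(TM)=1$. Here I would use $c_1(M)=0$ a second time: since $c_1(K_M)=0$, the canonical bundle carries a holomorphic connection, which amounts to a holomorphic Weyl structure, and the $\LieL$-part of that structure is a holomorphic principal connection on $\G/U$. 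This Weyl-structure step is the one I expect to be the genuine obstacle: a Cartan connection does \emph{not} restrict to a principal connection on $\G$ or on $\G/U$ --- the transversality built into the definition of a Cartan geometry says exactly the opposite --- so some extra input, namely the holomorphic connection on $K_M$, is unavoidable. (In non-parabolic cases one argues in the same spirit, or sidesteps this by appealing directly to the decomposition used below.)

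\emph{Conclusion.} With $c_1(M)=c_2(M)=0$, apply the Beauville--Bogomolov decomposition: a finite unramified cover of $M$ is a product of a complex torus with simply connected Calabi--Yau manifolds and irreducible holomorphic-symplectic manifolds. Since Calabi--Yau and irreducible holomorphic-symplectic manifolds of positive dimension have non-vanishing second Chern class (equivalently, admit no flat Ricci-flat metric), while $c_2$ of the product would have to vanish, there are no such factors: the cover is a complex torus. This is the required finite unramified covering of $M$ by a complex torus, and, as already noted, the given holomorphic Cartan geometry on $M$ pulls back to it.
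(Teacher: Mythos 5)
First, a point of reference: the paper does not prove this statement---it is quoted verbatim from Biswas \& McKay \cite{Biswas.McKay:2010}---so there is no in-paper proof to compare against, and I am judging your argument on its own terms. Your ``if'' direction is correct (biholomorphisms of a complex torus are affine, so the flat translation-invariant affine connection descends), and your closing step is sound: for a Ricci-flat K\"ahler metric $\int_M c_2\wedge\omega^{n-2}$ is a positive multiple of $\int_M|R|^2$, so $c_2=0$ forces the metric to be flat and Bieberbach (or your K\"unneth argument on the Beauville--Bogomolov factors) finishes.

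The genuine gap is exactly where you place it, and neither of your proposed escapes closes it. Your route to $c_2(M)=0$ requires all Chern classes of $\amal{\G}{P}{\LieP}$ to vanish, for which you want a holomorphic connection on $\G/U$. The Weyl-structure mechanism you invoke is Lemma~\ref{lemma:canon.conn} of this paper, and its proof is genuinely parabolic: it uses the grading of $\LieG$, the $1$-form $\delta$, and the fact that the coefficients $t_\alpha$ can be normalized to zero by the $G_+$-action, which rests on the identification of $\LieG_+$ with $\pr{\LieG/\LieP}^*$ as $\GZ$-modules. The theorem you are proving is stated for an \emph{arbitrary} model $(X,G)$: the stabilizer need not be algebraic, there is then no unipotent radical, and the Levi--Malcev radical does not act trivially on the graded pieces of its derived series (a Borel subgroup of $\SL{2}$ already fails this), so those pieces are not pulled back from a reductive quotient and no analogue of the Weyl-structure normalization is available. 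Your parenthetical fallback---``sidestep this by appealing directly to the decomposition used below''---is circular: Beauville--Bogomolov by itself only produces a finite cover of the form $T\times\prod_i Z_i$ with $Z_i$ simply connected, and ruling out positive-dimensional factors $Z_i$ is precisely what the missing vanishing of $c_2$ was supposed to accomplish. Some genuinely different input is needed for the general model; the citation to \cite{Biswas.McKay:2010} is doing real work here.

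Restricted to parabolic models, your argument does go through, and more simply than you present it: Lemma~\ref{lemma:canon.conn} already yields a holomorphic connection on $TM$ itself (via the principal connection $\omZ$ on the reduction $\G_0$), so $c(TM)=1$ directly and the Whitney-formula detour through the lower central series of $\mathfrak{u}$ is unnecessary. But that is a special case of the statement as given.
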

\begin{theorem}[Biswas \& Dumitrescu \cite{Biswas.Dumitrescu:2017}]\label{theorem:BD.torus}
If the group \(G\) of a complex homogeneous space \((X,G)\) is affine then every holomorphic \((X,G)\)-geometry on any complex torus is translation invariant.
\end{theorem}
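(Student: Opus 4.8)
The plan is to reduce to the case of a connected complex linear algebraic group $G$, to prove that the associated principal $G$-bundle is translation invariant, and then to put the Cartan connection into a translation-invariant normal form.

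\emph{Reduction to linear algebraic $G$.}
Arguing as in the paragraph preceding the statement, replace $M$ by a finite unramified covering (again a complex torus, to which the Cartan geometry lifts) and $G$ by a connected complex linear algebraic group with the same Lie algebra. It suffices to prove translation invariance after this replacement: the deck transformations act by translations, and, once the lifted geometry is translation invariant, the translations of the covering torus act on the lifted geometry by automorphisms commuting with the deck group, hence descend to automorphisms of the original geometry realising all translations of $M$.

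\emph{Translation invariance of the associated bundle.}
Write $M=V/\Lambda$ with $V=\C[n]$. The associated principal $G$-bundle $\G_G$ carries the holomorphic connection $\omega$, so by Atiyah's criterion for the existence of a holomorphic connection, its Atiyah class, and hence all its Chern classes, vanish. Over a complex torus this forces $\G_G$ to be translation invariant — the higher-dimensional analogue of Atiyah's description of the bundles admitting a holomorphic connection over an elliptic curve — so $\G_G\cong V\times_\rho G$ for some homomorphism $\rho\colon\Lambda\to G$. I expect this step to be the principal obstacle; it is precisely where linear algebraicity of $G$ enters, and it amounts to the known theory of holomorphic connections over complex tori.

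\emph{Normal form for the Cartan connection.}
Pull the geometry back to $V$. The connection becomes $d+A$ with $A$ a holomorphic $\LieG$-valued $1$-form on $V$ satisfying $A(v+\gamma)=\Ad_{\rho(\gamma)}A(v)$ for $\gamma\in\Lambda$, i.e.\ a holomorphic section of $\Omega^1_M\otimes\ad(\G_G)$. Jordan-decomposing the commuting operators $\Ad\rho(\gamma)$ yields a weight decomposition $\LieG=\bigoplus_\chi\LieG_\chi$ and a splitting $\ad(\G_G)\cong\bigoplus_\chi L_\chi\otimes U_\chi$, where $L_\chi$ is the flat line bundle of the character $\chi$ and $U_\chi$ is unipotent; since a nontrivial flat line bundle on a torus has no holomorphic sections, $H^0(M,\ad(\G_G))$ lies in the trivial character, and — after choosing $\rho$ suitably in its gauge class, so that no unipotent summand of $\ad(\G_G)$ is holomorphically trivial, the one delicate point, which occurs when $\rho(\Lambda)$ has a nontrivial unipotent part — it equals the fixed subalgebra $\LieG^{\Ad\rho(\Lambda)}$. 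Hence $A$ has constant coefficients. The reduction $\G\subseteq\G_G$ corresponds to a $\rho$-equivariant holomorphic map $V\to X$, a local biholomorphism because $\dim X=\dim M$ for a Cartan geometry; together with the constancy of $A$, this makes the geometry on $V$ manifestly invariant under every translation, and these translations descend to $M$. Therefore the holomorphic $(X,G)$-geometry on $M$ is translation invariant.
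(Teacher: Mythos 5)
The paper does not prove this theorem; it quotes it from Biswas and Dumitrescu, so the only comparison available is with the machinery the paper does assemble around it (the discussion of affine groups and lemma~\vref{lemma:BG}). Measured against that, your first two steps are essentially the intended route, though step two is justified too weakly: vanishing of the Chern classes of \(\G_G\) does not by itself force the bundle to be of the form \(\amal{V}{\Lambda}{G}\); what you actually need is that a holomorphic connection on a principal bundle with connected linear algebraic structure group over a torus forces a (canonical) \emph{flat} holomorphic connection for which every holomorphic section of every associated bundle, and every holomorphic connection, is parallel --- this is exactly lemma~\vref{lemma:BG}, and invoking it also disposes of the unipotent gauge-fixing you flag as delicate, since parallelism of sections is part of its conclusion rather than something to be arranged by hand.

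The genuine gap is in the last step. Constancy of \(A\) shows that the \emph{extended} bundle \((\G_G,\,d+A)\) is translation invariant, but the Cartan geometry is the pair consisting of the \(P\)-reduction \(\G_P\subset\G_G\) together with \(\omega\), and you never show that translations (or lifts of them) preserve the reduction. In your picture the reduction is a \(\rho\)-equivariant holomorphic map \(s\colon V\to X\), and translation invariance requires, for each \(w\in V\), an element \(h_w\in G\) with \(s(v+w)=h_w\cdot s(v)\) and \(\Ad_{h_w}A=A\); nothing you prove produces such elements. The remark that \(s\) is a local biholomorphism "because \(\dim X=\dim M\)" is not a proof (equality of dimensions gives nothing, and the immersivity built into a Cartan geometry is transversality to the horizontal spaces of \(d+A\), not of the flat connection \(d\)), and in any case local biholomorphy of \(s\) would not yield the required equivariance. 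This missing step is where the actual content of the Biswas--Dumitrescu argument lives: one lifts each constant vector field on the torus through the Atiyah sequence \(0\to\ad(\G_P)\to\operatorname{At}(\G_P)\to TM\to 0\), using the isomorphism \(\operatorname{At}(\G_P)\cong\ad(\G_G)\) furnished by the Cartan connection together with the parallelism of all holomorphic sections of \(\ad(\G_G)\), to produce vector fields tangent to \(\G_P\) preserving \(\omega\) and projecting onto all translations. Without some version of that argument, your proof establishes invariance of the ambient connection but not of the geometry.
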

Every flag variety \((X,G)\) has \(G\) complex semisimple, so affine, hence:
\begin{corollary}
If a compact K\"ahler manifold \(M\) with torsion canonical bundle admits a holomorphic parabolic geometry then \(M\) has a finite unramified covering by a complex torus.
Every holomorphic parabolic geometry on any complex torus is translation invariant.
\end{corollary}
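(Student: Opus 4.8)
The plan is to combine the two theorems just quoted with the observation that the structure group of a flag variety is affine. Recall that a holomorphic parabolic geometry is, by definition, a holomorphic Cartan geometry whose model $(X,G)$ is a flag variety; thus $X = G/P$ with $G$ complex semisimple and $P \subseteq G$ parabolic. The first sentence of the corollary will come from Theorem~\ref{thm:cpt.Kaehler.c.1.0} once we translate the hypothesis ``torsion canonical bundle'' into the cohomological hypothesis $c_1(M) = 0$, and the second sentence will come from Theorem~\ref{theorem:BD.torus} once we note that a complex semisimple Lie group is affine.

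For the first assertion, let $M$ be a compact connected K\"ahler manifold with torsion canonical bundle $K_M$ carrying a holomorphic parabolic geometry, that is, a holomorphic Cartan geometry modelled on some flag variety. Since $K_M$ is a torsion holomorphic line bundle, its first Chern class $c_1(K_M) \in H^2(M;\Z)$ is torsion, hence $c_1(M) = -c_1(K_M)$ is torsion and in particular vanishes in $H^2(M;\R)$; equivalently $M$ carries a Ricci-flat K\"ahler metric by Yau's theorem. Theorem~\ref{thm:cpt.Kaehler.c.1.0} therefore applies and produces a finite unramified covering $T \to M$ by a complex torus, to which the holomorphic Cartan geometry, and hence the holomorphic parabolic geometry, pulls back. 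This is precisely the first sentence of the corollary.

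For the second assertion, let $(X,G)$ be the flag variety model of a holomorphic parabolic geometry on a complex torus. A complex semisimple Lie group is a complex linear algebraic group, so it is affine in the sense defined above; hence Theorem~\ref{theorem:BD.torus} applies and shows that the geometry is translation invariant. I do not expect any real obstacle here: the only steps needing a word of care are the definitional identification of a parabolic geometry with a Cartan geometry modelled on a flag variety, and the passage from a torsion canonical bundle to the hypothesis $c_1(M) = 0$ of Theorem~\ref{thm:cpt.Kaehler.c.1.0} --- and for the latter, if one insists on reading that hypothesis in integral cohomology, one may first replace $M$ by its index-one cyclic cover, on which the canonical bundle is genuinely trivial, apply the theorem there, and pull everything back. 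In all cases the argument is routine.
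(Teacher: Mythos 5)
Your proof is correct and follows exactly the route the paper intends: the paper gives no argument beyond the remark that every flag variety has $G$ complex semisimple, hence affine, and leaves the reader to combine Theorem~\vref{thm:cpt.Kaehler.c.1.0} with Theorem~\vref{theorem:BD.torus} as you do. Your care over passing from a torsion canonical bundle to the hypothesis $c_1(M)=0$ is a reasonable extra detail, handled correctly.
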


\subsection{Semipositive Ricci}
A line bundle \(L\to M\) on a projective variety \(M\) is \emph{numerically effective}, \emph{nef} for short, if \(\int_C c_1(L) \ge 0\) for all complex algebraic curves \(C\) in \(M\).
\begin{theorem}%
\label{theorem:semipos}
Suppose that \(M\) is a compact K\"ahler manifold with nef anticanonical bundle.
After perhaps replacing \(M\) by a finite unramified covering space, every holomorphic Cartan geometry on \(M\) is the lift of a Cartan geometry on a complex torus, and in particular \(M\) is a holomorphic fiber bundle, with flag variety fibers, over a complex torus.
\end{theorem}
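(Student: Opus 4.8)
The plan is to combine the Cao--Höring decomposition theorem for compact Kähler manifolds with nef anticanonical bundle with the classification of holomorphic Cartan geometries on rationally connected varieties, and then to upgrade the resulting algebraic fibration to a \emph{dropping} of the Cartan geometry. Since $-K_M$ is nef, I would first invoke that decomposition theorem (in its compact Kähler form): after replacing $M$ by a suitable finite unramified covering space, the Albanese morphism $\alpha\colon M\to A:=\operatorname{Alb}(M)$ is a locally trivial holomorphic fiber bundle over the complex torus $A$, with fiber a rationally connected projective manifold $F$ (and $-K_F$ again nef); in fact it is a locally constant fibration, classified by a representation $\fundamentalGroup{A}\to\Aut{F}$. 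Since $A$ contains no rational curves, every rational curve of $M$ lies in a fiber, and since $F$ is projective it is \emph{tamely} rationally connected, so the classification theorem of Biswas and McKay \cite{Biswas.McKay:2016} for tamely rationally connected manifolds will apply to $F$ as soon as we know that $F$ carries a holomorphic Cartan geometry.

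\emph{The heart of the argument: dropping along $\alpha$.} Write the given geometry as a Cartan connection $\omega$ on $\G=\G_P\to M$ modelled on $(X,G)$. The vertical subbundle $V:=\ker d\alpha\subseteq TM$ is involutive; its preimage $\widetilde V\subseteq T\G$ (which contains the vertical subbundle of $\G\to M$) is again involutive, and under the pointwise isomorphism $\omega\colon T\G\xrightarrow{\ \sim\ }\G\times\LieG$ it corresponds to a $P$-equivariant holomorphic map $\phi\colon\G\to\operatorname{Gr}_k(\LieG)$, $\phi(g)=\omega_g\bigl(\widetilde V_g\bigr)$, where $k:=\operatorname{rank}\widetilde{V}$. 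The crucial point is to show $\phi$ is constant, with value an $\Ad(P)$-invariant Lie subalgebra $\LieP\subseteq\mathfrak q\subseteq\LieG$. Over a single fiber $F$ one has $\widetilde V|_{\G|_F}=T(\G|_F)$, so the variation of $\phi$ along $\G|_F$ is controlled by the structure equation $d\omega=-\tfrac12[\omega,\omega]+\Omega$ together with the involutivity of $\widetilde V$: for local lifts to $\G|_F$ of vector fields on $F$, the bracket lying in $\widetilde V$ forces the derivative of $\omega(\widetilde V)$ to be expressible through brackets in $\LieG$ and the curvature $\Omega$, which is semibasic for $\G\to M$ and hence depends only on directions along $F$. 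Feeding in that $F$ is rationally connected (so $H^{0}(F,\Omega^{1}_{F})=0$ and $H^{1}(F,\mathscr O_{F})=0$, and $F$ is simply connected) and compact should yield enough rigidity to force $\phi|_{\G|_F}$ constant; since $\phi$ is pulled back from $M$ and $V$ is $\alpha$-vertical, $\phi$ is then globally constant. I expect precisely this rigidity assertion --- constancy of $\phi$ along the rationally connected fibers --- to be the main obstacle; a promising alternative, if a direct differential argument is awkward, is to analyze how $\omega$ restricts along the tame rational curves filling the fibers, whose deformations are compactly parametrized since $F$ is projective.

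\emph{Conclusion.} Once $\phi\equiv\mathfrak q$, the structure equations show that $\mathfrak q$ is a Lie subalgebra and that $\Omega$ is suitably $\mathfrak q$-valued along $\widetilde V$, so by the dropping criterion of \cite{McKay:2023} the $(X,G)$-geometry on $M$ is the lift of a $(G/Q,G)$-geometry on $\G/Q$; since $M=\G/P\to\G/Q$ has the same connected fibers as $\alpha$, one has $\G/Q=A$, the complex torus. The fiber of this drop over a point of $A$ is $Q/P\cong F$, and the restriction of $\omega$ to $\G|_F\cong Q$ is a $\mathfrak q$-valued Cartan connection for a holomorphic $(Q/P,Q)$-geometry on $F$ (the dimensions match, as $\dim Q=\dim\mathfrak q$). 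Thus $F$ is a tamely rationally connected compact complex manifold admitting a holomorphic Cartan geometry, hence $F\cong Q/P$ is a flag variety by \cite{Biswas.McKay:2016}; therefore $\alpha\colon M\to A$ is a holomorphic fiber bundle with flag variety fibers over the complex torus $A$, carrying the lift of the $(G/Q,G)$-geometry on $A$, as required. The degenerate cases are immediate: if $\dim A=0$ then $M=F$ is itself a flag variety, and if $\dim F=0$ then $M$ has torsion canonical bundle and one may instead quote Theorem~\ref{thm:cpt.Kaehler.c.1.0}.
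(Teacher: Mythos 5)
Your top-level strategy coincides with the paper's: the proof in the text is three citations (the nef-anticanonical decomposition theorem, the Biswas--McKay classification of geometries in the presence of tame rational curves, and Theorem~\ref{thm:cpt.Kaehler.c.1.0}), and you invoke the same three ingredients. But there are two genuine gaps in your write-up.

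First, you misstate the decomposition theorem. For a compact K\"ahler manifold with nef anticanonical bundle, the fiber of the Albanese map need \emph{not} be rationally connected: for \(M=S\times E\) with \(S\) a K3 surface and \(E\) an elliptic curve, \(-K_M\) is nef, \(\Aut{}\)-Albanese is projection to \(E\), and the fiber is \(S\), which is not uniruled. The correct statement (and the one the paper cites) is that \(M\) is a locally trivial fibration with rationally connected fibers over a \emph{Calabi--Yau} base \(B\), where \(B\) is only known to have torsion canonical bundle. So after your dropping step you land on a geometry on \(B\), not yet on a torus; you must then apply Theorem~\ref{thm:cpt.Kaehler.c.1.0} to \(B\) (which now carries a holomorphic Cartan geometry) to replace it by a torus after a further finite unramified cover. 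You only invoke that theorem in the degenerate case \(\dim F=0\); it is needed in the main case as well, and this is exactly how the paper's proof is organized.

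Second, the step you yourself flag as the main obstacle --- constancy of the \(P\)-equivariant map \(\phi\colon\G\to\operatorname{Gr}_k(\LieG)\) along the rationally connected fibers, i.e.\ the fact that the presence of (tame) rational curves forces the geometry to drop --- is not an incidental rigidity lemma: it is precisely the content of the theorem of Biswas and McKay \cite{Biswas.McKay:2016} that the paper's proof quotes. The vanishing \(\cohomology{0}{F,\nForms{1}{F}}=0\) and simple connectivity of \(F\) do not by themselves control \(\phi\), since \(\phi\) lives on the bundle \(\left.\G\right|_F\) and is only \(P\)-equivariant; the actual argument in the cited reference proceeds by restricting \(\omega\) to the compactly parametrized families of rational curves and using the splitting type of the restricted bundles. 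As written, your proof proposal leaves its central step unproved; either complete that analysis or cite the dropping theorem of \cite{Biswas.McKay:2016} directly, as the paper does.
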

\begin{proof}
I previously proved this result for \(c_1\ge 0\) \cite{McKay:2016} p. 8 theorem 2.
But more recently, Matsumura \cite{matsumura2025compactkahlermanifoldsnef} proved that every compact K\"ahler manifold with nef anticanonical bundle is a locally trivial holomorphic fibration with rationally connected fibers and Calabi--Yau base.
Apply theorem~\vref{theorem:drop} and theorem~\vref{thm:cpt.Kaehler.c.1.0}.
\end{proof}

To state our theorems about holomorphic parabolic geometries on compact complex manifolds of general type, we first need some standard definitions from algebraic geometry.

\subsection{Mori's minimal models}
A complex manifold is \emph{uniruled} if a rational curve passes through its generic point.
A projective variety is \emph{minimal} if it has nef canonical bundle.
By Mori's cone theorem \cite{Cascini:2013,Mori:1982} and \cite{Kollar/Mori:1998} p. 22, every smooth projective variety with no rational curves is minimal.
\begin{theorem}[Biswas \& McKay \cite{Biswas.McKay:2016} theorem 2 p. 2]\label{theorem:drop}
On a smooth projective variety \(M\) bearing a holomorphic Cartan geometry, the following are equivalent 
\begin{enumerate}
\item
\(M\) is minimal, i.e. has nef canonical bundle,
\item 
Some holomorphic Cartan geometry on \(M\) is minimal, i.e.~does not drop,
\item 
Every holomorphic Cartan geometry on \(M\) is minimal,
\item
\(M\) contains no rational curves,
\item
\(M\) is not uniruled,
\item
\(M\) is not the total space of a holomorphic fiber bundle, with positive dimensional flag manifold fibers, over a minimal smooth projective variety.
\end{enumerate}
\end{theorem}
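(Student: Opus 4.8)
The plan is to reduce the whole theorem to one lemma --- that if a smooth projective variety $M$ carries a holomorphic Cartan geometry and contains a rational curve then that geometry is not minimal --- and then to close the cycle $(1)\Rightarrow(3)\Rightarrow(2)\Rightarrow(4)\Rightarrow(5)\Rightarrow(6)\Rightarrow(1)$ using this ``Key Lemma'' together with the standard facts quoted above.

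Three links are formal. The implication $(3)\Rightarrow(2)$ is immediate, since $M$ carries a holomorphic Cartan geometry by hypothesis. The implication $(4)\Rightarrow(5)$ holds contrapositively, since a uniruled variety contains a rational curve. The implication $(5)\Rightarrow(6)$ holds contrapositively as well, since the total space of a holomorphic fiber bundle with positive-dimensional flag manifold fibers is uniruled, flag manifolds being rationally connected. For $(1)\Rightarrow(3)$, suppose some holomorphic Cartan geometry on $M$ drops to a geometry on a complex manifold $M'$ with $\dim M'<\dim M$. By Theorem~\vref{theorem:Moishezon}, $M'$ is smooth projective, $M\to M'$ is a holomorphic fiber bundle, and --- by the structure of a drop \cite{McKay:2023} --- its fiber $F$ is a flag variety, necessarily positive-dimensional since $\dim M-\dim M'>0$. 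A fiber of a submersion has trivial normal bundle, so $K_M|_F\cong K_F$, which is anti-ample; thus a line in $F$ is a rational curve $C\subset M$ with $\int_C c_1(K_M)<0$, contradicting the nefness of $K_M$. Hence every holomorphic Cartan geometry on $M$ is minimal.

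It remains to run $(2)\Rightarrow(4)$ and $(6)\Rightarrow(1)$ through the Key Lemma. For $(2)\Rightarrow(4)$: if $M$ contained a rational curve, the Key Lemma applied to any holomorphic Cartan geometry on $M$ would show that geometry is not minimal, contradicting $(2)$. For $(6)\Rightarrow(1)$, argue contrapositively: if $K_M$ is not nef then, by Mori's cone theorem as quoted above, $M$ contains a rational curve; fix a holomorphic Cartan geometry on $M$, and among all complex manifolds to which it drops --- a nonempty family, since $M$ itself occurs --- choose one of least dimension, $W$. By the Key Lemma this geometry is not minimal, so $\dim W<\dim M$; and $W$ contains no rational curve, for otherwise the Key Lemma applied to the induced geometry on $W$ would, by transitivity of dropping, produce a drop-target of the geometry on $M$ of dimension below $\dim W$. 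Hence by Theorem~\vref{theorem:Moishezon} $W$ is smooth projective, by Mori's cone theorem it is minimal, and --- the structure of a drop once more --- $M\to W$ is a holomorphic fiber bundle with positive-dimensional flag variety fibers. Thus $M$ is a holomorphic flag bundle over a minimal smooth projective variety, i.e.\ $(6)$ fails; along the way one sees that $\neg(6)$ is equivalent to the a priori stronger assertion that $M$ is such a bundle.

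It remains to prove the Key Lemma, and this is the main obstacle. Projective varieties being K\"ahler, every rational curve in $M$ is tame \cite{Fujiki:1982}, so the rational quotient of $M$ is a dominant meromorphic map $M\dashrightarrow W_0$ with tamely rationally connected general fiber of positive dimension. The steps would be: (i) use the rigidity of the holomorphic Cartan connection --- concretely, that the Cartan bundle $\G$ is a holomorphic principal $P$-bundle over the projective variety $M$ whose tangent bundle $T\G$ is holomorphically trivialized by $\omega$ --- to show that this rational quotient extends to an honest holomorphic submersion $\pi\colon M\to W$ onto a smooth projective variety, each of whose fibers inherits a holomorphic Cartan geometry; (ii) apply the tamely rationally connected case recorded above (Biswas \& McKay \cite{Biswas.McKay:2016}) to identify each such fiber with the flag variety model of its induced geometry; (iii) conclude that $\pi$ is a holomorphic flag bundle and that the geometry on $M$ is the lift of a holomorphic Cartan geometry on $W$, hence not minimal. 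The delicate point is step (i): a priori the rational quotient is only a meromorphic map, with no control over its indeterminacy locus, the smoothness of its target, or the homogeneity of its fibers, and it is precisely the \emph{holomorphicity} of the Cartan geometry --- not merely a smooth Cartan geometry, nor a mere first-order structure --- channeled through the projectivity of $M$ (Theorem~\vref{theorem:Moishezon}) and the triviality of $T\G$, that forces all of this. Everything else in the argument is formal.
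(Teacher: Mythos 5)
This theorem is quoted in the paper from Biswas \& McKay \cite{Biswas.McKay:2016} and is not proved there, so there is no internal proof to compare against; what can be assessed is whether your reconstruction would stand on its own. Its skeleton --- the cycle $(1)\Rightarrow(3)\Rightarrow(2)\Rightarrow(4)\Rightarrow(5)\Rightarrow(6)\Rightarrow(1)$, with everything funnelled through a single Key Lemma --- is a sensible and essentially correct organization, and the three ``formal'' links are fine. But the proposal has a genuine gap: the Key Lemma (a rational curve in $M$ forces any holomorphic Cartan geometry on $M$ to drop) \emph{is} the theorem; it is where all of the mathematical content of \cite{Biswas.McKay:2016} lives, and you only sketch it in three steps while explicitly conceding that step (i) --- extending the rational quotient to a holomorphic flag-bundle fibration --- is the delicate point and is not established. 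Reducing the theorem to an unproved lemma that carries the entire difficulty is not a proof.

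A second, smaller gap recurs in $(1)\Rightarrow(3)$ and $(6)\Rightarrow(1)$: you assert that ``by the structure of a drop'' the fiber of $M\to M'$ is a flag variety. For a \emph{parabolic} geometry this is true, since the fiber is $P'/P$ with $P\subseteq P'$ parabolic in semisimple $G$; but the theorem concerns arbitrary holomorphic Cartan geometries, where the model $(X,G)$ is any complex homogeneous space and the fiber $P'/P$ is a priori only a compact complex homogeneous manifold. By Borel--Remmert such a (K\"ahler) fiber is a product of a flag variety and a complex torus, and if a torus factor were present your anti-ampleness argument for $K_M|_F$ would collapse --- a drop with torus fibers would not contradict nefness of $K_M$. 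Excluding torus factors in the fibers of a drop on a projective variety requires essentially the same rational-curve analysis as the Key Lemma, so this cannot be waved through as part of the formal bookkeeping.
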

Therefore the classification of holomorphic Cartan geometries on smooth projective varieties follows from the classification of minimal geometries on minimal connected smooth projective varieties containing no rational curves.
We mostly focus henceforth on the latter classification.
\subsection{Abundance}
A line bundle on a smooth projective variety is \emph{semiample} if some positive power is spanned by global sections.
The \emph{abundance conjecture} claims that the canonical bundle of any minimal projective variety is semiample; for more information on this conjecture, see Siu \cite{Siu:2009}.
We will occasionally need to assume, in addition to minimality of some smooth projective variety, that the canonical bundle is semiample which (as we will always point out) would follow from the abundance conjecture.
\subsection{Rational maps}
A \emph{modification} \(X\to Y\) of reduced and irreducible compact complex analytic spaces is a surjective morphism which is biholomorphic as a map \(U_X\to U_Y\) on the complements \(U_X:=X-X'\), \(U_Y:=Y-Y'\) of nowhere dense analytic subsets \(X'\subseteq X\), \(Y'\subseteq Y\) \cite{Ueno:1975} p.~14.
A \emph{meromorphic map} \(\rationalMap[\varphi]{X}{Y}\) is a complex analytic subvariety \(Z\subseteq X\times Y\), reduced and irreducible, called the \emph{graph of \(\varphi\)}, so that the projection to \(X\) is a modification.
If the projection to \(Y\) is also a modification, then the set of pairs \((y,x)\in Y\times X\) so that \((x,y)\in Z\) is also a meromorphic map, the \emph{inverse map}, and the meromorphic map is a \emph{bimeromorphism}.
If \(X\) and \(Y\) are algebraic varieties, a meromorphic map is called a \emph{rational map}.
A rational bimeromorphism is a \emph{birational map}.
\subsection{Kodaira dimension}
Take a line bundle \(L\to M\) on a compact irreducible reduced complex space \(M\).
For each integer \(p\ge 1\), we have the vector space \(V:=\cohomology{0}{M,pL}\) of sections of \(pL\).
To each point \(m \in M\), associate the hyperplane in \(V\) consisting of sections vanishing at \(m\).
To each integer \(p\ge 1\) is associated the rational map
\[
\rationalMap{M}{\Proj{}V^*}
\]
taking each point to its hyperplane.
The \emph{Iitaka dimension} of a holomorphic line bundle \(L\) on  is the maximal dimension of the images of these maps, for \(p=1,2,3,\dots\); see \cite{Ueno:1975} p. 50, \cite{Lazarsfeld:2004} p. 122 definition 2.1.3.
If \(0=\cohomology{0}{M,pL}\) for all integers \(p>0\), the Iitaka dimension is defined to be \(-\infty\).

The line bundle \(L \to M\) is \emph{big} if the Iitaka dimension of \(L\) is the dimension of \(M\).
A nef line bundle \(L \to M\) is big just when \(c_1(L)^n>0\) where \(n:=\dimC{M}\) \cite{Demailly:2012} p. 47 definition 6.20, \cite{Lazarsfeld:2004} p. 144 theorem 2.2.16.
The \emph{numerical dimension} of a holomorphic line bundle \(L\) on a complex manifold \(M\) is the largest integer \(k\) for which \(0\ne c_1(L)^k\in\cohomology{2k}{M,\R{}}\), so big means precisely that numerical dimension equals dimension.
The Iitaka dimension is at most the numerical dimension \cite{Demailly:2012} p. 47 proposition 6.21.
A holomorphic line bundle is \emph{abundant} if its Iitaka and numerical dimensions are equal \cite{Lazarsfeld:2004} p. 165 remark 2.3.17.
The \emph{Kodaira dimension} \(\kappa_M\) of a compact irreducible reduced complex space \(M\) is the Iitaka dimension of the canonical bundle of \(M\).
\subsection{Minimal models}
Smooth projective varieties are \emph{birational} if they have isomorphic fields of rational functions.
The \emph{minimal model conjecture} claims that every projective variety with nonnegative Kodaira dimension is birational to a minimal projective variety, i.e. a projective variety with numerically effective canonical bundle \cite{Kollar/Mori:1998}.
As we mentioned previously, by Mori's cone theorem \cite{Cascini:2013,Mori:1982} and \cite{Kollar/Mori:1998} p. 22, every smooth projective variety with no rational curves is minimal.
In particular, every smooth projective variety with nonnegative Kodaira dimension which bears a holomorphic Cartan geometry is minimal.

\subsection{The Iitaka fibration}
An \emph{Iitaka fibration} of a compact irreducible reduced complex space \(M\) is a dominant rational map \(\rationalMap{M}{S}\) with connected fibers, whose generic fiber is smooth and irreducible, so that subvarieties of \(M\) on which the canonical bundle \(\cb{M}\) has zero Iitaka dimension lie in the fibers and, on the very general fiber, \(\cb{M}\) has zero Iitaka dimension \cite{Fujino:2020,Iitaka:1972}, \cite{Lazarsfeld:2004} p.~133 theorem~2.1.33.
Any projective variety of nonnegative Kodaira dimension has a unique Iitaka fibration up to birational isomorphism \cite{Iitaka:1982} p. 302 theorem 10.3 or \cite{Ueno:1975} p. 58 theorem 5.10, which we denote by \(\rationalMap{M}{\Ii{M}}\).
If the canonical bundle is semiample (i.e.~some power of the canonical bundle is spanned by global sections), then there is a unique holomorphic Iitaka fibration, and its codomain \(\Ii{M}\) is the image of the \(\cb[p]{M}\)-maps
\[
M \to \Proj{}\cohomology{0}{M,\cb[p]{M}}^*
\]
for all but finitely many integers \(p>0\) \cite{Lazarsfeld:2004} p.~129, theorem 2.1.27.
\subsection{Abelian group schemes}
Recall that an \emph{abelian variety} is a complex torus which is also a projective variety; the standard reference is Mumford \cite{Mumford:2008}.
For our purposes, an \emph{abelian group scheme} is a surjective holomorphic submersion \(X\xrightarrow{\pi} Y\) of smooth projective varieties, equipped with a holomorphic section \(Y\to X\), so that every fiber \(X_y\subseteq X\) is an abelian variety \cite{Mumford:2008} p.~89, and so that \(X\to Y\) is a smooth morphism of varieties \cite{Mumford:1999} p.~214, hence in particular is a trivial \(C^\infty\) torus fiber bundle 
\[
\begin{tikzcd}
X \arrow[rd,"\pi"'] \arrow[rr,<->,"C^\infty"] &   & Y\times T \arrow[ld,"\operatorname{proj}_Y"] \\                        & Y &
\end{tikzcd}
\]
where \(T:=\R^{2n}/\Z^{2n}\) as a \(C^\infty\) manifold and \(\operatorname{proj}_Y(y,t)=y\).
Careful: the fibers \(X_y\) may not be biholomorphic as complex tori; their complex structure can vary as a function of \(y\in Y\).

\subsection{Parabolic geometries as abelian group schemes}
We have promised the reader that we will find holomorphic fibrations, and some information about their geometry.
The first main theorem of this paper:
\begin{theorem}\label{thm:Iitaka.group.scheme}
Take a smooth projective variety \(M\) with a minimal holomorphic parabolic geometry \(\G\to M\).
Suppose that the model is \((X,G)\).
Assume that the canonical bundle of \(M\) is semiample (which would follow from the abundance conjecture).
Then, after perhaps replacing \(M\) by a finite covering space of \(M\), the Iitaka fibration of \(M\) is an abelian group scheme, so every fiber is an abelian variety (possibly of dimension zero, i.e. just a point).
The fibration has a global holomorphic section.
The base of the Iitaka fibration has ample canonical bundle (or its just a point).
The restriction of the Cartan geometry bundle \(\G\) to any fiber of the Iitaka fibration admits a flat holomorphic connection.
The tangent bundle of the base of that Iitaka fibration satisfies all equations on Chern classes that are satisfied by the Chern classes of the tangent bundle of the associated cominuscule variety \((\breve X,\breve G)\) of \((X,G)\).
\end{theorem}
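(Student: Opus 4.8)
The plan is to combine the structural results of the earlier sections with a little bookkeeping about the Iitaka fibration, carried out in four steps. \emph{Reductions.} By Theorem~\ref{theorem:Moishezon}, \(M\) is a smooth projective variety, and by Theorem~\ref{theorem:drop} minimality of the geometry makes \(M\) minimal and free of rational curves. The hypothesis that \(\cb[p]{M}\) is globally generated for some \(p>0\) forces \(\cohomology{0}{M,\cb[p]{M}}\neq 0\), hence \(\kappa_M\geq 0\); so for all sufficiently divisible \(p\) the map \(\rationalMap{M}{\Proj{}\cohomology{0}{M,\cb[p]{M}}^{*}}\) is a morphism realizing the Iitaka fibration, and after Stein factorization we obtain a morphism \(f\colon M\to Y:=\Ii{M}\) with connected fibres such that \(\cb[p]{M}=f^{*}H\) for some ample line bundle \(H\) on \(Y\).

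\emph{The generic fibre.} Let \(F\) be a general, hence smooth and irreducible, fibre of \(f\). Its normal bundle in \(M\) is trivial, so \(\cb{M}|_F=\cb{F}\). Since \(f|_F\) is constant, all global sections of \(\cb[p]{M}\) restrict to \(F\) to mutually proportional sections, and those restrictions still generate \(\cb[p]{M}|_F=\cb[p]{F}\); a line bundle generated by a one-dimensional space of sections is trivial, so \(\cb[p]{F}\cong\OO_F\) and \(\cb{F}\) is torsion. By the result recalled in section~\vref{section:bundles.on.tori}, a compact complex submanifold with torsion canonical bundle of a variety carrying a holomorphic Cartan geometry is a complex torus; being a submanifold of the projective variety \(M\), \(F\) is therefore an abelian variety.

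\emph{The group-scheme structure.} Promoting the statement that the generic fibre is an abelian variety to the statement that \(f\) is an abelian group scheme is the content of the Weyl-structure argument of section~\vref{section:Weyl}: the curvature normalizations obtained from the bracket-closed subbundles (sections~\vref{section:brackets}, \vref{section:bracket.closed.geometries} and~\vref{section:boosh}) rigidify the parabolic geometry enough that \(f\) has no degenerate fibres --- here the absence of rational curves in \(M\) is indispensable, since a non-abelian (semi-abelian) degeneration of abelian varieties would compactify inside the projective variety \(M\) with rational curves --- so every scheme-theoretic fibre is a smooth abelian variety, \(f\) is a smooth morphism (and so \(Y\) is smooth), and after replacing \(M\) by a finite covering space the torsor \(f\) acquires a holomorphic section and becomes an abelian group scheme. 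I expect this step, together with the production of the global section, to be the principal difficulty, and it is what forces the long preparatory chain of sections.

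\emph{The base and the restricted bundle.} The section embeds \(Y\) into \(M\), so \(Y\) contains no rational curves, hence is minimal, and \(\dim Y=\kappa_M\) makes it of general type; the Chern-class identities on \(M\) extracted from the boosh in section~\vref{section:Chern.boosh}, sharpened with Beauville's form of the Baum--Bott theorem and pushed through \(f\), then upgrade \(\cb{Y}\) from merely nef and big to ample --- in effect because \(Y\) inherits from the boosh a rigid geometry of Hermitian symmetric type, for which compactness already forces ample canonical bundle. Finally, restricting the Cartan connection \(\omega\) to \(\G|_F\) leaves a holomorphic connection on the principal bundle \(\G_G|_F\) over the complex torus \(F\), so by the Biswas--Upmeier results recalled in section~\vref{section:bundles.on.tori} this bundle is translation-invariant and admits a flat holomorphic connection; hence so does \(\G|_F\). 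Assembling the four steps proves the theorem.
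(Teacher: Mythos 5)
Your first two steps and your final flat-connection step do track the paper: the restriction of \(\cb[p]{M}\) to a general fibre is trivial because it is globally generated and pulled back from the base, the fibre is then (up to finite cover) an abelian variety by lemma~\ref{lemma:tori}, and lemma~\ref{lemma:BG} converts a holomorphic connection on the restricted bundle into a flat one. The genuine gap is your third step, which you yourself flag as ``the principal difficulty'' and then do not prove. The passage from ``the generic fibre is an abelian variety'' to ``every fibre is a smooth abelian variety and the fibration has a holomorphic section'' is not obtained in the paper from Weyl structures or curvature normalizations (those are used only to manufacture holomorphic connections over open subsets of the base for the generic-fibre analysis), and your heuristic that a degenerate fibre ``would compactify with rational curves'' is not an argument: a special fibre of the Iitaka fibration could a priori be non-reduced, reducible or otherwise singular without visibly containing a rational curve, and nothing you have written rules this out. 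The paper's mechanism is entirely different: because the abelian fibres have large fundamental group, Koll\'ar's Theorem~6.3 produces, after passing to a finite cover of \(M\) and resolving the base, a birational model \(\dot{M}\to\dot{S}\) that is an abelian group scheme with a section; lemma~\ref{lemma:dev.large} (rational maps into a variety carrying a minimal geometry and containing no rational curves extend holomorphically) transports the birational maps and the section back to \(M\to S\) as honest morphisms; and a long fibre-by-fibre comparison (Zariski's main theorem, intersection numbers of fibres with the section, the ramification divisor of \(\dot{M}\to M\)) shows that every fibre of \(M\to S\) is smooth, reduced, irreducible and isomorphic to an abelian variety and that \(S\) is smooth. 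Without Koll\'ar's theorem and the extension lemma you have no source for the section and no control over the special fibres.

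A secondary error: the ampleness of the canonical bundle of the base does not come from boosh Chern-class identities or Baum--Bott. The paper's argument is that \(\cb{S}\) is big and nef, the exceptional locus of its canonical map is covered by rational curves by Kawamata's theorem, and \(S\) contains no rational curves because the section embeds it into \(M\); hence the canonical map has no exceptional locus and \(\cb{S}\) is ample. Your inputs (big, nef, no rational curves) are the right ones, but the mechanism you cite is not.
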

\begin{example}
The smooth projective varieties admitting holomorphic projective connections are classified \cite{Jahnke/Radloff:2004,jahnke2009holomorphicnormalprojectiveconnections,Jahnke/Radloff:2015,KobayashiOchiai:1980,Kobayashi1981}.
The holomorphic projective connections these varieties admit are also classified \cite{Biswas/Dumitrescu:2023,Dumitrescu:2010,Klingler:1998}.
Every smooth projective variety admitting a holomorphic projective connection is precisely one of:
\begin{itemize}
\item
the model projective space \(\mathbb{P}^n\), 
\item 
an abelian variety, 
\item
a ball quotients, i.e. a quotient of the unit ball in \(\C^n\) by a cocompact group of biholomorphisms,
\item
an abelian group scheme of a certain type (which is complicated to make precise).
\end{itemize}
The abelian group schemes are explicitly constructed \cite{Jahnke/Radloff:2004,jahnke2009holomorphicnormalprojectiveconnections,Jahnke/Radloff:2015}, occurring only in complex dimension three or more.
\end{example}

\subsection{Associated cominuscule varieties}
A flag variety \(X=G/P\) is \emph{cominuscule} if its tangent space \(T_{x_0} X=\LieG/\LieP\) is a direct sum of irreducible modules of the stabilizer \(P=G^{x_0}\).
Equivalently, \(X\) admits a K\"ahler metric for which \(X\) becomes a compact Hermitian symmetric space, and \(G\) has a maximal compact subgroup acting as isometries of that metric.
Every effective flag variety \(X=G/P\) has a unique complex homogeneously embedded effective cominuscule subvariety not tangent to any \(G\)-invariant holomorphic subbundle of \(TX\) \cite{McKay:2020}, its \emph{associated cominuscule subvariety}.

Among all \(G\)-invariant holomorphic filtrations of the tangent bundle \(TX\), there is a unique finest one, as we will see, and thereby an associated graded holomorphic vector bundle.
In section~\vref{subsec:tgt.filtration} we will introduce some integer invariants of flag varieties, which we call \emph{tallies}.
The tally of an irreducible flag variety is the minimal dimension of a nonzero invariant subbundle of the associated graded of the tangent bundle; we will see that tallies are easy to calculate in examples.
\subsection{Minimal geometries}
The second main theorem of this paper:
\begin{theorem}\label{theorem:Iitaka.associated}
Suppose that \(\G\to M\) is a minimal holomorphic parabolic geometry, with model \((X,G)\), on a smooth projective variety \(M\).
Then the numerical dimension of the canonical bundle of \(M\), hence the Kodaira dimension of \(M\), is at most the sum of the tallies of the model \((X,G)\).
Every polynomial 
\[
P(c_1,c_2,\dots,c_n)
\]
in characteristic classes of the tangent bundle which is satisfied on the associated cominuscule subvariety \((\breve X,\breve G)\) of \((X,G)\) is satisfied on \(M\).
Every meromorphic map \(\rationalMap{Z}{M}\) from a compact complex manifold \(Z\) extends uniquely to a holomorphic map.
\end{theorem}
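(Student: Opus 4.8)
First I would dispose of the last assertion, which needs only what is already established. By Theorem~\ref{theorem:drop}, a smooth projective variety carrying a minimal holomorphic Cartan geometry is minimal and contains no rational curves; in particular it is not uniruled. Hence, given a rational map \rationalMap{Z}{M} from a compact complex manifold \(Z\), if its indeterminacy locus were nonempty then, after resolving the indeterminacy, some positive-dimensional fiber of the exceptional locus over an indeterminacy point would be mapped nonconstantly into \(M\) while being covered by rational curves, contradicting the absence of rational curves in \(M\); so \rationalMap{Z}{M} is in fact holomorphic, and the extension is unique because \(M\) is separated. This is the same extension principle used for flat parabolic geometries in \cite{Biswas.McKay:2024}.

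For the two quantitative statements I would run the pseudoeffectivity--Frobenius machinery assembled in the later sections. Minimality of the geometry makes \(M\) non-uniruled, so the Campana--Peternell criterion applies and manufactures pseudoeffective line bundles out of the parabolic structure: determinants of the graded pieces of the \(P\)-invariant filtration of \(TM\), reorganized root by root according to the weights of \(P\) on \(\LieG\). Demailly's theorem then upgrades these pseudoeffectivity statements to Frobenius integrability, so that various holomorphic subbundles of \(TM\) assembled from unions of root spaces are bracket-closed. It is at this step that one must use that the geometry is parabolic and not merely a holomorphic Cartan geometry, since the argument is driven by the root-space decomposition of the model Lie algebra.

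Next I would substitute the bracket-closure relations into the structure equations of the parabolic geometry written out in a Chevalley basis and extract, through long but essentially mechanical computation, the resulting curvature identities; these force a canonical reduction of the structure group of \(\G\)---the \emph{boosh}---whose model is rigidified so that the ``flag directions'' of \(M\) are governed by the associated cominuscule subvariety \((\breve X,\breve G)\). Computing characteristic classes through this reduction, and sharpening the output with Beauville's version of the Baum--Bott theorem applied to the foliations just produced, I expect two conclusions. First, \(c_1(\cb{M})^{k}=0\) once \(k\) exceeds the sum of the tallies of \((X,G)\); this bounds the numerical dimension of \(\cb{M}\) by that sum, and hence, since the Iitaka dimension never exceeds the numerical dimension, it bounds the Kodaira dimension of \(M\) by the same number. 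Second, the Chern classes of \(TM\) and of \(T\breve X\) are pulled back through the same classifying data (the reductive part of the stabilizer in \(\breve G\)), so every polynomial identity among the \(c_i(T\breve X)\) valid in \(\cohomology{*}{\breve X}\) is already valid among the \(c_i(TM)\) in \(\cohomology{*}{M}\).

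The main obstacle is the middle of this chain. Unwinding the curvature from bracket-closure through the Chevalley-basis structure equations is intricate, and the real difficulty is constructing the boosh and proving that its model is genuinely controlled by \((\breve X,\breve G)\)---that the canonical reduction lands on a recognizable cominuscule geometry rather than on some opaque subgroup---together with the root-theoretic bookkeeping that matches the tallies precisely to the vanishing powers of \(c_1(\cb{M})\).
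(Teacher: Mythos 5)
Your outline reproduces the paper's own (distributed) proof: the rational-map extension is exactly Lemma~\ref{lemma:dev.large}, applied because Theorem~\ref{theorem:drop} rules out rational curves in \(M\), and the two quantitative claims are the content of Theorem~\ref{theorem:Bracket.closed} and the tally theorem of section~\ref{section:Chern.boosh}, reached precisely by the chain you describe --- Campana--Peternell pseudoeffectivity (Corollary~\ref{corollary:Q.cone.pseudoeffective}), Demailly's bracket-closure criterion (Corollary~\ref{corollary:brackets}), the Chevalley-basis structure equations and torsion normalization producing the boosh (Lemma~\ref{lemma:normalize}), its Atiyah class, and Beauville's form of Baum--Bott, with the Kodaira bound then following since Iitaka dimension never exceeds numerical dimension. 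You correctly locate the genuine work in the middle of that chain, which the paper carries out in sections~\ref{section:structure.equations}--\ref{section:Chern.boosh}.
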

This theorem surprised the author, since the dimension of \(\breve X\) is smaller than that of \(X\) unless \(X\) is cominuscule, and the tally can be smaller still, even smaller than the numerical dimension of \(\breve X\).
\begin{example}
In real differential geometry, quaternionic contact structures are regular normal parabolic geometries modelled on a certain real form of the flag variety \(\dynkin[parabolic=2]C3\); they live on \(7\)-dimensional manifolds, and can be described as certain \(4\)-plane fields.
Consider instead the holomorphic parabolic geometries with this model, on \(7\)-dimensional smooth projective varieties.
We won't assume regularity or normality.
Every smooth projective variety \(M\) (or compact K\"ahler manifold) with such a geometry satisfies all of the characteristic class equations of its model \cite{McKay:2011} p.~2, theorem~1.
These relations are the ideal generated by the equations
\begin{align*}
0&=c_{1}-5\,\varepsilon \\
0&=7\,\varepsilon^4\,c_{2}-82\,\varepsilon^6 \\
0&=c_{3}-2\,c_{2}\,\varepsilon+7\,\varepsilon^3 \\
0&=3\,c_{4}-5\,c_{2}\,\varepsilon^2+14\,\varepsilon^4 \\
0&=3\,c_{5}-10\,\varepsilon^3\,c_{2}+91\,\varepsilon^5 \\
0&=7\,c_{6}-24\,\varepsilon^6 \\
0&=7\,c_{7}-6\,\varepsilon^7 \\
0&=3\,c_{2}^2-74\,c_{2}\,\varepsilon^2+455\,\varepsilon^4 \\
\end{align*}
where \(\varepsilon\) is the characteristic class of a certain line bundle \cite{McKay:2011} p.~23, table~3.
If the geometry is not minimal, it drops to another holomorphic parabolic geometry modelled on a quotient flag variety, i.e. with fewer crosses in its Dynkin diagram.
But there is only one cross in the Dynkin diagram.
So either \(M=X=C_3/P_2=\dynkin[parabolic=2]C3\) is the model, with its standard holomorphic quaternionic contact structure, or it is a minimal geometry satisfying the characteristic class relations of its associated cominuscule \(\breve{X}\).
The associated cominuscule \(\breve{X}\) is the \(3\)-dimensional quadric hypersurface \(\breve{X}=\dynkin C{*x}\)\cite{McKay:2020} p.~10, theorem~1, with root system
\[
\drawroots{B}{1} 
\]
The characteristic class ring of a \(3\)-dimensional quadric hypersurface \(\dynkin C{*x}\) is generated by the relations
\begin{align*}
0&=c_{1}-4\delta \\
0&=c_{2}-6\delta^2 \\
0&=c_{3}-4\delta^3 \\
\end{align*}
where \(\delta\) is the characteristic class of a certain line bundle  \cite{McKay:2011} p.~23, table~3.
Comparing the two equations for \(c_3\), we find first that \(\varepsilon=3\delta/5\) in Dolbeault cohomology, and then plug in to find the surprising relation \(c_3=0\).
So every minimal \(\dynkin[parabolic=2]C3\)-geometry has \(c_3=0\), which is not clear from looking at the model.
As we will see when we define tallies, the tally of \(\dynkin[parabolic=2]C3\) is \(3\), visible in the Hasse diagram of the root lattice of \(\dynkin[parabolic=2]C3\) on p.~44, \cite{McKay:2020}, which in this example provides no further information.
Since \(c_1^4=0\), the Kodaira dimension is at most three.
By theorem~\vref{thm:Iitaka.group.scheme}, assuming that \(M\) does not contradict the abundance conjecture, after perhaps replacing \(M\) by a finite covering space, \(M\) is an abelian group scheme of dimension \(7\), with ample base of dimension at most \(3\), so fibers of dimension at least \(4\).
\end{example}
\subsection{Normalization}
Recall that an irreducible affine variety \(X\) is \emph{normal} if every rational function \(f\) on \(X\) which satisfies a monic polynomial
\[
0=f^n+a_1f^{n-1}+\dots+a_n
\]
with coefficients \(a_1,\dots,a_n\) regular functions on \(X\) is regular on \(X\) (i.e. the regular functions are integrally closed) \cite{Mumford:1999} p.~197 \S{}III.8, \cite{Shafarevich2013} p.~125 \S5.
Equivalently, every continuous function holomorphic on the smooth locus of the affine variety \(X\) extends to be holomorphic on \(X\) \cite{Brieskorn.Knoerrer:1986} p.~391.
A variety \(X\) is \emph{normal} if it is covered by open normal affine varieties.
In particular, smooth varieties are normal.
A \emph{finite morphism} \(X\to Y\) of projective varieties is a morphism so that, locally, the regular functions on open sets of \(X\) form an integral ring over those on open sets of \(Y\) \cite{Cutkosky:2018} p.~40.
A \emph{normalization} of a variety \(X\) is a normal variety \(X_0\) with a birational and finite morphism \(X_0\to X\).
Every variety has a normalization, unique up to isomorphism \cite{Mumford:1999} p.~197 \S{}III.8, \cite{Shafarevich2013} p.~125 \S5.

\subsection{Fundamental groups}
A connected complex projective variety \(M\) has \emph{large fundamental group} if every connected positive dimensional closed
complex subvariety \(Z \,\subset\, M\) with normalization \(Z_0 \to Z\) has infinite fundamental group image \(\fundamentalGroup{Z_0} \to \fundamentalGroup{M}\) \cite{Kollar:1993}.
We recall that any variety with a minimal geometry has large fundamental group:
\begin{lemma}[Biswas \& McKay \cite{Biswas.McKay:2024}]\label{lemma:dev.large}
Suppose that \(M\) is a smooth projective variety bearing a minimal geometry modelled on a complex homogeneous space \((X,\,G)\).
Suppose that \rationalMap{Z}{M} is a meromorphic map from a positive dimensional reduced connected compact complex space.
Then this meromorphic map extends uniquely to a holomorphic map.
Suppose that \(Z \to M\) is a finite map.
Take the normalization \(Z_0 \to  Z \to  M\).
Either
\begin{enumerate}
\item
the image of \(\fundamentalGroup{Z_0}\to\fundamentalGroup{M}\) is infinite or
\item
\(Z\) has large fundamental group.
\end{enumerate}
\end{lemma}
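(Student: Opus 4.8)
The plan is to prove the two assertions separately. For the extension statement I would run the classical ``no rational curves'' argument. Let $\Gamma\subseteq Z\times M$ be the closure of the graph of $\rationalMap{Z}{M}$, with first projection $p\colon\Gamma\to Z$, which is proper and bimeromorphic, and second projection $q\colon\Gamma\to M$, which is holomorphic; extending the meromorphic map to a holomorphic one amounts to showing that $p$ is a biholomorphism. If it is not, then over some point $z\in Z$ the fiber $p^{-1}(z)$ is positive dimensional, and $q$ carries it isomorphically onto a positive dimensional compact subvariety $F\subseteq M$. Since $p$ is a proper bimeromorphic morphism onto the reduced space $Z$, its positive dimensional fibers contain rational curves --- when $Z$ is Moishezon this is the familiar fact that exceptional loci of birational morphisms are covered by rational curves, and in general one first resolves $\Gamma$. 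A rational curve inside $p^{-1}(z)$ is carried by $q$ to a rational curve lying in $F\subseteq M$, contradicting Theorem~\ref{theorem:drop}, by which minimality of the geometry forces $M$ to contain no rational curves. Hence $p$ is a biholomorphism, $q\circ p^{-1}$ is the desired holomorphic extension, and it is unique, since two holomorphic maps agreeing on the dense domain of definition of the meromorphic map agree everywhere.

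For the second assertion, suppose $Z\to M$ is finite --- hence, by the first part, a finite holomorphic map --- with normalization $Z_0\to Z$, and suppose that the image of $\fundamentalGroup{Z_0}\to\fundamentalGroup{M}$ is \emph{finite}; the task is to prove that $Z$ has large fundamental group. Let $V\subseteq Z$ be a connected positive dimensional closed subvariety with normalization $V_0\to V$; we must show that the image of $\fundamentalGroup{V_0}\to\fundamentalGroup{Z}$ is infinite, so suppose for contradiction it is finite. Composing with $\fundamentalGroup{Z}\to\fundamentalGroup{M}$, the image of $\fundamentalGroup{V_0}\to\fundamentalGroup{M}$ is then finite, so the connected covering $\widehat{V}_0\to V_0$ classified by the kernel of $\fundamentalGroup{V_0}\to\fundamentalGroup{M}$ is finite; thus $\widehat{V}_0$ is again a positive dimensional compact complex space, and because the composite $\widehat{V}_0\to V_0\to V\hookrightarrow Z\to M$ is trivial on fundamental groups it lifts through the universal covering $\widetilde M\to M$ to a holomorphic map $\widehat{V}_0\to\widetilde M$ which is finite onto its image. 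So these hypotheses would force $\widetilde M$ to contain a positive dimensional compact analytic subset.

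The heart of the matter --- and the step I expect to be the main obstacle --- is therefore to prove that the universal covering of a smooth projective variety carrying a \emph{minimal} holomorphic Cartan geometry contains no positive dimensional compact analytic subset; this is essentially Koll\'ar's characterization of large fundamental group, now to be deduced from minimality. Granting it, the previous paragraph yields a contradiction, the image of $\fundamentalGroup{V_0}\to\fundamentalGroup{Z}$ is infinite, and $Z$ has large fundamental group. To prove this I would lift the Cartan geometry to $\widetilde M$ and rule out a compact positive dimensional $A\subseteq\widetilde M$: such an $A$ is Moishezon, being finite over a subvariety of the projective variety $M$, with projective normalization $A_0$ carrying a finite holomorphic map to $M$ that is trivial on fundamental groups, and an analysis of the restriction of the geometry along $A_0$ --- using the developing map $\widetilde M\to X$ into the flag variety in the flat case, which is a local biholomorphism, and the structure equations in the curved case --- must be made to contradict minimality, i.e. the absence of rational curves in $M$ and of drops of the geometry (Theorem~\ref{theorem:drop}). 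This is the only substantial input beyond covering-space theory and the graph construction, and the place where minimality genuinely enters.
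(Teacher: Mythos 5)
First, a point of comparison: this paper does not prove the lemma at all --- it is imported with a citation from \cite{Biswas.McKay:2024} --- so there is no in-paper argument to measure your proposal against, and I can only judge it on its own terms. On those terms it is a reduction, not a proof. Your treatment of the dichotomy correctly translates it, via standard covering-space arguments and Koll\'ar's criterion, into the single claim that the universal cover $\tilde{M}$ of a variety with a minimal Cartan geometry contains no positive-dimensional compact analytic subset. But that claim \emph{is} the lemma: it is equivalent to $M$ itself having large fundamental group, which subsumes both alternatives (if it held, the $\pi_1$-image in alternative (1) would always be infinite and alternative (2) would be vacuous --- the fact that the source states a genuine disjunction is itself a warning that its proof branches in a way your reduction cannot reproduce). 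You explicitly defer this step, saying only that ``an analysis of the restriction of the geometry \dots must be made to contradict minimality.'' Every place where the Cartan geometry actually enters is inside that deferred step, so the mathematical content of the lemma is missing from the proposal.

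Second, your extension argument rests on a false intermediate fact. You assert that positive-dimensional fibers of the proper bimeromorphic morphism $p\colon\Gamma\to Z$ contain rational curves, citing ``the familiar fact that exceptional loci of birational morphisms are covered by rational curves'' for Moishezon $Z$. The correct statement requires the \emph{target} of the bimeromorphic morphism --- here $Z$, the source of the meromorphic map --- to be smooth or to have sufficiently mild singularities. If $Z$ is the projective cone over an abelian variety $A$ and $\Gamma$ is the blow-up of the vertex, the unique positive-dimensional fiber of $p$ is a copy of $A$ and contains no rational curve; resolving $\Gamma$ further does not help, since one would then need those fibers to be contracted by $q$, which again requires rational connectedness. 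Since the lemma permits $Z$ to be an arbitrary reduced compact complex space, your argument establishes the extension statement only for smooth (or klt) $Z$ --- which happens to cover this paper's applications, but not the statement as written, and the cone example (with $M=A$ carrying its translation-invariant geometry and the meromorphic map the cone projection) shows that for singular $Z$ the bare graph argument cannot be the whole story.
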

\subsection{General type}\label{section:KE}
\begin{example}
Suppose that \(X=G/P\) is an effective cominuscule variety.
Denote by \(X'=G'/P'\) its dual noncompact Hermitian symmetric space \cite{Helgason:1978}, i.e. \(G' \subset G\) is a real form acting on \(X\), and \(P'=P\cap G'\) is compact, and \(G'\) acts on \(X\) with an open orbit \(X'=G'/P'\subseteq X\). 
Pullback the standard flat \((X,G)\)-geometry on \(X\) to a flat \((X,G)\)-geometry on \(X'\).
If \(\Gamma \subset G'\) acts on \(X'\) freely and properly then the flat \((X,G)\)-geometry on \(X'\) quotients to a unique flat \((X,G)\)-geometry on the manifold \(M:=\Gamma \backslash X'\), called the \emph{standard geometry} on \(M\).
The Bergman metric of the open set \(X'\subseteq X\) is a biholomorphism invariant K\"ahler metric of negative Ricci curvature \cite{Helgason:1978} chapter~VIII.
Hence \(M\) inherits this metric.
If \(\Gamma\) is cocompact, i.e. \(M\) is compact, then the metric on \(M\) gives \(M\) a positive curvature metric on its canonical bundle, so \(M\) is ample, hence is a smooth projective variety, a \emph{Hermitian symmetric variety} \cite{Satake:1980}.
When \(\Gamma\) is defined by ``congruence relations'', \(M\) is called a \emph{Shimura variety} \cite{Milne:2005}; we won't need the notion of Shimura variety below, but we have seen it arise in the Jahnke--Radloff classification of holomorphic projective connections on smooth projective varieties.
\end{example}
\begin{example}
If \(\pr{X,G}=\pr{\Proj{n},\PSL{n+1}}\) then any holomorphic \(\pr{X,G}\)-geometry is called a \emph{holomorphic projective connection}.
The space of holomorphic projective connections on any Riemann surface is canonically identified with the space of holomorphic quadratic differentials \cite{Loray/MarinPerez:2009}. 
\end{example}
\begin{example}
Call a holomorphic cominuscule geometry \emph{standard} if, after perhaps replacing by the pull back to a finite \'etale covering space, it becomes a product geometry of the form \(M=M' \times \prod_j M_j\), where each \(M_j\) is a compact Riemann surface with a holomorphic projective connection and \(M'=\Gamma \backslash X'\) is the standard geometry, for \(X'\) the noncompact dual of a cominuscule variety.
\end{example}
A compact irreducible reduced complex space is of \emph{general type} if its canonical bundle is big.
\begin{example}
Any compact complex manifold \(M\) with \(c_1<0\) has general type, as does any modification.
\end{example}
The third main theorem of this paper:
\begin{theorem}\label{theorem:general.type}
Suppose that \(M\) is a connected smooth projective variety \(M\) of general type.
Suppose that \(M\) admits an effective holomorphic parabolic geometry.
Then \(c_1(TM)<0\) and the model \((X,G)\) is effective cominuscule and \(M\) is a locally Hermitian symmetric variety with universal covering space \(X'\) the Hermitian symmetric space dual to \(X\).
All effective holomorphic parabolic geometries on \(M\) have that same model \((X,G)\).
The moduli space of holomorphic parabolic geometries on \(M\) is a finite dimensional complex vector space.
The normal holomorphic parabolic geometries on \(M\) form a finite dimensional linear subspace.
Every holomorphic normal parabolic geometry on \(M\) is a standard \((X,G)\)-geometry.
The following are equivalent:
\begin{itemize}
\item
The variety \(M\) admits a unique holomorphic parabolic geometry.
\item
No finite unramified covering space of \(M\) splits into a product \(M_1 \times M_2\) with \(M_1\) a compact Riemann surface.
\end{itemize}
If \(X\) does not split \(G\)-equivariantly into a product \(X=Y\times\Proj{1}\), then both are true.
\end{theorem}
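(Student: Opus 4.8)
The plan is to use the numerical dimension bound to force the model to be cominuscule, then to uniformize \(M\), and finally to read off the structure of the moduli of geometries it carries.

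\emph{Pinning down the model and the canonical bundle.} Since \(M\) is of general type it is not uniruled, so Theorem~\ref{theorem:drop} makes the geometry minimal, shows \(M\) contains no rational curves, and gives that \(\cb{M}\) is nef; as \(\cb{M}\) is big as well, its numerical dimension is \(\dimC{M}=\dimC{X}\). By Theorem~\ref{theorem:Iitaka.associated} this numerical dimension is at most the sum of the tallies of \((X,G)\), which by the computation of tallies in Section~\ref{section:flag.varieties} equals \(\dimC{X}\) exactly when \((X,G)\) is cominuscule and is strictly smaller otherwise; so \((X,G)\) is effective cominuscule, its associated cominuscule subvariety \(\breve X\) is \(X\) itself, and Theorem~\ref{theorem:Iitaka.associated} now gives every characteristic class relation of \(X\) on \(M\). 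For \(c_1(TM)<0\): since \(\cb{M}\) is nef and big the base point free theorem gives a birational morphism \(M\to Y\) onto a normal projective variety, contracting precisely the curves on which \(\cb{M}\) is trivial; any positive dimensional fibre would be covered by rational curves, of which \(M\) has none, so the morphism is an isomorphism and \(\cb{M}\) is ample (equivalently, apply Theorem~\ref{thm:Iitaka.group.scheme}, whose hypothesis is now unconditional, and note that \(\kappa_M=\dimC{M}\) forces the Iitaka fibration to have zero dimensional fibres, so its base, with ample canonical bundle, is \(M\)).

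\emph{Uniformization.} Because \(c_1(TM)<0\), Aubin--Yau endows \(M\) with a K\"ahler--Einstein metric of negative Einstein constant. The characteristic class relations of the compact Hermitian symmetric space \(X\) found above are exactly the equality case of a Miyaoka--Yau type inequality attached to \(X\) for varieties of general type (for \(X=\Proj{n}\) this is the Bogomolov--Miyaoka--Yau inequality, whose equality case is the ball, cf.~\cite{Greb.Kebekus.Taji:2018}); the equality forces the curvature of the K\"ahler--Einstein metric to match the model curvature of the noncompact dual \(X'\), and then Berger's holonomy theorem together with the classification of Hermitian symmetric spaces (cf.~\cite{Klingler:2001}) shows the universal cover of \(M\) to be biholomorphic to \(X'\), so \(M=\Gamma\backslash X'\) for a torsion free cocompact \(\Gamma\subset G'\). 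By the rigidity of this locally symmetric structure the model is determined by \(M\), so every effective holomorphic parabolic geometry on \(M\) has model \((X,G)\), and in particular \(c_1(TM)<0\) holds whenever such a geometry exists. This uniformization is the main obstacle: passing from the characteristic class relations and \(c_1(TM)<0\) to local isomorphism with \(X'\) needs a Miyaoka--Yau type uniformization theorem for a general cominuscule model, of which only the \(\Proj{n}\) case is classical; the alternative, showing directly by a vanishing argument that the Cartan connection is flat and then analysing its developing map (whose image is a \(G'\)-invariant open subset of \(X\) carrying a compact quotient of general type, which forces it to be \(X'\), using the large fundamental group of Lemma~\ref{lemma:dev.large}), requires positivity input for curved geometries far heavier than in the flat case of \cite{Biswas.McKay:2024}.

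\emph{The moduli of geometries.} With the model fixed, deformations of a holomorphic parabolic geometry on \(M\) are governed by a twisted de~Rham complex built from the adjoint tractor bundle, and \(c_1(TM)<0\) makes the obstructions vanish, so the moduli space of holomorphic parabolic geometries on \(M\) is a finite dimensional complex vector space and the (linear) normalization condition cuts out a linear subspace. Writing \(X'=D_1\times\dots\times D_k\) as a product of irreducible bounded symmetric domains, strong rigidity (Siu, Mok) of the factors of rank at least two forces a normal geometry to agree with the standard one along those factors, while along each \(\Proj{1}\) factor it may be twisted by an arbitrary holomorphic quadratic differential on the corresponding compact Riemann surface quotient; thus, up to a finite \'etale cover, every normal geometry is a standard \((X,G)\)-geometry, and the normal geometries form the vector space of such quadratic differentials. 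Consequently \(M\) admits a unique holomorphic parabolic geometry if and only if no finite unramified covering of \(M\) has a compact Riemann surface factor; and since a compact Riemann surface factor of a finite cover of \(M\) would force a \(G\)-equivariant product decomposition \(X=Y\times\Proj{1}\) of the model, the absence of such a decomposition makes the geometry unique.
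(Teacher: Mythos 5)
Your reduction to the cominuscule case is correct and is essentially the paper's: no rational curves via Theorem~\ref{theorem:drop}, \(\cb{M}\) nef and big hence of maximal numerical dimension, and then Theorem~\ref{theorem:Iitaka.associated} forcing the model to be cominuscule. Your route through the tallies (sum of tallies \(=\dimC{X}\) iff every irreducible factor of \(X\) is cominuscule, since for a non-cominuscule factor the lowest graded piece of \(\LieG/\LieP\) is a proper \(\GZ\)-submodule) is a slightly more explicit justification than the paper gives, and your ampleness argument via the base point free theorem is the standard proof behind the paper's citation of Debarre. Up to this point the two arguments agree.

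The gap is everything after that. Once the model is cominuscule and \(c_1(TM)<0\), the paper obtains \emph{all} of the remaining conclusions --- that \(M\) is a locally Hermitian symmetric variety uniformized by \(X'\), the vector space structure of the moduli of geometries, the description of normal geometries as standard ones, and the uniqueness criterion in terms of \(\Proj{1}\) factors --- in one step, by citing the classification of cominuscule geometries on compact complex manifolds with \(c_1<0\) (\cite{McKay:2016}, p.~5, Theorem~1). You instead attempt to rederive this classification via Aubin--Yau, a Miyaoka--Yau--type equality, Berger holonomy, a deformation complex for the moduli, and Siu--Mok rigidity for the normal geometries; and, as you yourself concede, the central uniformization step (passing from the characteristic class relations of \(X\) on \(M\) to local isomorphism of \(M\) with \(X'\)) is only classical for \(X=\Proj{n}\) and is not supplied for a general cominuscule model. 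As written, your argument therefore does not establish that \(M\) is locally Hermitian symmetric, and the subsequent claims about the moduli space and about normal geometries rest on that unproven step. The fix is simply to invoke the existing classification at the point where you have the cominuscule model and \(c_1(TM)<0\), rather than reproving it.
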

\begin{proof}
Since \(M\) is general type, \(M\) is not a fiber bundle with flag variety fibers.
If \(M\) contains a rational curve, the geometry drops and \(M\) is such a fiber bundle, so not of general type, by theorem~\vref{theorem:drop}.
Therefore \(M\) contains no rational curves.
Every smooth projective variety of general type which contains no rational curves has \(c_1<0\) \cite{deBarre:2001} p. 219.
Since \(c_1<0\), \(M\) has Kodaira dimension equal to dimension.
By theorem~\vref{theorem:Iitaka.associated}, either the geometry is cominuscule or the Iitaka fibration has positive dimensional fibers and hence \(c_1\) is not negative.
Cominuscule geometries on compact complex manifolds with \(c_1(M)<0\) are classified in \cite{McKay:2016} p. 5 theorem 1, giving the result above.
\end{proof}
\begin{theorem}\label{theorem:comini}
Suppose that \(M\) is a smooth projective variety carrying a regular effective holomorphic parabolic geometry.
Suppose that \(M\) is not uniruled.
Then the model of that geometry is cominuscule.
\end{theorem}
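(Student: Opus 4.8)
The plan is to reduce the assertion to the single vanishing \(\LieGQ{-2}=0\), and then to extract that from non‑uniruledness by playing the bracket‑closure results of section~\vref{section:pseudoeff.parabolic} against the regularity hypothesis. Since \(P\subseteq G\) is parabolic, \(\LieG\) carries a grading \(\LieG=\LieGQ{-k}\oplus\dots\oplus\LieGQ{k}\) with \(\LieP=\LieGQ{0}\oplus\dots\oplus\LieGQ{k}\); the \(P\)-module \(\LieG/\LieP\cong\LieGX\) then has the \(P\)-invariant filtration by the submodules \(\LieGQ{\ge -i}/\LieP\), so \(TM\cong\G\times^{P}(\LieG/\LieP)\) carries an associated filtration by holomorphic subbundles
\[
T^{-1}M\subseteq T^{-2}M\subseteq\dots\subseteq T^{-k}M=TM.
\]
The model is cominuscule exactly when \(\LieG/\LieP\) is a direct sum of irreducible \(P\)-modules; but the nilradical \(\LieGN\) glues consecutive graded pieces of this filtration nontrivially whenever there is more than one of them (for instance \(0\ne\left[\LieGQ{1},\LieGQ{-2}\right]\subseteq\LieGQ{-1}\) survives modulo \(\LieP\)), so this holds if and only if \(k=1\), i.e. the grading is \(|1|\)-graded, i.e. \(\LieGQ{-2}=0\). (For non‑simple \(\LieG\) one runs this on each simple factor; effectiveness guarantees each factor carries a genuine grading, so that ``cominuscule'' is the clean conclusion.) Thus it suffices to prove \(\LieGQ{-2}=0\).

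Assume \(M\) is not uniruled. By theorem~\ref{theorem:drop}, \(M\) is minimal and contains no rational curves, and the given geometry is minimal; in particular \(\cb{M}\) is nef, hence pseudoeffective. Now section~\vref{section:pseudoeff.parabolic} applies: Campana and Peternell's pseudoeffectivity test (section~\vref{section:pseudoeffective}), fed with the root‑theoretic description of the determinant line bundles attached to the filtration steps, produces pseudoeffective line bundles to which Demailly's theorem (section~\vref{section:brackets}) applies, and one concludes that the bottom step \(T^{-1}M\) (indeed any step \(T^{-i}M\), \(1\le i<k\)) is bracket‑closed, hence a holomorphic foliation of \(M\).

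On the other hand — and this is exactly where the regularity hypothesis is used — for a regular parabolic geometry the filtration of \(TM\) is compatible with the Lie bracket of vector fields, \(\left[T^{-i}M,T^{-j}M\right]\subseteq T^{-(i+j)}M\), and the induced Levi bracket on \(\operatorname{gr}TM\cong\G\times^{P}\LieGX\) is the algebraic bracket of \(\LieGX\) \cite{Cap/Slovak:2009}. Since \(\LieGX\) is generated in degree \(-1\), we have \(\left[\LieGQ{-1},\LieGQ{-1}\right]=\LieGQ{-2}\); so if \(\LieGQ{-2}\ne 0\), the Levi bracket \(\graded{-1}{TM}\otimes\graded{-1}{TM}\to\graded{-2}{TM}\) is onto a nonzero bundle, whence \(\left[T^{-1}M,T^{-1}M\right]\not\subseteq T^{-1}M\), contradicting the previous paragraph. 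Therefore \(\LieGQ{-2}=0\): the grading is \(|1|\)-graded and the model is cominuscule, as claimed.

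I expect the genuine obstacle to be the middle paragraph: identifying, via the theory of roots of \(\LieG\), exactly which explicit line bundle on \(M\) Campana--Peternell's criterion renders pseudoeffective, and checking that this is (a twist of) the tautological \(p\)-form annihilating \(T^{-1}M\) to which Demailly's integrability theorem applies with the correct sign convention. That bookkeeping is precisely the content of section~\vref{section:pseudoeff.parabolic}; granting it, the remainder is soft Lie theory as above. One convenient slack is that only bracket‑closure of a \emph{single} proper step of the filtration is needed, so it is irrelevant which \(T^{-i}M\) with \(1\le i<k\) the pseudoeffectivity machinery actually delivers.
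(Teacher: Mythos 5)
Your proposal is correct and follows exactly the route the paper takes: theorem~\vref{theorem:drop} gives minimality from non-uniruledness, corollary~\vref{corollary:brackets} (via Campana--Peternell and Demailly) gives bracket closure of the filtration step associated to \(\LieGQ{\ge-1}/\LieP\), and regularity then forces the algebraic bracket \(\lb{\LieGQ{-1}}{\LieGQ{-1}}=\LieGQ{-2}\) to vanish, i.e. the model is \(|1|\)-graded, hence cominuscule. The paper compresses this to a one-line remark in section~\vref{section:regularity} (``the Langlands bracket vanishes by corollary~\vref{corollary:brackets}''), and the bookkeeping you flag as the genuine obstacle is indeed exactly the content of section~\vref{section:pseudoeff.parabolic}, which you are entitled to cite.
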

\subsection{Locally homogeneous structures}
The theory of Cartan geometries is unfamiliar to many geometers, but many are familiar with the theory of \((X,G)\)-structures.
We will make the link to that theory explicit here, so that we can discuss the relation of our theorems to the theory of nonclassical domains.
A complex homogeneous space \((X,G)\) is \emph{strong} if, for any component of \(X\), any element of \(G\) which fixes every point of that component fixes every point of \(X\).
It is \emph{effective} if only \(1\in G\) fixes every point of \(X\).
An \emph{\((X,G)\)-structure} is a flat holomorphic parabolic geometry modelled on a strong effective complex homogeneous space \((X,G)\) \cite{McKay:2023} p~92.
If a connected complex manifold \(M\) carries an \((X,G)\)-structure then the universal covering space \(\tilde{M}\) has a local biholomorphism to the model \(\tilde{M}\xrightarrow{\delta}X\) which is also a local isomorphism of \((X,G)\)-structures, a \emph{developing map} \cite{McKay:2023} p.~35.
The developing map is equivariant for a unique group morphism \(\pi_1(M)\xrightarrow{h}G\), the \emph{holonomy morphism}.
Form the pullback bundle \(\tilde\G\) by
\[
\begin{tikzcd}
\tilde\G\arrow[r]\arrow[d]&\G\arrow[d]\\
\tilde{M}\arrow[r]&M\\
\end{tikzcd}
\]
so that 
\[
\G=\pi_1(M)\backslash\tilde\G.
\]
The developing map extends \cite{McKay:2023} to a \(P\)-equivariant bundle map \(\Delta\):
\[
\begin{tikzcd}
\tilde\G\arrow[r,"\Delta"]\arrow[d]&G\arrow[d]\\
\tilde{M}\arrow[r,"\delta"]&X
\end{tikzcd}
\]
identifying the Cartan connections.
\subsection{Dropping structures}
Suppose that \((X,G)\), \((X',G)\) are two flag varieties with a \(G\)-equivariant holomorphic map \(X\to X'\).
Pick points \(x_0\in X\) mapping to \(x_0'\in X'\) and let \(P:=G^{x_0}\) and \(P':=G^{x_0'}\) so \(P\subseteq P'\).
Every \((X',G)\)-structure \(\G\to M'\) on a complex manifold \(M'\) has a lift: we let \(M:=\G/P\) as before, but then use the same map \(\tilde\G\to G\) as above:
\[
\begin{tikzcd}
\tilde\G\arrow[r,"\Delta"]\arrow[d]&G\arrow[d]\\
\tilde{M}\arrow[r,"\delta'"]&X
\end{tikzcd}
\]
If an \((X,G)\)-structure drops, its developing map arises from the same map \(\tilde\G\xrightarrow{\Delta}G\):
\[
\begin{tikzcd}
\tilde\G\arrow[r,"\Delta"]\arrow[d]&G\arrow[d]\\
\tilde{M}\arrow[r,"\delta"]\arrow[d]&X\arrow[d]\\
\tilde{M}'\arrow[r,"\delta'"]&X'\\
\end{tikzcd}
\]

\subsection{Nonclassical domains}
From our theorems, we easily recover part of the theory of nonclassical domains.
Take an effective flag variety \((X,G)\).
A \emph{homogeneous domain} \(X_0\subseteq X\) is an open orbit of a real Lie subgroup \(G_0\subseteq G\).
It is a \emph{flag domain} if, in addition, \(G_0\subseteq G\) is a noncompact real form.
A homogeneous domain is \emph{classical} if there is a cominuscule variety \((X',G)\) with \(G\)-equivariant holomorphic map \(X\to X'\), and a \(G_0\)-invariant domain \(X_0'\subseteq X'\), so that \(X\to X'\)  restricts to a \(G_0\)-equivariant surjection \(X_0\to X_0'\).
Let \(G_0'\) be the quotient of \(G_0\) by the elements acting trivially on \(X_0'\).
We say \((X_0,G_0)\) \emph{drops} to \((X'_0,G_0')\).

An \emph{\((X_0,G_0)\)-structure} is an \((X,G)\)-structure whose developing map has image inside a homogeneous domain \(X_0\subseteq X\) and whose holonomy morphism has image inside \(G_0\).

The developing map of an \((X,G)\)-structure is an embedding if and only if \(M\) is the quotient of the open set
\[
\delta(\tilde{M})\subseteq X
\]
by the group
\[
h(\pi_1(M))\subseteq G.
\]
So an \((X_0,G_0)\)-structure is a generalization of the concept of a cocompact quotient of a flag domain.

\begin{corollary}\label{cor:Carlson}
Suppose that a smooth projective variety \(M\) has an \((X_0,G_0)\)-structure, for some flag variety \((X,G)\) and homogeneous domain \(X_0\subseteq X\).
Either 
\begin{itemize}
\item
\(X_0=X\) and \(M\) is biholomorphic to \(X\) by a biholomorphism which identifies the \((X,G)\)-structures or
\item
The homogeneous domain \((X_0,G_0)\) is classical, say dropping to \((X_0',G_0)\), and the \((X_0,G_0)\)-structure on \(M\) drops to an \((X_0',G_0')\)-structure \(M\to M'\).
Either 
\begin{itemize}
\item
\(M'\) is an abelian variety with translation invariant \((X_0',G_0)\)-geometry or 
\item
\(M'\) is an abelian group scheme (after perhaps replacing \(M\) and \(M'\) by finite covering spaces) or
\item
\(M'\) is a Hermitian symmetric variety.
If \(X\) is not a \(G\)-equivariant product \(X=Y\times\Proj{1}\) with a projective line factor then the \((X_0',G_0')\)-structure on \(M'\) is unique, hence the \((X_0,G_0)\)-structure on \(M\) is unique, hence is the one induced from the Hermitian symmetric metric on \(M'\) or
\item
\(M'\) is a counterexample to the abundance conjecture.
\end{itemize}
If in addition \(X_0'\) is Brody hyperbolic then \(M'\) is a Hermitian symmetric variety or a counterexample to the abundance conjecture.
Finally, if \((X_0',G_0')\) is a noncompact Hermitian symmetric space then \(M'\) is a Hermitian symmetric variety.
\end{itemize}
\end{corollary}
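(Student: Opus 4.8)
The plan is to drop the structure to a minimal one and then quote the main theorems. An \((X_0,G_0)\)-structure is a flat holomorphic parabolic geometry, and flatness survives dropping, so it drops---in possibly zero steps---to a minimal flat parabolic geometry \(\G'\to M'\) on a smooth projective variety \(M'\) (projective by Theorem~\ref{theorem:Moishezon}), modelled on a quotient flag variety \((X',G)\) of \((X,G)\), with \(M'=\G/P'\). Using the diagrams in the \emph{dropping structures} discussion above, the developing map \(\delta'\) of the dropped structure is the developing map \(\delta\) of \(M\) followed by the \(G\)-equivariant projection \(X\to X'\); its image is therefore the image \(X_0'\subseteq X'\) of \(X_0\), and, being the image of an open \(G_0\)-orbit under a \(G\)-equivariant surjection, \(X_0'\) is again an open \(G_0\)-orbit, i.e.\ a homogeneous domain, while the holonomy still lands in \(G_0\). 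So the dropped geometry is an \((X_0',G_0)\)-structure.

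First I would dispose of the case \(\dim M'=0\): there the Cartan connection identifies the tangent space of \(\G'\) with \(\LieG\), forcing \(P'=G\), so that \(X'\) is a point and \(\G'=\G=G\) with its Maurer--Cartan form; hence \(M=\G/P=G/P=X\) carries the model geometry and \(\delta\) is an isomorphism onto \(X\), which is the first alternative, with \(X_0=X\). So assume \(\dim M'>0\). Then \(M'\) carries a minimal geometry, hence by Theorem~\ref{theorem:drop} it is not uniruled; a flat parabolic geometry is regular; and, after passing to the quotient of \(G\) acting effectively on \(X'\) (replacing \(\G'\), \(P'\), \(G_0\) by their images, which affects neither \(M'\) nor \(X_0'\)), Theorem~\ref{theorem:comini} applies and shows \((X',G)\) is cominiscule. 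Hence \((X_0,G_0)\) is classical, dropping to \((X_0',G_0)\), and \(M\to M'\) is the corresponding drop of structures. It remains to identify \(M'\) by its Kodaira dimension.

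If \(M'\) is of general type, then, since a flat parabolic geometry is also normal, Theorem~\ref{theorem:general.type} identifies \(M'\) as a Hermitian symmetric variety whose universal cover \(X''\) is the noncompact dual of \(X'\), with \(\delta'\) the Borel embedding \(X''\hookrightarrow X'\); since \(X''\) is the unique open orbit of the relevant real form and any \(G_0\)-invariant domain containing it coincides with it (the holonomy image being Zariski dense), \(X_0'=X''\), which is Brody hyperbolic. Moreover, if \(X\) has no \(G\)-equivariant \(\Proj{1}\) factor then its quotient \(X'\) has none either (a splitting \(X'=Y'\times\Proj{1}\) would pull back, since \(P\subseteq P'\), to a splitting of \(X\)), so by Theorem~\ref{theorem:general.type} the parabolic geometry on \(M'\) is unique; by the \emph{dropping structures} discussion the \((X,G)\)-structure on \(M\) is determined by its drop, hence is also unique, and is the standard one induced by the Hermitian symmetric metric on \(M'\). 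If instead \(\kappa_{M'}<\dim M'\), I would invoke the flat case of the structure theory \cite{Biswas.McKay:2024}, for which no appeal to abundance is needed (alternatively Theorem~\ref{thm:Iitaka.group.scheme} under the abundance conjecture): after a finite covering, the Iitaka fibration \(\rationalMap{M'}{\Ii{M'}}\) is an abelian group scheme over a base with ample canonical bundle. When \(\kappa_{M'}=0\) the base is a point and \(M'\) is an abelian variety, on which the parabolic geometry is translation invariant by the corollary to Theorem~\ref{theorem:BD.torus} together with Theorem~\ref{thm:cpt.Kaehler.c.1.0}; when \(\kappa_{M'}>0\), \(M'\) is an abelian group scheme; in both cases \(M'\) contains positive dimensional abelian subvarieties, which develop to nonconstant entire curves in \(X_0'\) under \(\delta'\), so \(X_0'\) is not Brody hyperbolic. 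Hence Brody hyperbolicity of \(X_0'\) can occur only in the general type case, which gives the final clause.

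The hard part will be the bookkeeping of the first two steps: carrying the homogeneous domain and the holonomy faithfully through the drop, reducing to an effective cominiscule model so that Theorem~\ref{theorem:comini} and the general type theorem apply verbatim, and then pinning down \(X_0'=X''\) exactly in the Hermitian symmetric case, rather than some strictly larger \(G_0\)-invariant domain. Once those are in place, the remaining assertions are direct citations of the theorems above.
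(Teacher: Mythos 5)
Your proposal is correct and follows essentially the same route as the paper's own proof: drop the flat (hence regular) geometry to a minimal one, invoke Theorem~\ref{theorem:comini} to get a cominiscule quotient model (hence classicality of the domain), apply the Iitaka/general-type structure theorems to trichotomize \(M'\), and handle Brody hyperbolicity via the absence of entire curves. The only differences are matters of emphasis — you are somewhat more explicit about the \(X'=\mathrm{point}\) case, about avoiding the abundance hypothesis by citing the flat-case results of \cite{Biswas.McKay:2024}, and about deducing the Hermitian symmetric alternative from Theorem~\ref{theorem:general.type}, while the paper spends more effort showing \(X_0\) is a union of fibers of \(X\to X'\) so that \(X_0\to X_0'\) is a fiber bundle — none of which changes the substance of the argument.
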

\begin{proof}
The \((X,G)\)-geometry is flat, so regular.
The geometry drops, say from an \((X,G)\)-geometry to an \((X',G)\)-geometry, where \(X\to X'\) is a \(G\)-equivariant morphism of flag varieties.
The geometry is isomorphic to the model just when \(X'\) is a point.
So suppose that \(X'\) is not a point.
The geometry has the same Cartan connection, so remains flat, so regular.
By theorem~\vref{theorem:comini}, \((X',G)\) is cominuscule.
Write \(X=G/P\) and \(X'=G/P'\) so \(P\subseteq P'\).

Let \(X'_0\subseteq X'\) be the image of \(X_0\subseteq X\).
The fibers \(M\to M'\) of the drop are flag varieties, so simply connected, so their preimages in \(\tilde{M}\to\tilde{M}'\) are isomorphic flag varieties.
Each lies in the image of the developing map, as fibers map to fibers when we drop, so there is an open set of points of \(X'\) above which for which the fibers of \(X\to X'\) lie inside \(X_0\).
The group \(G\) takes those fibers to one another.
By \(G_0\)-homogeneity, \(X_0\) is a union of fibers of \(X\to X'\).
By \(G_0\)-homogeneity, \(X_0\to X_0'\) is a holomorphic fiber bundle mapping.

By theorem~\vref{thm:Iitaka.group.scheme}, since \(M'\) contains no rational curve, \(M'\) is an abelian variety, abelian group scheme (after perhaps replacing by a finite covering space) or has ample canonical bundle.

If \(X_0'\) is Brody hyperbolic then \(X_0'\) contains no entire curves, so \(\tilde{M}'\) contains no entire curves, so \(M'\) contains no entire curves, so \(M'\) is not an abelian variety or abelian group scheme with positive dimensional abelian variety fibers.
A \(G_0'\)-invariant positive curvature metric on the canonical bundle of \(X_0'\) pulls back to \(\tilde{M}'\) and then drops to \(M'\), so \(M'\) has canonical bundle spanned by global sections.
In the absence of projective line factors, the \((X',G)\)-structure is unique \cite{McKay:2016} p.~6, theorem~1.
\end{proof}
As of yet, we don't recover all of the results on compact K\"ahler manifolds of Carlson and Toledo\cite{Carlson.Toledo:2014} or on improper varieties from Griffiths, Robles and Toledo \cite{Griffiths.Robles.Toledo:2014}.
But we can easily generate similar theorems, replacing the flag variety \((X,G)\) by any complex algebraic homogeneous space by applying \cite{Biswas.McKay:2024} p.~1 theorem~1.
\section{Flag varieties}%
\label{section:flag.varieties}
We use notation and terminology for flag varieties from Knapp \cite{Knapp:2002}. 
A \emph{flag variety} \((X,G)\), also called a \emph{generalized flag variety} or a \emph{rational homogeneous variety}, is a complex projective variety \(X\) acted on transitively and holomorphically by a connected complex semisimple Lie group \(G\) \cite{Borel:1991} p. 148.
We will need to make use of ineffective flag varieties, i.e. \(G\) might not act faithfully on \(X\).
It is traditional to denote the stabilizer \(G^{x_0}\) of a point \(x_0 \in X\) as \(P\); the group \(P \subset G\) is a complex linear algebraic subgroup.
Denote the Lie algebras of \(P \subset G\) by \(\LieP \subset \LieG\). 
One can select a Cartan subgroup of \(G\) lying inside \(P\), whose positive root spaces all lie in \(\LieP\).
A simple root \(\aa\) is \(P\)-\emph{compact} (\emph{compact} if \(P\) is understood) if the root space of \(-\aa\) belongs to the Lie algebra of \(P\).
Each flag variety is determined uniquely up to isomorphism by the Dynkin diagram of \(G\) decorated with \(\dynkin{A}{*}\) on each compact simple root and \(\dynkin{A}{x}\) on each noncompact root \cite{Baston/Eastwood:1989} p. 14.
\begin{example}
The complexified octave projective plane is the flag variety \((X,G)\) where \(G=E_6\), with decorated Dynkin diagram
\[
\dynkin[parabolic=1]E6
\]
\end{example}
\subsection{Grading}
The Cartan subalgebra, together with the root spaces of the compact roots, generates a reductive subalgebra \(\LieGZ \subset \LieP\); let \(\GZ:=e^{\LieGZ}\subseteq G\).
There is a unique element \(e \in \LieH\), the \emph{grading element}, in the centre of \(\LieGZ\) so that \(\Pheight{\aa}=0\) for all compact roots, and \(\Pheight{\aa}=1\) just when \(\aa\) is a noncompact root with root space \(\rtsp{\LieG}{\aa} \subset \LieP\) and \(\aa\) is not a sum of noncompact roots \cite{Cap/Slovak:2009} p. 239 proposition 3.1.2.
We grade weights by inner product with \(e\).
For example, if \((X,G)\) has \(G=G_2\) and \(X=G/P\) with Dynkin diagram \(\dynkin[parabolic=2]G2\),  the root system with positive grades marked looks like
\[
\drawroots{G}{2}
\]
on the roots, hence on \(\LieP\).
The zero grade:
\[
\begin{tikzpicture}[baseline=-.5]
\begin{rootSystem}{G}
\roots
\parabolic{2}
\draw
[line width=3pt,gray,opacity=0.5,line cap=round] 
(hex cs:x=-1,y=2) -- (hex cs:x=1,y=-2);
\end{rootSystem}
\end{tikzpicture}
\]
The corresponding grading on \(\LieG/\LieP\) is therefore
\[
\begin{tikzpicture}[baseline=-.5,scale=-1]
\begin{rootSystem}{G}
\roots
\parabolic{2}
\parabolicgrading
\end{rootSystem}
\end{tikzpicture}
\]
Hence the associated flag variety has \(3\)-graded tangent bundle.

We partial order weights by grade: write \(\mu \le \nu\) to mean that \(\Pheight{\mu} \le \Pheight{\nu}\) and so on.
Let \(\ParaRts\) be the set of roots which are \(\ge 0\), i.e. whose root spaces lie in \(P\).
Let
\[
\cptRts:=\ParaRts\cap-\ParaRts,
\]
and 
\[
\ncptPosRts:=\ParaRts\backslash\cptRts.
\]
Note that these are \emph{not} necessarily the positive roots in the sense of the Cartan subgroup of \(G\); indeed they are the positive roots in the usual sense just when \(P\) is a Borel subgroup.
Similarly define \(\Delta_{\ge k}\) to be the roots \(\aa\) with \(\Pheight{\aa} \ge k\).
We make \(\LieG\) a graded \(\LieG_0\)-module by grading each root vector by the grade of its root.
Since \(\LieP\) is precisely the sum of nonnegatively graded roots, \(\LieG\) is thereby a filtered \(\LieP\)-module, hence a filtered \(P\)-module.
For an extensive discussion of filtrations and gradings of flag varieties, see \cite{McKay:2020}.
\subsection{Cominuscule flag varieties}
A flag variety is \emph{cominuscule} if \(\LieG/\LieP=T_{x_0} X\) is a sum of irreducible complex algebraic \(P\)-modules.
This occurs just when, for some and hence every maximal compact subgroup \(K\subseteq G\), \((X,K)\) is a compact Hermitian symmetric space \cite{Kostant:1961} p. 379 Proposition 8.2, \cite{Baston/Eastwood:1989} p. 26.
Some authors prefer the term \emph{compact Hermitian symmetric space}, \emph{cominuscule Grassmannian}, or \emph{generalized Grassmannian} to \emph{cominuscule variety}.
Every effective cominuscule variety is a product of the following irreducible effective cominuscule varieties \cite{Baston/Eastwood:1989} p.~26, Example~3.1.10, \cite{Kobayashi/Nagano:1964} theorem 1 p. 401:\par\noindent{}%
\begingroup
\small
\begin{longtabl}{@{}%
>{\columncolor[gray]{.9}$}r<{$}%
>{\columncolor[gray]{.93}}p{2.6cm}%
>{\columncolor[gray]{.9}$}r<{$}%
>{\columncolor[gray]{.93}}l@{}}
\toprule
G&\(G/P\)&\operatorname{dim}&\text{description}\\
\midrule
\endfirsthead
\toprule
G&\(G/P\)&\operatorname{dim}&\text{description}\\
\midrule
\endhead
\bottomrule
\endfoot
\bottomrule
\endlastfoot
A_r&\dynkin[labels={,,,k,,,}]{A}{**.*x*.**}&k(r+1-k)&Grassmannian of $k$-planes in $\C[r+1]$
\\
B_r&\dynkin[parabolic=1]{B}{}&2r-1&quadric hypersurface in $\Proj{2r}$\\
C_r&\dynkin[parabolic=16]{C}{}&\frac{r(r+1)}{2}&space of Lagrangian $r$-planes in $\C[2r]$\\
D_r&\dynkin[parabolic=1]{D}{}&2r-2&quadric hypersurface in $\Proj{2r-1}$\\
D_r&\dynkin[parabolic=32]{D}{}&\frac{r(r-1)}{2}& 
space of null $r$-planes in $\C[2r]$ \\
E_6&\dynkin[parabolic=1]{E}{6}&16&complexified octave projective plane\\
E_7 &\dynkin[parabolic=64]{E}{7}&27&space of null octave \(3\)-planes in octave \(6\)-space
\end{longtabl}
\endgroup
\subsection{Parabolic subgroups}
A Zariski closed subgroup \(P\subseteq G\) of a connected linear algebraic group \(G\) is \emph{parabolic} if \(X:=G/P\) is a projective variety, and this occurs just when \(X\) is a flag variety for a maximal semisimple Levi factor of \(G\), and just when \(P\) contains a maximal connected solvable subgroup of \(G\) \cite{Borel:1991} p. 148.
Every parabolic subgroup is connected p. 197 Proposition 14.18.
The unipotent radical of \(P\) is denoted \(\GN\subseteq P\), and a maximal reductive Levi factor of \(P\) is denoted \(\GZ\subseteq P\), so \(P=\GZ\ltimes\GN\).
(This is potentially confusing; the reader might expect to write these as \(P_+\) and \(P_0\) since they lie in \(P\), but this notation is standard \cite{Cap/Slovak:2009} p. 293 theorem 3.2.1, and due to the presence of the grading of the Lie algebra of \(G\) which we have defined above.)
A flag variety is cominuscule just when \(\GN\) is abelian \cite{Cap/Slovak:2009} p. 296 \S{}3.2.3.
Denote the center of the unipotent radical by \(Z\coloneq Z_{\GN}\).
\subsection{The tangent filtration}\label{subsec:tgt.filtration}
The quotient \(V=\LieG/\LieP\) has the obvious quotient filtration \(V_*\) as a filtered \(P\)-module.
The Lie bracket on \(\LieG\) descends to a graded algebra structure on \(\graded{*}{V}\): the \emph{algebraic bracket}.
Denote the Cartan subgroup by \(H\subseteq P\).
For any set \(\Gamma\) of weights, and any \(H\)-module \(V\), let \(V_{\Gamma}\) be the sum of the weight spaces of \(V\) for those weights.
In particular, consider \(V:=\LieG/\LieP\); the \emph{tally} \(N_X\) of an irreducible flag variety \((X,G)\) is the infimum number of elements in a nonempty set \(\Gamma\) of roots for which \(V_{\Gamma}\subset V\) is a \(\GZ\)-module.
The \emph{tallies} \(N_i\) of a reducible flag variety \(X=\prod X_i\) are the tallies of its factors.
\begin{example}
Take the flag variety \((X,G)\) with \(G=G_2\) given by the grading
\[
\drawroots{G}{2}
\]
on the roots, hence on \(\LieP\).
The corresponding grading on \(\LieG/\LieP\) is
\[
\begin{tikzpicture}[baseline=-.5,scale=-1]
\begin{rootSystem}{G}
\roots
\parabolic{2}
\parabolicgrading
\end{rootSystem}
\end{tikzpicture}
\]
The tally is \(1\), since the middle vertical line in 
\[
\begin{tikzpicture}[baseline=-.5,scale=-1]
\begin{rootSystem}{G}
\roots
\parabolic{2}
\draw
[line width=3pt,gray,opacity=0.5,line cap=round] 
(hex cs:x=0,y=2) -- (hex cs:x=2,y=-2);
\end{rootSystem}
\end{tikzpicture}
\]
has one root on it, say \(\gamma\), and the associated \(H\)-module \(V=V_{\{\gamma\}}\) is a \(\GZ\)-submodule of \(\LieG/\LieP\).
\end{example}
\subsection{Opposite parabolic subgroups}
Two parabolic subgroups \(P,\op{P}\subseteq G\) of a complex semisimple Lie group are \emph{opposite} if \(P\cap\op{P}\) is a reductive Levi factor of both \(P\) and \(\op{P}\).
All Borel subgroups of \(G\) are conjugate \cite{Borel:1991} p.~147 chapter IV 11.1, each containing a Cartan subgroup, hence the Lie algebra of \(P\) is the sum of (1) the Cartan subalgebra with (2) various root spaces.
Every automorphism of a root system arises from an automorphism of the associated semisimple Lie group \cite{Fulton/Harris:1991} p.~498 Proposition D.40, \cite{Humphreys:1978}, p.~87, \S16.5.
Hence there is an automorphism \(G\xrightarrow{a}G\) of \(G\) which yields \(\alpha\mapsto-\alpha\) in the root system.
(We can define such an automorphism explicitly as \(e_{\alpha}\mapsto -e_{-\alpha}\) on root vectors in a Chevalley basis; see \S\vref{subsubsection:ChevalleyBases}.)
Our automorphism sends \(P\) to an opposite parabolic subgroup \(\op{P}:=aP\) with \(P\cap \op{P}=\GZ\).
\[
\begin{tikzpicture}[baseline=-.5]
\begin{rootSystem}{G}
\roots
\parabolic{2}
\parabolicgrading
\end{rootSystem}
\end{tikzpicture}
\begin{tikzpicture}[baseline=-.5]
\draw[white] (-1,-1) rectangle (1,1);
\draw[<->] (-.5,0) -- (.5,0);
\end{tikzpicture}
\begin{tikzpicture}[baseline=-.5,scale=-1]
\begin{rootSystem}{G}
\roots
\parabolic{2}
\parabolicgrading
\end{rootSystem}
\end{tikzpicture}
\]
Letting \(G_-:=aG_+\), \(G_+\cap G_-=\set{1}\).
An open subset of \(G\) consists of elements uniquely expressed as a product \(p,q\in P\times G_-\mapsto pq\in G\) \cite{Cap/Slovak:2009} p.~294, \cite{Borel:1991} p.~198 Proposition 14.21.
Every root system also has an automorphism, traditionally called \(w_0\), which belongs to the Weyl group and which interchanges the positive and negative roots of a root system \cite{Cap/Slovak:2009} pp.~323-324; it is the unique element of the Weyl group of maximum length.
Note that \(w_0\) might not reverse the signs of simple roots \cite{Cap/Slovak:2009} p.~324.
The Weyl group lifts to a group of automorphisms of the Lie group \(G\), after perhaps extension by some finite group of order a power of \(2\).
We can use such an extension of \(w_0\) in place of \(a\) throughout this paper, as we will only need that \(a\) is an automorphism of a given root system which extends to an automorphism of \(G\) taking a given parabolic subgroup to an opposite.
%
\subsection{Definition of the associated cominuscule}
Take a flag variety \((X,G)\) and opposite parabolic subgroups \(P,\op{P}\subseteq G\), so that \(P\) is the stabilizer of \(x_0\in X\).
Take their unipotent radicals \(G_+,\op{G_+}\) and the centers \(Z,\op{Z}\) of these.
Let \(\breve{G}:=\left<Z,\op{Z}\right>\subseteq G\) be the subgroup generated by \(Z\cup\op{Z}\), \(\breve{P}:=\breve{G}\cap P\), \(\breve{X}:=\breve{G}/\breve{P}\).
Then \((\breve{X},\breve{G})\) is the \emph{associated cominuscule subvariety} through the point \(x_0\in X\), a positive dimensional homogeneously embedded cominuscule subvariety of \(X\) \cite{McKay:2020}.
The associated cominuscule of every flag variety is known explicitly \cite{McKay:2020}.
The group \(\breve{G}\) is semisimple with roots precisely the \(G\)-roots generated, as a root system, by the \(P\)-maximal roots.
The subgroup \(\breve{P}\subset\breve{G}\) is a parabolic subgroup and \(\breve{G}\) acts on \(\breve{X}\) with only a finite normal subgroup acting trivially.

Let \(G'\subseteq G\) be the subgroup preserving the associated cominuscule \(\breve{X}\subseteq X\), a closed subgroup because \(\breve{X}\) is compact, and a complex linear algebraic subgroup because \(\breve{X}\) is a complex subvariety.
While \(G'\) contains \(\breve{G}\), it is often larger than \(\breve{G}\) and is not always semisimple \cite{McKay:2020}.
The obvious map \(G'\to\breve{G}\) is onto.

Let \(\rtspMax{\LieG}, \rtspMin{\LieG} := \bigoplus \LieG_{\aa}\), where the sum is over the roots \(\aa\) where \(\aa\) is maximal, minimal in its irreducible factor of the root system of \(G\).
Let \(\LieP':=\LieGZ \oplus \rtspMax{\LieG} \subset \LieG\), a Lie subalgebra, and let \(P'\subseteq P\) be the connected subgroup with that Lie subalgebra.
This subgroup preserves the associated cominuscule, since \(\LieGZ\) does, acting on the sum of \(P\)-maximal and \(P\)-minimal root vectors.
\begin{example}
There are two flag varieties of \(B_2=\PSO{5}=C_2=\Symp{4}\); consider \(X=\dynkin[parabolic=2]B2\), which is \(\mathbb{P}^3\) acted on by the symplectic group of the standard complex linear symplectic form on \(\C[4]\), a homogeneous projective \(3\)-fold with a \(B_2\)-invariant contact path geometry, with roots
\[
\drawroots{B}{2}
\]
Note that \(\LieG_0=\LieSL{2}\oplus\C{}\).
The associated cominuscule subvariety is \(\breve{X}=\Proj{1}\subseteq X\), a line transverse to the contact planes:
\[
\begin{tikzpicture}[baseline=-.5]
\begin{rootSystem}{B}
\roots
\parabolic{2}
\draw[/root system/grading,line width=.3cm] (square cs:x=1,y=1) -- (square cs:x=-1,y=-1);
\end{rootSystem}
\end{tikzpicture}
\]
Therefore \(\LieP'=\breve{\LieP}\oplus\LieSL{2}=\breve{\LieP}\oplus\LieG_0\).
\end{example}
The Lie algebra of \(\breve\LieG\) has root system \(\breve\Delta\subset\Delta\) consisting precisely of the \(P\)-maximal and \(P\)-minimal roots and every \(P\)-compact root which is difference of a \(P\)-maximal and a \(P\)-minimal.
On the other hand, the \(P\)-compact roots form another root system \(\Delta_0:=\breve\Delta^{\perp}\), which can intersect \(\breve\Delta\).
The Lie algebra \(\breve\LieG\) is that of the root system \(\breve\Delta\), i.e. the direct sum of the \(\breve\Delta\)-root vectors and the \(\breve\Delta\)-coroots.
Similarly, the maximal complex semisimple subalgebra of \(\LieG_0\) is that of the root system \(\Delta_0\).
\begin{example}
There might be no overlap between cominuscule and compact roots: 
\[
\drawroots{G}{1}
\]
with associated cominuscule
\[
\begin{tikzpicture}[baseline=-.5]
\begin{rootSystem}{G}
\roots
\parabolic{1}
\draw[/root system/grading,line width=.3cm] (hex cs:x=1,y=1) -- (hex cs:x=-1,y=-1);
\end{rootSystem}
\end{tikzpicture}
\]
\end{example}
\begin{example}
There might be some overlap between cominuscule and compact roots: 
\[
\drawroots{G}{2}
\]
with associated cominuscule
\[
\begin{tikzpicture}[baseline=-.5]
\begin{rootSystem}{G}
\roots
\parabolic{2}
\draw[/root system/grading] (hex cs:x=-1,y=2) -- (hex cs:x=1,y=1) -- (hex cs:x=2,y=-1) -- (hex cs:x=1,y=-2) -- (hex cs:x=-1,y=-1) -- (hex cs:x=-2,y=1) -- cycle;
\end{rootSystem}
\end{tikzpicture}
\]
\end{example}
\begin{example}
There might be no compact roots: consider \(A_2=\PSL{3}\) acting on the space \(X\) of all pointed lines in projective space:
\[
\drawroots{A}{3}
\]
with associated cominuscule
\[
\begin{tikzpicture}[baseline=-.5]
\begin{rootSystem}{A}
\roots
\parabolic{3}
\draw[/root system/grading,line width=.3cm] (hex cs:x=1,y=1) -- (hex cs:x=-1,y=-1);
\end{rootSystem}
\end{tikzpicture}
\]
\end{example}
\begin{example}
The \(P\)-compact roots and the cominuscule roots might not be perpendicular.
\[
\drawroots{B}{3}
\]
with associated cominuscule
\[
\begin{tikzpicture}[baseline=-.5]
\begin{rootSystem}{B}
\roots
\parabolic{3}
\draw[/root system/grading,line width=.3cm] (square cs:x=1,y=1) -- (square cs:x=-1,y=-1);
\end{rootSystem}
\end{tikzpicture}
\]
\end{example}
As for coroots, \(\LieG_0\) contains the Cartan subalgebra, which is precisely the span of the \(\Delta\)-coroots.
\begin{lemma}
\(\LieP'=\breve\LieP+\LieG_0\) and \(\breve\LieP\cap\LieG_0=\breve\LieH\) is the Cartan subalgebra of \(\breve\LieG\).
\end{lemma}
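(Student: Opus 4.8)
The plan is to prove both equalities by decomposing all the algebras involved into \(\LieH\)-weight spaces, for the Cartan subalgebra \(\LieH\subseteq\LieP\) fixed above. Each of \(\LieGZ\), \(\LieP\), \(\rtspMax{\LieG}\), \(\breve\LieG\) and \(\breve\LieP\) is a direct sum of a subspace of \(\LieH\) with a family of root spaces \(\rtsp{\LieG}{\aa}\), so any equality between them can be verified summand by summand. I record the descriptions I will use: \(\LieGZ=\LieH\oplus\bigoplus_{\aa\in\cptRts}\rtsp{\LieG}{\aa}\), since \(\LieGZ\) is generated by \(\LieH\) and the compact root spaces; \(\LieP=\LieH\oplus\bigoplus_{\Pheight{\aa}\ge 0}\rtsp{\LieG}{\aa}\); \(\rtspMax{\LieG}=Z\) is the sum of \(\rtsp{\LieG}{\aa}\) over the \(P\)-maximal roots \(\aa\), all of which are noncompact with \(\Pheight{\aa}>0\); and \(\breve\LieG=\breve\LieH\oplus\bigoplus_{\aa\in\breve\Delta}\rtsp{\LieG}{\aa}\), where \(\breve\LieH\subseteq\LieH\) is spanned by the coroots of \(\breve\Delta\). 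Since all the root spaces have nonzero \(\LieH\)-weight, this already gives \(\breve\LieG\cap\LieH=\breve\LieH\), and hence \(\breve\LieP\cap\LieH=\breve\LieH\) as well because \(\LieH\subseteq\LieP\).

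I would then compute \(\breve\LieP=\breve\LieG\cap\LieP\). Intersecting weight space by weight space, the Cartan contribution is \(\breve\LieH\), and \(\rtsp{\LieG}{\aa}\) survives exactly when \(\aa\in\breve\Delta\) and \(\Pheight{\aa}\ge 0\). By the structure of \(\breve\Delta\) recalled above, its positively graded roots are precisely the \(P\)-maximal ones and its negatively graded roots the \(P\)-minimal ones, the rest being \(P\)-compact; the \(P\)-minimal roots are discarded, so \(\breve\LieP=\breve\LieH\oplus\rtspMax{\LieG}\oplus\bigoplus_{\aa\in\breve\Delta\cap\cptRts}\rtsp{\LieG}{\aa}\). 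The first equality is then immediate: every summand of \(\breve\LieP\) lies in \(\LieP'=\LieGZ\oplus\rtspMax{\LieG}\), because \(\breve\LieH\subseteq\LieH\subseteq\LieGZ\) and \(\breve\Delta\cap\cptRts\subseteq\cptRts\), so \(\breve\LieP+\LieGZ\subseteq\LieP'\); and conversely \(\LieP'=\LieGZ+\rtspMax{\LieG}\) with \(\rtspMax{\LieG}=Z\subseteq\LieGN\cap\breve\LieG\subseteq\breve\LieP\), so \(\LieP'\subseteq\LieGZ+\breve\LieP\). Hence \(\LieP'=\breve\LieP+\LieGZ\).

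For the second equality I intersect the descriptions of \(\breve\LieP\) and \(\LieGZ\): the Cartan parts meet in \(\breve\LieH\), and \(\rtsp{\LieG}{\aa}\subseteq\breve\LieP\cap\LieGZ\) forces \(\aa\) to be \(P\)-compact (from \(\LieGZ\)) and to occur in \(\breve\LieP\); the \(P\)-maximal summand of \(\breve\LieP\) cannot contribute, as there \(\Pheight{\aa}>0\), so \(\breve\LieP\cap\LieGZ=\breve\LieH\oplus\bigoplus_{\aa\in\breve\Delta\cap\cptRts}\rtsp{\LieG}{\aa}\). The assertion \(\breve\LieP\cap\LieGZ=\breve\LieH\) is therefore equivalent to \(\breve\Delta\cap\cptRts=\emptyset\), which follows from the identification \(\cptRts=\breve\Delta^{\perp}\) established just before the lemma, since no nonzero root is orthogonal to itself. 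Because \(\breve\LieH\) is by definition the span of the coroots of \(\breve\Delta\), it is the Cartan subalgebra of the semisimple \(\breve\LieG\), completing the proof. The step I expect to carry the real content is the passage through \(\cptRts=\breve\Delta^{\perp}\): the bare description of \(\breve\Delta\) as the root subsystem generated by the \(P\)-maximal roots does not obviously rule out \(P\)-compact roots, so the argument really leans on the sharper structural facts about \(\breve\Delta\) and \(\cptRts\) assembled in the preceding subsection, which are worth cross-checking against the explicit list of associated cominuscules in \cite{McKay:2020}.
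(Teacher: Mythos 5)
Your proof of the first equality is correct and is essentially the paper's argument, carried out more carefully: you verify \(\breve\LieP\subseteq\LieP'\) summand by summand and correctly observe that any root spaces coming from \(\breve\Delta\cap\cptRts\) already lie in \(\LieGZ\subseteq\LieP'\). (The paper instead justifies this inclusion by asserting that \(\breve\LieP\) is spanned by the \(P\)-maximal root vectors and the \(\breve\Delta\)-Cartan subalgebra, which is precisely the point at issue below; your version of the first half is the more robust one.)

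The gap is in the second equality, and you have put your finger on it yourself: your computation reduces \(\breve\LieP\cap\LieGZ=\breve\LieH\) to the claim \(\breve\Delta\cap\cptRts=\emptyset\), and the only support you offer is the parenthetical identification \(\cptRts=\breve\Delta^{\perp}\). That identification cannot be taken at face value: the same sentence of the paper adds that \(\cptRts\) ``can intersect \(\breve\Delta\)'', and the surrounding examples exhibit exactly such overlaps. Concretely, take \(G_2\) with the short simple root \(\alpha_1\) noncompact and the long simple root \(\alpha_2\) compact (the \(3\)-graded parabolic, with \(\breve{X}=\Proj{2}\)). The \(P\)-maximal roots are \(3\alpha_1+\alpha_2\) and \(3\alpha_1+2\alpha_2\); their difference is the compact root \(\alpha_2\), so \(\breve\Delta\) is the hexagon of long roots and \(\pm\alpha_2\in\breve\Delta\cap\cptRts\). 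Hence \(\rtsp{\LieG}{\alpha_2}\oplus\rtsp{\LieG}{-\alpha_2}\subseteq\breve\LieG\cap\LieP\cap\LieGZ\), and with the paper's definition \(\breve{P}=\breve{G}\cap P\) one gets \(\breve\LieP\cap\LieGZ\supsetneq\breve\LieH\); indeed \(\dim\breve\LieP=6\) here (as it must be, since \(\dim\breve\LieG=8\) and \(\dim\breve{X}=2\)), not the \(4\) that the description ``Cartan plus \(P\)-maximal root vectors'' would give. So your reduction is sound but the final step fails, and what your weight-space analysis actually establishes is that the second assertion holds if and only if \(\breve\Delta\cap\cptRts=\emptyset\) --- it is false as stated in the overlap cases. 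Since the paper's own proof never addresses the second assertion, this is a defect of the lemma rather than only of your argument, but as written your proof cannot be completed.
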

\begin{proof}
Clearly \(\LieP'=\LieG_0\oplus\rtspMax{\LieG}\subseteq\LieG_0+\breve\LieP\).
But \(\breve\LieP\) is the span of the \(P\)-maximal root vectors and the \(\breve\Delta\)-Cartan subalgebra, so lies in \(\LieP'\).
\end{proof}
We draw the gradings of the positive roots of the parabolic subgroups of the rank \(2\) simple groups.
Under the heading  \(\breve\LieG\), we draw the roots of the symmetry Lie algebra of the associated cominuscule, and under the heading \(\LieG'\) the roots of the automorphism Lie algebra.
\begin{longtabl}{@{}>{$\mathrlap}c<{$}>{$}c<{$}ccc@{}>{$}c<{$}}
\toprule
& 
&
\text{Grading} & 
$\breve\LieG$ & 
$\LieG'$&
\text{Tally}\\ 
\midrule
\endfirsthead
\multicolumn{3}{c}{continued \dots}\\
\toprule
& 
&
\text{Grading} & 
$\breve\LieG$ & 
$\LieG'$&
\text{Tally}\\ 
\midrule
\endhead
\multicolumn{3}{c}{continued \dots}\\
\endfoot
\bottomrule
\endlastfoot
A_2 & 
\dynkin A{*x} &
\drawroots{A}{1} 
& 
\begin{tikzpicture}[baseline=-.5]
\begin{rootSystem}{A}
\roots
\parabolic{1}
\draw[/root system/grading] (hex cs:x=-1,y=2) -- (hex cs:x=1,y=1) -- (hex cs:x=2,y=-1) -- (hex cs:x=1,y=-2) -- (hex cs:x=-1,y=-1) -- (hex cs:x=-2,y=1) -- cycle;
\simpleroots
\end{rootSystem}
\end{tikzpicture}
& 
\begin{tikzpicture}[baseline=-.5]
\begin{rootSystem}{A}
\roots
\parabolic{1}
\draw[/root system/grading] (hex cs:x=-1,y=2) -- (hex cs:x=1,y=1) -- (hex cs:x=2,y=-1) -- (hex cs:x=1,y=-2) -- (hex cs:x=-1,y=-1) -- (hex cs:x=-2,y=1) -- cycle;
\simpleroots
\end{rootSystem}
\end{tikzpicture}
&2
 \\ \midrule
A_2 & 
\dynkin A{xx} &
\drawroots{A}{3} 
& 
\begin{tikzpicture}[baseline=-.5]
\begin{rootSystem}{A}
\roots
\parabolic{3}
\draw[/root system/grading,line width=.3cm] (hex cs:x=1,y=1) -- (hex cs:x=-1,y=-1);
\simpleroots
\end{rootSystem}
\end{tikzpicture}
& 
\begin{tikzpicture}[baseline=-.5]
\begin{rootSystem}{A}
\roots
\parabolic{3}
\draw[/root system/grading,line width=.3cm] (hex cs:x=1,y=1) -- (hex cs:x=-1,y=-1);
\simpleroots
\end{rootSystem}
\end{tikzpicture}
&
1
 \\ \midrule
B_2 & 
\dynkin B{x*} &
\drawroots{B}{1} 
& 
\begin{tikzpicture}[baseline=-.5]
\begin{rootSystem}{B}
\roots
\parabolic{1}
\draw[/root system/grading] (square cs:x=1,y=1) -- (square cs:x=-1,y=1)-- (square cs:x=-1,y=-1)-- (square cs:x=1,y=-1)--cycle;
\simpleroots
\end{rootSystem}
\end{tikzpicture}
& 
\begin{tikzpicture}[baseline=-.5]
\begin{rootSystem}{B}
\roots
\parabolic{1}
\draw[/root system/grading] (square cs:x=1,y=1) -- (square cs:x=-1,y=1)-- (square cs:x=-1,y=-1)-- (square cs:x=1,y=-1)--cycle;
\simpleroots
\end{rootSystem}
\end{tikzpicture}
&
3
 \\ \midrule
B_2 & 
\dynkin B{*x} &
\drawroots{B}{2} 
& 
\begin{tikzpicture}[baseline=-.5]
\begin{rootSystem}{B}
\roots
\parabolic{2}
\draw[/root system/grading,line width=.3cm] (square cs:x=1,y=1) -- (square cs:x=-1,y=-1);
\simpleroots
\end{rootSystem}
\end{tikzpicture}
& 
\begin{tikzpicture}[baseline=-.5]
\begin{rootSystem}{B}
\roots
\parabolic{3}
\draw[/root system/grading,line width=.3cm] (square cs:x=1,y=1) -- (square cs:x=-1,y=-1);
\draw[/root system/grading,line width=.3cm] (square cs:x=-1,y=1) -- (square cs:x=1,y=-1);
\simpleroots
\end{rootSystem}
\end{tikzpicture}
&
1
 \\ \midrule
B_2 & 
\dynkin B{xx} &
\drawroots{B}{3} 
& 
\begin{tikzpicture}[baseline=-.5]
\begin{rootSystem}{B}
\roots
\parabolic{3}
\draw[/root system/grading,line width=.3cm] (square cs:x=1,y=1) -- (square cs:x=-1,y=-1);
\simpleroots
\end{rootSystem}
\end{tikzpicture}
& 
\begin{tikzpicture}[baseline=-.5]
\begin{rootSystem}{B}
\roots
\parabolic{3}
\draw[/root system/grading,line width=.3cm] (square cs:x=1,y=1) -- (square cs:x=-1,y=-1);
\draw[/root system/grading,line width=.3cm] (square cs:x=1,y=-1) -- (square cs:x=0,y=0);
\simpleroots
\end{rootSystem}
\end{tikzpicture}
 & 1
 \\ \midrule
G_2 & 
\dynkin G{x*} &
\drawroots{G}{1} 
& 
\begin{tikzpicture}[baseline=-.5]
\begin{rootSystem}{G}
\roots
\parabolic{1}
\draw[/root system/grading,line width=.3cm] (hex cs:x=1,y=1) -- (hex cs:x=-1,y=-1);
\simpleroots
\end{rootSystem}
\end{tikzpicture}
& 
\begin{tikzpicture}[baseline=-.5]
\begin{rootSystem}{G}
\roots
\parabolic{1}
\draw[/root system/grading,line width=.3cm] (hex cs:x=1,y=1) -- (hex cs:x=-1,y=-1);
\draw[/root system/grading,line width=.3cm] (hex cs:x=-1,y=1) -- (hex cs:x=1,y=-1);
\simpleroots
\end{rootSystem}
\end{tikzpicture}
& 1
 \\ \midrule
G_2 & 
\dynkin G{*x} &
\drawroots{G}{2} 
& 
\begin{tikzpicture}[baseline=-.5]
\begin{rootSystem}{G}
\roots
\parabolic{2}
\draw[/root system/grading] (hex cs:x=-1,y=2) -- (hex cs:x=1,y=1) -- (hex cs:x=2,y=-1) -- (hex cs:x=1,y=-2) -- (hex cs:x=-1,y=-1) -- (hex cs:x=-2,y=1) -- cycle;
\simpleroots
\end{rootSystem}
\end{tikzpicture}
& 
\begin{tikzpicture}[baseline=-.5]
\begin{rootSystem}{G}
\roots
\parabolic{2}
\draw[/root system/grading] (hex cs:x=-1,y=2) -- (hex cs:x=1,y=1) -- (hex cs:x=2,y=-1) -- (hex cs:x=1,y=-2) -- (hex cs:x=-1,y=-1) -- (hex cs:x=-2,y=1) -- cycle;
\simpleroots
\end{rootSystem}
\end{tikzpicture}
&
1
 \\ \midrule
G_2 & 
\dynkin G{xx} &
\drawroots{G}{3} 
& 
\begin{tikzpicture}[baseline=-.5]
\begin{rootSystem}{G}
\roots
\parabolic{3}
\draw[/root system/grading,line width=.3cm] (hex cs:x=1,y=1) -- (hex cs:x=-1,y=-1);
\simpleroots
\end{rootSystem}
\end{tikzpicture}
& 
\begin{tikzpicture}[baseline=-.5]
\begin{rootSystem}{G}
\roots
\parabolic{3}
\draw[/root system/grading,line width=.3cm] (hex cs:x=1,y=1) -- (hex cs:x=-1,y=-1);
\draw[/root system/grading,line width=.3cm] (hex cs:x=0,y=0) -- (hex cs:x=1,y=-1);
\simpleroots
\end{rootSystem}
\end{tikzpicture}
&1
\end{longtabl}
Any \(P\)-character restricts to an \(P'\)-character; these have the same Cartan subgroups so the same characters.

\subsection{Chevalley bases}\label{subsubsection:ChevalleyBases}
Pick a complex semisimple Lie group \(G\) and a Cartan subgroup \(H \subset G\).
Denote the Killing form on \(\LieH^*\) by \(\alpha, \beta \mapsto \KillingForm{\alpha}{\beta}\).
A \emph{Chevalley basis} \(\XX{\alpha} \in \rtsp{\LieG}{\alpha}, \HH{\alpha} \in \LieH\) is a spanning set of \(\LieG\) parameterized by roots \(\alpha \in \LieH^*\) so that
\begin{enumerate}
\item \([\HH{\beta}\XX{\alpha}]=2 \frac{\KillingForm{\alpha}{\beta}}{\KillingSquare{\beta}} \XX{\alpha}\)
\item \([\HH{\alpha}\HH{\beta}]=0\),
\item
\[
\lb{\XX{\alpha}}{\XX{\beta}}=
\begin{cases}
\HH{\alpha},&\text{if } \alpha+\beta=0, \\
N_{\alpha\beta} \XX{\alpha+\beta},&\text{if \(\alpha+\beta\) a root}, \\
0,&\text{otherwise} \\
\end{cases}
\]
with
\begin{enumerate}
\item \(N_{\alpha\beta}=\pm (p+1)\), where \(p\) is the largest integer for which \(\beta-p \, \alpha\) is a root,
\item \(N_{-\alpha,-\beta}=-N_{\alpha\beta}\),
\item we set \(N_{\alpha \beta}=0\) if \(\alpha+\beta = 0\) or \(\alpha+\beta\) is not a root.
\end{enumerate}
\end{enumerate}
It follows then that
\[
\aa(\check{\bb})=2\frac{\KillingForm{\aa}{\bb}}{\KillingSquare{\bb}}.
\]
Every complex semisimple Lie group \(G\) with a Cartan subgroup \(H \subset G\) admits a Chevalley basis \cite{Serre:2001} p. 51.
A Chevalley basis in this sense is only a spanning set; if we pick out a basis of simple roots \(\alpha_i\) then 
\(\set{\check{\alpha}_i}_{i=1}^{r}\sqcup\set{e_{\aa}}_{\alpha\in\Delta}\) is a basis of \(\LieH^*\).
The \emph{Cartan integers} are
\[
2\frac{\KillingForm{\alpha_i}{\alpha_j}}{\KillingSquare{\alpha_j}}.
\]


\begin{lemma}\label{lemma:Jacobi.mystery}
For any roots \(\aa, \bb\) of any complex semisimple Lie group \(G\), in any Chevalley basis,
\[
N_{\bb,-\aa}N_{\aa,\bb-\aa}
+
N_{\aa\bb}N_{-\aa,\aa+\bb}
=
\begin{cases}
0, & \text{if \(\aa=\pm \bb\),} \\
-2\frac{\KillingForm{\bb}{\aa}}{\KillingSquare{\aa}}, & \text{otherwise.}
\end{cases}
\]
\end{lemma}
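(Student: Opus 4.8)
\emph{Proof proposal.} The plan is to obtain the identity from a single instance of the Jacobi identity, applied to the triple $\XX{\aa}$, $\XX{-\aa}$, $\XX{\bb}$, and then to rewrite the resulting structure constants using only the antisymmetry $N_{\mu\nu}=-N_{\nu\mu}$, which is immediate from $\lb{\XX{\mu}}{\XX{\nu}}=-\lb{\XX{\nu}}{\XX{\mu}}$. I would first treat the case $\aa\ne\pm\bb$. The Jacobi identity reads
\[
\lb{\lb{\XX{\aa}}{\XX{-\aa}}}{\XX{\bb}}
+\lb{\lb{\XX{-\aa}}{\XX{\bb}}}{\XX{\aa}}
+\lb{\lb{\XX{\bb}}{\XX{\aa}}}{\XX{-\aa}}=0.
\]
Here $\lb{\XX{\aa}}{\XX{-\aa}}=\HH{\aa}$, so the first summand is $\lb{\HH{\aa}}{\XX{\bb}}=2\frac{\KillingForm{\bb}{\aa}}{\KillingSquare{\aa}}\XX{\bb}$ by the defining relations of a Chevalley basis; this is the one place a Cartan-subalgebra bracket enters, and it is the origin of the Killing-form term on the right-hand side. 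The remaining two summands are also multiples of $\XX{\bb}$, since $(\bb-\aa)+\aa=\bb$ and $(\aa+\bb)-\aa=\bb$; adopting the convention that $N_{\mu\nu}=0$ whenever $\mu+\nu$ vanishes or fails to be a root (so that the formulas hold uniformly, whether or not $\bb\pm\aa$ happen to be roots), the second summand is $N_{-\aa,\bb}N_{\bb-\aa,\aa}\XX{\bb}$ and the third is $N_{\bb,\aa}N_{\aa+\bb,-\aa}\XX{\bb}$.

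Comparing coefficients of $\XX{\bb}$ then gives
\[
2\frac{\KillingForm{\bb}{\aa}}{\KillingSquare{\aa}}
+N_{-\aa,\bb}N_{\bb-\aa,\aa}
+N_{\bb,\aa}N_{\aa+\bb,-\aa}=0,
\]
and I would finish by applying antisymmetry to each factor: $N_{-\aa,\bb}=-N_{\bb,-\aa}$, $N_{\bb-\aa,\aa}=-N_{\aa,\bb-\aa}$, $N_{\bb,\aa}=-N_{\aa\bb}$, and $N_{\aa+\bb,-\aa}=-N_{-\aa,\aa+\bb}$, so that both products are unchanged and the displayed relation becomes exactly $N_{\bb,-\aa}N_{\aa,\bb-\aa}+N_{\aa\bb}N_{-\aa,\aa+\bb}=-2\frac{\KillingForm{\bb}{\aa}}{\KillingSquare{\aa}}$. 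For the degenerate cases $\aa=\pm\bb$, I would simply note that $N_{\bb,-\aa}$ vanishes (its subscripts sum to $0$ when $\aa=\bb$, and to the non-root $-2\aa$ when $\aa=-\bb$) and likewise $N_{\aa\bb}$ vanishes (its subscripts sum to the non-root $2\aa$ when $\aa=\bb$, and to $0$ when $\aa=-\bb$), so each of the two products on the left is zero, matching the right-hand side.

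I do not anticipate any genuine obstacle: this is a bookkeeping argument, and the only points requiring care are the treatment of the structure constants as extended by the convention $N_{\mu\nu}=0$ outside the set of roots (so that all three Jacobi summands can be written uniformly as multiples of $\XX{\bb}$), and keeping track of signs when passing from $N_{-\aa,\bb},N_{\bb-\aa,\aa},N_{\bb,\aa},N_{\aa+\bb,-\aa}$ to the combinations $N_{\bb,-\aa},N_{\aa,\bb-\aa},N_{\aa\bb},N_{-\aa,\aa+\bb}$ appearing in the statement. The relation $N_{-\aa,-\bb}=-N_{\aa\bb}$ from the definition of a Chevalley basis is not even needed; only the tautological antisymmetry of the bracket is used.
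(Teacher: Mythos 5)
Your proposal is correct and takes essentially the same route as the paper: both apply the Jacobi identity once to the triple $e_{\alpha},e_{-\alpha},e_{\beta}$ and compare coefficients of $e_{\beta}$, the only cosmetic difference being which nesting of the Jacobi identity you start from (yours requires the tautological antisymmetry $N_{\mu\nu}=-N_{\nu\mu}$ to reindex each factor, which the paper sidesteps by bracketing in the other order so the constants come out already in the stated form). Your explicit check of the degenerate cases $\alpha=\pm\beta$ is a small addition that the paper leaves implicit.
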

\begin{proof}
If \(\aa \ne \pm \bb\), the Jacobi identity gives
\begin{align*}
0
&=
\lb{\XX{\bb}}{\lb{\XX{-\aa}}{\XX{\aa}}}
+
\lb{\XX{\aa}}{\lb{\XX{\bb}}{\XX{-\aa}}}
+
\lb{\XX{-\aa}}{\lb{\XX{\aa}}{\XX{\bb}}},
\\
&=
-\lb{\XX{\bb}}{\HH{\aa}}
+
N_{\bb,-\aa}\lb{\XX{\aa}}{\XX{\bb-\aa}}
+
N_{\aa\bb}\lb{\XX{-\aa}}{\XX{\aa+\bb}},
\\
&=
\lb{\HH{\aa}}{\XX{\bb}}
+
N_{\bb,-\aa}N_{\aa,\bb-\aa}\XX{\bb}
+
N_{\aa\bb}N_{-\aa,\aa+\bb}\XX{\bb},
\\
&=
2\frac{\KillingForm{\bb}{\aa}}{\KillingSquare{\aa}}\XX{\bb}
+
N_{\bb,-\aa}N_{\aa,\bb-\aa}\XX{\bb}
+
N_{\aa\bb}N_{-\aa,\aa+\bb}\XX{\bb},
\\
&=
\pr{
2\frac{\KillingForm{\bb}{\aa}}{\KillingSquare{\aa}}
+
N_{\bb,-\aa}N_{\aa,\bb-\aa}
+
N_{\aa\bb}N_{-\aa,\aa+\bb}
}
\XX{\bb}.
\end{align*}
\end{proof}
\subsection{Dual bases}
The dual basis is defined by
\begin{align*}
\aa(\HH{\bb})&=2\frac{\KillingForm{\aa}{\bb}}{\KillingSquare{\bb}},\\
\alpha(\XX{\bb})&=0,\\
\omega^{\aa}(\HH{\bb})&=0,\\
\omega^{\aa}(\XX{\bb})&=\delta^{\aa}_{\bb}.
\end{align*}
These extend uniquely to left invariant \(1\)-forms on \(G\), to which we apply
\[
d\omega(X,Y)=\LieDer_X(\omega(Y))-\LieDer_Y(\omega(X))-\omega(\lb{X}{Y}),
\]
to left invariant \(1\)-forms \(\omega\) and vector fields \(X,Y\).
The first two terms vanish by left invariance:
\[
d\omega(X,Y)=-\omega(\lb{X}{Y}).
\]
Let
\(
\om{\beta \gamma} := \om{\beta} \wedge \om{\gamma},
\)
etc. and compute the structure equations of semisimple Lie groups:
\begin{align*}
d \om{\aa} &= - \aa \wedge \om{\aa} - \frac{1}{2}
\sum_{\beta + \gamma=\alpha} N_{\beta \gamma} \om{\beta\gamma}
\\
d \alpha &=
-
\sum_{\beta} \frac{\KillingForm{\alpha}{\beta}}
{\KillingSquare{\beta}} \om{\beta,-\beta}
\end{align*}
with sums over all roots.
\subsection{Characters and modules}
Every character of \(P\) is a weight of \(\LieG\), since it restricts to a character of the Cartan subalgebra of \(\LieG\).
Every character of \(P\) is invariant under \(\GZ\).
A character of \(P\) is \emph{dominant} if it is a dominant weight of \(\LieG\).
If \(V\) is a holomorphic \(P\)-module, the \emph{character} \(\chi_V \in \LieH^*\) of \(V\) is the character of \(\Lmtop{V}\).
\begin{example}
\[
\chi_{\ge k} :=
\chi_{\LieG_{\ge k}}
=
\sum_{\Pheight{\bb}\ge k} \beta.
\]
\end{example}
\begin{example}
For the flag variety \((X,G)=(X^6,G_2)=(G_2/B,G_2)=\dynkin G{xx}\), the grading is
\[
\drawroots{G}{3} 
\]
The positive roots are
{
\pgfkeys{/root system/weight length=1.5cm}
\[
\begin{tikzpicture}
\begin{rootSystem}{G}
\roots
\parabolic{3}
\simpleroots
\node [above] at \Root {1}{0} {\(\alpha_1\)};
\node [below right] at \Root {0}{1} {\(\alpha_2\)};
\node [above] at \Root {1}{1} {\(\alpha_1+\alpha_2\)};
\node [right] at \Root {1}{2} {\(\alpha_1+2\alpha_2\)};
\node [below right] at \Root {1}{3} {\(\alpha_1+3\alpha_2\)};
\node [above right] at \Root {2}{3} {\(2\alpha_1+3\alpha_2\)};
\end{rootSystem}
\end{tikzpicture}
\]
\pgfkeys{/root system/weight length=.5cm}
}
So
\begin{align*}
\chi_{\ge 5}&=2\alpha_1+3\alpha_2,\\
\chi_{\ge 4}&=3\alpha_1+6\alpha_2,\\
\chi_{\ge 3}&=4\alpha_1+8\alpha_2,\\
\chi_{\ge 2}&=5\alpha_1+9\alpha_2,\\
\chi_{\ge 1}&=6\alpha_1+10\alpha_2=2\delta.
\end{align*}
\end{example}
\begin{lemma}[Knapp \cite{Knapp:2002} p. 330 Proposition 5.99.]\label{lemma:Knapp.dominant}
For any positive integer \(p \ge 0\), the character of any \(P\)-submodule of \(\LieG_{+}^{\otimes p}\) is dominant.
\end{lemma}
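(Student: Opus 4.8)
The plan is to check directly that $\chi_V$ pairs nonnegatively with every simple coroot, the whole matter reducing to rank-one representation theory. Write $\chi_V=\sum_\mu\of{\dim V_\mu}\mu$, the sum over the weight lattice (finitely many nonzero terms). Fix a simple root $\aa$; grouping the terms into pairs $\set{\mu,s_\aa\mu}$, and using that $s_\aa\mu=\mu-\mu\of{\Ha{}}\aa$ so that $\mu$ and $s_\aa\mu$ take opposite values on $\Ha{}$,
\[
\chi_V\of{\Ha{}}=\sum_\mu\of{\dim V_\mu}\,\mu\of{\Ha{}}=\sum_{\mu\of{\Ha{}}<0}\bigl(-\mu\of{\Ha{}}\bigr)\bigl(\dim V_{s_\aa\mu}-\dim V_\mu\bigr),
\]
the weights with $\mu\of{\Ha{}}=0$ contributing nothing. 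So it is enough to prove $\dim V_\mu\le\dim V_{s_\aa\mu}$ whenever $\mu\of{\Ha{}}=-c<0$.

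So fix such a weight $\mu$, and record two facts about the raising operator $\Xa=e_\aa$. Since the Cartan subgroup was chosen so that every positive root space lies in $\LieP$, we have $\Xa\in\LieP$, so $V$ is stable under $\Xa$, and then $\Xa^{c}$ carries $V_\mu$ into $V_{\mu+c\aa}=V_{s_\aa\mu}$. Also $\ad\of{\Xa}$ preserves $\LieGN=\bigoplus_{k\ge1}\LieGQ{k}$: if $\aa$ is noncompact then $\Xa\in\LieGQ{1}$ and $\ad\of{\Xa}$ raises grade, while if $\aa$ is compact then $\Xa\in\LieGZ$ and $\ad\of{\Xa}$ preserves grade; either way $\Xa$ preserves $\LieGN^{\otimes p}$. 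Now regard $\LieGN^{\otimes p}$ as a linear subspace of the genuine $\LieG$-module $\LieG^{\otimes p}$: since $\LieGN$ is an $\Ad\of{P}$-submodule of $\LieG$ and both tensor powers carry the diagonal action, this inclusion is equivariant for the $\LieSL{2}$-triple $\Xa,\XX{-\aa},\Ha{}$. Standard $\LieSL{2}$-theory says $\Xa^{c}$ is injective on the $(-c)$-weight space of the finite-dimensional module $\LieG^{\otimes p}$; restricting this injection to $V_\mu\subseteq\bigl(\LieGN^{\otimes p}\bigr)_\mu\subseteq\bigl(\LieG^{\otimes p}\bigr)_\mu$ yields the desired inequality $\dim V_\mu\le\dim V_{s_\aa\mu}$. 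As $\aa$ was an arbitrary simple root, $\chi_V$ is a dominant weight.

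The hard part is really only the bookkeeping in the second paragraph: one must make sure that the action of $\Xa\in\LieP$ on the $P$-module $\LieGN^{\otimes p}$ is literally the restriction of the adjoint action of $\Xa$ on $\LieG^{\otimes p}$, so that the rank-one representation theory applies verbatim to the subspace $V_\mu$. It is worth noting that the argument is uniform in $\aa$; for $P$-compact $\aa$ it moreover gives $\chi_V\of{\Ha{}}=0$, since then the $\LieSL{2}$-triple $\Xa,\XX{-\aa},\Ha{}$ lies in $\LieGZ$ and so acts on all of $\LieGN^{\otimes p}$, hence on $V$, forcing the weights of $V$ to be symmetric about $0$ along $\Ha{}$. (Alternatively, this is \cite{Knapp:2002} p.~330 Proposition~5.99.)
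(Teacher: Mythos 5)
The paper does not actually prove this lemma --- it is stated purely as a citation to Knapp \cite{Knapp:2002}, Proposition 5.99 --- so there is no in-text argument to compare against. Your proof is correct and self-contained, and it is essentially the standard argument underlying the cited result: reduce dominance to the inequality \(\dim V_\mu\le\dim V_{s_\aa\mu}\) for \(\mu(\Ha{})<0\), and obtain that inequality from the fact that \(e_\aa^{c}\) is injective on the \((-c)\)-eigenspace of \(\Ha{}\) in the ambient finite-dimensional \(\LieG\)-module \(\LieG^{\otimes p}\), combined with \(e_\aa\in\LieP\) preserving the \(P\)-submodule \(V\). The two points you flag as needing care --- that the \(P\)-action on \(\LieGN^{\otimes p}\) is literally the restriction of the adjoint action on \(\LieG^{\otimes p}\), and that all simple root vectors \(e_\aa\) lie in \(\LieP\) under the paper's choice of Cartan subgroup --- are exactly the right things to check, and both hold. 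The closing observation that \(\chi_V(\Ha{})=0\) for \(P\)-compact \(\aa\) is also correct and matches the paper's remark that every \(P\)-character is \(\GZ\)-invariant. Nothing to fix.
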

For any \(P\)-module \(V\) and weight \(\lambda \in \LieH^*\), denote by \(V_{\lambda}\) the \(\lambda\)-weight space of \(V\).
For any set \(\Gamma\) of weights of a \(P\)-module \(V\), let 
\[
V_{\Gamma}=\bigoplus_{\lambda \in \Gamma} V_{\lambda}.
\]
A set \(\Gamma\) of weights of \(V\) is \emph{saturated} if for \(\lambda\in\Gamma\) and \(\aa\) a root whose root vector lies in \(\LieP\), if \(\lambda+\aa\) is also a weight of \(V\), and if \(e_{\aa} \colon V_{\lambda} \to V_{\lambda+\aa}\) is not zero, then \(\lambda+\aa \in \Gamma\).
So \(\Gamma\) is saturated just when \(V_{\Gamma} \subset V\) is a \(P\)-submodule.
In particular, if \(\dimC{V_{\lambda}}=1\) or \(0\) for all \(\lambda\), then 
\[
\chi_{\Gamma} := \chi_{V_{\Gamma}} = \sum_{\lambda \in \Gamma} \lambda.
\]
\begin{lemma}\label{lemma:quotient.modules}
Take a complex semisimple Lie group \(G\) and a parabolic subgroup \(P \subset G\) with Lie algebras \(\LieP \subset \LieG\).
There are quotient holomorphic \(P\)-modules \(\LieGN \to Q_i \to 0\) whose characters \(\Q{\ge 0}\)-span the dominant \(P\)-characters.
\end{lemma}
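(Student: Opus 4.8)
The plan is to produce one such quotient module for each crossed (noncompact) simple root, and then to invert. Let $S$ be the set of indices of the crossed simple roots. I would first record the shape of the target cone: a $P$-character is automatically $\GZ$-invariant, hence orthogonal to every compact coroot, so it lies in the span of the fundamental weights $\omega_j$ with $j\in S$, and it is dominant exactly when its $\omega_j$-coefficients are all nonnegative. Thus the dominant $P$-characters are exactly the $\Q{\ge 0}$-combinations of the $\omega_j$, $j\in S$, and it is enough to $\Q{\ge 0}$-express each $\omega_j$ through characters of quotient $P$-modules of $\LieGN$.

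For $i\in S$ let $Q_i$ be the span of the root spaces $\LieG_\beta$ with $\beta$ a positive root of grade $1$ whose coefficient on $\alpha_i$ equals $1$; since grade $1$ forces the coefficients on all other crossed simple roots to vanish, every weight of $Q_i$ has the form $\alpha_i+\sum_{k\notin S}n_k\alpha_k$ with each $n_k\ge 0$. These are quotient holomorphic $P$-modules of $\LieGN$: the grading filtration gives a $P$-module surjection $\LieGN=\LieGQ{\ge 1}\twoheadrightarrow\LieGQ{1}=\LieGN/\LieGQ{\ge 2}$, the unipotent radical $\GN$ acts trivially on $\LieGQ{1}$ because $[\LieGN,\LieGN]\subseteq\LieGQ{\ge 2}$, so $\LieGQ{1}=\bigoplus_{i\in S}Q_i$ is a decomposition of $P$-modules (a grade $1$ root has coefficient $1$ on exactly one crossed node), and composing the surjection with the projection onto the $i$\textsuperscript{th} summand realizes $Q_i$ as a quotient of $\LieGN$.

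The only substantive step is then a one-line weight computation. For $m\in S$ let $\varpi_m$ be the fundamental coweight of $\alpha_m$, so $\langle\alpha_k,\varpi_m\rangle=\delta_{km}$; since the compact simple roots pair to $0$ with $\varpi_m$, every weight of $Q_i$ pairs with $\varpi_m$ to give $\delta_{im}$, hence $\langle\chi_{Q_i},\varpi_m\rangle=(\dim Q_i)\,\delta_{im}$. The $\varpi_m$ with $m\in S$ form a basis of the centre of $\LieGZ$, so any element of the span of the $\omega_j$ ($j\in S$) is determined by its pairings with them; comparing pairings and using $\langle\omega_j,\varpi_m\rangle=(\mathbf C^{-1})_{jm}$ (immediate from $\omega_j=\sum_k(\mathbf C^{-1})_{jk}\alpha_k$, with $\mathbf C$ the Cartan matrix of $G$) yields the identity
\[
\omega_j=\sum_{i\in S}\frac{(\mathbf C^{-1})_{ji}}{\dim Q_i}\,\chi_{Q_i}\qquad(j\in S).
\]
Because the inverse of a finite type Cartan matrix has nonnegative entries (positive within, and zero across, simple factors), all of these coefficients are $\ge 0$, which proves the lemma.

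The step most likely to need care is isolating the right modules $Q_i$ and carrying out the pairing computation cleanly; once that is done there is no real obstacle. The point worth stressing is that the $\chi_{Q_i}$ are \emph{not} dominant in general (already on the full flag variety of $\SL{3}$ one gets $\chi_{Q_i}=\alpha_i$), so one cannot simply collect dominant quotients and must instead extract the cone identity displayed above, in which the positivity of $\mathbf C^{-1}$ does all the work.
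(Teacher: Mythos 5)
Your proof is correct, and it uses the very same quotient modules as the paper: your $Q_i$ (grade-one roots with coefficient $1$ on the crossed node $\alpha_i$) coincide with the paper's $Q_j=\LieGN/V_j$, where $V_j$ is the saturated submodule spanned by roots with $n_j\ge 2$ or with positive coefficient on some other crossed node. The only divergence is in the final linear algebra: the paper expands a given character $\chi=\sum p_i\alpha_i$ in the simple-root basis, subtracts $\sum(p_i/q_i)\chi_i$, and kills the remainder by noting it is a $P$-character lying in the span of the compact roots; you instead pair against the fundamental coweights $\varpi_m$ and invert, arriving at the identity $\omega_j=\sum_i\bigl((\mathbf C^{-1})_{ji}/\dim Q_i\bigr)\chi_{Q_i}$. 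These are the same computation in different coordinates, but your version has the merit of making explicit the one point the paper's proof leaves tacit, namely why the coefficients are \emph{nonnegative} (the positivity of the inverse of a finite-type Cartan matrix) --- which is exactly what upgrades ``$\mathbb Q$-span'' to the ``$\Q{\ge 0}$-span'' claimed in the statement. Your closing remark that the $\chi_{Q_i}$ need not themselves be dominant is also a worthwhile observation that the paper does not make.
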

\begin{proof}
Expand every root \(\beta\) into a sum of simple roots \(\beta=\sum_i n_i\of{\beta} \alpha_i\).
Note that \(\Pheight{\alpha_i}=0\) (compact) or \(1\) (noncompact).
If \(\alpha_j\) is noncompact, let \(\Gamma_j \subset \ncptPosRts\) be the set of 
roots \(\bb > 0\) with either \(n_j\of{\beta}\ge 2\) or \(n_i\of{\beta}>0\) for some simple noncompact root \(\alpha_i \ne \alpha_j\).
This set \(\Gamma_j\) is saturated.
The associated holomorphic \(P\)-submodule \(V_j := \rtsp{\LieG}{\Gamma_j}\)  has quotient \(Q_j:= \LieGN/V_j\) the sum of root spaces of roots \(\alpha_j+\beta\) for \(\beta\) zero or compact positive; let \(\chi_j:= \chi_{Q_j}\).
Expand each character as a sum of weights, and use the positive simple roots as a \(\Q{}\)-basis of the weights: \(\chi_j=q_j \alpha_j \pmod{\cptRts}\) where \(q_j=\dimC{Q_j}\).
For any character \(\chi = \sum p_i \alpha_i\), if \(\alpha_1, \alpha_2, \dots, \alpha_p\) are the noncompact positive simple roots, then \(\chi - \sum_{i=1}^p \frac{p_i}{q_i} \chi_i\) is a character of \(P\), a sum of compact roots, so a weight of the maximal semisimple subalgebra of \(\LieGZ \subset \LieP\), invariant under reflections in all compact roots, so vanishes.
\end{proof}

We use this to relate characters of submodules of \(\LieGN\) to those of quotient modules of \(\LieGN\).
For any positive integer \(p \ge 0\), every holomorphic \(P\)-submodule of \(\LieGN^{\otimes p}\) has character a positive rational linear combination of characters of quotients of \(P\)-submodules of \(\LieGN\).

\begin{lemma}\label{lemma:not.perp.roots}
For any flag variety, let \(\chi_{\ge j}\) be the sum of the roots of grade \(j\) or more, \(j=1,2,\dots\).
Then \(\chi_{\ge 1}\) is perpendicular precisely to the roots of grade zero (the compact roots).
Moreover \(\chi_{\ge 2}\) is perpendicular precisely to the roots \(\beta\) for which either \(\beta\) has grade zero or \(\beta\) is a root of a cominuscule factor.
Finally, for all \(j\), \(\chi_{\ge j+1}\) has positive inner product with any root of grade \(j\) or more.
\end{lemma}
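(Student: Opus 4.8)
The plan is to derive all three assertions from a single root‑string computation, supplemented for the first one by the identity $\chi_{\ge 1}=2\rho-2\rho_0$. Fix a root $\beta$ and partition the roots of grade $\ge k$ by intersecting with the complete $\beta$‑strings of $\Delta$. On a complete $\beta$‑string $\{\mu,\mu-\beta,\dots,\mu-m\beta\}$ with top $\mu$ (so $\mu(\HH{\beta})=m$) the values $(\mu-i\beta)(\HH{\beta})=m-2i$ sum to zero; and when $\Pheight{\beta}>0$ the grade drops strictly along the string, so the roots of grade $\ge k$ are the first $a_\sigma$ of the $m+1$ members and contribute $\sum_{i=0}^{a_\sigma-1}(m-2i)=a_\sigma(m-a_\sigma+1)=a_\sigma b_\sigma$, where $b_\sigma:=m+1-a_\sigma$ counts the string's roots of grade $<k$ (for $\Pheight{\beta}=0$ each string is grade‑homogeneous and this is trivial). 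Summing over strings,
\[
\chi_{\ge k}(\HH{\beta})=\sum_{\sigma}a_\sigma b_\sigma\ \ge\ 0,
\]
a quantity with the same sign as $\KillingForm{\chi_{\ge k}}{\beta}$ since $\KillingForm{\chi_{\ge k}}{\beta}=\tfrac12\KillingSquare{\beta}\,\chi_{\ge k}(\HH{\beta})$; it vanishes exactly when every $\beta$‑string lies wholly among the roots of grade $\ge k$ or wholly among those of grade $<k$, and is strictly positive exactly when some $\beta$‑string meets both. Two consequences I will reuse: for a compact simple root $\aa$ the reflection $s_\aa$ preserves the grading, hence permutes the roots of grade $\ge k$, hence fixes $\chi_{\ge k}$, so $\KillingForm{\chi_{\ge k}}{\aa}=0$; and therefore $\chi_{\ge k}$ is orthogonal to every grade‑zero root (a combination of compact simple roots) and, when $k\ge 2$, to every root of a cominuscule simple factor of $\LieG$ (such a factor contributes no root of grade $\ge 2$, and distinct simple factors are orthogonal).

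For the first assertion, write $\chi_{\ge 1}=2\rho-2\rho_0$ with $\rho,\rho_0$ the half‑sums of the positive roots of $\LieG$ and of $\LieGZ$. Since $\rho_0$ is a nonnegative combination of the compact simple roots, $\KillingForm{\rho_0}{\aa_i}\le 0$ for a noncompact simple root $\aa_i$ (all cross‑terms $\KillingForm{\aa_j}{\aa_i}$ are $\le 0$), whereas $\KillingForm{\rho}{\aa_i}=\tfrac12\KillingSquare{\aa_i}$; hence $\KillingForm{\chi_{\ge 1}}{\aa_i}=\KillingSquare{\aa_i}-2\KillingForm{\rho_0}{\aa_i}$ is strictly positive when $\aa_i$ is noncompact and $0$ when $\aa_i$ is compact (there $\KillingForm{\rho_0}{\aa_i}=\tfrac12\KillingSquare{\aa_i}$). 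Writing a root $\beta=\sum_i n_i\aa_i$, whose $n_i$ all share one sign, $\KillingForm{\chi_{\ge 1}}{\beta}=\sum_{i\ \text{noncompact}}n_i\KillingForm{\chi_{\ge 1}}{\aa_i}$ has the same sign as $\Pheight{\beta}=\sum_{i\ \text{noncompact}}n_i$, and so vanishes precisely when $\Pheight{\beta}=0$. That is the first assertion.

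The second and third assertions share the refinement: if $\LieG$ is simple with top grade $\ell$ and $2\le k\le\ell$, then $\KillingForm{\chi_{\ge k}}{\beta}>0$ for every root $\beta$ of grade $\ge k-1$. I would prove it so. Take $\beta$ (necessarily positive, as $g:=\Pheight{\beta}\ge k-1\ge 1$), with $g\in[k-1,\ell]$. Adding compact simple roots whenever possible (this raises the height, so terminates, keeps the grade, and by the previous paragraph keeps $\KillingForm{\chi_{\ge k}}{\cdot}$) yields a positive root $\beta^{+}$ of grade $g$ admitting no compact simple raising. Since every root but the highest root $\theta$ can be raised by some simple root, either $\beta^{+}=\theta$—forcing $g=\ell$—or $\beta^{+}+\aa_q$ is a root for some noncompact simple $\aa_q$—forcing $g<\ell$. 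If $\beta^{+}=\theta$ then $\KillingForm{\chi_{\ge k}}{\theta}=\sum_{\Pheight{\alpha}\ge k}\KillingForm{\alpha}{\theta}\ge\KillingSquare{\theta}>0$, because $\theta$ is dominant (so every term is $\ge 0$) and appears in the sum. Otherwise $\gamma:=\beta^{+}+\aa_q$ has grade $g+1\in[k,\ell]$, so $\gamma$ is a root of grade $\ge k$ while $\gamma-\beta^{+}=\aa_q$ is a root of grade $1<k$; the $\beta^{+}$‑string through $\gamma$ thus meets both the roots of grade $\ge k$ and those of grade $<k$, and the displayed formula gives $\KillingForm{\chi_{\ge k}}{\beta^{+}}>0$. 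Either way $\KillingForm{\chi_{\ge k}}{\beta}=\KillingForm{\chi_{\ge k}}{\beta^{+}}>0$. Taking $k=2$ yields the nontrivial half of the second assertion (a nonzero‑grade root in no cominuscule factor lies in a simple factor of top grade $\ge2$, so is not orthogonal to $\chi_{\ge 2}$, the reverse implication having been noted above); taking $k=j+1$, factor by factor, yields the third.

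The step I expect to be the main obstacle is the root‑string computation together with the compact‑raising reduction: the monotonicity of the grade along a $\beta$‑string, the bookkeeping identity $\sum_{i=0}^{a-1}(m-2i)=a(m-a+1)$, and above all the verification that $\beta^{+}$ is genuinely either $\theta$ or raisable by a noncompact simple root, with the dichotomy controlled by whether $g<\ell$. The reduction from semisimple to simple $\LieG$ and the handling of grade‑zero roots, cominuscule factors, and the top grade are routine but demand care—in particular, a root lying in a simple factor whose top grade equals $j$ is orthogonal to $\chi_{\ge j+1}$, so the third assertion must be read with the understanding that $\beta$ lies in a factor of top grade exceeding $j$.
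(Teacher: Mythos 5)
Your proof is correct, and although it shares the paper's basic toolkit---reduction to the irreducible case, reflection-invariance of $\chi_{\ge k}$ under compact roots, and root strings---it reaches the crucial strict positivity by a genuinely different route. The paper proves $\KillingForm{\beta}{\chi_{\ge j+1}}\ge 0$ by pairing each negative term $\KillingForm{\beta}{\alpha}<0$ with the positive term coming from $r_{\beta}\alpha$ on the same string, characterizes equality as closure of the grade-$\ge j+1$ roots under $\beta$-strings, and then rules out equality for $\Pheight{\beta}\ge j$ by a contradiction argument that terminates in $\KillingSquare{\beta}=0$. You obtain the inequality and the equality criterion in one stroke from the bookkeeping identity $\sum_{i=0}^{a-1}(m-2i)=ab$, and then prove strict positivity constructively: raise $\beta$ through compact simple roots to $\beta^{+}$ (changing neither the grade nor the pairing with $\chi_{\ge k}$), and either $\beta^{+}=\theta$, handled by dominance of the highest root, or some noncompact simple root $\alpha_q$ raises $\beta^{+}$, and the $\beta^{+}$-string through $\beta^{+}+\alpha_q$ visibly straddles the grade-$k$ threshold. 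You also prove the $\chi_{\ge 1}$ statement directly via $\chi_{\ge 1}=2\rho-2\rho_0$, where the paper simply cites Knapp's Corollary 5.100. Your version is arguably more self-contained and makes the source of the positivity explicit; the paper's is shorter where it can lean on the literature.

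Two small points. First, the degenerate ``strings'' $\{\beta\}$ and $\{-\beta\}$ do not individually sum to zero: they contribute $+2$ and $-2$ respectively to $\chi_{\ge k}(\check\beta)$, so your identity does not literally apply to them. Pair them: for $k\ge 1$ their combined contribution is $2$ if $\Pheight{\beta}\ge k$ and $0$ otherwise, hence still nonnegative and still zero exactly when neither lies in the grade-$\ge k$ set, so no conclusion is disturbed---but the partition should be stated to account for this pair. Second, your closing caveat is correct and worth recording: as literally stated, the third assertion fails for a root lying in a simple factor whose top grade is exactly $j$, since $\chi_{\ge j+1}$ restricts to zero there; the paper's argument carries the same implicit restriction, so this is a defect of the statement rather than of either proof.
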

\begin{proof}
Roots from different irreducible factors are perpendicular, so it suffices to prove the result for an irreducible flag variety.
For \(\chi_{\ge 1}\), the result is corollary~5.100, p. 330 \cite{Knapp:2002}.
Every \(\chi_{\ge j}\) is perpendicular to all compact roots, i.e. grade zero, because the roots of the parabolic subgroup are invariant under reflections in the compact roots.

A flag variety is cominuscule just when all roots have grade at most \(1\), i.e. just when \(\chi_{\ge 2}=0\), for which the result is obvious.
So we can assume that there are roots of grade \(2\).

Next consider the case of \(\chi=\chi_{\ge j+1}\) for some \(j \ge 1\).
Suppose that \(\bb>0\) and that \(\KillingForm{\bb}{\chi}<0\).
Then \(\KillingForm{\bb}{\aa}<0\) for some \(\aa\) of grade \(\ge j+1\).
When two roots have negative inner product, they lie on a root string
\[
\aa,\aa+\bb,\dots,r_{\bb}\aa
\]
going up to the reflection \(r_\bb\) in \(\bb\) p. 
\cite{Knapp:2002} p. 144 proposition 2.29, \cite{Serre:2001} p. 29 proposition 3.
So \(\aa,\aa+\bb,\dots,r_{\bb}\aa\) is a string of roots of increasing grades, and 
\[
\KillingForm{\bb}{r_{\bb}\aa}=-\KillingForm{\bb}{\aa}>0
\]
cancels with \(\KillingForm{\bb}{\aa}\) in the expansion of \(\KillingForm{\bb}{\chi}\).
So all negative contributions \(\KillingForm{\bb}{\aa}<0\) to \(\KillingForm{\bb}{\chi}\) cancel with various positive contributions.
Therefore \(\KillingForm{\bb}{\chi}\ge 0\) for any root \(\bb > 0\).
Moreover \(\KillingForm{\bb}{\chi}=0\) with \(\bb>0\) just when every positive contribution cancels out too in the process of cancelling negatives.
In other words, \(\KillingForm{\bb}{\chi}=0\) with \(\bb>0\) just when the set of roots of grade \(\ge j+1\) is closed under taking \(\pm\bb\)-root strings.

Assume that \(\KillingForm{\bb}{\chi}=0\) and \(\Pheight{\bb}=j\).
Pick a root \(\cc\) with \(\KillingForm{\bb}{\cc}<0\) and with grade \(1\le\Pheight{\cc}\le j\).
Then the \(\bb\)-root string of \(\cc\) crosses into grade \(j+1\).
But then the set of roots of grade \(\ge j+1\) is not closed under taking \(\pm\bb\)-root strings, a contradiction.
So \(\KillingForm{\bb}{\cc}\ge 0\) if \(1\le\Pheight{\cc}\le j\).
All roots of any positive grade are sums of roots \(\cc\) with \(1\le\Pheight{\cc}\le j\), so \(\KillingForm{\bb}{\cc}\ge 0\) for \(\Pheight{\cc}>0\).
But then
\[
\KillingForm{\bb}{r_{\bb}\cc}
=
-\KillingForm{\bb}{\cc}\le 0.
\]
But \(r_{\bb}\cc=\cc+\bb+\dots+\bb\) so
\[
\KillingForm{\bb}{r_{\bb}\cc}\ge 0.
\]
So \(\KillingForm{\bb}{r_{\bb}\cc}=0\), and so \(\KillingForm{\bb}{\cc}=0\).
In particular \(\KillingForm{\bb}{\bb}=0\), i.e. \(\bb=0\), a contradiction.
So if \(\Pheight{\bb}\ge j\) then \(\KillingForm{\bb}{\chi}>0\).
\end{proof}
\subsection{Characters and the associated cominuscule}
Take a flag variety \(X=G/P\) and its associated cominuscule subvariety \(\breve{X}=\breve{G}/\breve{P}\subseteq X\).
Write \(X\) as a product of irreducible flag varieties
\[
X=\prod_j X_j, G=\prod_j G_j.
\]
Let
\[
\chi_j=\sum_{\alpha}\alpha
\]
be the character which is the sum of all \(P\)-maximal roots in simple factor number \(j\).
\begin{lemma}\label{lemma:chi.j}
For every character \(\chi\) of \(P\), there are unique rational numbers \(a_j\) so that 
\[
\chi':=\chi-\sum a_j\chi_j
\]
is a character perpendicular to all \(P\)-compact, \(P\)-maximal and \(P\)-minimal roots.
\end{lemma}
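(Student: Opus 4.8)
The plan is to recast the statement as a direct-sum decomposition of the \(\Q{}\)-vector space of characters of \(P\). Working inside \(\LieH^{*}\otimes\Q{}\) with the Killing form, I identify that space with the subspace \(C\) of weights orthogonal to every \(P\)-compact root: every character of \(P\) is a \(\GZ\)-invariant weight, hence orthogonal to the compact roots, and conversely every such weight extends to a character of \(P\) trivial on the unipotent radical. Since the roots of distinct simple factors of \(\LieG\) are mutually orthogonal, and the sets of \(P\)-compact, \(P\)-maximal and \(P\)-minimal roots, together with each \(\chi_{j}\), are supported on a single simple factor, it suffices to treat the case that \(G\) is simple, where there is a single \(\chi_{1}\), the sum of all \(P\)-maximal roots.

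First I would reformulate the perpendicularity condition on \(\chi'\). The \(P\)-maximal roots generate \(\breve\Delta\) as a root system, so they \(\Q{}\)-span \(\operatorname{span}\breve\Delta\), and the \(P\)-minimal roots are their negatives; hence ``\(\chi'\) perpendicular to all \(P\)-maximal and \(P\)-minimal roots'' is the same as ``\(\chi'\perp\breve\Delta\)''. Writing \(C_{0}:=\{\chi'\in C:\chi'\perp\breve\Delta\}\), the lemma for simple \(G\) becomes the assertion \(C=\Q{}\chi_{1}\oplus C_{0}\), the general \(a_{j}\) being obtained by doing this in each simple factor. That \(\chi_{1}\in C\) is immediate: the span of the \(P\)-maximal root spaces is the center \(Z=Z_{\GN}\) of the nilradical, a \(\GZ\)-submodule of \(\LieGN\), so \(\chi_{1}\) is the character of the line \(\Lmtop{Z}\), a \(\GZ\)-invariant weight.

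It then remains to check that \(\chi_{1}\notin C_{0}\) and that \(C_{0}\) has codimension exactly one in \(C\). For the first: \(\breve X=\breve G/\breve P\) is a flag variety whose parabolic \(\breve P\) has nilradical \(\breve\LieG\cap\LieGN=Z\), so the positive noncompact roots of \(\breve X\) are precisely the \(P\)-maximal roots and \(\chi_{1}\) is the character \(\chi_{\ge 1}\) for \(\breve X\); by lemma~\vref{lemma:not.perp.roots} applied to \(\breve X\), \(\chi_{1}\) is orthogonal precisely to the compact roots of \(\breve X\), hence to no \(P\)-maximal root, so \(\chi_{1}\not\perp\breve\Delta\) and in particular \(\chi_{1}\ne 0\). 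For the codimension: \(C=\Delta_{0}^{\perp}\) and \(C_{0}=(\Delta_{0}\cup\breve\Delta)^{\perp}=(\operatorname{span}\Delta_{0}+\operatorname{span}\breve\Delta)^{\perp}\), and since the compact roots belonging to \(\breve\Delta\) already lie in \(\Delta_{0}\) we have \(\operatorname{span}\Delta_{0}+\operatorname{span}\breve\Delta=\operatorname{span}\Delta_{0}+\operatorname{span}(\text{$P$-maximal roots})\). Here I would invoke that the associated cominiscule of a simple flag variety is irreducible \cite{McKay:2020}, equivalently that \(Z\cong T_{x_{0}}\breve X\) is an irreducible \(\GZ\)-module: then all its weights --- the \(P\)-maximal roots --- differ by sums of compact roots, so are congruent modulo \(\operatorname{span}\Delta_{0}\), and therefore enlarge \(\operatorname{span}\Delta_{0}\) by at most one dimension; by exactly one, since \(\chi_{1}\) is a nonzero sum of \(P\)-maximal roots with \(\chi_{1}\perp\Delta_{0}\), so \(\chi_{1}\notin\operatorname{span}\Delta_{0}\). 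Combining, \(\dim C-\dim C_{0}=1\), and with \(\chi_{1}\in C\setminus C_{0}\) this forces \(C=\Q{}\chi_{1}\oplus C_{0}\); expanding \(\chi=\sum_{j}a_{j}\chi_{j}+\chi'\) factor by factor produces the unique rationals \(a_{j}\) and the character \(\chi'=\chi-\sum_{j}a_{j}\chi_{j}\) orthogonal to all \(P\)-compact, \(P\)-maximal and \(P\)-minimal roots.

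The step I expect to be the crux is this codimension count. A priori, imposing orthogonality to all the \(P\)-maximal roots could drop the dimension of \(C\) by more than one, and then no single multiple of \(\chi_{1}\) per simple factor could absorb it. What rescues the statement is exactly the irreducibility of the associated cominiscule --- equivalently, of the center of the nilradical as a \(\GZ\)-module --- which makes all \(P\)-maximal roots point in one direction modulo the compact roots, so that together they impose only one independent condition beyond those already built into the characters of \(P\).
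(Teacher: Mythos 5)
Your proof is correct and rests on the same key fact as the paper's: the irreducibility of the \(P\)-maximal roots as a \(\GZ\)-module (equivalently, of the centre of the nilradical), which makes orthogonality to all of them a single linear condition beyond the orthogonality to the \(P\)-compact roots that every character of \(P\) already satisfies. Your packaging of this as a codimension-one count \(C=\Q{}\chi_j\oplus C_0\) in each simple factor, rather than the paper's direct subtraction of multiples of \(\chi_j\), is only a cosmetic difference, though you do make explicit the nondegeneracy point (\(\chi_j\ne 0\) and \(\chi_j\not\perp\chi_j\), via Lemma~\ref{lemma:not.perp.roots} applied to \(\breve X\)) that the paper leaves implicit.
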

\begin{proof}
Clearly we can assume that \((X,G)\) is irreducible.
Let
\[
\chi''=\sum_{\alpha}\alpha
\]
be the character which is the sum of all \(P\)-maximal roots.
The characters of \(P\) are trivial on the semisimple Levi factor of \(P\), and on the unipotent radical, hence reduce to the center of the reductive Levi factor, i.e. to the quotient of the Cartan subalgebra of \(P\) by the Cartan subalgebra of the \(P\)-compact roots, i.e. of \(G_0\).
So these characters belong to the weight lattice of the Cartan subgroup of \(P\).
For any compact root \(\beta\),
\[
0=\chi([e_\beta,e_{-\beta}])=\chi(\check\bb).
\]
But then
\[
0=\chi(\check\bb)=2\frac{\KillingForm{\chi}{\bb}}{\KillingSquare{\bb}},
\]
so \(\chi\) is perpendicular to all \(P\)-compact roots, hence invariant under reflection in all \(P\)-compact roots.

The \(P\)-maximal roots form an irreducible \(G_0\)-module \cite{Knapp:2002} p. 332 Theorem 5.104.
Hence each \(P\)-maximal root \(\beta\) is a sum of the unique \(\breve{P}\)-positive \(\breve{G}\)-simple \(\breve\Delta\)-root and various \(P\)-compact roots. 
Since those compact roots are perpendicular to \(\chi\), every \(P\)-maximal root \(\beta\) is perpendicular to \(\chi\) just when any one of them is, or when the sum \(\chi''\) is.
A character \(\chi\) of \(P\) is perpendicular to all \(P\)-maximal roots \(\beta\) just when it is perpendicular to \(\chi''\).
We can subtract a suitable multiple of \(\chi''\) from \(\chi\) to arrange that 
\[
\chi':=\chi-a\chi''
\]
is perpendicular to \(\chi''\), for unique rational number \(a\), as the inner product is rational (since weights lie in the rational linear combinations of the roots).
\end{proof}

\section{Pseudoeffective line bundles}\label{section:pseudoeffective}
Take a holomorphic line bundle \(L \to M\) on a complex manifold \(M\), and a Hermitian metric on \(L\), with norm denoted 
\(h\colon L \to \R{}\).
On the associated principal \(\C[\times]\)-bundle \(L^{\times} \to M\) (the complement of the zero section), let \(\phi:=-2\log h\); \(\phi \colon L^{\times} \to \R{}\) is the \emph{weight} of the Hermitian metric.
Conversely pick any locally integrable function \(\phi \colon L^{\times} \to \R{}\) so that \(\phi(as)=-\log |a| + \phi(s)\) for any \(a \in \C[\times]\) and \(s \in L^{\times}\) and let \(h(s):= e^{-\phi(s)/2}\) and call \(h\) a \emph{singular Hermitian metric}.
The \emph{curvature} of \(h\) is the basic current \(\sqrt{-1} \partial \bar\partial \phi\).
A line bundle \(L\) is \emph{pseudoeffective} if it admits a singular Hermitian metric with curvature nonnegative in the sense of currents \cite{Demailly:2014} p. 1.
A compact K\"ahler manifold is uniruled if and only if its canonical bundle is not pseudoeffective \cite{ou2025characterizationuniruledcompactkahler}.
\begin{theorem}
A holomorphic Cartan geometry on a compact K\"ahler manifold is minimal if and only if the canonical bundle of the manifold is pseudoeffective.
\end{theorem}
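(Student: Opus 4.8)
The plan is to factor the equivalence through uniruledness. The sentence just before the theorem records that a compact \Kaehler manifold \(M\) is uniruled precisely when its canonical bundle fails to be pseudoeffective, so it suffices to prove that a holomorphic Cartan geometry on a compact \Kaehler manifold \(M\) is minimal if and only if \(M\) is not uniruled. I would prove the two implications separately.

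Suppose first that the geometry is not minimal. Then it drops: there is a \(G\)-equivariant holomorphic map \(X\to X'\) of complex homogeneous spaces with stabilizers \(P\subseteq P'\) and a holomorphic Cartan geometry \(\G\to M'\) modelled on \((X',G)\) with \(M=\G/P\) and \(\dim M'<\dim M\). The quotient \(\G/P\to\G/P'\) exhibits \(M\to M'\) as a holomorphic fiber bundle with positive-dimensional fiber \(F=P'/P\). Since \(M\) and \(M'\) are compact, \(F\) is a compact complex submanifold of \(M\), hence a compact homogeneous \Kaehler manifold, so by the Borel--Remmert theorem it is the product of a flag variety and a complex torus. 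I would then invoke the Biswas--McKay analysis of drops, of which theorem~\vref{theorem:drop} is the smooth projective prototype, to rule out the torus factor, so that \(F\) is a positive-dimensional flag variety; flag varieties are rationally connected, so every point of \(M\) lies on a rational curve and \(M\) is uniruled.

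Conversely, suppose \(M\) is uniruled, so that \(M\) is covered by a family of rational curves. The rigidity of holomorphic Cartan geometries along rational curves (Biswas--McKay, with theorem~\vref{theorem:drop} again as the projective prototype) forces \(M\) to be the total space of a holomorphic fiber bundle, with positive-dimensional flag variety fibers, over a compact \Kaehler manifold \(M'\) carrying a holomorphic Cartan geometry, the geometry on \(M\) being isomorphic to the lift of the one on \(M'\); hence it drops and is not minimal. Together with the uniruledness criterion for pseudoeffectivity of the canonical bundle, the two implications give the theorem.

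The main obstacle is transporting the Biswas--McKay dichotomy from smooth projective varieties to compact \Kaehler manifolds that need not be projective, for example complex tori. This requires the maximal rationally connected fibration of a uniruled compact \Kaehler manifold (Campana), a check that its base is again compact \Kaehler and inherits a holomorphic Cartan geometry by dropping, and the rigidity that makes the fibers honest flag varieties rather than torus bundles over flag varieties; dually, in the step from non-minimality to uniruledness one must exclude drops whose fiber \(F\) is a positive-dimensional complex torus, using the classification of compact complex homogeneous spaces together with the fact recalled in section~\vref{section:bundles.on.tori} that a subvariety with torsion canonical bundle in a manifold bearing a holomorphic Cartan geometry is a complex torus.
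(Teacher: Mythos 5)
Your reduction is exactly the paper's: the printed proof is the single line ``Follows immediately from Biswas \& McKay \cite{Biswas.McKay:2016} theorem 2 p.~2'', i.e.\ the equivalence of minimality with non-uniruledness, combined with the characterization (quoted immediately before the theorem) of uniruled compact K\"ahler manifolds as those whose canonical bundle is not pseudoeffective. Your further sketch of how the Biswas--McKay dichotomy itself is proved, and your flagging of the projective-versus-K\"ahler transfer and of possible torus fibers in a drop, address the content of the cited theorem rather than anything the paper argues, so the route is the same.
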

\begin{proof}
This is immediate from Biswas \& McKay \cite{Biswas.McKay:2016} theorem 2 p. 2.
\end{proof}
\begin{lemma}\label{lemma:pseudoeffective.trivial}
The dual of a pseudoeffective line bundle on a smooth complex projective variety is either trivial or has no nonzero holomorphic sections.
The Chern class of a pseudoeffective line bundle is nonnegative.
\end{lemma}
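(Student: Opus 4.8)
The plan is to reduce both assertions to the single cohomological identity \(c_1(L)+c_1(L^{*})=0\), paired against a fixed K\"ahler class. Fix a K\"ahler form \(\omega\) on the smooth projective variety \(M\) (say, the restriction of a Fubini--Study form under a projective embedding) and set \(n:=\dim M\), so that one may integrate divisors and \(2\)-classes against \([\omega]^{n-1}\).

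\emph{Nonnegativity of the Chern class.} This I regard as immediate from the definition of pseudoeffective used above: \(L\) carries a singular Hermitian metric whose curvature is a closed current \(T\ge 0\), and \(T\) represents \(c_1(L)\) in \(H^{2}(M,\mathbb{R})\). Thus \(c_1(L)\) lies in the closure of the effective cone; moreover, since \(T\wedge\omega^{n-1}\) is a positive measure and, \(T\) being closed, \(\int_M T\wedge\omega^{n-1}=c_1(L)\cdot[\omega]^{n-1}\), we get \(c_1(L)\cdot[\omega]^{n-1}\ge 0\). I would record this inequality explicitly, since it is exactly what the rest of the argument needs.

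\emph{Triviality of the dual when it has a section.} Suppose \(s\in H^{0}(M,L^{*})\) is nonzero. Then \(D:=\operatorname{div}(s)\) is an effective divisor with \(\mathscr{O}_M(D)\cong L^{*}\), and the goal is to prove \(D=0\). By standard intersection theory, \(c_1(\mathscr{O}_M(D))\cdot[\omega]^{n-1}=\int_D\omega^{n-1}\), the \(\omega\)-volume of \(D\): a sum, over the irreducible components of \(D\) with their positive multiplicities, of their (positive) \(\omega\)-volumes; hence it is \(\ge 0\), with equality if and only if \(D=0\). On the other hand \(c_1(\mathscr{O}_M(D))=c_1(L^{*})=-c_1(L)\), so
\[
\int_D\omega^{n-1}=-\,c_1(L)\cdot[\omega]^{n-1}\le 0
\]
by the previous paragraph. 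Therefore \(\int_D\omega^{n-1}=0\), forcing \(D=0\), whence \(L^{*}\cong\mathscr{O}_M(D)=\mathscr{O}_M\) is trivial --- and then so is \(L\).

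\emph{Where the (slight) difficulty sits.} There is essentially no analysis to do once one accepts the definition of pseudoeffectivity used in this section: the only inputs beyond linear algebra are that the curvature current of a (singular) metric on a line bundle represents the real Chern class, that the wedge of a closed positive \((1,1)\)-current with \(\omega^{n-1}\) is a positive measure of total mass \(c_1(L)\cdot[\omega]^{n-1}\), and that a nonzero effective divisor has strictly positive \(\omega\)-volume. A purely algebro-geometric route would instead invoke that on a projective variety the pseudoeffective cone is the closure of the effective cone \cite{Demailly:2014} and that this cone is salient (contains no line); the one point needing a little care there is the salience, which again comes down to pairing with \([\omega]^{n-1}\).
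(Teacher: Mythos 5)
Your proof is correct, and it follows the same overall strategy as the paper's: pair everything against the $(n-1)$-st power of an ample/K\"ahler class, use that a nonzero effective divisor has strictly positive volume against that class, and conclude that the divisor of any section of $L^*$ must vanish. The one genuine difference is in how the inequality $c_1(L)\cdot[\omega]^{n-1}\ge 0$ is obtained: the paper invokes the Boucksom--Demailly--P\u{a}un--Peternell result that the Chern class of a pseudoeffective bundle is a limit of classes of effective divisors, and then pairs each approximant with a complete-intersection curve, whereas you work directly from the definition, wedging the closed positive curvature current with $\omega^{n-1}$ to get a positive measure of total mass $c_1(L)\cdot[\omega]^{n-1}$. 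Your route is more self-contained and avoids the external citation; the paper's route stays entirely inside intersection theory of divisors and curves once the approximation is granted. One small remark: your parenthetical assertion that ``$c_1(L)$ lies in the closure of the effective cone'' is not actually a consequence of merely having a positive curvature current (that implication \emph{is} the BDPP-type statement), but since your argument only uses the pairing inequality you do prove, nothing is lost.
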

\begin{proof}
The Chern class of any pseudoeffective line bundle \(L\) is the limit of a sequence of Chern classes of line bundles of effective divisors \(D_i\) (\cite{Boucksom/Demailly/Paun/Peternell:2013} p. 3 proposition 1.2).
If \(L^* \to M\) is the dual of a pseudoeffective line bundle, then \(\int_C c_1(L^*)=-\int_C c_1(L) \le 0\) on any curve \(C \subset M\).
If \(L_i \to M\) is the line bundle of a nonzero effective divisor \(D_i\), since \(M\) is projective, we can find a curve (intersection with a suitable linear subspace in projective space) which intersects that effective divisor \(C\cdot D_i\) times counting multiplicity, so \(\int_C c_1(L_i)=C\cdot D_i>0\) is a positive integer.
If \(L^*\to M\) has a holomorphic section, vanishing on some hypersurface \(D\), then \(\int_C c_1(L^*)=C\cdot D \ge 0\) for any curve \(C\).
But \(C\cdot D=-C\cdot D_i\le 0\), so \(C\cdot D=0\) for all curves \(C\), so \(D=0\), so \(L\) is trivial.
\end{proof}
\begin{theorem}[Campana and Peternell \cite{Campana/Peternell:2011} theorem 1.9]%
\label{theorem:Campana.Peternell}
Suppose that \(M\) is a smooth complex projective variety, not uniruled.
Suppose that \(\pr{\nForms{1}{M}}^{\otimes m} \to \vb{Q}\) is a torsion-free quotient sheaf, some \(m \ge 1\).
Then \(\det \vb{Q}\) is pseudoeffective.
\end{theorem}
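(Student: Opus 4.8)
My plan is to follow Campana and Peternell: derive pseudoeffectivity of $\det\vb{Q}$ from Miyaoka's generic semipositivity theorem for the cotangent bundle, combined with the characterization, due to Boucksom, Demailly, P\u{a}un and Peternell \cite{Boucksom/Demailly/Paun/Peternell:2013}, of pseudoeffective divisor classes on a smooth projective variety as exactly those having nonnegative intersection with every movable curve class. Recall that the closed movable cone of curves is generated by classes of the form $\mu_*(\tilde H_1\cdots\tilde H_{n-1})$, where $n=\dim M$, $\mu\colon\tilde M\to M$ runs over smooth projective modifications, and the $\tilde H_i$ run over ample classes on $\tilde M$. So it suffices to fix such a $\mu$ and such $\tilde H_i$ and prove $\det\vb{Q}\cdot\mu_*(\tilde H_1\cdots\tilde H_{n-1})\ge 0$, and by a routine reduction (replacing $\vb{Q}$ by the torsion-free quotient $\widetilde{\vb{Q}}$ of $(\nForms{1}{\tilde M})^{\otimes m}$ which agrees with $\mu^*\vb{Q}$ over the locus where $\mu$ is an isomorphism, and keeping track of the exceptional and relative-canonical divisors, which contribute harmlessly to intersections with a general movable curve on $\tilde M$) this reduces to the case $\tilde M=M$, $\mu=\operatorname{id}$.

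So take $C\subset M$ to be a general complete intersection of general members of $|k_iH_i|$ for fixed ample $H_i$ and $k_i\gg 0$. Then $C$ is a smooth irreducible curve, it moves in a family covering $M$, and it avoids the codimension $\ge 2$ locus where the torsion-free sheaf $\vb{Q}$ fails to be locally free, so $\vb{Q}|_C$ is an honest quotient bundle of $(\nForms{1}{M})^{\otimes m}|_C$. Since $M$ is not uniruled, Miyaoka's generic semipositivity theorem says that $\nForms{1}{M}|_C$ is a nef vector bundle (equivalently, the minimal Harder--Narasimhan slope of $\nForms{1}{M}$ for the polarization $(H_1,\dots,H_{n-1})$ is nonnegative, which one transfers to $C$ by the Mehta--Ramanathan restriction theorem). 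Tensor products of nef bundles are nef, so $(\nForms{1}{M})^{\otimes m}|_C=(\nForms{1}{M}|_C)^{\otimes m}$ is nef; quotient bundles of nef bundles are nef, so $\vb{Q}|_C$ is nef; hence the line bundle $\det(\vb{Q}|_C)=\det\vb{Q}|_C$ is nef on the curve $C$, i.e. $\det\vb{Q}\cdot C=\deg\bigl(\det\vb{Q}|_C\bigr)\ge 0$. As $C$ runs through all such complete intersections and, via $\mu$, through the generators of the movable cone, the cited duality gives that $\det\vb{Q}$ is pseudoeffective.

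The one genuinely deep input — and the main obstacle — is Miyaoka's generic semipositivity theorem, which I would not attempt to reprove. Its standard proofs proceed either by reduction to positive characteristic together with iterated Frobenius pullbacks of a hypothetical maximal destabilizing subsheaf of $\nForms{1}{M}$, or by observing that a Harder--Narasimhan quotient of $\nForms{1}{M}$ of negative slope is an algebraically integrable foliation whose general leaf is covered by rational curves (by the theorems of Miyaoka and of Bogomolov--McQuillan), forcing $M$ to be uniruled, contrary to hypothesis. Everything else above (the cone duality, the stability of nefness under tensor products and quotient bundles, the exceptional-divisor bookkeeping in the reduction to modifications) is formal. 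Since the statement to be proved is literally \cite{Campana/Peternell:2011}, theorem~1.9, in practice I would simply invoke it; the sketch above records how its proof runs.
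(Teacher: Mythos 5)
The paper offers no proof of this statement at all: it is quoted verbatim as an attributed external result, so simply invoking Campana--Peternell, as you say you would do in practice, is exactly what the paper does. Your sketch of their argument --- the duality of Boucksom--Demailly--P\u{a}un--Peternell between the pseudoeffective cone and the movable cone, combined with generic semipositivity of $\Omega^1_M$ on non-uniruled varieties and the stability of nefness under tensor powers and quotients --- is the standard route and is essentially correct. The one place where your outline hides real work is the ``routine reduction'' across modifications $\mu\colon\tilde M\to M$: a subsheaf of a nef bundle need not be nef, so one cannot simply transfer nefness of $\Omega^1_{\tilde M}|_{\tilde C}$ to $\mu^*\Omega^1_M|_{\tilde C}$, and the exceptional-divisor bookkeeping for $\det\widetilde{\vb{Q}}$ versus $\mu^*\det\vb{Q}$ involves a difference of two effective exceptional contributions of a priori unclear sign; the clean fix in the literature is to work directly with Harder--Narasimhan slopes of $\Omega^1_M$ with respect to the movable class $\mu_*(\tilde H_1\cdots\tilde H_{n-1})$ on $M$ itself, where a destabilizing quotient of negative slope would produce, via Bogomolov--McQuillan, a foliation with rationally connected leaves and hence uniruledness.
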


\section{Brackets of vector fields}\label{section:brackets}
Suppose that \(V \to M\) is a holomorphic vector bundle on a complex manifold with a Hermitian metric.
If \(s,t\) are \(V\)-valued functions and \(\xi,\eta\) are differential forms, say a \((p,0)\)-form and a \((q,0)\)-form, define a \(\pr{p,q}\)-form by
\(
\sesq{s \otimes \xi}{t \otimes \eta}
=
\frac{\sqrt{-1}^{p^2}}{2^p}
\left<s,t\right> \xi \wedge \bar{\eta}.
\)
Extend by \(\C\)-linearity and \(\bar{\mathbb{C}}\)-linearity to define \(\sesq{\lambda}{\mu}\) for any \(\lambda, \mu \in \cohomology{0}{M,V \otimes \Omega^{*,0}}\).
If additionally \(T\) is a section of \(\operatorname{End} V\) and \(\zeta\) is a \((k,k)\)-form, define
\[
\isesq{s \otimes \xi}{T \otimes \zeta}{t \otimes \eta}
=
\frac{\sqrt{-1}^{p^2}}{2^p}
\left<Ts,t\right> \xi \wedge \zeta \wedge \eta.
\]
Extend by \(\C\)-linearity and \(\bar{\mathbb{C}}\)-linearity to define \(\isesq{\lambda}{\tau}{\mu}\) for any
\[
\lambda, \mu \in \cohomology{0}{M,V \otimes \Omega^{\bullet,0}}, \tau \in \cohomology{0}{M,\Omega^{k,k} \otimes \operatorname{End} V}.
\]
If \(\Omega\) is a K\"ahler form, clearly
\(
\sesq{\lambda}{\lambda}\wedge\Omega^{n-p}\ge 0
\)
for any \(\lambda\), with equality just when \(\lambda=0\).

A subsheaf \(\vb{V} \subset TM\) is \emph{bracket closed} if the Lie bracket of any two local sections is a local section.
\begin{theorem}[Demailly \cite{Demailly:2002}]%
\label{theorem:Demailly.bracket}
Suppose that \(L \to M\) is a holomorphic line bundle on a compact \Kaehler manifold \(M\) with \Kaehler form \(\Omega\), and \(L\) has a singular Hermitian metric with Chern connection \(\nabla\).
Suppose that \(\lambda\) is a holomorphic differential form valued in \(L\).
Let \(\vb{V} \subset TM\) be the subsheaf of holomorphic tangent vector fields \(X\) on \(M\) so that \(X \hook \lambda=0\).
Then
\[
\int_M
\isesq{\lambda}{\frac{\sqrt{-1}}{2 \pi}\nabla^2_L}{\lambda}
\wedge e^{\Omega}
=
\frac{1}{\pi}
\int
\sesq{\nabla \lambda}{\nabla \lambda} \wedge e^{\Omega} \ge 0.
\]
Equality occurs just when \(\vb{V}\) is bracket closed and \(\lambda\) is parallel and the curvature of \(L\) satisfies \(\nabla^2_L \wedge \lambda=0\). 
\end{theorem}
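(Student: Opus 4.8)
The plan is to establish a Bochner-type integral identity by an integration by parts (Stokes' theorem), deduce the inequality from the pointwise positivity recorded immediately before the statement, and then extract the equality conditions from a Cartan-formula computation. Write $p$ for the degree of $\lambda$ and $n$ for the complex dimension of $M$. I would first prove everything assuming the Hermitian metric on $L$ is smooth, and then recover the singular case by the standard regularization: approximate the weight of the singular metric by a decreasing family of smooth weights with curvature bounded below, apply the smooth identity to each, and pass to the limit; since $\lambda$ is a fixed holomorphic form that does not depend on the metric, the pairings $\isesq{\lambda}{\nabla^2_L}{\lambda}$ and $\sesq{\nabla\lambda}{\nabla\lambda}$ converge as measures and the identity survives.

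For a smooth metric, note that $\bar\partial\lambda=0$, so $\nabla\lambda=\nabla^{1,0}\lambda$ is an $L$-valued $(p+1,0)$-form. Consider the $(p,p+1)$-form $\tau:=\sesq{\lambda}{\nabla\lambda}$. Because $\Omega$ is closed, $d\of{\tau\wedge e^\Omega}=\of{d\tau}\wedge e^\Omega$, so Stokes' theorem on the compact manifold $M$ gives $\int_M\of{d\tau}\wedge e^\Omega=0$. Expanding $d\tau$ using the compatibility of the Chern connection with the Hermitian metric together with $\bar\partial\lambda=0$, the only surviving terms are $\sesq{\nabla\lambda}{\nabla\lambda}$ and a term built from $\nabla^2\lambda$; for an $L$-valued form $\nabla^2\lambda=\nabla^2_L\wedge\lambda$, since the curvature of the induced connection is simply wedging by the curvature of $L$. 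Keeping track of the constants coming from the normalization $\frac{\sqrt{-1}}{2\pi}\nabla^2_L$ of the curvature and from the definition of $\sesq{\cdot}{\cdot}$ then yields exactly $\int_M\isesq{\lambda}{\frac{\sqrt{-1}}{2\pi}\nabla^2_L}{\lambda}\wedge e^\Omega=\frac{1}{\pi}\int_M\sesq{\nabla\lambda}{\nabla\lambda}\wedge e^\Omega$.

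Next, only the term $\Omega^{n-p-1}/(n-p-1)!$ of $e^\Omega$ contributes to the right-hand integral, so its integrand is a positive multiple of $\sesq{\nabla\lambda}{\nabla\lambda}\wedge\Omega^{n-p-1}$, which is $\ge 0$ pointwise by the inequality stated just before the theorem, with equality at a point exactly where $\nabla\lambda$ vanishes there. Hence the integral is $\ge 0$, and it vanishes if and only if $\nabla\lambda\equiv 0$; since $\bar\partial\lambda=0$ already, this says precisely that $\lambda$ is parallel. If $\lambda$ is parallel then $\nabla^2_L\wedge\lambda=\nabla^2\lambda=0$, giving the curvature condition, while conversely parallelism makes the right-hand integrand vanish pointwise, so the three listed conditions together are equivalent to equality. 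Finally, to see that parallelism forces $\mathbf{V}$ to be bracket closed, take local holomorphic sections $X,Y$ of $\mathbf{V}$ and arbitrary local holomorphic vector fields $Z_1,\dots,Z_{p-1}$, and evaluate the covariant Cartan formula for $\nabla^{1,0}\lambda$ on $\of{X,Y,Z_1,\dots,Z_{p-1}}$: since $\nabla^{1,0}\lambda=0$ the left side is zero, and on the right side every term in which $X$ or $Y$ is inserted into $\lambda$ vanishes because $X\hook\lambda=Y\hook\lambda=0$, leaving only $\pm\lambda\of{[X,Y],Z_1,\dots,Z_{p-1}}$. Hence $[X,Y]\hook\lambda=0$ on the locus $\lambda\ne 0$, so everywhere by continuity, and $\mathbf{V}$ is bracket closed.

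The step I expect to be the main obstacle is the passage to singular Hermitian metrics: making the integration by parts and the limit rigorous when $\nabla^2_L$ is only a positive current and $\nabla\lambda$ can be unbounded near the polar set, which is where the regularization and a careful weak-convergence (or monotone-convergence) argument are needed. A secondary nuisance is the bookkeeping of signs and powers of $\sqrt{-1}$ in the sesquilinear wedge calculus, which must be carried out exactly for the normalizing constants to come out as $\frac{\sqrt{-1}}{2\pi}$ and $\frac{1}{\pi}$.
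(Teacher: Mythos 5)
The paper offers no proof of this statement --- it is quoted directly from Demailly \cite{Demailly:2002} --- so there is nothing in-paper to compare against; what you have written is, in structure, a correct reconstruction of Demailly's own argument. The three ingredients are all right: the Stokes/Bochner identity coming from \(d\bigl(\sesq{\lambda}{\nabla\lambda}\bigr)\wedge e^{\Omega}\) together with \(\bar\partial\lambda=0\), \((\nabla^{1,0})^2=0\) and \(\nabla^2\lambda=\nabla^2_L\wedge\lambda\); the pointwise positivity of \(\sesq{\nabla\lambda}{\nabla\lambda}\wedge\Omega^{n-p-1}\), which is exactly the inequality the paper records just before the theorem applied to the \((p+1,0)\)-form \(\nabla\lambda\); and the local-frame Cartan-formula computation showing that \(\nabla^{1,0}\lambda=0\) forces \([X,Y]\hook\lambda=0\) for local sections \(X,Y\) of \(\vb{V}\), whence bracket closure. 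Two points need more care than you give them. First, in the singular case the claim that ``the pairings converge as measures since \(\lambda\) is fixed'' is not an argument: both sides depend on the approximating weights \(\phi_k\), the left through the curvature current and the pointwise norm \(e^{-\phi_k}\), the right through the connection form \(\partial\phi_k\) hidden inside \(\nabla_k\lambda\); one needs a decreasing approximation \(\phi_k\downarrow\phi\) and a monotone-convergence/Fatou argument on each side, and on a compact K\"ahler (not necessarily projective) manifold Demailly's regularization comes with a controlled loss of positivity that must be absorbed --- this is where the real work of \cite{Demailly:2002} lies, and you have only named it. Second, your final continuity step is stated backwards: the identity \([X,Y]\hook\lambda=0\) is obtained almost everywhere (off the polar set of the metric, where the connection form is defined), and extends to all of \(M\) because \([X,Y]\hook\lambda\) is a continuous (indeed holomorphic) form vanishing on a dense set; the locus \(\lambda\ne0\) plays no role. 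Neither point invalidates the strategy, but the first is the genuine technical content of the theorem rather than a routine limit.
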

Clearly \(\vb{V}\) is a coherent sheaf of \(\OO\)-modules, and a holomorphic foliation by the Frobenius theorem.
We give only a few details of Demailly's proof, but written without his use of local trivializations, which we hope clarifies his proof.
\begin{proof}
A section \(s\) of \(L\) is clearly identified with a function \(L^{\times}\xrightarrow{f}\C\) so that
\[
f(at)=a^{-1}f(t),
\]
by asking that, if \(t\in L^{\times}_m\) for some \(m\in M\), then \(f(t)=s(m)/t\).
Conversely, every function \(L^{\times}\xrightarrow{f}\C\) so that
\[
f(at)=a^{-1}f(t)
\]
arises uniquely from a section: define \(s(t):=f(t)t\), and see that \(s(at)=s(t)\), so that \(s\) descends to a section of \(L\).

For \(a\in\C[\times]\), denote by \(\ell_a\) the scaling operation
\[
\ell_a(t)=at,
\]
for \(t\in L^{\times}\).
Let \(v\) be the infinitesimal generator of this action
\[
v(t):=\left.\frac{d}{da}\right|_{a=1}\ell_a(t).
\]
A \((1,0)\)-connection on \(L\) is precisely a \((1,0)\)-form \(\gamma\) on \(L^{\times}\) so that 
\[
v\hook\gamma=1.
\]
For any singular Hermitian metric with weight \(\phi\), 
\[
v\hook d\phi=-1.
\]
So
\[
\gamma=-\partial\phi
\]
is a \((1,0)\)-connection, the Chern connection of the weight \(\phi\).

Any \(L\)-valued \((p,0)\)-form \(\lambda\) is identified in the same way with a semibasic \((p,0)\)-form on \(L^{\times}\), which we also denote by \(\lambda\), so that
\[
\ell_a^*\lambda=a^{-1}\lambda.
\]
The connection, expressed in this ``upstairs formalism'', becomes
\[
\nabla \lambda=d\lambda+\gamma\wedge\lambda.
\]
Compute
\begin{align*}
\nabla\Braket{\nabla\lambda|\lambda}
&=
\frac{\sqrt{-1}^{(p+1)^2}}{2^{p+1}}
\nabla\left(e^{-\phi/2}\nabla\lambda\wedge\bar\lambda\right),
\\
&=
\frac{\sqrt{-1}^{(p+1)^2}}{2^{p+1}}
e^{-\phi/2}\left(\nabla^2\lambda\wedge\bar\lambda
+(-1)^{p+1}\nabla\lambda\wedge\nabla\bar\lambda\right).
\end{align*}
The second term is
\[
(-1)^{p+1}\Braket{\nabla\lambda|\nabla\lambda}.
\]
The first is
\begin{align*}
\frac{\sqrt{-1}^{(p+1)^2}}{2^{p+1}}
e^{-\phi/2}\nabla^2\wedge\lambda\wedge\bar\lambda
&=
\frac{\sqrt{-1}^{p^2+2p+1}}{2^{p+1}}
e^{-\phi/2}\nabla^2\wedge\lambda\wedge\bar\lambda,
\\
&=
-(-1)^{p+1}
\frac{\sqrt{-1}^{p^2+1}}{2^{p+1}}
e^{-\phi/2}\nabla^2\wedge\lambda\wedge\bar\lambda,
\\
&=
-(-1)^{p+1}
\left(\frac{\sqrt{-1}}{2}\nabla^2\right)
\wedge
\frac{\sqrt{-1}^{p^2}}{2^p}
e^{-\phi/2}\lambda\wedge\bar\lambda,
\\
&=
-(-1)^{p+1}
\Braket{\lambda|\frac{\sqrt{-1}}{2}\nabla^2|\lambda}
\end{align*}
Put back together,
\[
(-1)^{p+1}\nabla\Braket{\nabla\lambda|\lambda}
=
\Braket{\nabla\lambda|\nabla\lambda}
-
\Braket{\lambda|\frac{\sqrt{-1}}{2}\nabla^2|\lambda}.
\]
If \(\phi\) is smooth enough, then by Stokes's theorem, our integrals agree.
Demailly's original proof \cite{Demailly:2002} explains that we can always approximate \(\phi\) by something smooth with suitable bounds. 
We prefer not to repeat the remainder of his argument.
\end{proof}
Suppose that \(\vb{I} \subset T^*M\) is a holomorphic subbundle and let \(\vb{V}=\vb{I}^{\perp} \subset TM\).
Let \(p\) be the rank of \(\vb{I}\) and let \(\lambda=\lambda_{\vb{I}}\) be the holomorphic section of \(\Lm{p}{T^*M} \otimes \det{\vb{I}}^*\) given by
\[
\lambda\of{v_1,v_2,\dots,v_p}
=
\pr{v_1+\vb{V}} \wedge \pr{v_2+\vb{V}} \wedge \dots \wedge \pr{v_p+\vb{V}} \in \det{\pr{TM/\vb{V}}}=\det{\vb{I}}^*.
\]
\begin{corollary}\label{corollary:bracket.pseudo}
Suppose that \(\vb{I} \subset T^*M\) is a holomorphic vector subbundle on a compact \Kaehler manifold \(M\).
Then either \(\det \vb{I}\) is not pseudoeffective or \(\vb{V}=\vb{I}^{\perp} \subset TM\) is bracket closed and the associated differential form \(\lambda_{\vb{I}}\) of \(\vb{I}\) defined above is parallel in any singular Hermitian metric on \(\det \vb{I}\) with nonnegative curvature.
\end{corollary}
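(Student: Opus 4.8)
The plan is to deduce this directly from Demailly's theorem (Theorem~\ref{theorem:Demailly.bracket}), applied to the line bundle $L := \det\vb{I}^{*}$ together with the tautological holomorphic $L$-valued $p$-form $\lambda = \lambda_{\vb{I}}$, where $p := \rank{\vb{I}}$ and $n := \dimC{M}$. Two preliminary observations are needed. First, $\lambda_{\vb{I}}$ is holomorphic (indeed nowhere zero), since $\vb{I}$, hence $\vb{V} = \vb{I}^{\perp}$, is a holomorphic subbundle; and a holomorphic vector field $X$ satisfies $X\hook\lambda_{\vb{I}} = 0$ exactly when $X$ is a local section of $\vb{V}$, directly from $\lambda_{\vb{I}}(v_1,\dots,v_p) = (v_1+\vb{V})\wedge\dots\wedge(v_p+\vb{V})$. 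Thus the subsheaf called ``$\vb{V}$'' in Theorem~\ref{theorem:Demailly.bracket} is precisely our $\vb{V}$, and ``$\lambda$ parallel'' there is exactly the assertion we want about $\lambda_{\vb{I}}$. Second, if $\det\vb{I}$ carries a singular Hermitian metric $h$ with curvature current $\ge 0$, then the dual metric $h^{*}$ on $L = \det\vb{I}^{*}$ has Chern curvature with $\tfrac{\sqrt{-1}}{2\pi}\nabla^{2}_{L} \le 0$ as a $(1,1)$-current, and being parallel for $h^{*}$ on $L$ is the same as being parallel for $h$ on $\det\vb{I}$.

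So I would argue as follows. Assume $\det\vb{I}$ is pseudoeffective and fix any such metric $h$. Feeding $\lambda = \lambda_{\vb{I}}$ and the metric $h^{*}$ into Theorem~\ref{theorem:Demailly.bracket} gives
\[
\int_M \isesq{\lambda}{\tfrac{\sqrt{-1}}{2\pi}\nabla^{2}_{L}}{\lambda}\wedge e^{\Omega}
=
\frac{1}{\pi}\int_M \sesq{\nabla\lambda}{\nabla\lambda}\wedge e^{\Omega}\ge 0 .
\]
The right-hand side is $\ge 0$ by the basic positivity $\sesq{\mu}{\mu}\wedge\Omega^{\,n-q}\ge 0$ for a $(q,0)$-form $\mu$ valued in $L$ (here $q = p+1$, $\mu = \nabla\lambda$). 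For the left-hand side, only the $\Omega^{\,n-p-1}/(n-p-1)!$ term of $e^{\Omega}$ contributes a top-degree form; and since for a line bundle inserting the scalar $(1,1)$-curvature form in the middle slot of $\isesq{\cdot}{\cdot}{\cdot}$ just produces $\tfrac{\sqrt{-1}}{2\pi}\nabla^{2}_{L}\wedge\sesq{\lambda}{\lambda}$, the left-hand side equals $\tfrac{1}{(n-p-1)!}\int_M \tfrac{\sqrt{-1}}{2\pi}\nabla^{2}_{L}\wedge\sesq{\lambda}{\lambda}\wedge\Omega^{\,n-p-1}$, which is $\le 0$ because $\sesq{\lambda}{\lambda}$ is a weakly positive $(p,p)$-form while $-\tfrac{\sqrt{-1}}{2\pi}\nabla^{2}_{L}\wedge\Omega^{\,n-p-1}$ is strongly positive. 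Hence both sides vanish, so the equality case of Theorem~\ref{theorem:Demailly.bracket} applies: $\vb{V} = \vb{I}^{\perp}$ is bracket closed, $\lambda_{\vb{I}}$ is parallel for $h$, and (as a bonus) $\nabla^{2}_{L}\wedge\lambda_{\vb{I}} = 0$. Since $h$ was an arbitrary nonnegative-curvature singular metric on $\det\vb{I}$, this is the claim, and the first alternative of the corollary is just the negation of the hypothesis ``$\det\vb{I}$ pseudoeffective''.

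The one point requiring care is the pointwise sign inequality $\tfrac{\sqrt{-1}}{2\pi}\nabla^{2}_{L}\wedge\sesq{\lambda}{\lambda}\wedge\Omega^{\,n-p-1}\le 0$: one must unwind the definition of $\isesq{\cdot}{\cdot}{\cdot}$ on elementary tensors, confirm that for a line bundle it reduces to $(\text{curvature})\wedge\sesq{\lambda}{\lambda}$, and invoke the standard fact that $\sesq{\lambda}{\lambda}$ is weakly positive, so it pairs nonnegatively with strongly positive forms. This is the same positivity bookkeeping that underlies the inequality already built into Theorem~\ref{theorem:Demailly.bracket}, only with the opposite curvature sign, so I expect no new ideas are needed. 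When $h$ is genuinely singular one runs the computation on the locus where $h$ is smooth and controls the polar set using the local integrability built into the notion of a singular metric, exactly as in Demailly's proof.
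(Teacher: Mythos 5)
Your proposal is correct and follows essentially the same route as the paper: apply Theorem~\vref{theorem:Demailly.bracket} to \(L=\det\vb{I}^*\) with the tautological form \(\lambda_{\vb{I}}\), observe that pseudoeffectivity of \(\det\vb{I}\) makes \(\frac{\sqrt{-1}}{2\pi}\nabla^2_L\le 0\) so the left side of Demailly's identity is nonpositive while the right side is nonnegative, and conclude from the equality case. The paper's proof is terser (it does not spell out the identification of the kernel sheaf of \(\lambda_{\vb{I}}\) with \(\vb{V}\) or the positivity bookkeeping for the sign of the curvature term), but the argument is the same.
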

\begin{proof}
Let \(L=\det{\vb{I}}^*\) and apply theorem~\vref{theorem:Demailly.bracket}:
\[
\int_M 
\isesq{\lambda}{\frac{\sqrt{-1}}{2 \pi}\nabla^2_L}{\lambda}
\wedge e^{\Omega} 
=
\frac{1}{\pi}
\int
\sesq{\nabla \lambda}{\nabla \lambda} \wedge e^{\Omega}
\ge 0.
\]
If the line bundle \(\det{\vb{I}} = L^*\) is pseudoeffective, so admits a singular Hermitian metric with nonnegative curvature (in the sense of currents) then in that singular Hermitian metric, \(\nabla^2_L \le 0\) so the integral turns out not positive.
Therefore in that singular Hermitian metric, \(\vb{V}=\vb{I}^{\perp}\) is bracket closed.
\end{proof}

\section{Pseudoeffectivity in parabolic geometries}%
\label{section:pseudoeff.parabolic}
Take a Cartan geometry \(\G\to M\) modelled on a complex homogeneous space \((X,G)\) with \(X=G/H\).
To any holomorphic \(H\)-module \(V\), associate the holomorphic vector bundle \(\vb{V}:= \prodquot{\G}{V}{H} \to M\) whose sections are the holomorphic \(H\)-equivariant maps \(\G\to V\).
We use the same symbol \(\vb{V}\) for the associated vector bundle on the model \(X\).
\begin{corollary}\label{corollary:Q.cone.pseudoeffective}
Suppose that \((X,G)\) is a flag variety and \(P \to\G\to M\) is a minimal holomorphic \((X,G)\)-geometry on a smooth projective variety \(M\).
If \(I\) is a holomorphic \(P\)-module whose character is dominant, then the determinant \(\det \vb{I}\) of the associated vector bundle is pseudoeffective.
\end{corollary}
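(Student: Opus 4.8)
The plan is to deduce the corollary from the Campana--Peternell theorem (Theorem~\vref{theorem:Campana.Peternell}), using Lemma~\vref{lemma:quotient.modules} to produce the quotient sheaves it needs. First I would record that $M$ is not uniruled: the geometry $\G \to M$ is minimal and $M$ is a smooth projective variety, so Theorem~\vref{theorem:drop} shows $M$ contains no rational curves, hence is not uniruled --- exactly the hypothesis of Campana--Peternell.

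Next I would identify the relevant bundles. For a Cartan geometry modelled on $(X,G)$ with $X = G/P$, the Cartan connection presents the tangent bundle as $TM = \prodquot{\G}{(\LieG/\LieP)}{P}$, so $\nForms{1}{M} = \prodquot{\G}{(\LieG/\LieP)^*}{P}$. The Killing form of $\LieG$ is $\Ad$-invariant and pairs $\LieG_i$ with $\LieG_{-i}$, hence induces a $P$-equivariant isomorphism $(\LieG/\LieP)^* \cong \LieP^\perp = \LieGN$, so that $\nForms{1}{M} \cong \vb{\LieGN}$. I would also record that the functor $V \mapsto \vb{V} := \prodquot{\G}{V}{P}$, from finite-dimensional holomorphic $P$-modules to holomorphic vector bundles on $M$, is exact, takes a one-dimensional $P$-module of character $\chi$ to a holomorphic line bundle $L_\chi$ with $L_{\chi+\chi'} \cong L_\chi \otimes L_{\chi'}$, and satisfies $\det \vb{V} \cong L_{\chi_V}$ for $\chi_V$ the character of $V$.

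Now I would invoke Lemma~\vref{lemma:quotient.modules}: there are quotient $P$-modules $\LieGN \twoheadrightarrow Q_i$ whose characters $\Q{\ge 0}$-span the dominant characters. Applying $\vb{(-)}$ gives surjections of holomorphic vector bundles $\nForms{1}{M} \cong \vb{\LieGN} \twoheadrightarrow \vb{Q_i}$; each $\vb{Q_i}$ is locally free, in particular torsion-free, so Theorem~\vref{theorem:Campana.Peternell} with $m=1$ shows that each $\det \vb{Q_i} \cong L_{\chi_{Q_i}}$ is pseudoeffective. Since the character $\chi_I$ of $I$ is dominant, Lemma~\vref{lemma:quotient.modules} writes $\chi_I = \sum_i a_i \chi_{Q_i}$ with rational $a_i \ge 0$; clearing denominators gives an integer $N > 0$ and integers $b_i \ge 0$ with $N\chi_I = \sum_i b_i \chi_{Q_i}$. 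Hence, as honest line bundles,
\[
(\det \vb{I})^{\otimes N} \cong L_{N\chi_I} \cong \bigotimes_i (\det \vb{Q_i})^{\otimes b_i}.
\]
A tensor product of pseudoeffective line bundles with nonnegative integer exponents is pseudoeffective --- add the corresponding nonnegative curvature currents --- so $(\det \vb{I})^{\otimes N}$ is pseudoeffective, and taking the $N$-th root of a singular Hermitian metric with nonnegative curvature current shows $\det \vb{I}$ is pseudoeffective.

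I expect no real obstacle here, since the combinatorial heart of the matter is already Lemma~\vref{lemma:quotient.modules}. The two points that genuinely need care are the $P$-equivariant duality $(\LieG/\LieP)^* \cong \LieGN$, which is what realizes the bundles $\vb{Q_i}$ as quotients of $\nForms{1}{M}$ itself (rather than of some unrelated bundle), and the bookkeeping that $\chi \mapsto L_\chi$ is a bona fide homomorphism from the character lattice of $P$ to $\operatorname{Pic}(M)$, so that the equality of characters upgrades to an actual isomorphism of line bundles rather than a mere numerical equivalence.
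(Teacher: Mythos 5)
Your proposal is correct and follows essentially the same route as the paper's own proof: Lemma~\vref{lemma:quotient.modules} produces quotient modules of \(\LieGN\) whose characters \(\Q{\ge 0}\)-span the dominant characters, Campana--Peternell makes the corresponding determinant line bundles pseudoeffective, and nonnegative rational combinations preserve pseudoeffectivity. You merely make explicit several steps the paper leaves implicit (that minimality rules out uniruledness via Theorem~\vref{theorem:drop}, the identification \(\nForms{1}{M}\cong\vb{\LieGN}\), and the clearing of denominators), which is all to the good.
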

\begin{proof}
By lemma~\vref{lemma:quotient.modules}, the characters of the quotient \(P\)-modules of the module \(\LieGN\) span the dominant \(P\)-characters.
Hence the determinant line bundles on \(M\) associated to the induced quotient bundles of \(T^*M\) span the associated line bundles of the dominant \(P\)-characters.
By theorem~\vref{theorem:Campana.Peternell}, those determinant line bundles are pseudoeffective.
\end{proof}
\begin{corollary}\label{corollary:pseudo.cotangent}
Suppose that \((X,G)\) is a flag variety and \(P \to\G\to M\) is a minimal holomorphic \((X,G)\)-geometry on a smooth projective variety \(M\).
If \(p\) is a positive integer and \(I \subset \LieGN^{\otimes p}\) is a \(P\)-submodule, then the determinant line bundle \(\det \vb{I}\) associated to the vector bundle \(\vb{I} \subset T^*M^{\otimes p}\) is a pseudoeffective line bundle.
\end{corollary}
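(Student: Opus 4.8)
The plan is to see Corollary~\ref{corollary:pseudo.cotangent} as a direct consequence of Corollary~\ref{corollary:Q.cone.pseudoeffective}: once one checks that $\vb{I}$ really is a holomorphic subbundle of $(T^{*}M)^{\otimes p}$ and that a $P$-submodule of $\LieGN^{\otimes p}$ automatically has dominant character, the pseudoeffectivity is already supplied by that corollary.

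First I would identify the bundle $\vb{I}$. The Killing form pairs the grade-$j$ piece of $\LieG$ with the grade-$(-j)$ piece, and $\LieP$ is the sum of the nonnegatively graded pieces while $\LieGN$ is the sum of the positively graded pieces; hence $\LieGN$ is the Killing-form annihilator of $\LieP$, and the induced pairing $\LieGN \times (\LieG/\LieP) \to \C$ is a perfect $P$-invariant pairing. Forming the bundles associated to $\G$, and using $T_{x_0}X = \LieG/\LieP$, this gives a canonical isomorphism $\vb{\LieGN} \cong T^{*}M$ of holomorphic vector bundles on $M$, so $\vb{\LieGN^{\otimes p}} \cong (T^{*}M)^{\otimes p}$. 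Because $I \subseteq \LieGN^{\otimes p}$ is a $P$-submodule, its quotient is again a $P$-module, hence locally free, so the inclusion $\vb{I} \subseteq (T^{*}M)^{\otimes p}$ is that of a holomorphic subbundle; and $\det\vb{I}$ is the line bundle associated to the one-dimensional $P$-module $\Lmtop{I}$, i.e.\ to the $P$-character $\chi_{I}$ of $I$.

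Next I would invoke Lemma~\ref{lemma:Knapp.dominant}, which says the character of any $P$-submodule of $\LieGN^{\otimes p}$ is dominant; thus $\chi_{I}$ is a dominant $P$-character. Since $\G \to M$ is a minimal holomorphic $(X,G)$-geometry on a smooth projective variety, Corollary~\ref{corollary:Q.cone.pseudoeffective} applies to the holomorphic $P$-module $I$ and gives that $\det\vb{I}$ is pseudoeffective, which is the assertion.

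If one prefers not to quote Corollary~\ref{corollary:Q.cone.pseudoeffective} as a black box, the argument expands as follows. Minimality forces $M$ to be non-uniruled (Theorem~\ref{theorem:drop}). By Lemma~\ref{lemma:quotient.modules} and the remark after it, $\chi_{I} = \sum_{j} a_{j}\chi_{j}$ with each $a_{j}$ a nonnegative rational and each $\chi_{j}$ the character of a holomorphic quotient $P$-module $\LieGN \to Q_{j}$; passing to associated bundles, $T^{*}M \to \vb{Q_{j}}$ is a torsion-free quotient sheaf, so Theorem~\ref{theorem:Campana.Peternell} gives that $\det\vb{Q_{j}}$ is pseudoeffective. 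Then $c_{1}(\det\vb{I}) = \sum_{j} a_{j}\, c_{1}(\det\vb{Q_{j}})$ is a nonnegative rational combination of pseudoeffective classes; clearing denominators and multiplying together the corresponding singular Hermitian metrics of nonnegative curvature current, then extracting a root of the resulting metric on a positive power of $\det\vb{I}$, shows $\det\vb{I}$ is pseudoeffective. The only real content is the input Lemma~\ref{lemma:Knapp.dominant} together with Theorem~\ref{theorem:Campana.Peternell}/Corollary~\ref{corollary:Q.cone.pseudoeffective}; the remaining points — the Killing-form identification $\vb{\LieGN} \cong T^{*}M$, the fact that $\det$ of an associated bundle sees only the character, and the stability of the pseudoeffective cone under nonnegative rational combinations and under roots of line bundles — are routine, so I do not anticipate any serious obstacle.
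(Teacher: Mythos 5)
Your proposal is correct and matches the paper's (implicit) argument: the paper gives no separate proof of this corollary precisely because it follows at once from Lemma~\ref{lemma:Knapp.dominant} (the character of any \(P\)-submodule of \(\LieGN^{\otimes p}\) is dominant) combined with Corollary~\ref{corollary:Q.cone.pseudoeffective}, which is exactly the chain you use. Your expanded version, unwinding Corollary~\ref{corollary:Q.cone.pseudoeffective} through Lemma~\ref{lemma:quotient.modules} and Theorem~\ref{theorem:Campana.Peternell}, is also the same route the paper takes inside the proof of that corollary, just spelled out in more detail.
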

\begin{lemma}
Suppose that \(\pr{X,G}\) is a flag variety, \(P \to\G\to M\) is a minimal holomorphic \((X,G)\)-geometry on a smooth projective variety \(M\), and \(\chi\) is a dominant \(P\)-character.
Let \(L_{\chi} \to M\) be the associated line bundle with total space \(L_{\chi} = \amal{\G}{\chi}{\C}\).
Then either \(L_{\chi}\) is trivial or \(0=\cohomology{0}{M,L_{\chi}}\).
\end{lemma}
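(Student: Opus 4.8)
The plan is to deduce the lemma from the pseudoeffectivity results established just above, together with Lemma~\ref{lemma:pseudoeffective.trivial}. The guiding observation is that, for a dominant character, it is the \emph{dual} line bundle $L_{\chi}^{*}$ that is pseudoeffective, while Lemma~\ref{lemma:pseudoeffective.trivial} tells us that the dual of a pseudoeffective line bundle on a smooth projective variety is trivial or has no nonzero holomorphic sections.

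So the first step is to check that $L_{\chi}^{*}$ is pseudoeffective. The bundle $L_{\chi}^{*}$ is the line bundle associated to the one-dimensional holomorphic $P$-module of character $\chi$, and it is its own determinant; since $\chi$ is dominant, Corollary~\ref{corollary:Q.cone.pseudoeffective} applied to that one-dimensional module says exactly that $L_{\chi}^{*}$ is pseudoeffective. To exhibit the mechanism behind this: by Lemma~\ref{lemma:quotient.modules}, some positive integer multiple $N\chi$ is a nonnegative integral combination $\sum_{i}m_{i}\chi_{Q_{i}}$ of the characters of the quotient $P$-modules $\LieGN\to Q_{i}\to 0$, so the $N$-th tensor power of $L_{\chi}^{*}$ is isomorphic to $\bigotimes_{i}\pr{\det\vb{Q_{i}}}^{\otimes m_{i}}$, a tensor product of determinants of torsion-free quotient bundles of $\nForms{1}{M}$. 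Each of these determinants is pseudoeffective by the Campana--Peternell Theorem~\ref{theorem:Campana.Peternell}, which applies because a smooth projective variety carrying a minimal holomorphic Cartan geometry is not uniruled (Theorem~\ref{theorem:drop}). A tensor product of pseudoeffective line bundles is pseudoeffective, and a line bundle is pseudoeffective as soon as some positive power of it is, so $L_{\chi}^{*}$ is pseudoeffective.

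The second step is then immediate. Applying Lemma~\ref{lemma:pseudoeffective.trivial} to the pseudoeffective line bundle $L_{\chi}^{*}$, its dual $L_{\chi}$ is either trivial or satisfies $0=\cohomology{0}{M,L_{\chi}}$, which is the assertion of the lemma.

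I do not expect a genuine obstacle. The substance has already been assembled: Campana--Peternell applied to the cotangent bundle of a non-uniruled variety; the root-theoretic fact of Lemma~\ref{lemma:quotient.modules} that dominant $P$-characters are nonnegatively spanned by characters of quotients of $\LieGN$; and Lemma~\ref{lemma:pseudoeffective.trivial}. The one point demanding care is the accounting of signs and duals, so that pseudoeffectivity is invoked for $L_{\chi}^{*}$ (equivalently $L_{-\chi}$) and not for $L_{\chi}$ itself, which for dominant $\chi$ is in general not pseudoeffective.
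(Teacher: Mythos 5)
Your proof is correct and follows the paper's own argument: the paper's proof is the single observation that $-L_{\chi}$ is pseudoeffective (which is exactly Corollary~\vref{corollary:Q.cone.pseudoeffective}, whose content you re-derive via Lemma~\vref{lemma:quotient.modules} and Campana--Peternell), followed by an appeal to Lemma~\vref{lemma:pseudoeffective.trivial}. Your explicit care with the dual — that it is $L_{\chi}^{*}$, not $L_{\chi}$, which is pseudoeffective for dominant $\chi$ — matches the (tersely stated) sign convention the paper's one-line proof relies on.
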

\begin{proof}
By lemma~\vref{lemma:pseudoeffective.trivial}, since \(-L_{\chi}\) is pseudoeffective,  \(L_{\chi}\) is trivial or has no nonzero holomorphic sections.
\end{proof}
A Cartan geometry \(\G\to M\) modelled on a homogeneous space \(X=G/P\) is \emph{bracket closed} if, for every \(P\)-submodule \(V \subset \LieG/\LieP\), the corresponding vector subbundle \(\vb{V} \subset TM\) is bracket closed.
\begin{corollary}\label{corollary:brackets}
Every minimal holomorphic parabolic geometry on any smooth projective variety is bracket closed.
\end{corollary}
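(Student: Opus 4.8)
The plan is to deduce the corollary directly by combining Corollary~\ref{corollary:pseudo.cotangent} with Corollary~\ref{corollary:bracket.pseudo}. Fix a minimal holomorphic parabolic geometry $\G\to M$ with model $X=G/P$, and fix a $P$-submodule $V\subset\LieG/\LieP$, with associated holomorphic subbundle $\vb{V}\subset TM$; the task is to show that $\vb{V}$ is bracket closed, since this for all such $V$ is exactly the definition of bracket closure of the geometry.

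First I would set up the cotangent module. For any Cartan geometry the cotangent bundle is $T^*M=\prodquot{\G}{(\LieG/\LieP)^*}{P}$, and since $\LieP$ is the sum of the nonnegatively graded root spaces, its Killing-orthogonal complement in $\LieG$ is $\LieGN$, the sum of the positively graded root spaces. The Killing form of $\LieG$ is $P$-invariant and restricts to a perfect pairing $\LieGN\times(\LieG/\LieP)\to\C$, giving a $P$-module isomorphism $(\LieG/\LieP)^*\cong\LieGN$, hence $T^*M=\prodquot{\G}{\LieGN}{P}$. Under this identification the annihilator subbundle $\vb{I}:=\vb{V}^{\perp}\subset T^*M$ of $\vb{V}$ is the subbundle associated to the annihilator of $V$, which is a $P$-submodule $I\subseteq\LieGN$ because $V$ is a $P$-submodule of $\LieG/\LieP$ and the Killing form is $P$-invariant. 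Thus $\vb{I}$ is the associated bundle of a $P$-submodule $I\subseteq\LieGN=\LieGN^{\otimes 1}$, so Corollary~\ref{corollary:pseudo.cotangent}, applied with $p=1$, shows that $\det\vb{I}$ is pseudoeffective.

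Since $M$ is a smooth projective variety, it is a compact \Kaehler manifold, so Corollary~\ref{corollary:bracket.pseudo} applies to the holomorphic subbundle $\vb{I}\subset T^*M$: either $\det\vb{I}$ is not pseudoeffective, or $\vb{I}^{\perp}\subset TM$ is bracket closed. We have just shown that $\det\vb{I}$ is pseudoeffective, so the second alternative holds; and $\vb{I}^{\perp}=(\vb{V}^{\perp})^{\perp}=\vb{V}$ because the natural pairing between $TM$ and $T^*M$ is perfect. Hence $\vb{V}$ is bracket closed, and since $V$ was an arbitrary $P$-submodule of $\LieG/\LieP$, the geometry is bracket closed.

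The genuinely substantive work — Campana--Peternell's pseudoeffectivity theorem and Demailly's Bochner-type inequality — has already been carried out in the earlier sections, so the only thing to get right here is the module-theoretic bookkeeping of the middle paragraph: identifying $T^*M$ with $\prodquot{\G}{\LieGN}{P}$ via the Killing form, and verifying that the annihilator of a $P$-submodule of $\LieG/\LieP$ really is a $P$-submodule of $\LieGN$, so that Corollary~\ref{corollary:pseudo.cotangent} is literally applicable with $p=1$. Parabolicity of the model enters precisely through the identity $\LieP^{\perp}=\LieGN$, which is what makes the cotangent module sit inside $\LieGN^{\otimes 1}$.
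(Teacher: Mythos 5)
Your proof is correct and follows essentially the same route as the paper: identify the annihilator $\vb{I}=\vb{V}^{\perp}\subset T^*M$ with the bundle of the $P$-submodule $I=V^{\perp}\subseteq\LieGN$, apply Corollary~\vref{corollary:pseudo.cotangent} (with $p=1$) to get pseudoeffectivity of $\det\vb{I}$, and then Corollary~\vref{corollary:bracket.pseudo} to conclude bracket closure. The only cosmetic difference is that the paper also explicitly cites Lemma~\vref{lemma:Knapp.dominant} for dominance of $\chi_I$, a step you correctly leave absorbed into Corollary~\vref{corollary:pseudo.cotangent}, while you spell out the Killing-form identification $(\LieG/\LieP)^*\cong\LieGN$ that the paper leaves implicit.
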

\begin{proof}
Every smooth projective variety is a compact K\"ahler manifold.
Suppose that the model is \((X,G)\) with \(X=G/P\).
Take a \(P\)-submodule \(V \subset \LieG/\LieP\) and let \(\vb{V} \subset TM\) be the associated vector bundle.
Let \(I \subset \LieGN\) be \(I=V^{\perp}\) with associated vector bundle \(\vb{I} \subset T^*M\).
By corollary~\vref{corollary:bracket.pseudo}, either \(\det\vb{I}\) is not pseudoeffective or \(\vb{V}\) is bracket closed.
By lemma~\vref{lemma:Knapp.dominant}, the character \(\chi=\chi_{I}\) is dominant.
By corollary~\vref{corollary:pseudo.cotangent}, \(\det\vb{I}\) is pseudoeffective.
\end{proof}
Bracket closure ensures that any splitting of the model \(X=X_1 \times \dots \times X_k\) and \(G=G_1 \times \dots \times G_k\) gives a splitting of the universal covering space of the manifold \(M\), and a corresponding local splitting of the manifold
\[
TM = T_1 \oplus \dots \oplus T_k
\]
given by tangent bundles of foliations, one for each factor in the model, with \(\rank{T_i}=\dim{X_i}\).

\section{Structure equations of a parabolic geometry}%
\label{section:structure.equations}
\subsection{Regularity}\label{section:regularity}
A parabolic geometry \(\G\to M\) gives rise to a filtration 
\[
T_r M \subset \dots \subset T_{-r}M= TM
\] 
of the tangent bundle \(TM=\vb{V}\), corresponding to the filtration of \(V=\LieG/\LieP\),
with associated graded \(\graded{*}{TM}\).
There are two notions of bracket available here: if we take local sections \(u\) of \(\graded{i}{TM}\) and \(v\) of \(\graded{j}{TM}\), these are sections of \(\amal{\G}{P}{\graded{*}{V}}\), i.e. as \(P\)-equivariant maps
\[
u,v\colon\G\to\graded{*}{V},
\]
and these then have an algebraic bracket \(\lb{u}{v}\) coming simply from the Lie algebra \(\LieG\).
We can also locally pick vector fields \(U,V\) representing \(u,v\), sections of \(TM_i\) and \(TM_j\), and then compute vector field Lie bracket \(\lb{U}{V}\), and take the quotient in \(\graded{*}{TM}\), the \emph{Langlands bracket}.
A parabolic geometry is \emph{regular} if the algebraic and Langlands brackets agree.
The proof of theorem~\vref{theorem:comini} is trivial: the Langlands bracket vanishes by corollary~\vref{corollary:brackets}.
\emph{Danger:} regularity is not necessarily preserved when dropping.
Dropping is the main tool we have in studying holomorphic parabolic geometries.
Therefore we have to be careful not to assume regularity whenever possible.
\begin{example}
Flatness is preserved when dropping.
Flatness implies regularity.
So when we study flat holomorphic parabolic geometries, we can make use of regularity hypotheses, hence make use of theorem~\vref{theorem:comini}.
Hence we see classicality of the drop in corollary~\vref{cor:Carlson}.
\end{example}
\subsection{Structure equations}
Throughout this section, pick 
\begin{itemize}
\item
a complex semisimple Lie group \(G\) and 
\item
a flag variety \(X=G/P\) and
\item 
a Chevalley basis and
\item
a complex manifold \(M\) and 
\item
a holomorphic \((X,G)\)-geometry \(\G\to M\).
\end{itemize}
We provide explicit structure equations for parabolic geometries, more abstract than in the concrete computations of Cartan \cite{Cartan:136,Cartan:136bis,Cartan:174,Cartan:1992}, but with one system of equations for all parabolic geometries.
These equations are identical for holomorphic parabolic geometries and for real parabolic geometries with model \((X,G)\) where \(G\) is the split real form of a semisimple Lie group.
Consider the \(1\)-forms \(\om{\alpha}\) on \(G\) dual to the vectors \(\XX{\alpha}\) of the Chevalley basis. 
Use the Killing form to extend \(\alpha\) from the Cartan subalgebra \(\LieH\) to \(\LieG\), by splitting \(\LieG = \LieH \oplus \LieH^{\perp}\), and taking \(\alpha=0\) on \(\LieH^{\perp}\). 
The \(1\)-forms \(\om{\alpha}, \alpha\) span \(\LieG^*\).
The Cartan connection identifies each tangent space of \(\G\) with \(\LieG\), so pulls back \(\om{\alpha}, \alpha\) to define \(1\)-forms, given by the same symbols, on \(\G\).
Let
\(
\om{\beta \gamma} := \om{\beta} \wedge \om{\gamma},
\)
etc. and compute
\begin{align}
d \om{\aa} &= - \aa \wedge \om{\aa} - \frac{1}{2}
\sum_{\beta + \gamma=\alpha} N_{\beta \gamma} \om{\beta\gamma}
+
\frac{1}{2}
\sum_{\bb,\cc<0}
k^{\alpha}_{\beta \gamma} \om{\beta\gamma},\label{eqn:domega}
\\
d \alpha &=
-
\sum_{\beta} \frac{\KillingForm{\alpha}{\beta}}
{\KillingSquare{\beta}} \om{\beta,-\beta}
+
\frac{1}{2}
\sum_{\beta, \gamma<0}
\ell^{\alpha}_{\beta \gamma} \om{\beta\gamma},\label{eqn:dalpha}
\end{align}
with sums over all roots.
The quantities \(k^{\alpha}_{\beta\gamma}\) and \(\ell^{\alpha}_{\beta\gamma}\) are the components of the curvature of the \((X,G)\)-geometry.
We define \(k^{\alpha}_{\beta \gamma}=0\) and \(\ell^{\alpha}_{\beta \gamma}=0\) if either of \(\beta\) or \(\gamma\) is not in \(\ncptNegRts\).
Note that
\(
\ell^{\alpha+\varepsilon}_{\beta \gamma}
=
\ell^{\alpha}_{\beta \gamma}
+
\ell^{\varepsilon}_{\beta \gamma}.
\)

Let
\begin{align*}
\nabla k^{\alpha}_{\beta \gamma} 
:= &
\begin{dcases}
dk^{\aa}_{\bb \cc}
+
k^{\aa}_{\bb \cc}\pr{\aa-\bb-\cc}
-\ell^{\aa}_{\bb \cc} \delta_{\alpha\ge 0} \om{\alpha}
\\
+\sum_{\dd\ge 0} \pr{
N_{\cc \dd} k^{\aa}_{\bb,\dd+\cc} + N_{\dd \bb} k^{\aa}_{\cc,\bb+\dd} + N_{\dd,\aa-\dd} k^{\aa-\dd}_{\bb\cc} 
}
\om{\dd}
\end{dcases}
\\
+&
\begin{dcases}
-\ell^{\aa}_{\bb \cc} \delta_{\alpha<0} \om{\alpha}
\\
+\sum_{\ee<0}
\pr{
N_{\bb\cc}
k^{\aa}_{\ee,\bb+\cc}
+
N_{\ee,\aa-\ee}
k^{\aa-\ee}_{\bb\cc}
+
\sum_{\dd \in \ncptNegRts}
k^{\aa}_{\dd\ee}
k^{\dd}_{\bb\cc}
}
\om{\ee}.
\end{dcases}
\end{align*}
and let
\begin{align*}
\nabla \ell^{\alpha}_{\beta \gamma}
:= &
d \ell^{\alpha}_{\beta \gamma}
-
\ell^{\alpha}_{\beta \gamma} \pr{\beta + \gamma}
-
2
\sum_{\varepsilon}
\frac{\KillingForm{\alpha}{\varepsilon}}{\KillingSquare{\varepsilon}}
k^{\varepsilon}_{\beta \gamma} \om{-\varepsilon}
\\
&-
\sum_{\sigma<0, \varepsilon\ge 0}
\pr{
\delta_{\varepsilon+\beta=\sigma} \ell^{\alpha}_{\sigma \gamma} N_{\varepsilon \beta} 
-
\delta_{\varepsilon+\gamma=\sigma} \ell^{\alpha}_{\sigma \beta} N_{\varepsilon \gamma} 
}
\om{\varepsilon}
\\
&+
\sum_{\sigma, \varepsilon<0}
\ell^{\alpha}_{\varepsilon\sigma} 
\pr{
k^{\varepsilon}_{\beta\gamma}
-
\delta_{\beta+\gamma=\varepsilon}
N_{\beta \gamma}
}\om{\sigma}.
\end{align*}

\begin{proposition}
To each choice of \(\alpha \in \Roots, \beta, \gamma, \sigma<0\), there is a unique holomorphic function \(k^{\alpha}_{\beta \gamma \sigma} \colon\G\to \C\) with cyclic sums vanishing:
\[
0=
k^{\alpha}_{\beta \gamma \sigma}
+
k^{\alpha}_{\sigma \beta \gamma }
+
k^{\alpha}_{\gamma \sigma \beta }
\]
so that
\begin{equation}\label{equation:nabla.k}
\nabla k^{\aa}_{\bb \gamma} = \sum_{\sigma<0} k^{\alpha}_{\beta \gamma \sigma} \om{\sigma}
\end{equation}
and a unique holomorphic function \(\ell^{\alpha}_{\beta \gamma \sigma} \colon\G\to \C\) with cyclic sums vanishing
\[
0 = 
\ell^{\alpha}_{\beta \gamma \varepsilon}
+
\ell^{\alpha}_{\varepsilon \beta \gamma}
+
\ell^{\alpha}_{\gamma \varepsilon \beta}
\]
so that
\[
\nabla \ell^{\alpha}_{\beta \gamma} = \sum_{\sigma<0} \ell^{\alpha}_{\beta \gamma \sigma} \om{\sigma}.
\]
\end{proposition}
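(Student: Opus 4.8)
The proposition is the first Bianchi identity of the parabolic geometry, written out in the Chevalley frame, so the plan is to extract it from \(d^{2}=0\) applied to the structure equations~\eqref{eqn:domega} and~\eqref{eqn:dalpha}. First I would package the curvature invariantly: set \(\Theta\coloneq d\omega+\tfrac12[\omega,\omega]\), a \(\LieG\)-valued \(2\)-form on \(\G\). By~\eqref{eqn:domega}--\eqref{eqn:dalpha}, the \(\XX{\alpha}\)-component of \(\Theta\) is \(\tfrac12\sum_{\beta,\gamma\in\ncptNegRts}k^{\alpha}_{\beta\gamma}\,\om{\beta}\wedge\om{\gamma}\), and its \(\LieH\)-valued part pairs with each weight \(\alpha\) to give \(\tfrac12\sum_{\beta,\gamma\in\ncptNegRts}\ell^{\alpha}_{\beta\gamma}\,\om{\beta}\wedge\om{\gamma}\); in particular \(\Theta\) is \emph{horizontal}, meaning it is annihilated by every tangent vector dual to \(\LieP\) (the subalgebra spanned by \(\LieH\) and the \(\XX{\varepsilon}\) of grade \(\ge 0\)). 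This is just the observation, already built into the structure equations, that the curvature carries only \(\om{\beta}\wedge\om{\gamma}\) with \(\beta,\gamma<0\). Since \(\omega\) is a coframing of \(\G\), an equality of differential forms on \(\G\) is equivalent to the vanishing of every coframe component, and that is the mechanism I will use throughout.

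Next I would derive the Bianchi identity: differentiating \(\Theta=d\omega+\tfrac12[\omega,\omega]\), using \(d^{2}\omega=0\) and the Jacobi identity to kill \([[\omega,\omega],\omega]\), gives \(d\Theta=[\Theta,\omega]\) up to the sign convention fixed by~\eqref{eqn:domega}; this is a \(\LieG\)-valued \(3\)-form identity on \(\G\). I would then expand both sides in the coframe \(\{\om{\alpha}\}_{\alpha\in\Roots}\) together with the Cartan \(1\)-forms, substitute~\eqref{eqn:domega} for every \(d\om{\beta}\) and~\eqref{eqn:dalpha} for every \(d\alpha\) that occurs, and sort the resulting \(3\)-form identity by how many factors \(\om{\sigma}\) with \(\sigma<0\) each term has. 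The terms in which a Cartan \(1\)-form is wedged with a horizontal \(2\)-form come only from the \(-\beta\wedge\om{\beta}\) pieces of the \(d\om{\beta}\) and from the bracket of \(\Theta\) with the Cartan part of \(\omega\); they express that the Cartan component of \(dk^{\alpha}_{\beta\gamma}\) is \(-k^{\alpha}_{\beta\gamma}\of{\alpha-\beta-\gamma}\). The terms in which some \(\om{\varepsilon}\) of grade \(\ge 0\) is wedged with a horizontal \(2\)-form come from the \(N_{\mu\nu}\) pieces of the \(d\om{\beta}\) having one index of grade \(\ge 0\), from the root-vector part of \([\Theta,\omega]\) (which shifts the superscript \(\alpha\)), and from the bracket of the \(\LieH\)-valued part of \(\Theta\) with the root-vector part of \(\omega\) (which contributes \(-\ell^{\alpha}_{\beta\gamma}\delta_{\alpha\ge 0}\om{\alpha}\)); matching the coefficient of each \(\om{\varepsilon}\) reproduces exactly the corrections \(N_{\gamma\varepsilon}k^{\alpha}_{\beta,\varepsilon+\gamma}+N_{\varepsilon\beta}k^{\alpha}_{\gamma,\beta+\varepsilon}+N_{\varepsilon,\alpha-\varepsilon}k^{\alpha-\varepsilon}_{\beta\gamma}\) built into \(\nabla k^{\alpha}_{\beta\gamma}\). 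Together, these two families of terms say that \(\nabla k^{\alpha}_{\beta\gamma}\), as defined just before the proposition, has vanishing Cartan component and vanishing \(\om{\varepsilon}\)-component for every \(\varepsilon\) of grade \(\ge 0\); hence \(\nabla k^{\alpha}_{\beta\gamma}=\sum_{\sigma<0}k^{\alpha}_{\beta\gamma\sigma}\om{\sigma}\) for unique functions \(k^{\alpha}_{\beta\gamma\sigma}\), uniqueness being immediate from linear independence of the \(\om{\sigma}\).

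The cyclic identity is then the purely horizontal part of the Bianchi identity. Collecting the \(\om{\sigma}\wedge\om{\beta}\wedge\om{\gamma}\) terms with \(\sigma,\beta,\gamma<0\): the curvature-quadratic contributions (arising when the \(k\)-curvature term of \(d\om{\beta}\) in~\eqref{eqn:domega} is wedged against the outer \(\om{\gamma}\)), together with the horizontal part of \([\Theta,\omega]\) and the term \(-\ell^{\alpha}_{\beta\gamma}\delta_{\alpha<0}\om{\alpha}\), are exactly the remaining terms \(N_{\beta\gamma}k^{\alpha}_{\sigma,\beta+\gamma}+N_{\sigma,\alpha-\sigma}k^{\alpha-\sigma}_{\beta\gamma}+\sum_{\varepsilon\in\ncptNegRts}k^{\alpha}_{\varepsilon\sigma}k^{\varepsilon}_{\beta\gamma}\) built into \(\nabla k^{\alpha}_{\beta\gamma}\), so the horizontal Bianchi identity becomes \(\tfrac12\sum_{\beta,\gamma<0}\bigl(\sum_{\sigma<0}k^{\alpha}_{\beta\gamma\sigma}\om{\sigma}\bigr)\wedge\om{\beta}\wedge\om{\gamma}=0\). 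A \(3\)-form built this way from a tensor antisymmetric in \((\beta,\gamma)\) vanishes precisely when the cyclic sum \(k^{\alpha}_{\beta\gamma\sigma}+k^{\alpha}_{\sigma\beta\gamma}+k^{\alpha}_{\gamma\sigma\beta}\) vanishes, which is the stated identity. The case of \(\ell^{\alpha}_{\beta\gamma}\) runs the same way, using \(d^{2}\alpha=0\) on~\eqref{eqn:dalpha}: the \(\LieH\)-part of \([\Theta,\omega]\), coming from the brackets \(\lb{\XX{\varepsilon}}{\XX{-\varepsilon}}=\HH{\varepsilon}\), produces the cross-term \(-2\sum_{\varepsilon}\frac{\alpha\cdot\varepsilon}{\varepsilon^{2}}k^{\varepsilon}_{\beta\gamma}\om{-\varepsilon}\), the mixed terms produce the \(N_{\varepsilon\beta},N_{\varepsilon\gamma}\) corrections, and the horizontal part produces the cyclic identity for \(\ell^{\alpha}_{\beta\gamma\varepsilon}\).

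The main obstacle is entirely the bookkeeping in this coframe expansion: nothing conceptual beyond \(d^{2}=0\) is needed, but keeping track of signs, the structure constants \(N_{\beta\gamma}\), and the case splits (grade \(\ge 0\) versus \(<0\)) is delicate, and checking that the coefficients written into \(\nabla k^{\alpha}_{\beta\gamma}\) and \(\nabla\ell^{\alpha}_{\beta\gamma}\) are exactly the ones that make the identity close will use Lemma~\ref{lemma:Jacobi.mystery} together with the \(\LieG\)-Jacobi identity for the \(N_{\beta\gamma}\). The cyclic identities themselves are the painless part, being forced by the antisymmetry of \(3\)-forms in a coframe.
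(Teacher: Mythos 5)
Your proposal is correct and follows essentially the same route as the paper: both proofs amount to applying \(d^2=0\) to the structure equations, expanding in the coframe, checking that the non-semibasic (Cartan and grade \(\ge 0\)) components are exactly absorbed by the correction terms built into \(\nabla k^{\alpha}_{\beta\gamma}\) and \(\nabla\ell^{\alpha}_{\beta\gamma}\), and reading off the cyclic identities from the antisymmetry of the resulting \(3\)-form. The only difference is organizational: you package the computation through the invariant Bianchi identity \(d\Theta=[\Theta,\omega]\), which makes the purely Maurer--Cartan cancellations automatic via the Jacobi identity, whereas the paper computes \(d^2\om{\aa}\) and \(d^2\aa\) componentwise and disposes of those terms by observing that they must vanish because they do in the flat model.
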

\begin{proof}
Let
\[
t^{\alpha}_{\beta \gamma}
:=
k^{\alpha}_{\beta \gamma} - \delta_{\beta+\gamma=\alpha} N_{\beta \gamma}.
\]
Note that \(t^{\aa}_{\bb\cc}+t^{\aa}_{\cc\bb}=0\) and
\[
d\om{\alpha} = - \alpha \wedge \om{\alpha} + \frac{1}{2} t^{\alpha}_{\beta \gamma}
\om{\beta \gamma},
\]
with Einstein summation convention.
Compute
\begin{align*}
0
=&
d^2\om{\alpha},
\\
=&
d\pr{-\alpha \wedge \om{\alpha} + \frac{1}{2} t^{\alpha}_{\beta \gamma}
\om{\beta \gamma}}
,
\\
=&
-d\alpha \wedge \om{\alpha}
+
\alpha \wedge d\om{\alpha}
+
\frac{1}{2} dt^{\alpha}_{\beta \gamma} \wedge \om{\beta \gamma}
+
\frac{1}{2} t^{\alpha}_{\beta \gamma} 
\pr{
d \om{\beta} \wedge \om{\gamma}
-
\om{\beta} \wedge d \om{\gamma}
},
\\
=&
-\pr{
-\sum_{\beta} \frac{\KillingForm{\alpha}{\beta}}{\KillingSquare{\beta}} \om{\beta,-\beta}
+
\frac{1}{2}\ell^{\alpha}_{\beta \gamma} \om{\beta \gamma}
} \wedge
\om{\alpha}
+
\alpha 
\wedge 
\pr{
-\alpha \wedge \om{\alpha}
+
\frac{1}{2}t^{\alpha}_{\beta \gamma} \om{\beta \gamma}}
\\&+
\frac{1}{2} dt^{\alpha}_{\beta \gamma} \wedge \om{\beta \gamma}
+
\frac{1}{2} t^{\alpha}_{\beta \gamma} 
\pr{
\pr{-\beta \wedge \om{\beta} + \frac{1}{2} t^{\beta}_{\varepsilon \sigma} \om{\varepsilon \sigma}
}
\wedge \om{\gamma}
-
\om{\beta} \wedge
\pr{-\gamma \wedge \om{\gamma} + \frac{1}{2} t^{\gamma}_{\varepsilon \sigma} \om{\varepsilon \sigma}
}
},
\\
=&
\sum_{\beta} \frac{\KillingForm{\alpha}{\beta}}{\KillingSquare{\beta}} \om{\alpha,\beta,-\beta}
-
\frac{1}{2}\ell^{\alpha}_{\beta \gamma} \om{\alpha \beta \gamma}
+
\frac{1}{2}t^{\alpha}_{\beta \gamma} \alpha \wedge \om{\beta \gamma}
+
\frac{1}{2} dt^{\alpha}_{\beta \gamma} \wedge \om{\beta \gamma}
\\&-\frac{1}{2} t^{\alpha}_{\beta \gamma} 
\pr{\beta+\gamma} \wedge \om{\beta \gamma} 
+ \frac{1}{4} t^{\alpha}_{\beta \gamma} 
\pr{t^{\beta}_{\varepsilon \sigma} \om{\gamma}- t^{\gamma}_{\varepsilon \sigma} \om{\beta}} \wedge \om{\varepsilon \sigma},
\\
=&
\frac{1}{2} 
\pr{
dt^{\alpha}_{\beta \gamma} 
+
t^{\alpha}_{\beta \gamma} \pr{\alpha-\beta-\gamma}
-
\ell^{\alpha}_{\beta \gamma} \om{\alpha}
}
\wedge \om{\beta \gamma}
+
\sum_{\beta} \frac{\KillingForm{\alpha}{\beta}}{\KillingSquare{\beta}} \om{\alpha,\beta,-\beta}
+ \frac{1}{4} t^{\alpha}_{\beta \gamma} 
\pr{t^{\beta}_{\varepsilon \sigma} \om{\gamma}- t^{\gamma}_{\varepsilon \sigma} \om{\beta}} \wedge \om{\varepsilon \sigma},
\\
=&
\frac{1}{2} 
\pr{
dt^{\alpha}_{\beta \gamma} 
+
t^{\alpha}_{\beta \gamma} \pr{\alpha-\beta-\gamma}
-
\ell^{\alpha}_{\beta \gamma} \om{\alpha}
}
\wedge \om{\beta \gamma}
+
\sum_{\beta\gamma} \frac{\KillingForm{\alpha}{\beta}}{\KillingSquare{\bb}} \delta_{\beta+\gamma=0}\om{\alpha\beta\gamma}
+ \frac{1}{4} t^{\alpha}_{\varepsilon \sigma}
\pr{t^{\varepsilon}_{\beta \gamma} \om{\sigma}- t^{\sigma}_{\beta \gamma} \om{\varepsilon}} \wedge \om{\beta \gamma},
\\
=&
\frac{1}{2} 
\pr{
dt^{\alpha}_{\beta \gamma} 
+
t^{\alpha}_{\beta \gamma} \pr{\alpha-\beta-\gamma}
-
\ell^{\alpha}_{\beta \gamma} \om{\alpha}
+ \frac{1}{2} t^{\alpha}_{\varepsilon \sigma}
\pr{t^{\varepsilon}_{\beta \gamma} \om{\sigma}- t^{\sigma}_{\beta \gamma} \om{\varepsilon}}
}
\wedge \om{\beta \gamma}
\\
&+
\frac{1}{2} \sum_{\beta\gamma} 
\pr{
\frac{\KillingForm{\alpha}{\beta}}{\KillingSquare{\bb}}
-
\frac{\KillingForm{\alpha}{\gamma}}{\KillingSquare{\gamma}}
}
 \delta_{\beta+\gamma=0}\om{\alpha\beta\gamma}
\\
=&
\frac{1}{2} 
\curly{
dt^{\alpha}_{\beta \gamma} 
+
t^{\alpha}_{\beta \gamma} \pr{\alpha-\beta-\gamma}
+
\pr{
\pr{
\frac{\KillingForm{\alpha}{\beta}}{\KillingSquare{\bb}}
-
\frac{\KillingForm{\alpha}{\gamma}}{\KillingSquare{\cc}}
}
 \delta_{\beta+\gamma=0}
-
\ell^{\alpha}_{\beta \gamma}} \om{\alpha}
+ \frac{1}{2} t^{\alpha}_{\varepsilon \sigma}
\pr{t^{\varepsilon}_{\beta \gamma} \om{\sigma}- t^{\sigma}_{\beta \gamma} \om{\varepsilon}}
}
\wedge \om{\beta \gamma}
\end{align*}
So we let
\[
D t^{\alpha}_{\beta \gamma}
=
dt^{\alpha}_{\beta \gamma} 
+
t^{\alpha}_{\beta \gamma} \pr{\alpha-\beta-\gamma}
+
\pr{
\pr{
\frac{\KillingForm{\alpha}{\beta}}{\KillingSquare{\bb}}
-
\frac{\KillingForm{\alpha}{\gamma}}{\KillingSquare{\cc}}
}
 \delta_{\beta+\gamma=0}
-
\ell^{\alpha}_{\beta \gamma}} \om{\alpha}
+ \frac{1}{2} t^{\alpha}_{\varepsilon \sigma}
\pr{t^{\varepsilon}_{\beta \gamma} \om{\sigma}- t^{\sigma}_{\beta \gamma} \om{\varepsilon}}
\]
and then
\[
0 = D t^{\alpha}_{\beta \gamma} \wedge \om{\beta \gamma}.
\]
By Cartan's lemma \cite{Sternberg:1983} p.~18 Theorem 4.4,
\[
D t^{\alpha}_{\beta \gamma} = t^{\alpha}_{\beta \gamma \sigma} \om{\sigma}
\]
for unique holomorphic functions \(t^{\alpha}_{\beta \gamma \sigma} \colon\G\to \C\)
with
\[
0=
t^{\alpha}_{\beta \gamma \sigma} 
+
t^{\alpha}_{\sigma \beta \gamma}
+
t^{\alpha}_{ \gamma \sigma \beta}.
\]

To isolate the semibasic contribution, we first expand
\begin{align*}
Dt^{\aa}_{\bb \cc}
=&
dk^{\aa}_{\bb \cc}
+
k^{\aa}_{\bb \cc}\pr{\aa-\bb-\cc}
-\ell^{\aa}_{\bb \cc}\om{\alpha}
-\frac{1}{2}\delta_{\dd+\ee=\aa} N_{\dd \ee} 
\pr{
k^{\dd}_{\bb \cc} \om{\ee}
-
k^{\ee}_{\bb \cc} \om{\dd}
}
\\
&+\frac{1}{2}k^{\aa}_{\dd \ee}\pr{k^{\dd}_{\bb \cc} \om{\ee} - k^{\ee}_{\bb \cc} \om{\dd}}
\\
&+\frac{1}{2}
\underbrace{
k^{\aa}_{\dd \ee} N_{\bb \cc}
\pr{
\delta_{\bb+\cc=\ee} \om{\dd} - \delta_{\bb+\cc=\dd} \om{\ee}
}
}_{(1)}
\\
&+
\underbrace{
\pr{
\frac{\KillingForm{\aa}{\bb}}{\KillingSquare{\bb}}
-
\frac{\KillingForm{\aa}{\cc}}{\KillingSquare{\cc}}
}\delta_{\bb+\cc=0} \om{\aa}
+
\frac{1}{2}\delta_{\dd+\ee=\aa}N_{\bb \cc}N_{\dd\ee}
\pr{
\delta_{\bb+\cc=\dd}\om{\ee}
- 
\delta_{\bb+\cc=\ee}\om{\dd}
}
}_{(2)}
\end{align*}
The term marked (2) will drop out of the expression \(Dt^{\alpha}_{\beta \gamma} \wedge \om{\beta \gamma}\), without altering the result, because we can pick the model geometry and arrange \(k=\ell=0\) and still satisfy the equation \(0=Dt^{\alpha}_{\beta \gamma} \wedge \om{\beta \gamma}\) without altering (2).
Besides the term (1), the rest of the terms vanish when \(\beta\) or \(\gamma\) are not in \(\ncptNegRts\).
Consider (1), wedged with \(\om{\bb \cc}\):
\begin{align*}
\frac{1}{2} k^{\aa}_{\dd \ee} N_{\bb \cc} 
\pr{
\delta_{\bb+\cc=\ee} \om{\dd} - \delta_{\bb+\cc=\dd} \om{\ee}
} \wedge \om{\bb \cc}
=&
k^{\aa}_{\dd, \bb+\cc} N_{\bb \cc} 
\om{\dd \bb \cc} 
\end{align*} 
This expression vanishes unless \(\dd<0\) and \(\bb+\cc<0\), which forces either \(\bb\) or \(\cc\) in \(\ncptNegRts\), or both:
\begin{align*}
\frac{1}{2} k^{\aa}_{\dd \ee} N_{\bb \cc} 
\pr{
\delta_{\bb+\cc=\ee} \om{\dd} - \delta_{\bb+\cc=\dd} \om{\ee}
} \wedge \om{\bb \cc}
=&
\sum_{\bb, \cc, \dd<0}
k^{\aa}_{\dd, \bb+\cc} N_{\bb \cc} 
\om{\dd \bb \cc}
\\
&+
\underbrace{
\sum_{\cc, \dd<0}^{\bb\ge 0}
k^{\aa}_{\dd, \bb+\cc} N_{\bb \cc} 
\om{\dd \bb \cc}
}_{(a)}
+
\underbrace{
\sum_{\bb, \dd<0}^{\cc\ge 0}
k^{\aa}_{\dd, \bb+\cc} N_{\bb \cc} 
\om{\dd \bb \cc}
}_{(b)}
,
\\
\intertext{and swap \(\bb \leftrightarrow \dd\) in (a), \(\cc \leftrightarrow  \dd\) in (b) to yield}
=&
\sum_{\bb, \cc, \dd<0}
k^{\aa}_{\dd, \bb+\cc} N_{\bb \cc} 
\om{\dd \bb \cc}
\\
&+
\sum_{\bb, \cc<0}^{\dd\ge 0}
k^{\aa}_{\bb, \dd+\cc} N_{\dd \cc} 
\om{\bb \dd \cc}
+
\sum_{\bb, \cc<0}^{\dd\ge 0}
k^{\aa}_{\cc, \bb+\dd} N_{\bb \dd} 
\om{\cc \bb \dd}
,
\\
=&
\sum_{\bb, \cc<0}
\kappa^{\aa}_{\bb \cc} \wedge \om{\bb \cc}
\end{align*}
where
\[
\kappa^{\aa}_{\bb \cc}
=
\sum_{\ee<0}
k^{\aa}_{\ee, \bb+\cc}
N_{\bb\cc}
\om{\ee}
+
\sum_{\dd \in \ParaRts}
\pr{
k^{\alpha}_{\bb, \cc+\dd} N_{\cc \dd} 
+
k^{\alpha}_{\cc, \bb+\dd} N_{\dd \bb} 
}\om{\dd}.
\]

To sum up, for any \(\alpha \in \Roots\) and \(\beta, \gamma<0\), let
\begin{align*}
\nabla k^{\alpha}_{\beta \gamma} 
=&
dk^{\aa}_{\bb \cc}
+
k^{\aa}_{\bb \cc}\pr{\aa-\bb-\cc}
-\ell^{\aa}_{\bb \cc} \om{\alpha}
-\frac{1}{2}\delta_{\dd+\ee=\aa} N_{\dd \ee} 
\pr{
k^{\dd}_{\bb \cc} \om{\ee}
-
k^{\ee}_{\bb \cc} \om{\dd}
}
\\
&+\frac{1}{2}k^{\aa}_{\dd \ee}\pr{k^{\dd}_{\bb \cc} \om{\ee} - k^{\ee}_{\bb \cc} \om{\dd}}
\\
&+\sum_{\ee<0}
k^{\aa}_{\ee, \bb+\cc}
N_{\bb\cc}
\om{\ee}
+
\sum_{\dd\ge 0}
\pr{
k^{\alpha}_{\bb, \cc+\dd} N_{\cc \dd} 
+
k^{\alpha}_{\cc, \bb+\dd} N_{\dd \bb} 
}\om{\dd}.
\end{align*}
By Cartan's lemma \cite{Sternberg:1983} p.~18 Theorem 4.4,
for any \(\alpha \in \Roots\) and \(\beta, \gamma, \sigma<0\), there exists a unique holomorphic function \(k^{\alpha}_{\beta \gamma \sigma} \colon\G\to\C\), with cyclic sums vanishing:
\[
0=
k^{\alpha}_{\beta \gamma \sigma}
+
k^{\alpha}_{\sigma \beta \gamma }
+
k^{\alpha}_{\gamma \sigma \beta }
\]
so that
\[ 
\nabla k^{\aa}_{\bb \gamma} = \sum_{\sigma<0} k^{\alpha}_{\beta \gamma \sigma} \om{\sigma}.
\]
It is convenient to split into semibasic and nonsemibasic terms as:
\begin{align*}
\nabla k^{\alpha}_{\beta \gamma} 
=&
\begin{dcases}
dk^{\aa}_{\bb \cc}
+
k^{\aa}_{\bb \cc}\pr{\aa-\bb-\cc}
-\ell^{\aa}_{\bb \cc} \delta_{\alpha\ge 0} \om{\alpha}
\\
+\sum_{\dd\ge 0} \pr{
N_{\cc \dd} k^{\aa}_{\bb,\dd+\cc} + N_{\dd \bb} k^{\aa}_{\cc,\bb+\dd} + N_{\dd,\aa-\dd} k^{\aa-\dd}_{\bb\cc} 
}
\om{\dd}
\end{dcases}
\\
+&
\begin{dcases}
-\ell^{\aa}_{\bb \cc} \delta_{\alpha<0} \om{\alpha}
\\
+\sum_{\ee<0}
\pr{
N_{\bb\cc}
k^{\aa}_{\ee,\bb+\cc}
+
N_{\ee,\aa-\ee}
k^{\aa-\ee}_{\bb\cc}
+
\sum_{\dd<0}
k^{\aa}_{\dd\ee}
k^{\dd}_{\bb\cc}
}
\om{\ee}.
\end{dcases}
\end{align*}

Similarly let
\begin{align*}
\nabla \ell^{\alpha}_{\beta \gamma}
=&
d \ell^{\alpha}_{\beta \gamma}
-
\ell^{\alpha}_{\beta \gamma} \pr{\beta + \gamma}
-
2
\sum_{\varepsilon}
\frac{\KillingForm{\alpha}{\varepsilon}}{\KillingSquare{\varepsilon}}
k^{\varepsilon}_{\beta \gamma} \om{-\varepsilon}
\\
&-
\sum_{\sigma<0, \varepsilon\ge 0}
\pr{
\delta_{\varepsilon+\beta=\sigma} \ell^{\alpha}_{\sigma \gamma} N_{\varepsilon \beta} 
-
\delta_{\varepsilon+\gamma=\sigma} \ell^{\alpha}_{\sigma \beta} N_{\varepsilon \gamma} 
}
\om{\varepsilon}
\\
&+
\sum_{\sigma, \varepsilon<0}
\ell^{\alpha}_{\varepsilon\sigma} 
\pr{
k^{\varepsilon}_{\beta\gamma}
-
\delta_{\beta+\gamma=\varepsilon}
N_{\beta \gamma}
}\om{\sigma}.
\end{align*}

Write
\(
d\alpha = \frac{1}{2}T^{\alpha}_{\beta \gamma} \om{\bb \cc},
\)
where
\[
T^{\aa}_{\bb \cc} = 
\pr{
\frac{\KillingForm{\aa}{\cc}}{\KillingSquare{\cc}}
-
\frac{\KillingForm{\aa}{\bb}}{\KillingSquare{\bb}}
}\delta_{\bb+\cc=0}
+
\ell^{\aa}_{\bb \cc}.
\]
Take exterior derivative:
\begin{align*}
0 
&=
2 \, d^2\aa,
\\
&=
dT^{\aa}_{\bb \cc} \wedge \om{\bb \cc}
+
T^{\aa}_{\bb \cc} 
\pr{
d\om{\bb} \wedge \om{\cc}
-
\om{\bb} \wedge d\om{\cc}
},
\\
&=
dT^{\aa}_{\bb \cc} \wedge \om{\bb \cc}
+
2 \, T^{\aa}_{\bb \cc} 
d\om{\bb} \wedge \om{\cc}
,
\\
&=
d\ell^{\aa}_{\bb \cc} \wedge \om{\bb \cc}
+
2 \, 
\pr{
	\pr{
		\frac{\KillingForm{\aa}{\cc}}{\KillingSquare{\cc}}
		-
		\frac{\KillingForm{\aa}{\bb}}{\KillingSquare{\bb}}
	}
	\delta_{\bb+\cc=0}
+
\ell^{\aa}_{\bb \cc}
}
d\om{\bb} \wedge \om{\cc}
,
\\
&=
d\ell^{\aa}_{\bb \cc} \wedge \om{\bb \cc}
\\
&\quad +
2 \pr{\pr{
\frac{\KillingForm{\aa}{\cc}}{\KillingSquare{\cc}}
-
\frac{\KillingForm{\aa}{\bb}}{\KillingSquare{\bb}}
}\delta_{\bb+\cc=0}
+\ell^{\aa}_{\bb \cc}} 
\pr{
-\beta \wedge \om{\bb} - \frac{1}{2}\pr{\delta_{\dd+\ee=\bb}N_{\dd\ee}-k^{\bb}_{\dd\ee}}\om{\dd \ee}
}
 \wedge \om{\cc}
,
\\
&=
d\ell^{\aa}_{\bb \cc} \wedge \om{\bb \cc}
-2
	\pr{
		\frac{\KillingForm{\aa}{\cc}}{\KillingSquare{\cc}}
		-
		\frac{\KillingForm{\aa}{\bb}}{\KillingSquare{\bb}}
	}
	\delta_{\bb+\cc=0}
	\beta \wedge \om{\bb \cc}
\\
&\quad
-
	\pr{
		\frac{\KillingForm{\aa}{\cc}}{\KillingSquare{\cc}}
		-
		\frac{\KillingForm{\aa}{\bb}}{\KillingSquare{\bb}}	
	}
	\delta_{\bb+\cc=0}
	\pr{\delta_{\dd+\ee=\bb}N_{\dd\ee}-k^{\bb}_{\dd\ee}}\om{\dd \ee \cc} 
\\
&\quad
-2\ell^{\aa}_{\bb \cc} 
	\beta \wedge \om{\bb\cc}
-\ell^{\aa}_{\bb \cc}
\pr{\delta_{\dd+\ee=\bb}N_{\dd\ee}-k^{\bb}_{\dd\ee}}\om{\dd \ee \cc}.
\end{align*}
In the flat model geometry, we have \(k=\ell=0\) and therefore have
\[
0=
-2
	\pr{
		\frac{\KillingForm{\aa}{\cc}}{\KillingSquare{\cc}}
		-
		\frac{\KillingForm{\aa}{\bb}}{\KillingSquare{\bb}}
	}
	\delta_{\bb+\cc=0}
	\beta \wedge \om{\bb\cc}
-
	\pr{
		\frac{\KillingForm{\aa}{\cc}}{\KillingSquare{\cc}}
		-
		\frac{\KillingForm{\aa}{\bb}}{\KillingSquare{\bb}}
	}
	\delta_{\bb+\cc=0}
	\delta_{\dd+\ee=\bb}N_{\dd\ee}\om{\dd \ee \cc}.
\]
But then the same equation holds in every \((X,G)\)-geometry.
Indeed for the first term, in every \((X,G)\)-geometry,
\begin{align*}
-2
	\pr{
		\frac{\KillingForm{\aa}{\cc}}{\KillingSquare{\cc}}
		-
		\frac{\KillingForm{\aa}{\bb}}{\KillingSquare{\bb}}
	}
	\delta_{\bb+\cc=0}
	\beta \wedge \om{\bb\cc}
&=
4
		\frac{\KillingForm{\aa}{\bb}}{\KillingSquare{\bb}}
	\beta \wedge \om{\bb,-\bb},
\\
\intertext{and we swap \(\bb\leftrightarrow-\bb\),}
&=
-4
		\frac{\KillingForm{\aa}{\bb}}{\KillingSquare{\bb}}
	\beta \wedge \om{\bb,-\bb},
\end{align*}
hence vanishes.
The other term is more difficult, but we already know it must vanish.
Therefore
\begin{align*}
0
&=
d\ell^{\aa}_{\bb \cc} \wedge \om{\bb \cc}
+
	\pr{
		\frac{\KillingForm{\aa}{\cc}}{\KillingSquare{\cc}}
		-
		\frac{\KillingForm{\aa}{\bb}}{\KillingSquare{\bb}}
	}
	\delta_{\bb+\cc=0}
	k^{\bb}_{\dd\ee}\om{\dd \ee \cc} 
\\
&\quad
-2\ell^{\aa}_{\bb \cc} 
	\beta \wedge \om{\bb \cc}
-\ell^{\aa}_{\bb \cc}
\pr{\delta_{\dd+\ee=\bb}N_{\dd\ee}-k^{\bb}_{\dd\ee}}\om{\dd \ee \cc},
\\
&=
d\ell^{\aa}_{\bb \cc} \wedge \om{\bb \cc}
+
	\pr{
		\frac{\KillingForm{\aa}{\ee}}{\KillingSquare{\ee}}
		-
		\frac{\KillingForm{\aa}{\dd}}{\KillingSquare{\dd}}
	}
	\delta_{\dd+\ee=0}
	k^{\dd}_{\bb\cc}\om{\bb \cc \ee} 
\\
&\quad
-\ell^{\aa}_{\bb \cc} 
	\beta \wedge \om{\bb \cc}
-\ell^{\aa}_{\cc \bb} 
	\cc \wedge \om{\cc \bb}
\\
&\quad
-2\delta_{\dd\ge 0, \ee<0} \ell^{\aa}_{\bb \cc}
\delta_{\dd+\ee=\bb}N_{\dd\ee}\om{\dd \ee \cc}
-\delta_{\dd, \ee<0} \ell^{\aa}_{\bb \cc}
\delta_{\dd+\ee=\bb}N_{\dd\ee}\om{\dd \ee \cc}
\\
&\quad
+\ell^{\aa}_{\bb \cc}k^{\bb}_{\dd\ee}\om{\dd \ee \cc},
\\
&=
d\ell^{\aa}_{\bb \cc} \wedge \om{\bb \cc}
+
	\pr{
		\frac{\KillingForm{\aa}{\ee}}{\KillingSquare{\ee}}
		-
		\frac{\KillingForm{\aa}{\dd}}{\KillingSquare{\dd}}
	}
	\delta_{\dd+\ee=0}
	k^{\dd}_{\bb\cc}\om{\bb \cc \ee} 
\\
&\quad
-\ell^{\aa}_{\bb \cc} \pr{\bb+\cc} \wedge \om{\bb \cc}
\\
&\quad
-\delta_{\dd\ge 0, \bb<0} \ell^{\aa}_{\ee \cc}
\delta_{\dd+\bb=\ee}N_{\dd\bb}\om{\dd \bb \cc}
\\
&\quad
-\delta_{\dd\ge 0, \cc<0} \ell^{\aa}_{\ee \bb}
\delta_{\dd+\cc=\ee}N_{\dd\cc}\om{\dd \cc \bb}
\\
&\quad
-\delta_{\bb, \cc<0} \ell^{\aa}_{\dd \ee}
\delta_{\bb+\cc=\dd}N_{\bb\cc}\om{\bb \cc \ee}
\\
&\quad
+\ell^{\aa}_{\dd \ee}k^{\dd}_{\bb\cc}\om{\bb \cc \ee},
\\
&=
d\ell^{\aa}_{\bb \cc} \wedge \om{\bb \cc}
+
	\pr{
		\frac{\KillingForm{\aa}{\ee}}{\KillingSquare{\ee}}
		-
		\frac{\KillingForm{\aa}{\dd}}{\KillingSquare{\dd}}
	}
	\delta_{\dd+\ee=0}
	k^{\dd}_{\bb\cc}\om{\ee \bb \cc} 
\\
&\quad
-\ell^{\aa}_{\bb \cc} \pr{\bb+\cc} \wedge \om{\bb \cc}
\\
&\quad
-\delta_{\dd\ge 0, \bb<0} \ell^{\aa}_{\ee \cc}
\delta_{\dd+\bb=\ee}N_{\dd\bb}\om{\dd \bb \cc}
\\
&\quad
+\delta_{\dd\ge 0, \cc<0} \ell^{\aa}_{\ee \bb}
\delta_{\dd+\cc=\ee}N_{\dd\cc}\om{\dd \bb \cc}
\\
&\quad
+\ell^{\aa}_{\dd \ee}
\pr{
k^{\dd}_{\bb\cc}
-\delta_{\bb, \cc<0} 
\delta_{\bb+\cc=\dd}N_{\bb\cc}}\om{\ee \bb \cc},
\\
&=
\nabla \ell^{\aa}_{\bb \cc} \wedge \om{\bb \cc}.
\end{align*}
By Cartan's lemma \cite{Sternberg:1983} p.~18 Theorem 4.4,
there are unique holomorphic functions \(\ell^{\alpha}_{\beta \gamma \varepsilon} \colon\G\to \C\) so that
\(
\nabla \ell^{\alpha}_{\beta \gamma} = \ell^{\alpha}_{\beta \gamma \varepsilon} \om{\varepsilon},
\)
with \(\ell^{\alpha}_{\beta \gamma \varepsilon}\) with
\[
0 = 
\ell^{\alpha}_{\beta \gamma \varepsilon}
+
\ell^{\alpha}_{\varepsilon \beta \gamma}
+
\ell^{\alpha}_{\gamma \varepsilon \beta}.
\]
\end{proof}

\section{Bracket closed geometries}%
\label{section:bracket.closed.geometries}
Throughout this section, pick 
\begin{itemize}
\item
a complex semisimple Lie group \(G\) and
\item
a flag variety \(X=G/P\) and 
\item
a Chevalley basis and 
\item
a complex manifold \(M\) and 
\item
a bracket closed holomorphic \((X,G)\)-geometry \(\pi \colon\G\to M\).
\end{itemize}
\begin{lemma}\label{lemma:bracket.closed.equations}
A parabolic geometry is bracket closed just when, for each saturated set \(\Gamma \subset \ncptNegRts\), if \(\alpha\in\ncptNegRts\backslash\Gamma\) and \(\beta, \gamma\in \Gamma\) then
\[
k^{\aa}_{\bb \cc}=\delta_{\aa=\bb+\cc}N_{\bb \cc}.
\]
\end{lemma}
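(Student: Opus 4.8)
The plan is to fix a single saturated set $\Gamma\subseteq\ncptNegRts$ and to show that the associated subbundle $\vb{V}_{\Gamma}\subseteq TM$ is bracket closed exactly when $k^{\aa}_{\bb\cc}=\delta_{\aa=\bb+\cc}N_{\bb\cc}$ for every $\aa\in\ncptNegRts\backslash\Gamma$ and every $\bb,\cc\in\Gamma$. Since the Cartan subgroup acts on $TM$ with one-dimensional weight spaces, the $P$-submodules of $\LieG/\LieP$ are precisely the subspaces $V_{\Gamma}$ cut out by saturated sets $\Gamma$ (this is the definition of \emph{saturated} recorded above), so bracket closure of the geometry is bracket closure of every $\vb{V}_{\Gamma}$, and the lemma follows from this equivalence. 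The whole argument takes place on $\G$, where the Cartan connection trivialises $T\G$ and the structure equation \eqref{eqn:domega} is available.

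First I would carry $\vb{V}_{\Gamma}$ upstairs: let $\tilde D_{\Gamma}\subseteq T\G$ be the preimage of $\pi^{*}\vb{V}_{\Gamma}$ under $d\pi$, a $P$-invariant distribution containing the vertical bundle, on which the Cartan connection is an isomorphism onto the fixed subspace $\LieP\oplus\bigoplus_{\aa\in\Gamma}\LieG_{\aa}\subseteq\LieG$. A preliminary point --- which I expect to be the only genuinely delicate one --- is that this subspace is a $P$-submodule of $\LieG$; the content is that $[\LieG_{\rho},\LieG_{\aa}]$ lies in it for $\rho\in\ParaRts$ and $\aa\in\Gamma$, and here if $\rho+\aa$ has nonnegative grade the bracket is already in $\LieP$, whereas if $\rho+\aa\in\ncptNegRts$ then $N_{\rho\aa}\ne 0$ and, the quotient $\LieG\to\LieG/\LieP$ being an isomorphism on negatively graded weight spaces, saturation of $\Gamma$ (applied to the weight $\aa$ and the root $\rho$) forces $\rho+\aa\in\Gamma$. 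With this established, the standard fact that a subbundle of $TM$ is bracket closed if and only if its preimage in $T\G$ is bracket closed --- proved by lifting local sections of $\vb{V}_{\Gamma}$ to $\pi$-projectable sections of $\tilde D_{\Gamma}$ and using that brackets of projectable fields are projectable --- reduces us to deciding when $\tilde D_{\Gamma}$ is involutive.

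Finally I would test involutivity in the Chevalley coframe, checking brackets on the $\omega$-constant frame $\widehat v$ (characterised by $\omega(\widehat v)\equiv v$) as $v$ runs through the basis of $\LieP\oplus\bigoplus_{\aa\in\Gamma}\LieG_{\aa}$ given by the coroots, the root vectors $e_{\rho}$ with $\rho\in\ParaRts$, and the root vectors $e_{\bb}$ with $\bb\in\Gamma$. Applying $d\om{\aa}(X,Y)=-\om{\aa}(\lb{X}{Y})$ to $\omega$-constant fields and using \eqref{eqn:domega} one gets $\om{\aa}(\lb{\widehat e_{\mu}}{\widehat e_{\nu}})=-t^{\aa}_{\mu\nu}$ on root vectors, while a short separate check handles the brackets involving a coroot direction, since $\om{\beta}$ annihilates coroots and the weights annihilate root vectors: their $\aa$-components vanish for $\aa\in\ncptNegRts\backslash\Gamma$. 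Whenever $\mu$ or $\nu$ lies in $\ParaRts$ the convention $k^{\aa}_{\mu\nu}=0$ applies and the saturation argument of the second paragraph kills $\delta_{\aa=\mu+\nu}N_{\mu\nu}$, so $\om{\aa}(\lb{\widehat e_{\mu}}{\widehat e_{\nu}})=0$ for all $\aa\in\ncptNegRts\backslash\Gamma$; hence the only brackets that can fail to land in $\tilde D_{\Gamma}$ are $\lb{\widehat e_{\bb}}{\widehat e_{\cc}}$ with $\bb,\cc\in\Gamma$, for which the obstruction is exactly $\om{\aa}(\lb{\widehat e_{\bb}}{\widehat e_{\cc}})=-t^{\aa}_{\bb\cc}=-(k^{\aa}_{\bb\cc}-\delta_{\aa=\bb+\cc}N_{\bb\cc})$ for $\aa\in\ncptNegRts\backslash\Gamma$. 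Thus $\tilde D_{\Gamma}$ is involutive if and only if the displayed identity of the lemma holds, completing the proof; everything beyond the $P$-module bookkeeping is a routine unwinding of \eqref{eqn:domega} in the coframe.
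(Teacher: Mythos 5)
Your argument is correct and is essentially the paper's proof: both reduce bracket closure of \(\vb{V}_{\Gamma}\subseteq TM\) to involutivity of the lifted distribution on \(\G\) (using that saturation of \(\Gamma\) makes \(V_{\Gamma}\) a \(P\)-submodule so everything descends), and then read the obstruction \(k^{\aa}_{\bb\cc}-\delta_{\aa=\bb+\cc}N_{\bb\cc}\) directly off the structure equation \eqref{eqn:domega}. The only difference is presentational --- you test involutivity on the \(\omega\)-constant frame, while the paper states the dual Frobenius condition that each \(d\om{\aa}\), \(\aa\in\ncptNegRts\setminus\Gamma\), lie in the ideal generated by the \(\om{\bb}\) with \(\bb\in\ncptNegRts\setminus\Gamma\) --- and you supply somewhat more detail on the descent via projectable vector fields.
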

\begin{proof}
Let \(\bar\Gamma:=\ncptNegRts\backslash \Gamma\).
By the Frobenius theorem, the subbundle \(\vb{V}_\G \subset T\G\) cut out by the equations \(\om{\alpha}\) for \(\alpha \in \bar\Gamma\) is bracket closed just when each \(d\om{\alpha}\) lies in the ideal generated by the \(1\)-forms \(\om{\beta}\), for \(\beta \in\bar\Gamma\).
From equation~\vref{eqn:domega}, this is equivalent to \(k^{\alpha}_{\beta \gamma}=\delta_{\beta+\gamma=\alpha}N_{\beta \gamma}\) for every \(\beta, \gamma \in \Gamma\).
Since \(\Gamma \subset \Roots\) is saturated, its root spaces sum to a \(P\)-submodule  \(V_{\Gamma} \subset \LieG\), with quotient \(P\)-module \(V \subset \LieG/\LieP\) having an associated vector bundle \(\vb{V}_M \subset TM\), the quotient under the map \(T\G \to TM\) which is the derivative of \(\G\to M\).
Thus \(\vb{V}_M\) is bracket closed just when \(\vb{V}_\G\) is.
\end{proof}
\begin{corollary}
For every bracket closed parabolic geometry, if \(\alpha < \beta < 0\) and \(\alpha < \cc < 0\) are roots, then
\[
k^{\aa}_{\bb \cc}=\delta_{\aa=\bb+\cc}N_{\bb \cc}.
\]
\end{corollary}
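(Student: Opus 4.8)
The plan is to read this off lemma~\vref{lemma:bracket.closed.equations}. That lemma produces the identity \(k^{\aa}_{\bb\cc}=\delta_{\aa=\bb+\cc}N_{\bb\cc}\) as soon as one exhibits a saturated set \(\Gamma\subset\ncptNegRts\) with \(\beta,\gamma\in\Gamma\) and \(\alpha\in\ncptNegRts\setminus\Gamma\), so, since the geometry under consideration is bracket closed, the entire problem reduces to building such a \(\Gamma\) out of the grade data \(\Pheight{\alpha}<\Pheight{\beta}<0\) and \(\Pheight{\alpha}<\Pheight{\gamma}<0\).

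The idea is simply to truncate \(\ncptNegRts\) by grade. Put \(m:=\min\{\Pheight{\beta},\Pheight{\gamma}\}\), a negative integer, and let \(\Gamma:=\Delta_{\ge m}\cap\ncptNegRts\) be the set of negatively graded roots of grade at least \(m\). Then \(\beta,\gamma\in\Gamma\) by construction, while \(\Pheight{\alpha}<\Pheight{\beta}\) and \(\Pheight{\alpha}<\Pheight{\gamma}\) give \(\Pheight{\alpha}<m\), so \(\alpha\notin\Gamma\); and \(\Pheight{\alpha}<\Pheight{\beta}<0\) places \(\alpha\) in \(\ncptNegRts\). It remains only to see that \(\Gamma\) is saturated, i.e.\ that \(V_\Gamma\) is a \(P\)-submodule of \(V=\LieG/\LieP\); equivalently, \(\Gamma\) indexes one of the pieces of the quotient filtration \(V_*\) recalled in section~\vref{subsec:tgt.filtration}. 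This is immediate from grades: if \(\lambda\in\Gamma\) and \(\varepsilon\) is a root with \(e_\varepsilon\in\LieP\), so \(\Pheight{\varepsilon}\ge 0\), and \(\lambda+\varepsilon\in\ncptNegRts\), then \(\Pheight{\lambda+\varepsilon}=\Pheight{\lambda}+\Pheight{\varepsilon}\ge m\), hence \(\lambda+\varepsilon\in\Gamma\). Feeding this \(\Gamma\) into lemma~\vref{lemma:bracket.closed.equations} finishes the proof.

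There is really no obstacle here: the whole content sits in lemma~\vref{lemma:bracket.closed.equations}, and the only thing one must supply is the observation that cutting \(\ncptNegRts\) off below a fixed grade yields a saturated set — which holds because every root whose root vector lies in \(\LieP\) has nonnegative grade, so the \(\LieP\)-action can never lower the grade of a tangent weight. Note also that \(\beta\) and \(\gamma\) play symmetric roles, and the degenerate cases (\(\beta=\gamma\), or \(\beta+\gamma\) not a root) need no separate treatment, being absorbed by the Kronecker delta and the Chevalley convention \(N_{\bb\cc}=0\) when \(\beta+\gamma\) fails to be a root.
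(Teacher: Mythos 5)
Your proof is correct and is exactly the argument the paper intends: the corollary is stated without proof as an immediate consequence of Lemma~\ref{lemma:bracket.closed.equations}, the only input being that the grade-truncated set of negative roots (a piece of the tangent filtration) is saturated, which you verify. Nothing is missing.
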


\subsection{Torsion components}
For each flag variety \((X,G)\) and \((X,G)\)-geometry \(\G\to M\) and integer \(j \ge 1\), the \emph{\(j\)-torsion} is
\[
\nator^j :=
\sumOver{-j\le\Pheight{\aa}<0}{\nator_{\aa} \omega^{\aa}}
\]
where
\[
\nator_{\aa}
:= 
\sum_{\bb<\aa}	k^{\bb}_{\aa \bb}.
\]
\begin{lemma}\label{lemma:nator.derivative}
For each root \(\aa<0\), the semibasic \(1\)-form
\[
\nabla \nator_{\aa} := 
	\sum_{\bb<\aa}
	\nabla k^{\bb}_{\aa \bb}
\]
satisfies
\[
\nabla \nator_{\aa} 
=
d\nator_{\aa}
-\aa \nator_{\aa}
+2
\frac{\KillingForm{\chi_{>\left|\Pheight{\aa}\right|}}{\aa}}{\KillingSquare{\aa}}
\om{-\aa}
+\sum_{\Pheight{\dd}=0} N_{\dd\aa} \nator_{\aa+\dd} \om{\dd}
+\dots 
\]
writing \(\dots\) for terms \(\om{\dd}\) for \(\dd< 0\) or \(0<\dd<-\aa\).
\end{lemma}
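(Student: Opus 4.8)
The plan is to apply the formula for the covariant derivative $\nabla k$ proved in the Proposition of Section~\ref{section:structure.equations} to the curvature component $k^{\bb}_{\aa\bb}$, and to sum the resulting expression over all roots $\bb<\aa$. Two contributions are immediate: the $dk^{\bb}_{\aa\bb}$ terms assemble into $d\nator_{\aa}$, and the weight terms $k^{\bb}_{\aa\bb}\pr{\bb-\aa-\bb}=-\aa\,k^{\bb}_{\aa\bb}$ assemble into $-\aa\,\nator_{\aa}$. It then remains to sort the remaining terms by the semibasic form $\om{\dd}$ they carry, and to show that the coefficient of $\om{-\aa}$ is $2\KillingForm{\chi_{>\left|\Pheight{\aa}\right|}}{\aa}/\KillingSquare{\aa}$, that the coefficients of $\om{\dd}$ with $\Pheight{\dd}=0$ assemble into $N_{\dd\aa}\nator_{\aa+\dd}$, and that every other term carries $\om{\dd}$ with $\dd<0$ or $0<\dd<-\aa$.

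First I would dispose of the easy terms. The terms indexed by $\ee<0$ in the Proposition (the $\ell^{\bb}_{\aa\bb}\om{\bb}$ term and the linear and $k$-quadratic terms paired with $\om{\ee}$) all carry $\om{\dd}$ with $\dd<0$, hence land in the unspecified remainder. For the terms indexed by $\dd\ge 0$, the key point is that \emph{bracket closure applies}: when $\dd$ is noncompact and positive, the grading inequalities required by the corollary to Lemma~\ref{lemma:bracket.closed.equations} hold (using $\bb<\aa<0$) for the coefficients $k^{\bb}_{\aa,\bb+\dd}$ and $k^{\bb-\dd}_{\aa\bb}$, so each of these equals $\delta_{\aa+\dd=0}$ times a structure constant and hence vanishes whenever $\dd\ne-\aa$; and the middle term $N_{\dd\aa}k^{\bb}_{\bb,\aa+\dd}$ vanishes as soon as $\aa+\dd$ fails to be a negative root, i.e. as soon as $\Pheight{\dd}\ge\left|\Pheight{\aa}\right|$ with $\dd\ne-\aa$. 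So among the noncompact $\dd$ the only survivors are the middle term with $0<\Pheight{\dd}<\left|\Pheight{\aa}\right|$, which carries $\om{\dd}$ with $0<\dd<-\aa$, together with the three terms at $\dd=-\aa$. For $\Pheight{\dd}=0$ bracket closure does not apply (one inequality degenerates to an equality), so I would instead reindex $\bb\mapsto\bb-\dd$ in one of the three terms and use the antisymmetry $k^{\aa}_{\bb\cc}=-k^{\aa}_{\cc\bb}$ (which follows from $t^{\aa}_{\bb\cc}=-t^{\aa}_{\cc\bb}$): two of the terms cancel, and the third reassembles — using that $\bb<\aa$ and $\bb<\aa+\dd$ define the same set of roots when $\dd$ is compact — into $N_{\dd\aa}\nator_{\aa+\dd}\om{\dd}$.

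The heart of the argument is the coefficient of $\om{-\aa}$. At $\dd=-\aa$ the middle term drops out (since $N_{-\aa,\aa}=0$), leaving $\sum_{\bb<\aa}\bigl(N_{\bb,-\aa}k^{\bb}_{\aa,\bb-\aa}+N_{-\aa,\aa+\bb}k^{\aa+\bb}_{\aa\bb}\bigr)$. Bracket closure evaluates $k^{\bb}_{\aa,\bb-\aa}=N_{\aa,\bb-\aa}$ and $k^{\aa+\bb}_{\aa\bb}=N_{\aa\bb}$, turning this into $\sum_{\bb<\aa}\bigl(N_{\bb,-\aa}N_{\aa,\bb-\aa}+N_{\aa\bb}N_{-\aa,\aa+\bb}\bigr)$; since $\bb<\aa$ forces $\bb\ne\pm\aa$, Lemma~\ref{lemma:Jacobi.mystery} evaluates each summand as $-2\KillingForm{\bb}{\aa}/\KillingSquare{\aa}$. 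Finally, because the set of roots is stable under negation, the roots of grade strictly below $\Pheight{\aa}$ are exactly the negatives of the roots of grade strictly above $\left|\Pheight{\aa}\right|$, so $\sum_{\bb<\aa}\bb=-\chi_{>\left|\Pheight{\aa}\right|}$ and the coefficient of $\om{-\aa}$ equals $2\KillingForm{\chi_{>\left|\Pheight{\aa}\right|}}{\aa}/\KillingSquare{\aa}$, as claimed.

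The main obstacle I anticipate is not a single deep step but the bookkeeping: one must track every nonsemibasic and semibasic piece of the expansion of $\nabla k^{\bb}_{\aa\bb}$, verify in each case that the curvature coefficient occurring either lies in the range where the corollary to Lemma~\ref{lemma:bracket.closed.equations} applies or else vanishes for a grading reason, and carry out the reindexings that collapse the compact-$\dd$ terms while keeping every sign from the antisymmetry of $k$ and of the $N_{\bb\cc}$ correct. Once that is organised, the conceptual content — recognising the two surviving products of structure constants as the left-hand side of Lemma~\ref{lemma:Jacobi.mystery}, and identifying $\sum_{\bb<\aa}\bb$ with $-\chi_{>\left|\Pheight{\aa}\right|}$ — is short.
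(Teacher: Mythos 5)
Your proposal is correct and follows essentially the same route as the paper's proof: expand each \(\nabla k^{\bb}_{\aa\bb}\), discard or defer terms by grading, evaluate the \(\dd=-\aa\) contributions via bracket closure and lemma~\vref{lemma:Jacobi.mystery}, cancel the two remaining compact-\(\dd\) terms by reindexing, and identify \(\sum_{\bb<\aa}\bb=-\chi_{>\left|\Pheight{\aa}\right|}\). The only quibble is that the compact-\(\dd\) cancellation comes from the antisymmetry \(N_{\dd\bb}=-N_{\bb\dd}\) after the substitution \(\bb\mapsto\bb-\dd\), rather than from antisymmetry of \(k\), but this does not affect the argument.
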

\begin{proof}
The covariant derivative of \(\nator_{\aa}\) is:
\begin{align*}
\nabla \nator_{\aa}
&=
\sum_{\bb<\aa}
	\nabla
		k^{\bb}_{\aa \bb},
\\
&=
\sum_{\bb<\aa} \biggl(
	d
		k^{\bb}_{\aa \bb}
	+ k^{\bb}_{\aa \bb} \pr{\bb-\aa-\bb} 
	\\
	& \quad 
	+ \sum_{\dd \ge 0}
	\pr{
	N_{\dd \aa} k^{\bb}_{\bb,\aa+\dd}
	+
	N_{\bb \dd} k^{\bb}_{\aa,\dd+\bb}
	+
	N_{\dd, \bb-\dd} k^{\bb-\dd}_{\aa\bb}	
	}\om{\dd} \biggr) + \dots
\end{align*}
We split the sum over \(\dd \ge 0\) into its \(3\) terms, which we denote by \(1,2,3\) respectively.
We split each of these into sums as follows: each has a factor \(k^{\sigma}_{\mu \nu}\), and we split each into four cases, which we denote with a subscript \(+\) if \(\mu>\sigma\), \(-\) otherwise, and a second subscript \(+\) if \(\nu>\sigma\), \(-\) otherwise, giving \(12\) sums in all, which we denote by \(1_{++}, 1_{+-}\), and so on, as follows:
\[
\begin{array}{*5{>{\displaystyle}l}}
\toprule
& ++ & +- & -+ & -- 
\\ 
\midrule 
1
&
\sumOver{\bb<\aa \\ \bb, \aa+\dd > \bb}{N_{\dd \aa} k^{\bb}_{\bb,\aa+\dd}}
&
\sumOver{\bb<\aa \\ \bb > \bb \\ \aa+\dd \le \bb}{N_{\dd \aa} k^{\bb}_{\bb,\aa+\dd}}
&
\sumOver{\bb<\aa \\ \bb \le \bb \\ \aa+\dd > \bb}{N_{\dd \aa} k^{\bb}_{\bb,\aa+\dd}}
&
\sumOver{\bb<\aa \\ \bb, \aa+\dd \le \bb}{N_{\dd \aa} k^{\bb}_{\bb,\aa+\dd}}
\\[40pt]
2
&
\sumOver{\bb<\aa \\ \aa, \dd+\bb > \bb}{N_{\bb \dd} k^{\bb}_{\aa,\dd+\bb}}
&
\sumOver{\bb<\aa \\ \aa > \bb \\ \dd+\bb \le \bb}{N_{\bb \dd} k^{\bb}_{\aa,\dd+\bb}}
&
\sumOver{\bb<\aa \\ \aa \le \bb \\ \dd+\bb > \bb}{N_{\bb \dd} k^{\bb}_{\aa,\dd+\bb}}
&
\sumOver{\bb<\aa \\ \aa, \dd+\bb \le \bb}{N_{\bb \dd} k^{\bb}_{\aa,\dd+\bb}}
\\[40pt]
3
&
\sumOver{\bb<\aa \\ \aa, \bb > \bb-\dd}{N_{\dd, \bb-\dd} k^{\bb-\dd}_{\aa\bb}}
&
\sumOver{\bb<\aa \\ \aa > \bb-\dd \\ \bb \le \bb-\dd}{N_{\dd, \bb-\dd} k^{\bb-\dd}_{\aa\bb}}
&
\sumOver{\bb<\aa \\ \aa \le \bb-\dd \\ \bb > \bb-\dd}{N_{\dd, \bb-\dd} k^{\bb-\dd}_{\aa\bb}}
&
\sumOver{\bb<\aa \\ \aa, \bb \le \bb-\dd}{N_{\dd, \bb-\dd} k^{\bb-\dd}_{\aa\bb}}
\\ \bottomrule
\end{array}
\]

Because the geometry is bracket closed, by lemma~\vref{lemma:bracket.closed.equations}, for \(\bb < \mu, \nu < 0\), 
\[
k^{\bb}_{\mu \nu}=\delta_{\bb=\mu+\nu}N_{\mu \nu}.
\]
We will apply this to various terms as follows.

In the term
\[
1_{++}=
\sumOver{\bb<\aa \\ \dd \ge 0 \\ \bb, \aa+\dd > \bb}{N_{\dd \aa} k^{\bb}_{\bb,\aa+\dd}}
\om{\dd},
\]
we cannot have \(\bb > \bb\), so there are no such terms.

In the term
\[
1_{+-}=
\sumOver{\bb<\aa \\ \dd \ge 0 \\ \bb > \bb \\ \aa+\dd \le \bb}{N_{\dd \aa} k^{\bb}_{\bb,\aa+\dd}}\om{\dd},
\]
we cannot have \(\bb > \bb\), so there are no such terms.

In the term
\[
1_{-+}=
\sumOver{\bb<\aa \\ \dd \ge 0 \\ \bb \le \bb \\ \aa+\dd > \bb}{N_{\dd \aa} k^{\bb}_{\bb,\aa+\dd}}
\om{\dd}
\]
to get \(k^{\bullet}_{\bullet,\aa+\dd}\) not zero, we need \(\aa+\dd<0\).
We are computing modulo \(\om{\dd}\) except for \(\Pheight{\dd}=0\) or \(\dd\ge-\aa\), i.e. \(\aa+\dd\ge0\), but such terms yield zeroes, so:
\[
1_{-+}=
\sumOver{\bb<\aa \\ \Pheight{\dd}=0 \\ \aa+\dd > \bb}{N_{\dd \aa} k^{\bb}_{\bb,\aa+\dd}}
\om{\dd}
\]
Note that \(\aa+\dd > \bb\) just precisely when \(\aa > \bb\):
\begin{align*}
1_{-+}
&=
\sumOver{\bb<\aa+\dd \\ \Pheight{\dd}=0}{N_{\dd \aa} k^{\bb}_{\bb,\aa+\dd}}
\om{\dd},
\\
&=
\sum_{\Pheight{\dd}=0}{N_{\dd \aa} \nator_{\aa+\dd}
\om{\dd}}.
\end{align*}

In the term
\[
1_{--}=
\sumOver{\bb<\aa \\ \dd \ge 0 \\ \bb, \aa+\dd \le \bb}{N_{\dd \aa} k^{\bb}_{\bb,\aa+\dd}}
\om{\dd}
\]
\(\aa+\dd\le \bb<\aa\), but \(\dd\ge 0\), a contradiction, so there are no such terms.

Consider the term
\[
2_{++}=\sumOver{\bb<\aa \\ \dd \ge 0 \\ \aa, \dd+\bb > \bb}{N_{\bb \dd} k^{\bb}_{\aa,\dd+\bb}} \om{\dd}.
\]
By lemma~\vref{lemma:bracket.closed.equations},
\begin{align*}
k^{\bb}_{\aa,\dd+\bb}
&=
N_{\aa,\dd+\bb} \delta_{\aa+\dd+\bb=\bb},
\\
&=
N_{\aa,\dd+\bb} \delta_{\dd=-\aa},
\end{align*}
and thus
\[
2_{++}
=
\sum_{\bb<\aa}{N_{\bb,-\aa}N_{\aa,\bb-\aa} \om{-\aa}}.
\]

In the term
\[
2_{+-}=
\sumOver{\bb<\aa \\ \dd \ge 0 \\ \aa > \bb \\ \dd+\bb \le \bb}{N_{\bb \dd} k^{\bb}_{\aa,\dd+\bb}} \om{\dd}
\]
the requirement that \(0 \le \dd\) and that \(\dd+\bb \le \bb\) ensures, taking grades, that \(\Pheight{\dd}=0\).
So
\[
2_{+-}=
\sumOver{\bb<\aa \\ \Pheight{\dd}=0}{N_{\bb \dd} k^{\bb}_{\aa,\dd+\bb}} \om{\dd}.
\]
We can't simplify this term any further.

In the term
\[
2_{-+}=
\sumOver{\bb<\aa \\ \dd \ge 0 \\ \aa \le \bb \\ \dd+\bb > \bb}{N_{\bb \dd} k^{\bb}_{\aa,\dd+\bb}}\om{\dd},
\]
\(\aa \le \bb\), but \(\bb < \aa\), so there are no such terms.

In the term
\[
2_{--}=\sumOver{\bb<\aa \\ \dd \ge 0 \\ \aa, \dd+\bb \le \bb}{N_{\bb \dd} k^{\bb}_{\aa,\dd+\bb}}\om{\dd},
\]
\(\aa \le \bb\), but \(\bb < \aa\), so there are no such terms.

Take the term
\[
3_{++}=
\sumOver{\bb<\aa \\ \dd \ge 0 \\ \aa, \bb > \bb-\dd}{N_{\dd, \bb-\dd} k^{\bb-\dd}_{\aa\bb}}
\om{\dd}.
\]
Again by our lemma~\vref{lemma:bracket.closed.equations},
\begin{align*}
k^{\bb-\dd}_{\aa\bb}
&=
N_{\aa\bb} \delta_{\aa+\bb=\bb-\dd},
\\
&=
N_{\aa\bb} \delta_{\dd=-\aa},
\end{align*}
and thus
\[
3_{++}
=
\sumOver{\bb<\aa}{N_{-\aa, \aa+\bb}N_{\aa\bb} \om{-\aa}}.
\]

Take the term
\[
3_{+-}=
\sumOver{\bb<\aa \\ \dd \ge 0 \\ \aa > \bb-\dd \\ \bb \le \bb-\dd}{N_{\dd, \bb-\dd} k^{\bb-\dd}_{\aa\bb}}
\om{\dd}.
\]
From \(\bb \le \bb-\dd\) we see that \(\dd\) has grade zero, i.e. is a compact root.
\[
3_{+-}=
\sumOver{\bb<\aa \\ \Pheight{\dd}=0}{N_{\dd, \bb-\dd} k^{\bb-\dd}_{\aa\bb}}
\om{\dd}.
\]
Let \(\cc:=\bb-\dd\).
Note that if \(\bb\) and \(\cc\) are not both roots, then the associated term vanishes.
Moreover \(\bb<\aa\) just when \(\cc<\aa\).
So we can write \(\bb=\cc+\dd\), and write
\begin{align*}
3_{+-}
&=
\sumOver{\cc<\aa \\ \Pheight{\dd}=0}{N_{\dd\cc} k^{\cc}_{\aa,\cc+\dd}}
\om{\dd},
\\
\intertext{and if we now formally replace the symbol \(\cc\) with the symbol \(\bb\)}
&=
\sumOver{\bb<\aa \\ \Pheight{\dd}=0}{N_{\dd\bb} k^{\bb}_{\aa,\bb+\dd}}
\om{\dd},
\\
&=
-\sumOver{\bb<\aa \\ \Pheight{\dd}=0}{N_{\bb\dd} k^{\bb}_{\aa,\bb+\dd}}
\om{\dd},
\\
&=-2_{+-}.
\end{align*}
So \(3_{+-}\) cancels \(2_{+-}\) in \(\nabla \nator_{\aa}\).

Take the term
\[
3_{-+}=
\sumOver{\bb<\aa \\ \dd \ge 0 \\ \aa \le \bb-\dd \\ \bb > \bb-\dd}{N_{\dd, \bb-\dd} k^{\bb-\dd}_{\aa\bb}}
\om{\dd}.
\]
Since \(\bb<\aa\) while \(\aa \le \bb-\dd\), and both of \(\bb\) and \(\bb-\dd\) are roots, \(\aa \le \bb-\dd \le \bb\), a contradiction, so there are no such terms.

Finally, take the term
\[
3_{--}=
\sumOver{\bb<\aa \\ \dd \ge 0 \\ \aa, \bb \le \bb-\dd}{N_{\dd, \bb-\dd} k^{\bb-\dd}_{\aa\bb}}
\om{\dd}.
\]
Then \(\bb \le \bb-\dd\) forces \(\Pheight{\dd}\le 0\), so that \(\Pheight{\dd}=0\).
But then \(\bb < \aa\) so \(\bb-\dd<\aa\).
This contradicts \(\bb-\dd\ge \aa\), so there are no such terms.

Summing up, writing \(\dots\) for terms \(\om{\dd}\) for \(\dd<0\) or \(0<\dd<-\aa\),
\begin{align*}
\nabla \nator_{\aa}
&=
d\nator_{\aa}
-\aa \nator_{\aa}
+\sum_{\bb<\aa}%
{
\pr%
{
N_{\bb,-\aa}N_{\aa,\bb-\aa} 
+
N_{-\aa, \aa+\bb}N_{\aa\bb}
}
\om{-\aa}
}
+
\sum_{\Pheight{\dd}=0} N_{\dd\aa} \nator_{\aa+\dd} \om{\dd}
+ \dots
\end{align*}

By lemma~\vref{lemma:Jacobi.mystery}
\begin{align*}
\nabla \nator_{\aa}
&=
d\nator_{\aa}
-\aa \nator_{\aa}
-2\sum_{\bb<\aa}%
{
\frac{\KillingForm{\bb}{\aa}}{\KillingSquare{\aa}}
\om{-\aa}
}
+
\sum_{\Pheight{\dd}=0} N_{\dd\aa} \nator_{\aa+\dd} \om{\dd}
+ 
\dots,
\\
&=
d\nator_{\aa}
-\aa \nator_{\aa}
-2
\frac{\KillingForm{
\sum_{\bb<\aa}\bb
}
{\aa}}{\KillingSquare{\aa}}
\om{-\aa}
+\sum_{\Pheight{\dd}=0} N_{\dd\aa} \nator_{\aa+\dd} \om{\dd}
+ \dots,
\\
&=
d\nator_{\aa}
-\aa \nator_{\aa}
+2
\frac{\KillingForm{\chi_{>\left|\Pheight{\aa}\right|}}{\aa}}{\KillingSquare{\aa}}
\om{-\aa}
+\sum_{\Pheight{\dd}=0} N_{\dd\aa} \nator_{\aa+\dd} \om{\dd}
+ \dots.
\end{align*}
\end{proof}

\section{The boosh}%
\label{section:boosh}
\subsection{Notation}
Throughout this section, pick 
\begin{itemize}
\item
a complex semisimple Lie group \(G\) and
\item
a flag variety \(X=G/P\) and 
\item
a Chevalley basis and 
\item
a complex manifold \(M\) and 
\item
a bracket closed holomorphic \((X,G)\)-geometry \(\pi \colon\G\to M\).
\end{itemize}
Let
\[
T:=\bigoplus_{\aa} \LieG_{\aa},
\]
the sum over \(\aa>0\) submaximal, and recall the definitions:
\[
\LieGZ:=\bigoplus_{\aa} \LieG_{\aa}
\]
the sum over \(P\)-compact roots,
\[
\LieG_{\text{max}}:=\bigoplus_{\aa} \LieG_{\aa},
\]
the sum over \(\aa>0\) maximal,
\[
\LieP':=\LieGZ\oplus\LieG_{\text{max}}, 
\]
and \(P':=e^{\LieP'}\).
The \(P'\)-module \(\LieG\) contains \(T\) as a \(P'\)-submodule.
Let \(V':= \LieG/T\) be the quotient \(P'\)-module.

\subsection{Definition}
Let \(\G_0:=\G\).
Let \(\G_1\subseteq\G\) be the set of points of \(\G\) at which the \(1\)-torsion vanishes.
Inductively, let \(\G_j \subseteq\G_{j-1}\) be the set of points at which the \(j\)-torsion vanishes.
The \emph{boosh} of \(\G\to M\) is
\[
\G':=\bigcap_j\G_j,
\]
i.e. the smallest of these \(\G_j\).
Pull back the Cartan connection \(\omega\) on \(\G\) to \(\G'\), and compose with the projection \(\LieG\to V'=\LieG/T\) as above to form a \(1\)-form \(\omega' \in \nForms{1}{\G'} \otimes^{P'}V'\).

\subsection{Generalized Cartan geometries}
The boosh is not a Cartan geometry in the conventional sense; we need to use the more general notion of Cartan geometry \cite{McKay:2023} p.~229.
An \emph{infinitesimal model} \((V,H)\) in this broader sense is a complex Lie group \(H\) and a finite dimensional holomorphic \(H\)-module \(V\) containing \(\LieH\) as an \(H\)-submodule.
Every homogeneous space \((X,G)\) has infinitesimal model \((\LieG,H)\) where \(H:=G^{x_0}\) is, as usual, the stabilizer of a point.
A \((V,H)\)-geometry, or \emph{Cartan geometry infinitesimally modelled on \((V,H)\)} is a right principal \(H\)-bundle \(\G\to M\) with an \(H\)-equivariant \(V\)-valued \(1\)-form \(\omega\), the \emph{Cartan connection}, giving a linear isomorphism of all tangent spaces of \(\G\) with \(V\), and agreeing with the Maurer--Cartan form on the fibers.
Clearly every \((X,G)\)-geometry is a \((V,H)\)-geometry for \((V,H)=(\LieG,H)\).
If \(V\) comes equipped with an \(H\)-invariant Lie algebra structure, we can still make sense of curvature, torsion and flatness; otherwise, curvature, torsion and flatness are not defined.
Unfortunately, there is no natural choice of Lie algebra structure for the infinitesimal model of the boosh.

\subsection{The boosh as Cartan geometry}
\begin{lemma}\label{lemma:normalize}
The boosh \(\G'\subset\G\) of any holomorphic bracket-closed parabolic geometry is a holomorphic principal right \(P'\)-subbundle, on which \(\omega^{\aa}\) is semibasic for any \(\aa>0\) not maximal in its irreducible factor of the root system of \(G\).
Thus \(\omega'\) is a Cartan connection modelled on the infinitesimal model \((V',P')\).
\end{lemma}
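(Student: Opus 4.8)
The plan is to realise $\G'=\bigcap_j\G_j$ by a finite chain of torsion absorptions. Write $r$ for the depth of the grading. Recall that $\G_j\subseteq\G$ is the common zero locus of the $\nator_\aa$ with $-j\le\Pheight{\aa}<0$, that $\nator_\aa\equiv 0$ once $-\aa$ has maximal grade (the sum defining $\nator_\aa$ is then empty), and hence $\G'=\G_{r-1}$. For $0\le j\le r-1$ let $P_j$ be the connected closed subgroup of $P$ with Lie algebra $\LieGZ\oplus\bigoplus_{i>j}\LieG_i$ --- a subalgebra --- so $P_0=P$ and $P_{r-1}=P'$; note $\LieP=\LieP'\oplus T$ and that $V'=\LieG/T$ is a $P'$-module containing $\LieP'$ as a submodule, since $\Ad(\exp\LieG_{\text{max}})$ fixes $T$ because $[\LieG_{\text{max}},T]\subseteq\LieG_{>r}=0$. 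The one non-formal fact needed is that the constant $c_\aa:=2\,\KillingForm{\chi_{>\left|\Pheight{\aa}\right|}}{\aa}/\KillingSquare{\aa}$ of lemma~\vref{lemma:nator.derivative} is nonzero when $-\aa$ is submaximal: indeed $c_\aa=-2\,\KillingForm{\chi_{\ge\left|\Pheight{\aa}\right|+1}}{-\aa}/\KillingSquare{\aa}$, and $-\aa$ has grade $\left|\Pheight{\aa}\right|$ while $\chi_{\ge\left|\Pheight{\aa}\right|+1}\neq 0$, so this inner product is positive by the last clause of lemma~\vref{lemma:not.perp.roots}; whereas $c_\aa=0$ when $-\aa$ has maximal grade.

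The heart of the proof is the inductive step: if $\G_{j-1}\to M$ is a holomorphic principal $P_{j-1}$-subbundle of $\G$ ($1\le j\le r-1$), then so is $\G_j\to M$ for $P_j$. On $\G_{j-1}$ the fibre directions are precisely those with $\omega$-value in $\LieGZ\oplus\bigoplus_{i\ge j}\LieG_i$, so $\omega^{\dd}$ is semibasic there exactly for $\dd$ of grade strictly between $0$ and $j$, while the Cartan forms, the $\omega^{\dd}$ with $\dd$ compact, and the $\omega^{-\aa}$ with $-\aa$ of grade $j$ are fibre-direction forms. Inserting this into the formula for $\nabla\nator_\aa$ in lemma~\vref{lemma:nator.derivative}, for $\Pheight{\aa}=-j$ one reads off: along the fundamental field of $e_\mu$ the derivative of $\nator_\aa$ is the \emph{constant} $-c_\aa\,\delta_{\mu=-\aa}$ if $\mu$ has grade $j$ and is $0$ if $\mu$ has grade $>j$; and along $\GZ$ the $\nator_\aa$ ($\Pheight{\aa}=-j$) satisfy a linear homogeneous system, coupled by the $N_{\dd\aa}\nator_{\aa+\dd}\omega^{\dd}$ terms ($\dd$ compact, $\Pheight{(\aa+\dd)}=-j$). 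Hence $\nator_\aa$ is $\exp(\bigoplus_{i>j}\LieG_i)$-invariant, its zero locus is $\GZ$-invariant (linear system together with Cartan-weight homogeneity), and modulo those directions the map $\LieG_j\to\C^{\{\aa:\Pheight{\aa}=-j\}}$, $u\mapsto(\nator_\aa(p\exp u))_\aa$, is \emph{affine} with linear part $u\mapsto(-c_\aa u_{-\aa})_\aa$, a diagonal isomorphism since $\dim\LieG_j=\#\{\aa:\Pheight{\aa}=-j\}$ and each $c_\aa\neq 0$. So each $P_{j-1}$-fibre meets $\{\nator_\aa=0:\Pheight{\aa}=-j\}$ in a single $P_j$-orbit depending holomorphically on the base point, and (since $d\nator_\aa\equiv-c_\aa\,\omega^{-\aa}$ modulo semibasic forms there) the $d\nator_\aa$ are independent along it; thus $\G_j\to M$ is a holomorphic principal $P_j$-subbundle. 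Iterating from $j=1$ to $r-1$ presents $\G'=\G_{r-1}\to M$ as a holomorphic principal $P'$-subbundle of $\G$.

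Finally, the fibre directions of $\G'$ have $\omega$-values in $\LieP'=\LieGZ\oplus\LieG_{\text{max}}$, so $\omega^{\aa}$ with $\aa>0$ not of maximal grade vanishes on them, i.e.\ is semibasic on $\G'$; hence $\omega':=(\LieG\to V')\circ\omega|_{\G'}$ is defined. It is $P'$-equivariant (inherited from $\omega$, as $T$ is $\Ad(P')$-invariant), agrees with the Maurer--Cartan form on the fibres of $\G'\to M$ (their $\omega$-values lie in $\LieP'\subseteq V'$), and at each $p$ is a linear isomorphism $T_p\G'\xrightarrow{\ \sim\ }V'$: $\omega|_{T_p\G'}$ is injective, $\omega(T_p\G')\supseteq\LieP'$ and $\omega(T_p\G')+\LieP=\LieG$ since $\G'\to M$ is a submersion, so $\omega(T_p\G')+T=\omega(T_p\G')+(\LieP'\oplus T)=\LieG$, whence the projection to $V'$ is onto and, by $\dim T_p\G'=\dim M+\dim\LieP'=\dim V'$, bijective. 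Therefore $(\G'\to M,\omega')$ is a Cartan geometry infinitesimally modelled on $(V',P')$.

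The step I expect to be the real obstacle is the inductive one: extracting from lemma~\vref{lemma:nator.derivative} the precise statement that the homogeneity-$j$ torsion is affine and grade-triangular along the fibres, and that its zero locus is genuinely a $P_j$-reduction --- not singular, not a union of several $P_j$-orbits, not over a proper subvariety of $M$. The nonvanishing of $c_\aa$ for submaximal $\aa$ from lemma~\vref{lemma:not.perp.roots} is exactly what makes that affine map's linear part invertible, the grading gives the triangularity, and bracket-closedness is simply the hypothesis making lemma~\vref{lemma:nator.derivative} available; everything else (smoothness and local triviality of the $\G_j$, and equivariance and nondegeneracy of $\omega'$) is routine.
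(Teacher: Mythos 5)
Your proof follows the paper's own argument: an induction on the grade, using lemma~\vref{lemma:nator.derivative} to see that on \(\G_{j-1}\) the differentials of the grade-\(j\) torsions \(\nator_{\aa}\) are, modulo semibasic forms and forms already semibasic by induction, constant multiples of the \(\om{-\aa}\), hence of full rank, while containing no \(\GZ\)- or higher-grade fibre components, so that the zero locus is a principal subbundle for the next group in the chain. You make explicit two points the paper leaves implicit --- the nonvanishing of the coefficient \(c_{\aa}\) for submaximal \(-\aa\) via the last clause of lemma~\vref{lemma:not.perp.roots}, and the affine structure of the fibrewise torsion map, which shows \(\G_j\) meets every \(P_{j-1}\)-fibre in exactly one nonempty \(P_j\)-orbit rather than relying on ``dimension count and transversality'' --- so your write-up is, if anything, more complete than the paper's.
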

\begin{proof}
At each point \(p\in\G_j\), for each root \(\aa\) of grade \(\Pheight{\aa}=-j\),  \(\nator_{\aa}=0\), so on \(T_p\G_{j-1}\),
\[
\nabla \nator_{\aa} = 
2
\frac{\KillingForm{\chi_{\ge j+1}}{\aa}}{\aa^2} \omega^{-\aa} + \dots
\]
where the \(\dots\) are semibasic terms, by induction.
It is convenient to switch the sign on \(\aa\) here: replacing \(\aa\) by \(-\aa\), so \(0<\Pheight{\aa}\le j\).
By lemma~\vref{lemma:not.perp.roots},
\[
\KillingForm{\chi_{\ge j+1}}{\aa}>0
\]
precisely when \(\Pheight{\aa}=j\).
By induction, if \(0<\Pheight{\aa}<j\), we already have \(\om{\aa}\) semibasic, but there is at least one root \(\aa\) with \(\Pheight\aa=j\), since \(j\) is not maximal.
So the equations of \(\G_j\) have full rank at every point of \(\G_j\), so \(\G_j \subset\G\) is a complex submanifold of codimension equal to the number of roots of grade \(j\).
Since there are no \(\om{\aa}\) for roots of grade zero in this expression, and no \(1\)-forms \(\aa\), \(\G_j\) is invariant under the action of \(\LieGZ\).
Similarly \(\G_j\) is invariant under \(\LieG_{\bb}\) for \(\bb>\aa\).
By induction, \(\G'\) is invariant under \(\LieGZ\) and \(\LieG'\).
Because \(\GZ\) and \(G'\) are connected, \(\G'\) is invariant under \(\GZ\) and \(G'\).
By dimension count and transversality, \(\G'\) is a principal right \(\GZ G'\)-subbundle.
Inductively, on each \(\G_j\), every \(\omega^{\aa}\) is now semibasic for \(0<\Pheight\aa\) submaximal.
\end{proof}
\subsection{Structure equations of the boosh so far}
Write \(\hat\aa\) to mean that \(\aa\) is maximally graded for the flag variety.
Define
\begin{align*}
\nabla'\om\aa&=d\om{\aa}+\aa\wedge\om{\aa}
-\sum_{\Pheight{\sigma}=0}N_{\alpha+\sigma,-\sigma}\omega^{-\sigma,\alpha+\sigma},\qquad\aa<0,\\
\nabla'\om\aa&=d\om{\aa}+\aa\wedge\om{\aa}
+
\sum_{\Pheight{\bb}=0=\Pheight{\cc}}^{\bb+\cc=\aa}N_{\bb\cc}\omega^{\bb\cc}
+
\sum_{\hat\bb,\hat\cc}^{\hat\bb-\hat\cc=\aa}N_{\hat\bb,-\hat\cc}\omega^{\hat\bb,-\hat\cc}
,\qquad\Pheight{\aa}=0,\\
\nabla'\om{\hat\aa} &= d\om{\hat\aa}+\hat\aa \wedge \om{\hat\aa}+\sum_{\hat\bb + \cc=\hat\aa}^{\Pheight{\cc}=0} N_{\hat\bb \cc} \om{\hat\bb\cc},\\
\nabla'\aa&=d\aa+2
\sum_{\hat\bb} 
\frac{\KillingForm{\alpha}{\hat\bb}}{\KillingSquare{\hat\bb}} 
\om{\hat\bb,-\hat\bb}.
\end{align*}
The structure equations of the boosh are
\begin{align*}
\nabla'\om{\aa} &=
\sum_{\aa\le\bb<0,\cc<\aa}k^{\alpha}_{\bb\gamma}\om{\bb\gamma}-
\sum_{\Pheight{\beta}>0}^{\aa=\bb+\cc}N_{\beta\cc}\omega^{\bb\cc}
,\qquad\aa<0,
\\
\nabla'\om{\aa}&=
\frac{1}{2}
\sum_{\bb,\cc<0}K^{\alpha}_{\bb\cc}\om{\bb,\cc},\qquad\Pheight{\aa}=0,
\\
\nabla'\om{\hat\aa} &= 
\frac{1}{2}
\sum_{\bb,\cc<0}
K^{\hat\aa}_{\beta \gamma} \om{\beta\gamma}, 
\\
\nabla'\alpha &=
\frac{1}{2}
\sum_{\bb, \cc<0}
L^{\aa}_{\bb \cc} \om{\bb\cc}.
\end{align*}
\subsection{More Lie brackets}
The \(P'\)-action on \(\LieG/\LieP\) factors through \(P'\to G_0\), , as \(\rtspMax{\LieG}\) acts trivially, so the boosh induces a \(G_0\)-structure on \(M\).
So every \(P'\)-submodule \(V \subset \LieG/\LieP\) is precisely \(V=\pr{\LieG/\LieP}_{\Gamma}\) where \(\Gamma\subset\ncptNegRts\) is any \(\GZ\)-saturated subset.
\begin{proposition}
Suppose that \(M\) is a smooth projective variety and \(\G\to M\) is a minimal holomorphic parabolic geometry with boosh \(\G'\to M\).
Take a \(G_0\)-submodule \(V\subseteq\LieG/\LieP\).
Then \(V\) is an \(P'\)-submodule, hence has associated holomorphic distribution \(\vb{V}\subseteq TM\); this distribution is bracket closed.
\end{proposition}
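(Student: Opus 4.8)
The plan is to read both assertions off the special shape of the boosh established above. For the claim that $V$ is a $P'$-submodule: recall $\LieP'=\LieGZ\oplus\rtspMax{\LieG}$ and $P'=e^{\LieP'}$, so $P'$ is $G_0$ extended by the connected normal subgroup $e^{\rtspMax{\LieG}}$, and I would first check that $e^{\rtspMax{\LieG}}$ acts trivially on $\LieG/\LieP$. Indeed, for $\hat\aa$ maximally graded and $\bb<0$, the $e$-grades of roots are symmetric about $0$, so $\Pheight{\bb}\ge-\Pheight{\hat\aa}$ and hence $\Pheight{\hat\aa+\bb}\ge 0$; thus $\ad_{\XX{\hat\aa}}$ carries $\LieGX$ into $\LieP$, and it carries $\LieP$ into $\LieP$ because $\hat\aa>0$, so it induces the zero endomorphism of $\LieG/\LieP$. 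Since $e^{\rtspMax{\LieG}}$ is connected it therefore acts trivially, the $P'$-action on $\LieG/\LieP$ factors through $P'\to G_0$, every $G_0$-submodule $V\subseteq\LieG/\LieP$ is a $P'$-submodule, and $\vb{V}:=\prodquot{\G'}{V}{P'}$ is a holomorphic vector subbundle of $\prodquot{\G'}{\pr{\LieG/\LieP}}{P'}=TM$ --- the $G_0$-structure carried by the boosh.

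For bracket closure I would run the Frobenius argument on the total space $\G'$ exactly as in the proof of lemma~\vref{lemma:bracket.closed.equations}, but using the boosh structure equations in place of~\vref{eqn:domega}. Write $V=\pr{\LieG/\LieP}_{\Gamma}$ with $\Gamma\subseteq\ncptNegRts$ a $G_0$-saturated set. Since $\G'\to M$ is a principal $P'$-bundle whose vertical tangents correspond to $\LieP'=\LieGZ\oplus\rtspMax{\LieG}$, the forms $\om{\aa}$ for $\aa\in\ncptNegRts$ are semibasic and coframe $\pi^*T^*M$, and the preimage of $\vb{V}$ under $d\pi\colon T\G'\to TM$ is the distribution cut out by the equations $\om{\aa}=0$, $\aa\in\ncptNegRts\setminus\Gamma$; it contains the vertical subbundle, so $\vb{V}$ is bracket closed on $M$ if and only if this distribution on $\G'$ is involutive, which by the holomorphic Frobenius theorem means that $d\om{\aa}$ lies in the ideal generated by $\curly{\om{\bb}:\bb\in\ncptNegRts\setminus\Gamma}$ for every $\aa\in\ncptNegRts\setminus\Gamma$. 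Such an $\aa$ is negative, so the relevant boosh equation is
\[
d\om{\aa}=-\aa\wedge\om{\aa}+\sum_{\Pheight{\sigma}=0}N_{\aa+\sigma,-\sigma}\,\om{-\sigma,\aa+\sigma}-\sum_{\Pheight{\sigma}=0,\,\cc<0}k^{\aa}_{\aa+\sigma,\cc}\,\om{\aa+\sigma,\cc},
\]
and every summand carries a factor $\om{\aa}$, or a factor $\om{\aa+\sigma}$ with $\sigma$ compact. The first is in the generating set; for the second, if $\aa+\sigma$ is a root it has the same grade as $\aa$, so lies in $\ncptNegRts$, and it cannot lie in $\Gamma$, for otherwise $G_0$-saturation applied to $\aa=(\aa+\sigma)+(-\sigma)$, with $-\sigma$ compact, would force $\aa\in\Gamma$. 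Hence $d\om{\aa}$ lies in the ideal and $\vb{V}$ is bracket closed.

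The one genuinely load-bearing point is that the argument uses the \emph{boosh} structure equations rather than the original ones: equation~\vref{eqn:domega} for a general parabolic geometry contains a sum $-\tfrac12\sum_{\bb+\cc=\aa}N_{\bb\cc}\om{\bb\cc}$ over all decompositions $\bb+\cc=\aa$, including ones with $\bb$ of positive grade, and such terms fall into the ideal only when $\Gamma$ is $P$-saturated --- which is exactly why corollary~\vref{corollary:brackets} was stated for $P$-submodules. The torsion reduction of section~\vref{section:boosh} has replaced that sum by one over \emph{compact} $\sigma$ alone, and this is precisely what promotes bracket closure from $P$-submodules to $G_0$-submodules. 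So the real obstacle was already overcome upstream; here the only things to verify carefully are the grade inequality $\Pheight{\hat\aa}+\Pheight{\bb}\ge 0$ driving the triviality of the unipotent part (including the extreme case where $\bb$ carries the bottom grade) and the identification of the preimage distribution with the one cut out by $\om{\aa}=0$, $\aa\in\ncptNegRts\setminus\Gamma$, which rests on the boosh being a $P'$-bundle with $\rtspMax{\LieG}$ among its vertical directions.
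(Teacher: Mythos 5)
Your first half --- that $\rtspMax{\LieG}$ acts trivially on $\LieG/\LieP$ so the $P'$-action factors through $G_0$ and every $G_0$-submodule is a $P'$-submodule --- is correct and is exactly the remark the paper makes immediately before the proposition; the grade inequality $\Pheight{\hat\aa}+\Pheight{\bb}\ge 0$ is the right justification. For the bracket-closure half, however, you take a genuinely different route from the paper, and it has a gap. The paper does \emph{not} argue via the boosh structure equations at all: it uses that $G_0$ is reductive to produce a $G_0$-invariant complement $V_\perp$, so that $\vb{I}=\vb{V}^\perp$ is simultaneously a subsheaf and a torsion-free quotient sheaf of $\nForms{1}{M}$; Campana--Peternell (theorem~\vref{theorem:Campana.Peternell}, which needs $M$ projective and not uniruled, i.e.\ the minimality hypothesis) then makes $\det\vb{I}$ pseudoeffective, and Demailly's identity (corollary~\vref{corollary:bracket.pseudo}) forces bracket closure. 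The hypotheses ``smooth projective'' and ``minimal'' are consumed a second time here, not merely used to construct the boosh.

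Your argument instead reads everything off the displayed structure equation for $d\om{\aa}$, $\aa<0$, on the boosh. That equation is asserted in the paper without derivation, and the inputs actually available do not obviously produce it: bracket closure of $P$-submodules (lemma~\vref{lemma:bracket.closed.equations} and its corollary) only pins down $k^{\aa}_{\bb\cc}$ when $\aa$ lies outside the $P$-saturation of $\bb$ and of $\cc$ --- in particular it says nothing about components with $\Pheight{\bb}<\Pheight{\aa}$ or $\Pheight{\cc}<\Pheight{\aa}$ --- and the conditions cutting out the boosh are the vanishing of the traces $\nator_{\aa}=\sum_{\bb<\aa}k^{\bb}_{\aa\bb}$, which do not kill individual components. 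So curvature terms $k^{\aa}_{\bb\cc}\,\om{\bb\cc}$ with both $\bb,\cc$ of grade below $\Pheight{\aa}$, and hence possibly both in $\Gamma$, are not visibly excluded; any such term breaks your Frobenius step. The telltale symptom is that your proof never invokes projectivity or minimality, so it would establish bracket closure of every $G_0$-submodule for an arbitrary bracket-closed parabolic geometry on an arbitrary complex manifold --- a substantially stronger statement that would render the proposition's hypotheses vacuous. To repair your route you would have to actually derive the boosh structure equation you quote, and at that point the pseudoeffectivity argument the paper uses is the cleaner path.
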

\begin{proof}
The \(G_0\)-submodule \(V\subseteq\LieG/\LieP\) has a \(G_0\)-invariant complement \(V_{\perp}\) since \(G_0\) is reductive.
Hence we have a torsion-free quotient module \((\LieG/\LieP)^*\to Q:=V^{\perp}=V_{\perp}^*\).
So the vector bundle \(\vb{V}\) is both a subsheaf and a quotient sheaf of \(TM\), and so \(\vb{I}:=\vb{V}^{\perp}\) is therefore both a subsheaf and a quotient sheaf of \(\nForms{1}{M}\).
By theorem~\vref{theorem:Campana.Peternell}, \(\det\vb{Q}\) is pseudoeffective, i.e. the determinant \(\det \vb{I}\) is pseudoeffective.
By corollary~\vref{corollary:bracket.pseudo}, \(V\) is bracket closed, i.e. the tangent bundle of a unique holomorphic foliation of \(M\), i.e. \(\vb{I}\) is \(d\)-closed.
As above, if we let \(\bar\Gamma:=\ncptNegRts-\Gamma\):
\[
k^{\aa}_{\bb\cc} = N_{\bb\cc} \delta_{\alpha=\beta+\gamma},
\]
for \(\alpha\in\bar\Gamma\)  and \(\beta, \gamma\in\Gamma\).
\end{proof}
Split up the \(P\)-negative roots into minimal collections so that each collection \(\Gamma\) is the set of roots whose root vectors span a \(G_0\)-submodule of \(\LieG/\LieP\). 
Each collection has root vectors spanning a \(G_0\)-submodule of \(\LieG/\LieP\).
(See pictures of these minimal collections \cite{McKay:2020}).
This splits \(\LieG/\LieP\) into a maximal direct sum of \(G_0\)-submodules
\[
\LieG/\LieP=\bigoplus_{\Gamma}V_{\Gamma}.
\]
The smooth projective variety \(M\) has correspondingly split tangent bundle 
\[
T=\bigoplus_{\Gamma}T\vb{F}_{\Gamma}
\]
from the various foliations \(\vb{F}_{\Gamma}\) with \(T\vb{F}_{\Gamma}=\vb{V}_{\Gamma}\).
\subsection{Structure equations of the boosh}
For each root \(\aa<0\), let \(\Gamma:=\Gamma_{\aa}\) be the minimal collection of roots \(\bb<0\) so that \(V_{\Gamma}\) is a \(G_0\)-module.
Incorporating these additional bracket closure equations, the structure equations of the boosh are, for some holomorphic functions \(K^{\aa}_{\bb\cc},L^{\aa}_{\bb\cc}\): 
\begin{align*}
\nabla'\om{\aa} &=
\frac{1}{2}\sum_{\bb,\cc\in\Gamma_{\aa}}K^{\alpha}_{\bb\gamma}\om{\bb\gamma}
+
\sum_{\bb\notin\Gamma_{\aa},\cc\in\Gamma_{\aa}}K^{\alpha}_{\bb\gamma}\om{\bb\gamma}
,\qquad\aa<0,
\\
\nabla'\om{\aa}&=
-
\sum_{\hat\bb,\hat\cc}^{\hat\bb-\hat\cc=\aa}N_{\hat\bb,-\hat\cc}\omega^{\hat\bb,-\hat\cc}
+
\frac{1}{2}
\sum_{\bb,\cc<0}K^{\alpha}_{\bb\cc}\om{\bb,\cc},\qquad\Pheight{\aa}=0,
\\
\nabla'\om{\hat\aa} &= 
\frac{1}{2}
\sum_{\bb,\cc<0}
K^{\hat\aa}_{\beta \gamma} \om{\beta\gamma}, 
\\
\nabla'\alpha &=
\frac{1}{2}
\sum_{\bb, \cc<0}
L^{\aa}_{\bb \cc} \om{\bb\cc}.
\end{align*}
\begin{example}
Pick some negative grades, and pick out the root vectors of those grades to generate a \(G_0\)-submodule \(V\subseteq\LieG/\LieP\).
We can already read off from the structure equations above that the associated holomorphic distribution \(\vb{V}\) is the tangent bundle of a unique holomorphic foliation of \(M\), without using the previous proposition.
\end{example}

\subsection{Quotienting down the boosh}
Let \(\bar{\G}':=\G'/Z'\) where \(Z'\subseteq P'\) is the center of the unipotent radical of \(P'\).
We have principal bundles
\[
\begin{tikzcd}
\bar{\G}'\arrow[d]&P'/Z'\arrow[l]\\
M
\end{tikzcd}
\]
and
\[
\begin{tikzcd}
\G'\arrow[d]&Z'\arrow[l]\\
\bar{\G}'.
\end{tikzcd}
\]
The \(1\)-forms \(\omega^\aa\) for \(\aa<0\) are semibasic for \(\G'\to M\).
The \(1\)-forms \(\omega^\aa\) for \(\Pheight{\aa}=0\) and the \(1\)-forms \(\aa\) are semibasic for \(\G'\to\bar{\G}'\).
So at each point of \(\G'\), these \(1\)-forms project to \((1,0)\)-forms on the tangent space at the underlying point of \(\bar{\G}'\).
\subsection{Reducing structure group of the boosh}
Note that \(Z'\subseteq P'\) is abelian, connected and simply connected \cite{Knapp:2002} p.~393 Proposition~7.31, hence biholomorphic to complex Euclidean space, so smoothly contractible. 

Take a \(C^\infty\) principal bundle \(G\to E\to M\).
Suppose that the structure group \(G\) has finitely many components.
Take an embedded Lie subgroup \(H\subseteq G\).
Then \(G\to E\to M\) admits a \(C^\infty\) retraction to an \(H\)-subbundle if and only if \(H\) contains a \(C^\infty\) maximal compact subgroup of \(G\)  \cite{Hilgert.Neeb:2012} p.~, \cite{Steenrod:1999} \S 12.14.

Therefore, returning to our bracket closed geometry, there is a global \(C^\infty\)-smooth section \(\bar{\G}'\xrightarrow{s}\G'\), i.e. a reduction of \(\G'\) to a principal right \(P'/Z'\)-bundle.
When we pullback the \(1\)-forms \(\omega^\aa\) for \(\aa<0\), \(\omega^\aa\) for \(\Pheight{\aa}=0\) and \(\aa\), they agree with these \((1,0)\)-projections, so they are \((1,0)\)-forms on \(\bar{\G}'\).
But we have no control on whether the \(1\)-forms \(\omega^{\hat\aa}\), pulled back to \(\bar{\G}'\), are \((1,0)\) or \((0,1)\) or a sum of \((1,0)\) and \((0,1)\) say
\[
\omega^{\hat\aa}=(\omega^{\hat\aa})^{1,0}+(\omega^{\hat\aa})^{0,1}.
\]
\subsection{The Atiyah class of the boosh}\label{section:boosh.Atiyah}
On \(\bar{\G}'\), the \(1\)-forms \(\omega^\aa\) for \(\Pheight{\aa}=0\) and the \(1\)-forms \(\aa\) together form the components of a \((1,0)\)-connection.
The Atiyah class of this connection is the image in Dolbeault cohomology of its \((1,1)\)-part \cite{Atiyah:1957}.
From the structure equations, the components of the Atiyah class are
\begin{align*}
\bar\partial\om{\aa} &= 
-
\sum_{\hat\bb,\hat\cc}^{\hat\bb-\hat\cc=\aa}N_{\hat\bb,-\hat\cc}(\om{\hat\bb})^{0,1}\wedge\om{-\hat\cc}, \text{ if } \Pheight{\aa}=0,
\\
\bar\partial\alpha &=
-
2
\sum_{\hat\bb} 
\frac{\KillingForm{\alpha}{\hat\bb}}{\KillingSquare{\hat\bb}} 
(\om{\hat\bb})^{0,1}\wedge\om{-\hat\bb}
\end{align*}
Note that these expressions do not contain any contribution from the curvature of the parabolic geometry.
Also note that we have no control over the possible values of the \((0,1)\)-forms here, because they can vary arbitrarily depending on the tangent space to the choice of reduction of structure group.
More discussion of the Atiyah class in terms of these structure equations, see \cite{McKay:2022}.

\section{Chern classes on the boosh}%
\label{section:Chern.boosh}
The \emph{characteristic ring} of a holomorphic Cartan geometry \(\G\to M\) is the ring of Dolbeault cohomology classes in \(H^{*,*}(M)\) consisting of all polynomials in all Chern classes of all associated holomorphic vector bundles of the bundle \(\G\to M\).
In particular, the characteristic ring of any holomorphic Cartan geometry \(\G\to M\) contains the Chern classes of the tangent bundle of \(M\).

Take a flag variety \((X,G)\).
Denote by \(P\subseteq G\) the \(G\)-stabilizer of some point \(x_0\in X\). 
Take an associated cominuscule subvariety \((\breve{X},\breve{G})\) through \(x_0\).
Denote by \(\breve{P}\subseteq\breve{G}\) the \(\breve{G}\)-stabilizer of \(x_0\in X\). 
Let \(G'\subseteq G\) be the subgroup acting on \(X\) preserving \(\breve{X}\subseteq X\) and let \(P'\subseteq G'\) be the stabilizer of \(x_0\).
Each element of \(G'\) thus acts as an automorphism of \(\breve{X}\) given by a unique element of \(\breve{G}\).
Thus we have a  holomorphic Lie group morphism \(G'\to\breve{G}\), hence a morphism \(P'\to\breve{P}\) of stabilizer subgroups.
Recall that \(G'\to\breve{G}\) is onto \cite{McKay:2020}.

Each \(G'\)-invariant vector bundle \(\vb{V}\to\breve{X}\) is associated to a unique \(P'\)-module \(V\).
In particular, the morphism \(P'\to\breve{P}\) allows us to take \(\breve{P}\)-modules and construct out of them \(G'\)-invariant vector bundles.
Take a basis \(\breve{\Delta}\) of simple roots \(\alpha\) of \(\breve{P}\).
Associate to each character \(\chi\) of \(\breve{P}\) the line bundle \(\vb{L}_{\chi}\) and hence its first Chern class.
Recall that \(P, P'\) and \(\breve P\) have the same characters, so the same line bundles.

On a compact complex manifold \(M\), take a holomorphic bracket closed parabolic geometry \(\G\to M\) with model \((X,G)\).
As above, denote its boosh by \(\G'\to M\).
Hence, to each \(P'\)-module, construct a holomorphic vector bundle \(\vb{V}\to M\) by \(\vb{V}:=\amal{\G'}{P'}{V}\).
In particular, we have line bundles \(\vb{L}_{\chi}\to M\).

\begin{theorem}\label{theorem:Bracket.closed}
On a compact complex manifold \(M\), take a holomorphic bracket closed parabolic geometry \(\G\to M\) with model \((X,G)\).
The map 
\[
(\vb{V}\to\breve{X})\mapsto (\vb{V}\to M)
\]
taking \(G'\)-invariant holomorphic vector bundles to holomorphic vector bundles extends uniquely to a complex linear map
\[
H^{*,*}(\breve{X}) \to H^{*,*}(M),
\]
taking the Dolbeault cohomology of \(\breve{X}\) to that of \(M\).
Its image contains the characteristic ring of the parabolic geometry.
In particular, any polynomial 
\[
P(c_1,c_2,\dots,c_n)
\]
in Chern classes of those associated vector bundles which vanishes on \(\breve{X}\) (for example, has degree \((p,p)\) with \(p>\dimC{\breve{X}}\)) vanishes on \(M\).
Hence the numerical dimension of any holomorphic line bundle associated to the Cartan geometry is at most the dimension of \(\breve{X}\).
\end{theorem}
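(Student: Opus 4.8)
The plan is to present \(H^{*,*}(\breve X)\) and the characteristic ring of \(\G\to M\) as quotients, by the \emph{same} relation ideal, of the cohomology \(H^{*}(B\breve P)\) of a classifying space, and then to show that ideal dies on \(M\). Since \(\breve X=\breve G/\breve P\) is a flag variety, \(H^{*,*}(\breve X)=H^{*}(\breve X;\C)\), and the tautological bundle \(\breve P\to\breve G\to\breve X\) induces, by Borel's theorem on the cohomology of flag varieties, a surjection \(H^{*}(B\breve P)\twoheadrightarrow H^{*}(\breve X;\C)\) whose kernel is the ideal \(\mathfrak a\) generated by the positive-degree image of \(H^{*}(B\breve G)\); equivalently, \(\mathfrak a\) consists of the characteristic classes the bundle picks up after extending its structure group from \(\breve P\) to \(\breve G\), and it is killed on \(\breve X\) exactly because that extension \(\amal{\breve G}{\breve P}{\breve G}\) is the trivial bundle \(\breve X\times\breve G\). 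On \(M\), the boosh \(\G'\to M\) is a holomorphic principal \(P'\)-bundle (Lemma~\ref{lemma:normalize}); the homomorphism \(P'\to\breve P\) coming from the surjection \(G'\to\breve G\) \cite{McKay:2020} turns each \(G'\)-invariant bundle on \(\breve X\), associated to a \(\breve P\)-module \(V\), into \(\vb{V}=\amal{\G'}{P'}{V}\to M\) (and \(P,P',\breve P\) share characters, so the \(\vb{L}_{\chi}\) survive). This assignment respects \(\oplus\), \(\otimes\) and duals, hence gives a ring map \(\Phi\colon H^{*}(B\breve P)\to H^{*,*}(M)\) whose image contains the characteristic ring, since \(TM=\amal{\G'}{P'}{(\LieG/\LieP)}\). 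So everything reduces to \(\Phi(\mathfrak a)=0\), i.e.\ to the vanishing of the characteristic classes of the \(\breve G\)-bundle \(\amal{\G'}{P'}{\breve G}\to M\); granting it, \(\Phi\) descends to the asserted (visibly unique) linear map \(H^{*,*}(\breve X)\to H^{*,*}(M)\).

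To kill \(\mathfrak a\) I would compute these classes by Chern--Weil on the boosh. By Section~\ref{section:boosh.Atiyah}, on \(\bar{\G}'=\G'/Z'\) (after a \(C^\infty\) reduction to \(\GZ=P'/Z'\)) the \(1\)-forms \(\om{\aa}\) with \(\Pheight{\aa}=0\) together with the Cartan forms \(\aa\) form a \((1,0)\)-connection, whose curvature---the Atiyah class---has every component an \(\mathcal{O}_{M}\)-combination of the \((1,1)\)-forms \((\om{\hat\bb})^{0,1}\wedge\om{-\hat\cc}\) with \(\hat\bb,\hat\cc\) maximally graded, the curvature functions \(k,\ell\) of the parabolic geometry having dropped out. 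The semibasic \((1,0)\)-forms \(\om{-\hat\cc}\) descend to spanning sections of the subbundle of \(T^{*}M\) of rank \(\dimC{\breve X}\) associated to \(\rtspMax{\LieG}\), so any Chern--Weil polynomial of degree \(>\dimC{\breve X}\) in this curvature vanishes identically; this already gives the last two assertions (vanishing of \(P(c_1,\dots,c_n)\) of degree \((p,p)\), \(p>\dimC{\breve X}\), and the numerical-dimension bound). For the remaining generators of \(\mathfrak a\) I would compare with the flat model: the identical construction over \(\breve X\)---where the boosh has \(k=\ell=0\)---produces Chern--Weil forms of exactly the same algebraic shape, realizing the generators of \(\mathfrak a\) as zero (by triviality of \(\breve X\times\breve G\)); since the boosh curvature on \(M\) is built, with the same root-theoretic coefficients, from forms obeying the same exterior-algebra relations in \(\langle\om{\hat\bb}\rangle\oplus\langle\om{-\hat\bb}\rangle\), every invariant polynomial that vanishes in the model vanishes on \(M\), whence \(\Phi(\mathfrak a)=0\).

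The main obstacle is this last comparison: upgrading ``the boosh curvature on \(M\) has the same shape as the model curvature on \(\breve X\)'' to ``the whole ideal \(\mathfrak a\) dies'', not merely its part of degree \(>\dimC{\breve X}\) (which alone gives the numerical-dimension bound but not, e.g., the Schubert relations of a Grassmannian cominuscule). Rigorously this requires producing on \(M\) a single connection on \(\amal{\G'}{P'}{\breve G}\) whose Chern--Weil forms coincide with those of the tautological bundle on \(\breve X\) up to a holomorphically trivial perturbation killing all of \(H^{*}(B\breve G)_{+}\); I expect this to hinge on careful bookkeeping of the \(\breve\LieG\)-equivariant projection \(\LieG\to\breve\LieG\), of the reduction of structure group effected by the boosh, and of its compatibility with \(P'\to\breve P\hookrightarrow\breve G\). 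Once in place, triviality of \(\breve X\times\breve G\) forces \(\Phi(\mathfrak a)=0\), and the stated corollaries follow at once.
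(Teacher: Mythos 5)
Your reorganization via Borel's presentation of \(H^{*}(\breve X)\) as \(H^{*}(B\breve P)/\mathfrak a\) is sensible, and your argument for the degree bound (products of more than \(\dimC{\breve X}\) of the semibasic forms \(\om{-\hat\cc}\) vanish) is essentially the paper's. But the proposal leaves its central step open: you write that killing all of \(\mathfrak a\), not merely its part in degree \(>\dimC{\breve X}\), requires producing a connection on \(\amal{\G'}{P'}{\breve{G}}\) with prescribed Chern--Weil forms, and that you ``expect this to hinge on careful bookkeeping.'' That deferred step is the content of the theorem, and it is not supplied. The paper closes it with a specialization argument that needs no connection on the extended \(\breve{G}\)-bundle: the components of the Atiyah class of the boosh on \(M\) are given by the \emph{identical} universal root-theoretic formulas as for the invariant connection on \(\breve X\) (the curvature functions \(k,\ell\) having dropped out); on \(\breve X\) the forms \((\om{\hat\bb})^{0,1}\), \(\om{-\hat\cc}\) entering these formulas are \(\C\)-linearly independent and the invariant Chern--Weil forms represent the cohomology exactly, so a relation in \(H^{*,*}(\breve X)\) is a pointwise identity of forms; substituting the possibly dependent forms on \(M\) preserves any such identity, hence every relation on \(\breve X\) holds on \(M\).

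The second gap is the assertion that the image of \(\Phi\) contains the characteristic ring ``since \(TM=\amal{\G'}{P'}{(\LieG/\LieP)}\).'' The characteristic ring is generated by the first Chern classes of the line bundles of \emph{all} characters of the Cartan subgroup of \(P'\) (which is that of \(G\)); these do not all arise from \(\breve P\)-modules, and \(\LieG/\LieP\) is not a pullback of a \(\breve P\)-module, so neither is obviously in the image of a map defined on \(H^{*}(B\breve P)\). The paper handles this with Lemma~\vref{lemma:chi.j}: every character \(\chi\) differs from a rational combination of the characters \(\chi_j\) of \(\breve{G}\)-invariant line bundles on \(\breve X\) by a character \(\chi'\) perpendicular to all \(P\)-maximal roots \(\hat\bb\); since the boosh structure equation expresses \(\bar\partial\chi'\) as a combination of the \((\om{\hat\bb})^{0,1}\wedge\om{-\hat\bb}\) with coefficients proportional to \(\KillingForm{\chi'}{\hat\bb}=0\), the bundle \(\vb{L}_{\chi'}\) has vanishing \((1,1)\) Atiyah class and \(c_1(\vb{L}_{\chi'})=0\). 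Without this reduction your map need not hit the characteristic ring, and the first claim of the theorem is not established.
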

\begin{proof}
For every \(P'\)-module \(V\), we can compute its Chern classes as polynomials in the Atiyah class of the bundle \(\G'\to M\).
The Atiyah class of \(G'\to\breve X\) has precisely the same expression as that of \(\G'\to M\) from section~\vref{section:boosh.Atiyah}, in any reduction of structure group to a maximal compact subgroup of \(P'\)  \cite{McKay:2011}, except that the \((1,0)\)-forms and \((0,1)\)-forms appearing on that maximal compact subgroup are \(\C\)-linearly independent.
Hence any relation on \(\breve X\) implies one on \(\G'\).

The characters of the Cartan subgroup generate the characteristic ring of the parabolic geometry \cite{McKay:2011}.
Pick a character \(\chi\) of \(P'\).
Since \(P'\) has the same Cartan subgroup as \(P\), \(\chi\) is a character of \(P\).
By lemma~\vref{lemma:chi.j}, up to rational multiples, we can subtract off some multiples of characters arising from \(\breve{G}\)-invariant line bundles on \(\breve{X}\) to arrange that \(\chi\) is perpendicular to all \(P\)-maximal roots \(\hat\bb\).

Expand \(\chi\) in a basis of simple roots \(\alpha_i\), say
\[
\chi=\sum n_i\alpha_i.
\]
Denote also by \(\chi\) the \(1\)-form on \(\G\) given by the same expression using the basis of forms on \(\G\) described in section~\vref{section:structure.equations}.
The Chern class of \(\chi\) is represented in Dolbeault cohomology by the same polynomial in the components of the Atiyah class, after reduction of the structure group of \(\G'\) to a maximal compact subgroup of the boosh structure group \(P'\), by \(\partial\chi\) as above.
Expand into the structure equations of the boosh:
\[
\bar\partial \alpha=
-
2
\sum_{\hat\bb} 
\frac{\KillingForm{\alpha}{\hat\bb}}{\KillingSquare{\hat\bb}} 
\om{\hat\bb,-\hat\bb}
\]
Hence the wedge products arising are those \(\om{\hat\bb,-\hat\bb}\) for the maximal roots \(\hat\bb\).
The inner products \(\KillingForm{\chi}{\hat\bb}\) vanish.
Hence \(\bar\partial\chi\) has vanishing \((1,1)\)-part on any reduction of structure group to a maximal compact subgroup.
So the associated line bundle \(\vb{L}_\chi\to M\) has a holomorphic connection and vanishing \(c_1\).
Therefore characteristic ring is generated by the first Chern classes of characters \(\chi\) associated to \(\breve{G}\)-invariant line bundles on \(\breve{X}\).
\end{proof}

\begin{proposition}
The characteristic ring of any cominuscule variety is generated by the first Chern classes of the tangent bundles of its irreducible factors.
\end{proposition}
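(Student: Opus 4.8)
The plan is to read off the characteristic ring from Theorem~\ref{theorem:Bracket.closed}, which cuts it down to first Chern classes of associated line bundles, and then to use Lemma~\ref{lemma:chi.j} to see that on a cominuscule variety those line bundles are, rationally, just powers of the anticanonical bundles of the irreducible factors.

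First I would apply Theorem~\ref{theorem:Bracket.closed} to the model geometry of a cominuscule variety \(\breve X=\breve G/\breve P\) on \(\breve X\) itself. This geometry is bracket closed, because the unipotent radical of \(\breve P\) is abelian (the cominuscule condition), so the algebraic bracket on \(\breve\LieG/\breve\LieP\) vanishes and every invariant distribution on the flat model is automatically integrable; and the associated cominuscule subvariety of a cominuscule variety is that variety itself. Hence Theorem~\ref{theorem:Bracket.closed}, in the form established at the end of its proof (using \cite{McKay:2011}), shows that the characteristic ring of \(\breve X\) is generated by the first Chern classes \(c_1(L_\chi)\) of the line bundles associated to the characters \(\chi\) of \(\breve P\). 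It then remains to show that each \(c_1(L_\chi)\) lies in the rational span of \(c_1(T\breve X_1),\dots,c_1(T\breve X_k)\), where \(\breve X=\breve X_1\times\dots\times\breve X_k\) is the decomposition into irreducible cominuscule factors.

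Next I would identify those classes, working one factor at a time: a character of a product parabolic is a tuple of characters of the factor parabolics, so it suffices to treat an irreducible cominuscule \(X=G/P\). By Lemma~\ref{lemma:chi.j}, for each character \(\chi\) of \(P\) there is a unique \(a\in\mathbb{Q}\) with \(\chi-a\chi_1\) perpendicular to every \(P\)-compact, \(P\)-maximal and \(P\)-minimal root, where \(\chi_1\) is the sum of the \(P\)-maximal roots. Since \(X\) is cominuscule, every root has grade \(0\) or \(\pm1\), so the \(P\)-compact, \(P\)-maximal and \(P\)-minimal roots together are all the roots of \(G\); hence \(\chi-a\chi_1\) is perpendicular to the whole Cartan subalgebra and \(\chi=a\chi_1\). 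For a cominuscule variety the \(P\)-maximal roots are precisely the noncompact positive roots, whose sum is minus the character of \(\LieG/\LieP=T_{x_0}X\), so \(c_1(L_{\chi_1})=-c_1(TX)\) and therefore \(c_1(L_\chi)=-a\,c_1(TX)\in\mathbb{Q}\,c_1(TX)\). Equivalently, \(\operatorname{Pic}(\breve X)\otimes\mathbb{Q}\) has the classes \(c_1(T\breve X_1),\dots,c_1(T\breve X_k)\) as a basis.

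Combining these, every generator \(c_1(L_\chi)\) of the characteristic ring of \(\breve X\) lies in the subring generated by \(c_1(T\breve X_1),\dots,c_1(T\breve X_k)\), which is the assertion. The main obstacle is the reduction of the characteristic ring to Chern classes of associated line bundles via Theorem~\ref{theorem:Bracket.closed} — this is where the boosh (here trivial, as the parabolic is \(1\)-graded) and the Atiyah-class computation of section~\ref{section:Chern.boosh}, together with \cite{McKay:2011}, do the real work. Granting that, the rest is bookkeeping: checking that the hypotheses of Theorem~\ref{theorem:Bracket.closed} hold on \(\breve X\) (bracket closedness, and \(\breve X\) being its own associated cominuscule), and the one-line root computation that in a cominuscule parabolic there are no roots of grade \(\ge 2\), so that Lemma~\ref{lemma:chi.j} collapses to \(\chi\in\mathbb{Q}\,\chi_1\).
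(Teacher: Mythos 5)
Your proposal is correct and follows essentially the same route as the paper: both reduce the characteristic ring to first Chern classes of line bundles associated to characters of the parabolic (via the fact cited from \cite{McKay:2011} inside the proof of theorem~\vref{theorem:Bracket.closed}), and both then observe that for an irreducible cominuscule factor the space of available characters is one-dimensional, spanned by the class of the (anti)canonical bundle. The only difference is bookkeeping: you run the one-dimensionality argument through lemma~\vref{lemma:chi.j} (perpendicularity to the compact, maximal and minimal roots, which in the \(1\)-graded case exhaust all roots), whereas the paper notes directly that every character, like \(c_1\) of the tangent bundle, lies in the unique line in \(\breve{\LieH}_i^*\) perpendicular to the compact roots.
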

\begin{proof}
Since \((\breve{X},\breve{G})\) is cominuscule, we can split it into a product of irreducible cominuscules
\begin{align*}
\breve{X}&=\breve{X}_1\times\dots\times\breve{X}_k,\\
\breve{G}&=\breve{G}_1\times\dots\times\breve{G}_k,\\
\breve{P}&=\breve{P}_1\times\dots\times\breve{P}_k.
\end{align*}
Each \(\breve\LieG_i\) is \(1\)-graded, i.e. over its Levi factor \(G_{i,0}\):
\[
\breve{\LieG}_i=\breve{\LieG}_{i,-1}\oplus\breve{\LieG}_{i,0}\oplus\breve{\LieG}_{i,1}
\]
with \(T\breve{X}_{i,x_0}=\breve{\LieG}_{i,-1}\).
The first Chern class of \(\breve{X}_i\) is therefore
\[
c_1=\frac{\sqrt{-1}}{2\pi}\sum_{\alpha} d\alpha,
\]
the sum over the roots \(\alpha\) of grade \(-1\) of \(\breve{\LieG}_i\).
This sum is not zero, because there are negative roots (i.e. \(\dim\breve{X}_i>0\) by definition), so \(c_1\) spans the unique linear subspace of the dual \(\breve{\LieH}_i^*\) of the Cartan subalgebra of \(\breve{G}_i\) which is perpendicular to the roots of \(\breve{\LieG}_{i,0}\).
This same linear subspace contains the first Chern class of every line bundle on \(\breve{X}_i\), and so of every line bundle on \(X\) which is associated to a \(1\)-dimensional \(\breve{P}_i\)-module.
\end{proof}
\begin{corollary}
Suppose that \(M\) is a smooth projective variety admitting a minimal holomorphic parabolic geometry.
Suppose that the model has \(k\) irreducible factors
\[
X=X_1\times X_2\times\dots\times X_k.
\]
The subring of the characteristic class ring of \(M\) in Dolbeault cohomology generated by the invariant vector bundles on the model is generated in degree \((1,1)\) by \(k\) commuting \((1,1)\)-forms, the first Chern forms of the line bundles associated to the canonical bundles of the associated cominuscules \(\breve X_i\) of the factors \(X_i\) of the model.
\end{corollary}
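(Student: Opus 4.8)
The plan is to read this off as the synthesis of Theorem~\ref{theorem:Bracket.closed} with the preceding proposition on the characteristic ring of a cominiscule variety, after two identifications: that the subring in the statement is the characteristic ring of the parabolic geometry, and that the associated cominiscule of \(X=X_1\times\dots\times X_k\) is the product \(\breve X=\breve X_1\times\dots\times\breve X_k\) of the associated cominiscules of the factors. Throughout I take the geometry to be bracket closed, which is the running hypothesis of this section (it is so, for instance, when the geometry is minimal, by Corollary~\ref{corollary:brackets}; some such hypothesis is needed, since the flat geometry on a full flag variety carries too many invariant line bundles).

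First I would identify the ring. On \(X=G/P\) the \(G\)-invariant holomorphic vector bundles are exactly the bundles \(\amal{G}{P}{V}\) for holomorphic \(P\)-modules \(V\), and these induce on \(M\) precisely the associated bundles \(\amal{\G}{P}{V}\) of the Cartan bundle \(\G\to M\); hence the subring of \(H^{*,*}(M)\) generated by the invariant bundles on the model is the characteristic ring of the parabolic geometry in the sense of Section~\ref{section:Chern.boosh}. By Theorem~\ref{theorem:Bracket.closed} this ring is the image of the linear map \(H^{*,*}(\breve X)\to H^{*,*}(M)\), and it is generated by the first Chern classes of the line bundles \(\vb{L}_\chi\to M\) attached to characters \(\chi\) coming from \(\breve G\)-invariant line bundles on \(\breve X\). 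Each such generator lies in bidegree \((1,1)\), and classes of even total degree commute, so the characteristic ring is generated in degree \((1,1)\) by commuting \((1,1)\)-forms; it remains only to count them.

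Next I would decompose the associated cominiscule. Since \(Z=Z_{\GN}\) and the automorphism \(a\) both split along the simple factors of \(G\), so does \(\breve G=\left<Z,aZ\right>=\breve G_1\times\dots\times\breve G_k\), whence \(\breve P=\breve P_1\times\dots\times\breve P_k\) and \(\breve X=\breve X_1\times\dots\times\breve X_k\), with \(\breve X_i\) the associated cominiscule of \(X_i\); each \(\breve X_i\) is an irreducible cominiscule variety \cite{McKay:2020}. Applying the preceding proposition to the cominiscule variety \(\breve X\), the subring of \(H^{*,*}(\breve X)\) generated by the first Chern classes of \(\breve G\)-invariant line bundles is generated in degree \((1,1)\) by the \(k\) classes \(c_1(T\breve X_i)=-c_1(K_{\breve X_i})\), \(i=1,\dots,k\); indeed the proof of that proposition shows that the first Chern class of every line bundle attached to a character of \(\breve P_i\) lies on the line through \(c_1(T\breve X_i)\) in \(\breve{\LieH}_i^{*}\). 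Pushing this forward under the \(\mathbb{C}\)-linear map of Theorem~\ref{theorem:Bracket.closed}, the \((1,1)\)-part of the characteristic ring of \(M\) is spanned by the images of the \(c_1(K_{\breve X_i})\), that is, by the first Chern forms of the \(k\) line bundles on \(M\) associated (through \(P'\to\breve P\), equivalently through the common character group of \(P\), \(P'\) and \(\breve P\)) to the canonical bundles \(K_{\breve X_i}\); together with the previous paragraph, this is the assertion.

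The step I expect to need the most care is this last matching: checking that "the line bundle on \(M\) associated to \(K_{\breve X_i}\)" is unambiguous and that its first Chern class is genuinely the image of \(c_1(K_{\breve X_i})\) under the map of Theorem~\ref{theorem:Bracket.closed}. This rides on the fact, already used in the proof of that theorem through Lemma~\ref{lemma:chi.j}, that \(P\), \(P'\) and \(\breve P\) share the same Cartan subgroup and hence the same characters, so that a character of \(\breve P\) really does determine a line bundle on \(M\) lying in the image. Everything else is bookkeeping: the product decomposition of \(\breve G\) is immediate from its definition, the irreducibility of the factors is quoted, and the passage from "infinitely many generators, one per \(\breve G\)-invariant line bundle on \(\breve X\)" down to "\(k\) generators" is exactly the content of the preceding proposition transported along a linear map.
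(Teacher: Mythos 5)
Your proof is correct and is exactly the derivation the paper intends: the corollary is stated there without proof, as an immediate consequence of theorem~\ref{theorem:Bracket.closed} and the preceding proposition, and you have simply supplied the synthesis (identifying the subring with the characteristic ring of the geometry, splitting the associated cominiscule as \(\breve X=\breve X_1\times\dots\times\breve X_k\), and transporting the \(k\) generators along the linear map \(H^{*,*}(\breve X)\to H^{*,*}(M)\)). Your remark that some hypothesis such as bracket closure (e.g.\ minimality, via corollary~\ref{corollary:brackets}) is tacitly needed is also well taken, since theorem~\ref{theorem:Bracket.closed} requires it while the corollary as printed omits it.
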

Take a flag variety \((X,G)\).
As above, denote the stabilizer of a point \(x_0\in X\) as \(P\subseteq G\), and \(\GZ\subseteq P\) a maximal reductive Levi factor.
Recall that the the \emph{tally} \(N_X\) of an irreducible flag variety \((X,G)\) is the infimum number of elements in a nonempty set \(\Gamma\) of roots for which the sum \(V_{\Gamma}\subset V=\LieG/\LieP\) of root spaces is a \(\GZ\)-module.
\begin{theorem}
Suppose that \(M\) is a smooth projective variety admitting a minimal parabolic geometry.
Suppose that the model \((X,G)\) splits into \(k\) irreducible flag varieties
\begin{align*}
X&=X_1\times X_2\times\dots\times X_k,\\
X&=G_1\times G_2\times\dots\times G_k,
\end{align*}
with tallies
\[
N_1,N_2,\dots,N_k
\]
respectively.
The characteristic ring of the holomorphic parabolic geometry is generated by \(k\) first Chern classes \(\alpha_i=c_1(L_i)\) of holomorphic line bundles, each satisfying \(\alpha_i^{N_i+1}=0\).
In particular, any polynomial in Chern classes of associated holomorphic vector bundles of the parabolic geometry, with degree more than \(N_i\) in the Chern classes of invariant vector bundles associated to the factor \(X_i\) vanishes.
Hence the numerical dimension of any holomorphic line bundle associated to the Cartan geometry is at most the sum of the tallies of the model.
\end{theorem}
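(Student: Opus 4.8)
The plan is to run the boosh machinery of sections~\ref{section:boosh}--\ref{section:Chern.boosh} and then squeeze the exponents down from $\dim\breve{X}_i$ to the tally $N_i$ by producing a holomorphic foliation of codimension exactly $N_i$ and feeding its normal bundle into the Baum--Bott vanishing theorem. Throughout I assume the geometry is minimal, as we may by theorem~\ref{theorem:drop}, so that corollary~\ref{corollary:brackets} applies and the boosh $\G'\to M$ exists by lemma~\ref{lemma:normalize}. From section~\ref{section:Chern.boosh} (theorem~\ref{theorem:Bracket.closed} and its corollaries) the characteristic ring is generated in bidegree $(1,1)$ by $k$ commuting classes, one class $\beta_i$ per irreducible factor $X_i=G_i/P_i$ of the model, with $\beta_i^{\dim\breve{X}_i+1}=0$; the task is to replace $\dim\breve{X}_i$ by $N_i$.

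Fix a factor $i$ and decompose $\LieG_i/\LieP_i$ into irreducible submodules over the reductive Levi factor of $P_i$ (equivalently, over $\GZ$, acting through its factor $i$). By the very definition of the tally, one of these summands $V$ has $\dim_{\C}V=N_i$, corresponding to a $\GZ$-saturated set $\Gamma_i\subset\ncptNegRts$. Since $\GZ$ is reductive, the direct sum $V'$ of all the other such summands of $\LieG/\LieP$ (including the full $\LieG_l/\LieP_l$ for every $l\ne i$) is a $\GZ$-invariant, hence $P'$-invariant, complement of $V$ inside $\LieG/\LieP$. By the bracket-closure proposition of section~\ref{section:boosh}, the holomorphic subbundle $\vb{V}'\subset TM$ associated to $V'$ is bracket closed, so by the Frobenius theorem it is the tangent bundle of a holomorphic foliation $\vb{F}$ of $M$; since $\vb{V}'$ and the bundle $\vb{V}$ associated to $V$ are complementary subbundles of $TM$, the foliation $\vb{F}$ has codimension $N_i$ and normal bundle $N_{\vb{F}}\cong\vb{V}$.

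Now I would apply the Baum--Bott / Bott vanishing theorem for holomorphic foliations (the form used via Beauville's work in section~\ref{section:Chern.boosh}) to $\vb{F}$: every polynomial in the Chern classes of $N_{\vb{F}}=\vb{V}$ of weighted degree exceeding the codimension $N_i$ vanishes in $H^{*,*}(M)$; in particular $c_1(\vb{V})^{N_i+1}=0$, and $c_1(\vb{V})=c_1(\vb{L}_{\chi_{\Gamma_i}})$ where $\chi_{\Gamma_i}=\sum_{\lambda\in\Gamma_i}\lambda$ is the character of $V$. Next I would match this class with the factor-$i$ generator: by lemma~\ref{lemma:chi.j} there is a unique $a_i$ with $\chi_{\Gamma_i}-a_i\chi_i$ perpendicular to all $P$-compact, $P$-maximal and $P$-minimal roots, whence $c_1(\vb{L}_{\chi_{\Gamma_i}})=a_i\beta_i$ on $M$; one checks $a_i\ne0$, i.e. that the sum of a nonempty $\GZ$-saturated set of negative roots of an irreducible factor is never orthogonal to every $P$-maximal root, by an inner-product argument in the spirit of lemma~\ref{lemma:not.perp.roots}. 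Setting $\alpha_i:=c_1(\vb{L}_{\chi_{\Gamma_i}})$ we obtain generators $\alpha_1,\dots,\alpha_k$ of the characteristic ring with $\alpha_i^{N_i+1}=0$; the sharper statement about polynomials of degree $>N_i$ in the Chern classes of factor-$i$ invariant bundles follows at once, since that subring is $\C[\alpha_i]/(\alpha_i^{N_i+1})$. Finally the numerical dimension bound is formal: an associated holomorphic line bundle $L$ has $c_1(L)=\sum_i b_i\alpha_i$ with the $\alpha_i$ commuting, so $c_1(L)^m$ expands into monomials $\prod_i\alpha_i^{m_i}$ with $\sum_i m_i=m$, and if $m>\sum_i N_i$ then some $m_i>N_i$ kills each monomial, so $c_1(L)^m=0$.

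The step I expect to be the main obstacle is the Baum--Bott application carried out with the sharp codimension $N_i$ rather than $\dim\breve{X}_i$: one must be sure that the tally-sized summand $V$ genuinely splits off a codimension-$N_i$ holomorphic foliation and that the partial flatness of the Bott connection on its normal bundle forces all Chern polynomials of degree $>N_i$ to vanish, and then one must carry out the algebra of the previous paragraph — the $a_i\ne 0$ verification — showing that the classes so produced agree, up to nonzero scalars, with the generators $\beta_i$ coming from the associated cominiscules, so that they really do generate the full characteristic ring.
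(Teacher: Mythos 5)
Your proposal is correct and follows essentially the same route as the paper: isolate a tally-sized $\GZ$-saturated set $\Gamma_i$, use the bracket-closure results of the boosh section to get the corresponding splitting of $TM$ into integrable summands, apply a Bott-type vanishing to kill Chern polynomials of $\vb{V}_{\Gamma_i}$ in degree $>N_i$, and identify $c_1(\vb{V}_{\Gamma_i})$ with a nonzero multiple of the cominuscule generator $\beta_i$. The only cosmetic difference is that you invoke Baum--Bott for the normal bundle of the complementary codimension-$N_i$ foliation, whereas the paper cites Beauville's Atiyah-class lemma applied directly to the tangent bundle of the rank-$N_i$ foliation --- the same partial-flat-connection mechanism in different packaging.
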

\begin{proof}
Each of our foliations \(\vb{V}:=\vb{V}_{\Gamma}\subseteq TM\) has Atiyah class the image in \(\cohomology{1}{M,T^*\otimes\vb{V}^*\otimes \vb{V}}\) of a class in \(\cohomology{1}{M,\vb{V}^*\otimes\vb{V}^*\otimes\vb{V}}\) \cite{Beauville:2000} p. 3, Lemma 3.1; see \cite{McKay:2022b} for a proof more in the spirit of our work here.
Hence polynomials in Chern classes of \(\vb{V}\) of total degree more than the rank of \(\vb{V}\) (i.e. the number of elements of \(\Gamma\)) vanish.

The set \(\Gamma\) is the set of weights of an \(H\)-module \(V=V_\Gamma\subseteq\LieG/\LieP\), i.e. of negative grade.
Splitting \(TM\) according to the splitting of the model, and thereby splitting \(\vb{V}\), we can assume that \(\Gamma\) is a set of roots of a single factor \(X_i\) of the model \(X\).
The first Chern class of \(\vb{V}\) is represented by
\[
\bar\partial\chi_\Gamma=\sum_{\alpha\in\Gamma} \bar\partial\alpha.
\]
This sum is over weights of negative grade, so \(\chi\) is negative grade.
The minimal roots of that factor \(X_i\) are also negative, so the associated \(P\)-module, a quotient of \(\LieG/\LieP\), also has negative grade, so first Chern class
\[
\bar\partial\chi_i=\sum_{\alpha\in\Delta_i} \bar\partial\alpha.
\]
By the above, \(c_1(\vb{V})=c_1(\vb{L}_\chi)\) is a multiple of \(c_1(\vb{L}_{\chi_i})\), and must be a nonzero multiple.
So the characteristic ring is also generated by the classes \(c_1(\vb{V})\) of the tangent bundles of the foliations.
\end{proof}
\begin{example}
Take as model
\[
\drawroots{G}{2}
\]
with associated cominuscule
\[
\begin{tikzpicture}[baseline=-.5]
\begin{rootSystem}{G}
\roots
\parabolic{2}
\draw[/root system/grading] (hex cs:x=-1,y=2) -- (hex cs:x=1,y=1) -- (hex cs:x=2,y=-1) -- (hex cs:x=1,y=-2) -- (hex cs:x=-1,y=-1) -- (hex cs:x=-2,y=1) -- cycle;
\end{rootSystem}
\end{tikzpicture}
\]
The tally is \(1\), since the middle vertical line in 
\[
\drawroots{G}{2}
\]
has one root on it.
Hence any minimal holomorphic geometry with this model has numerical dimension at most \(1\).
\end{example}
\begin{example}\label{ex:E.8}
The \(E_8\)-flag variety \dynkin E{******x*} has Hasse diagram:
\begin{center}
\hasse[three D=false] E{******x*}
\end{center}
See \cite{McKay:2020} for a complete discussion of Hasse diagrams of flag varieties.
The connected components that have crosses on their vertices represent the irreducible invariant subsheaves of the associated graded of the tangent bundle of the flag variety.
In this example, their dimensions (down from the top) are \(7,14,35,42\).
So the dimension of the flag variety is \(7+14+35+42=98\).
The tally is the smallest of these dimensions: \(7\), from the component of the highest root.
Therefore every smooth projective variety \(M\) bearing a minimal geometry with this model has numerical dimension at most \(7\) and dimension \(98\).
The associated cominuscule variety is projective space \(\Proj{7}\).
Therefore 
\[
c_k=\binom{8}{k}c_1^k
\]
for \(k=1,2,\dots,7\), while \(c_k=0\) for \(k>7\).
\end{example}
\begin{example}
Every adjoint variety and every complete flag variety \(G/B\) has tally \(1\), so every minimal geometry with such a model has numerical dimension at most \(1\).
\end{example}
Roughly speaking, minimal parabolic geometries only occur on smooth projective varieties with large numerical codimension.
This is the only result known that demonstrates that smooth projective varieties without rational curves typically have no holomorphic parabolic geometries.
\section{Weyl structures and the canonical bundle}%
\label{section:Weyl}
Take a flag variety \((X,G)\).
Pick a point \(x_0\in X\) and let \(H:=G^{x_0}\).
A \emph{Weyl structure} for an \((X,G)\)-geometry \(\G\to M\) on a complex manifold \(M\) is a holomorphic reduction of structure group \(\G_0\subset\G\), i.e.~a holomorphic principal \(G_0\)-subbundle,
We summarize some results from \cite{McKay2013}, with an outline of the proof.
\begin{lemma}\label{lemma:canon.conn}
Take a complex manifold with a holomorphic parabolic geometry.
Every holomorphic connection on the canonical bundle determines a Weyl structure and determines a holomorphic connection on 
\begin{itemize}
\item
the tangent bundle and
\item
the boosh and
\item
the bundle of the parabolic geometry.
\end{itemize}
\end{lemma}
\begin{proof}
Take a flag variety \((X,G)\), say \(X=G/P\).
Pick a Cartan subgroup of \(G\) lying in \(P\).
Let \(\delta\) be half the sum of the \(P\)-noncompact positive roots.
Suppose that \(P \to\G\to M\) is a holomorphic \((X,G)\)-geometry, with Cartan connection \(\omega \in \amal{\nForms{1}{\G}}{P}{\LieG}\).
Then \(\delta \circ \omega \in \nForms{1}{\G}\), and we also denote this \(1\)-form on \(\G\) as \(\delta\).
Any holomorphic connection \(\nabla\) on the canonical bundle of \(M\) can be written as a \(1\)-form \(\phi\) on the total space \(\kappa_M^{\times}\) of the associated principal bundle.
Pick a nonzero complex linear volume form \(\xi\) on \(\LieG/\LieP\), i.e. \(\xi \in \Lmtop{\LieG/\LieP}^*-0\).
Let \(\bar\omega:= \omega+\LieP\).
At each point \(e \in\G\), \(\xi \circ (\bar\omega,\bar\omega,\dots,\bar\omega)\) is a semibasic top degree form, so the pullback of a unique element \(\xi_e \in \Lmtop{T_m M}^*-0\).
Via this bundle map \(e \in\G\mapsto \xi_e\in\kappa_M^{\times}\), we can pullback the connection \(1\)-form \(\phi\), and denote the pullback also as \(\phi\).
On the fibers of \(\G\to\kappa_M^{\times}\), \(\phi=\delta\), since both are characters of the structure group scaling the same under the Cartan subgroup: as half the sum of the positive roots.
Therefore, on \(\G\), \(\phi=\delta + \sum t_{\alpha}\omega^{\alpha}\), a sum over \(P\)-noncompact negative roots, for some holomorphic functions \(\G\xrightarrow{t_{\alpha}}\C\).
The subset \(\G_0\subset\G\) on which \(t_{\alpha}=0\) for all \(\alpha\) is a principal right \(\GZ\)-subbundle of \(\G\).
Splitting \(\LieG\) as a \(\GZ\)-module: \(\LieG=\LieGX \oplus \LieGZ \oplus \LieGN\), the Cartan connection splits \(\omega=\omX + \omZ + \omN\).
The \(1\)-form \(\omZ\) is a holomorphic connection on \(\G_0\).
It imposes a holomorphic connection on every quotient bundle, in particular on the tangent bundle, and on \(\G\) and on \(\G'\).
\end{proof}
\section{Bundles on tori}%
\label{section:bundles.on.tori}
\begin{lemma}[Biswas and G\'omez \cite{Biswas/Gomez:2008} p. 41 theorem 4.1]\label{lemma:BG}
Take a holomorphic principal bundle \(G \to P \to T\) on a complex torus \(T\), with structure group a linear algebraic group \(G\).
Suppose that the identity component \(G_0 \subset G\) has semisimple Levi quotient \(1 \to G_s \to G_0 \to G_{ss} \to 1\).
The following are equivalent:
\begin{enumerate}
\item
the bundle admits a holomorphic connection
\item
the bundle is pseudostable with vanishing first and second Chern classes
\item
after perhaps lifting to a finite unramified covering torus, the pullback bundle admits a holomorphic reduction of structure group to the Borel subgroup of \(G_{ss}\), so that the line bundle associated to any character of the Borel subgroup has vanishing first Chern class
\item
\begin{itemize}
\item
the bundle is pseudostable and
\item
the bundle admits a unique holomorphic flat connection and
\item
every holomorphic section of any associated vector bundle is parallel for that holomorphic flat connection and
\item
every holomorphic connection on the bundle is parallel in that holomorphic flat connection.
\end{itemize}
\end{enumerate}
\end{lemma}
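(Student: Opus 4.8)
The plan is to follow Biswas and G\'omez \cite{Biswas/Gomez:2008}, reducing the assertion about the principal bundle to one about vector bundles and then exploiting the structure of semistable sheaves with vanishing Chern classes on a complex torus. First I would recall Atiyah's theorem: the bundle admits a holomorphic connection if and only if its Atiyah class in $H^1(T,\operatorname{ad}(P)\otimes\Omega^1_T)$ vanishes; since $\Omega^1_T$ is holomorphically trivial, this is the vanishing of $n=\dim_{\mathbb C}T$ classes in $H^1(T,\operatorname{ad}(P))$. Chern--Weil theory applied to a holomorphic connection produces curvature of type $(2,0)$, so all Chern forms of all associated bundles are of pure type $(k,0)$; being complexifications of real classes on a torus, on which conjugation exchanges $H^{k,0}$ and $H^{0,k}$, these classes vanish. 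This already yields that (1) implies the vanishing of every Chern class of $P$, in particular of $c_1$ and $c_2$.

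To finish $(1)\Leftrightarrow(2)$ I would invoke the structure theory of numerically flat bundles on tori: a holomorphic vector bundle $E\to T$ that is semistable with $c_1(E)=0$ and $c_2(E)=0$ arises, after perhaps a finite unramified covering of $T$, as a direct sum $\bigoplus_i(L_i\otimes U_i)$ with each $L_i$ a unitary flat line bundle in $\operatorname{Pic}^0(T)$ and each $U_i$ unipotent (an iterated self-extension of $\mathcal O_T$); this follows from the Narasimhan--Seshadri / Kobayashi--Hitchin correspondence on the torus together with the commutativity of $\pi_1(T)\cong\mathbb Z^{2n}$. Such a decomposition is visibly pseudostable and carries a flat holomorphic connection --- on each $L_i$ because it lies in $\operatorname{Pic}^0$, and then over each $U_i$ because the successive obstructions lie in cohomology groups built from the $L_i$, which already carry connections. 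Running this in both directions, and using the standard reduction that a principal bundle on $T$ admits a holomorphic connection exactly when its adjoint vector bundle does (controlling the passage between $P$ and its associated bundles via reductivity of the Levi), gives $(1)\Leftrightarrow(2)$, and also the equivalence with (4) once the package below is read off.

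For (3), I would push $P$ forward along the Levi quotient $G_0\to G_{ss}$. By the structure theorem just recalled, (1) forces the resulting $G_{ss}$-bundle to underlie a flat bundle whose monodromy is a finitely generated abelian subgroup of $G_{ss}$, since $\pi_1(T)$ is abelian; the Zariski closure of that subgroup is abelian, so after a further finite unramified covering of $T$ to clear torsion its connected component is connected solvable, hence contained in a Borel subgroup $B\subseteq G_{ss}$ by Borel's fixed point theorem. This is the asserted reduction, and every character of $B$ factors through the maximal torus, so the associated line bundles have unitary monodromy and vanishing first Chern class. Conversely, a $B$-reduction with all character line bundles in $\operatorname{Pic}^0$ gives a flat holomorphic connection on the associated maximal-torus bundle, which extends over the unipotent radical of $B$ with vanishing obstructions as above, hence over $G_{ss}$; one then reassembles a connection along the solvable radical $G_s$ by the same argument to get one on $G_0$, and finally on $G$ because $G_0$ has finite index. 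This gives $(3)\Leftrightarrow(1)$.

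For the package in (4): once the decomposition $\bigoplus_i(L_i\otimes U_i)$ and its principal-bundle analogue are available, two flat connections on it differ by a holomorphic $\operatorname{End}$-valued $1$-form, which on the torus has constant coefficients and must vanish by compatibility with the flat unitary structure of the $L_i$, giving uniqueness; the same constant-coefficient-plus-unitarity argument shows that every holomorphic section of an associated bundle, and every holomorphic connection, is parallel, since its covariant derivative is a holomorphic form of constant coefficients pairing to zero against the flat structure. The step I expect to be the main obstacle is the vector-bundle structure theorem underlying $(1)\Leftrightarrow(2)$: proving that semistability with $c_1=c_2=0$ on a torus forces the numerically flat decomposition, and that this decomposition is exactly what ``pseudostable'' encodes, requires the full Kobayashi--Hitchin correspondence and non-abelian Hodge input, and some care about which rational or real cohomology and which polarization one works with throughout.
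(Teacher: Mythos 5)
Your proposal and the paper's proof diverge substantially in strategy. The paper does not reprove the Biswas--G\'omez theorem at all: its argument is a reduction to the citation. It passes to a finite unramified cover (still a torus, by pulling back the holomorphic vector fields) so that \(G=G_0\), notes that descent back down works because the flat connection is \emph{unique} and hence invariant under deck transformations, passes to the semisimple Levi quotient so that the structure group is connected and linear algebraic, quotes \cite{Biswas/Gomez:2008} p.~41 theorem~4.1 for the core equivalences, and finally deduces ``every holomorphic section of any associated vector bundle is parallel'' from the functorial (``categorical'') behaviour of the flat connection under bundle morphisms, applied to a morphism from the trivial line bundle. You instead set out to reprove the cited theorem from Atiyah classes, Chern--Weil, and the Kobayashi--Hitchin/numerically-flat structure theory. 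That is a legitimate but far heavier route, and it omits precisely the two small points the paper actually has to supply on top of the citation: the deck-invariance/descent step for the finite cover, and the functoriality argument for parallelism of sections.

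Within your route there is one step that fails as stated: the claim that ``a principal bundle on \(T\) admits a holomorphic connection exactly when its adjoint vector bundle does.'' The obstruction to a holomorphic connection on \(P\) is the Atiyah class in \(H^1\bigl(T,\operatorname{ad}(P)\otimes\Omega^1_T\bigr)\), while the Atiyah class of the vector bundle \(\operatorname{ad}(P)\) is its image under \(\operatorname{ad}\colon\mathfrak{g}\to\operatorname{End}(\mathfrak{g})\), whose kernel is the centre of \(\mathfrak{g}\). For \(G=\mathbb{C}^{\times}\) the adjoint bundle is trivial and always admits a connection, but a line bundle of nonzero degree does not; so the equivalence is false whenever \(G\) has positive-dimensional centre. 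The correct bridge is a faithful representation \(V\) for which \(\mathfrak{g}\subseteq\operatorname{End}(V)\) is a \(G\)-module direct summand (available for reductive \(G\)), not the adjoint representation. A second, lesser issue: your uniqueness and parallelism arguments in the last paragraph are circular as written, since declaring that a holomorphic \(\operatorname{End}\)-valued \(1\)-form ``has constant coefficients'' is exactly the assertion that holomorphic sections of that associated bundle are parallel, which is part of what you are trying to establish; the paper avoids this by extracting both uniqueness and parallelism directly from the cited theorem's functoriality statement.
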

\begin{proof}
Take a holomorphic principal bundle \(G \to P \to T\) on a complex torus \(T\). Take any finite unramified covering space \(\hat{T} \to T\) and pull back the holomorphic vector fields on \(T\) to see that the unramified covering space is also a torus.
If we can prove our result for \(\hat{T}\), then by the uniqueness of the flat connection, it is invariant under the deck transformations of \(\hat{T} \to T\), so descends.
Therefore it suffices to prove the result for some, hence any, finite unramified covering space.
If \(G\) has a normal subgroup \(G_0\subseteq G\) of finite index, then \(\hat{T}:=P/G_0\to T\) is a finite unramified covering space.
So we can assume that \(G=G_0\) is a connected linear algebraic group.
Every connected linear algebraic group has a Levi quotient.

Any holomorphic principal bundle \(G\to P\to T\) with connected complex linear algebraic structure group on any complex torus \(T=\C[n]/\Lambda\) admits a holomorphic connection just when it admits a unique holomorphic flat connection, and the bundle is then pseudostable and the flat connection is categorical: if we apply a morphism \(G \to G'\) of connected linear algebraic groups, then the flat connection is taken to the flat connection \cite{Biswas/Gomez:2008} p. 41 theorem 4.1.
So we can assume that \(G=G_{ss}\).
Any morphism from a trivial line bundle to an associated vector bundle is just a section, so by the categorical nature of the connection, every section is parallel.
See \cite{Biswas.Dumitrescu:2017} p. 5 section 3.1 and p. 6 lemma 3.1 for more detail.
\end{proof}
\begin{lemma}[Biswas \& McKay \cite{Biswas.McKay:2024} lemma 14.3, corollary 14.4]\label{lemma:tori}
Suppose that \(M\) is a smooth projective variety bearing a minimal holomorphic Cartan geometry.
Take a smooth projective variety \(Z\) with trivial canonical bundle.
If there is a finite meromorphic map \rationalMap{Z}{M} then \(Z\) has a finite unramified covering by an abelian variety, and the map extends to a holomorphic map \(Z\to M\).
\end{lemma}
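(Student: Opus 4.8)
The plan is to induct on $\dim Z$; the base case $\dim Z = 0$ is trivial, as then $Z$ is a point, an abelian variety of dimension zero. Fix $n = \dim Z > 0$ and assume the lemma in all smaller dimensions. First I would apply Lemma~\ref{lemma:dev.large} to the given finite meromorphic map to $M$: since $Z$ is a positive dimensional reduced connected compact complex space and $M$ bears a minimal geometry, the map extends uniquely to a holomorphic map $f \colon Z \to M$, which is finite, and --- $Z$ being smooth, hence its own normalization --- either the image of $\fundamentalGroup{Z} \to \fundamentalGroup{M}$ is infinite, or $Z$ has large fundamental group. I claim $\fundamentalGroup{Z}$ is infinite: otherwise the first alternative fails, images of finite groups being finite, and the second fails too, since any curve in $Z$ is a positive dimensional subvariety whose fundamental group has finite, hence not infinite, image in the finite group $\fundamentalGroup{Z}$ (the borderline possibility $\dim Z = 1$ does not occur, a smooth projective curve with trivial canonical bundle being elliptic).

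Next I would bring in the Beauville--Bogomolov decomposition theorem (via Yau's theorem on Ricci-flat metrics): $Z$ has a connected finite unramified covering $\tilde Z \to Z$ with $\tilde Z \cong T \times V$, where $T$ is a complex torus --- projective, hence an abelian variety, being a factor of the projective variety $\tilde Z$ --- and $V$ is a simply connected smooth projective variety with trivial canonical bundle. Since $\fundamentalGroup{\tilde Z} \cong \fundamentalGroup{T}$ has finite index in the infinite group $\fundamentalGroup{Z}$, it is infinite, so $\dim T > 0$ and hence $\dim V < n$. Including a fibre $\{t_0\} \times V \hookrightarrow T \times V = \tilde Z$ and composing with $\tilde Z \to Z \to M$ yields a finite holomorphic map $V \to M$ from a smooth projective variety of dimension $< n$ with trivial canonical bundle, so the inductive hypothesis furnishes a finite unramified covering of $V$ by an abelian variety $A$. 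As $V$ is simply connected, $A \to V$ is a trivial covering and $A \cong V$, whence $V$ is at once an abelian variety and simply connected, forcing $V$ to be a point. Therefore $\tilde Z \cong T$: the variety $Z$ is finitely unramifiedly covered by the abelian variety $T$, and $f$ is holomorphic, as shown. This closes the induction.

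The real content is not this bookkeeping but the input Lemma~\ref{lemma:dev.large} from \cite{Biswas.McKay:2024}: its dichotomy between infinite holonomy image and large fundamental group is precisely what transmits the minimal parabolic --- indeed Cartan --- geometry on $M$, and without such a hypothesis the statement fails, a (strict) Calabi--Yau threefold admitting no finite unramified cover by an abelian variety. Two caveats deserve a line. One uses that the holomorphic extension of a finite meromorphic map is again finite, which is built into the framework of Lemma~\ref{lemma:dev.large}, so no extra argument is required. And in the special case that $f$ is dominant, the induction can be bypassed: $M$ is minimal by Theorem~\ref{theorem:drop}, so $K_M$ is nef; then Riemann--Hurwitz, together with intersection against an ample class, forces the ramification divisor of $f$ to vanish (an effective, numerically anti-nef divisor is zero), so $f$ is \'etale, and one may pull the parabolic geometry back to $Z$ and apply Theorem~\ref{thm:cpt.Kaehler.c.1.0} directly, as $c_1(Z) = 0$.
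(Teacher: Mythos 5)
The paper does not actually prove this lemma; it is imported verbatim from \cite{Biswas.McKay:2024}, so there is no in-paper argument to set yours against. Judged on its own terms, your proof is correct, and the two inputs you choose --- Lemma~\ref{lemma:dev.large} (itself quoted from that same reference) and the Beauville--Bogomolov decomposition --- are the natural ones: the dichotomy of Lemma~\ref{lemma:dev.large} forces $\pi_1(Z)$ to be infinite, so the torus factor of $\tilde Z\cong T\times V$ is positive dimensional, and the simply connected factor $V$ is killed by mapping it finitely into $M$. Two small remarks. First, the induction on $\dim Z$ is unnecessary: once you have the finite holomorphic map $\{t_0\}\times V\to M$ you can apply Lemma~\ref{lemma:dev.large} to it directly --- simple connectivity of $V$ rules out an infinite holonomy image, and rules out large fundamental group unless $\dim V=0$ --- which shortens the argument without changing its substance. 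Second, you are right to flag the passage from ``finite meromorphic'' to ``finite holomorphic'': the point that closes it is that the graph of the holomorphic extension is the closure of the graph of the meromorphic map, whose projection to $M$ is finite by hypothesis, so the extension is again finite and the dichotomy of Lemma~\ref{lemma:dev.large} really does apply. Your Riemann--Hurwitz shortcut in the dominant case (effective ramification divisor with nef negative, hence zero; purity of the branch locus; then Theorem~\ref{thm:cpt.Kaehler.c.1.0}) is also correct.
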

\subsection{The Iitaka fibration has torus fibers}
\begin{theorem}
Suppose that \(M\) is a smooth complex projective variety with a minimal holomorphic parabolic geometry, modelled on a flag variety \((X,G)\).
Then the canonical bundle is numerically effective, i.e.
\[
\int_Z c_1(K_M)^{\dim Z}\ge 0
\]
for every subvariety \(Z\) of \(M\).
If the canonical bundle of \(M\) is big, and \(M\) is not a counterexample to the abundance conjecture, then the canonical bundle of \(M\) is ample and the model \((X,G)\) is cominuscule.
\end{theorem}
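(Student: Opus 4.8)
The plan is to reduce everything to results already available in the excerpt. First I would observe that the nef assertion is essentially immediate: by Theorem~\ref{theorem:drop}, on a smooth projective variety bearing a holomorphic Cartan geometry, minimality of the geometry is equivalent to $M$ being minimal, i.e.\ to $K_M$ being nef, and equivalent to $M$ containing no rational curves. Since we are given a \emph{minimal} holomorphic parabolic geometry, all of these hold, so $\int_C c_1(K_M)\ge 0$ for every curve $C$. (Alternatively, invoke the theorem that on a compact \Kaehler{} manifold a holomorphic Cartan geometry is minimal iff $K_M$ is pseudoeffective, together with Lemma~\ref{lemma:pseudoeffective.trivial} which gives nonnegativity of the Chern class; but the cleaner route is via Theorem~\ref{theorem:drop}.)

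Next, suppose $K_M$ is big. Since $K_M$ is also nef, bigness is equivalent to $c_1(K_M)^n>0$ where $n=\dimC M$; equivalently the numerical dimension of $K_M$ equals $n$. Now apply Theorem~\ref{theorem:Bracket.closed} (a minimal parabolic geometry is bracket closed by Corollary~\ref{corollary:brackets}, so the hypotheses are met): the numerical dimension of any holomorphic line bundle associated to the Cartan geometry — in particular $K_M$, which is the line bundle of the character $\chi_{\ge 1}$ of $P$ (the sum of all noncompact positive roots, i.e.\ the determinant of $\LieG/\LieP$ as a $P$-module) — is at most $\dimC{\breve X}$. Hence $n\le\dimC{\breve X}$. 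But $\breve X\subseteq X$ and $\dimC X$ need not be related to $n$ directly; the point is rather that $\breve X$ is a \emph{cominuscule} subvariety of $X$, homogeneously embedded, and the numerical dimension bound forces the associated cominuscule to be ``as large as possible.'' The cleanest way to extract ``$(X,G)$ is cominuscule'' is: the tally bound (the refined form, $\alpha_i^{N_i+1}=0$ for the generators) combined with $c_1(K_M)^n\ne 0$ forces $\sum_i N_i\ge n$; and one checks from the structure of tallies that $\sum_i N_i\le\sum_i\dim X_i=\dimC X$ with the bound for the numerical dimension of $K_M$ being exactly $\sum_i N_i$, so bigness forces $n\le\sum_i N_i$. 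Tracking through the pictures of minimal collections, $\sum_i N_i = n$ with $K_M$ big can only happen when every factor $X_i$ has $N_i=\dim X_i$, which (by inspection of the $\breve\LieG$/tally tables) characterizes the cominuscule case: a flag variety is cominuscule precisely when $\LieGN$ is abelian, equivalently when all noncompact roots have grade $1$, equivalently when each irreducible factor already equals its associated cominuscule and the tally equals the dimension.

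So the key steps, in order, are: (1) invoke Theorem~\ref{theorem:drop} to get nef $K_M$ from minimality; (2) assume $K_M$ big, rewrite bigness as $c_1(K_M)^n\ne 0$ using nefness; (3) invoke Corollary~\ref{corollary:brackets} to get bracket closure, then Theorem~\ref{theorem:Bracket.closed} (or the tally refinement) to bound the numerical dimension of $K_M$ by $\dimC{\breve X}$, hence by $\sum_i N_i$; (4) conclude $n\le\sum_i N_i\le\dimC X$ with equalities, and read off from the tally tables that equality in $N_i=\dim X_i$ for all $i$ forces the model to be cominuscule; (5) finally, $K_M$ nef and big on a projective variety implies $K_M$ ample by the base-point-free theorem together with $K_M$ semiample in the cominuscule case (here one may need to quote the classification of cominuscule geometries with $c_1<0$ in \cite{McKay:2016}, since once we know the model is cominuscule and $K_M$ is big and nef, the results there identify $M$ as a locally Hermitian symmetric variety with $c_1(TM)<0$, whence $K_M$ ample). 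The main obstacle I expect is step (4): showing that the numerical-dimension bound is \emph{sharp} exactly in the cominuscule case, i.e.\ that $\sum_i N_i = \dimC X$ forces each factor to be its own associated cominuscule. This is a finite combinatorial check over the root-system data, but it is the crux; everything else is an assembly of quoted theorems.
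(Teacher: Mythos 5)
Your treatment of nefness and of the cominuscule conclusion matches the paper's. For nefness the paper goes through ``no rational curves'' plus Mori's cone theorem, but quoting Theorem~\ref{theorem:drop} directly, as you do, carries the same content. For the cominuscule conclusion the paper likewise applies Theorem~\ref{theorem:Bracket.closed}: since \(K_M\) is nef and big, \(c_1(K_M)^n\ne 0\), so the numerical dimension of \(K_M\) equals \(n\), and the theorem bounds it by \(\dimC{\breve X}\); hence \(n\le\dimC{\breve X}\le\dimC{X}=n\), so \(\breve X=X\), and \(X\) is cominuscule because \(\breve X\) always is. Your detour through the tallies reaches the same conclusion but makes the crux look harder than it is: you do not need the combinatorial claim that \(\sum_i N_i=\dimC{X}\) forces each factor to equal its associated cominuscule, only the immediate observation that a homogeneously embedded cominuscule subvariety of full dimension is the whole flag variety.

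The genuine gap is the ampleness step. Nef and big does not imply ample, and the base-point-free theorem only gives that \(K_M\) is semiample, hence defines a birational morphism onto a normal projective variety; to conclude that this morphism is an isomorphism you must use, once more, that \(M\) contains no rational curves, via Kawamata's theorem that the exceptional fibers of this morphism are covered by rational curves --- which is exactly what the paper cites (\cite{Kawamata:1991}, Theorem~2). Your proposed substitute, quoting the classification of cominuscule geometries with \(c_1<0\) from \cite{McKay:2016}, is circular: that classification takes \(c_1(TM)<0\), i.e.\ ampleness of \(K_M\), as a hypothesis, so it cannot be used to establish it. (Alternatively one could quote the fact, used elsewhere in the paper, that a smooth projective variety of general type containing no rational curves has \(c_1<0\); either way, the absence of rational curves is the missing ingredient in your step~(5).)
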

\begin{proof}
By the cone theorem \cite{Cascini:2013,Mori:1982} and \cite{Kollar/Mori:1998} p. 22, if \(M\) is a smooth projective variety, in the absence of rational curves, the canonical bundle is numerically effective, i.e.~\(\int_C c_1(K_M)\ge 0\) for any irreducible curve \(C\) in \(M\).
By Kleiman's criterion \cite{Lazarsfeld:2004} p.~53 theorem~1.4.9, \(\int_Z c_1(K_M)^{\dim Z}\ge 0\) for any subvariety \(Z\) of \(M\).

Suppose that the canonical bundle of \(M\) is big, i.e.~\(c_1(\kappa_M)^n>0\) where \(n:=\dim M\). 
So there is a integer \(p>0\) so that, mapping each point of \(m\) to the hyperplane of sections of \(\kappa^{\otimes p}\) vanishing at \(m\), inside the projective space of sections, gives a rational map which is an isomorphism away from some subvariety.
If \(M\) is not a counterexample to the abundance conjecture then the canonical bundle of \(M\) is also semiample, i.e.~some power is spanned by global sections.
So this rational map is a birational morphism.
Exceptional fibers are covered by rational curves \cite{Kawamata:1991} Theorem 2.
But there are no rational curves in \(M\) so the Iitaka fibers are points: the canonical bundle is ample.
Apply theorem~\vref{theorem:Bracket.closed}.
\end{proof}
\begin{theorem}\label{thm:Iitaka.fibers}
Take a nonuniruled smooth projective variety \(M\) with a holomorphic parabolic geometry \(P\to\G_P\to M\) with model \((X,G)\).
Assume that the canonical bundle of \(M\) is semiample (i.e.~ that some positive power of the canonical bundle of \(M\) is spanned by global sections), which would follow from the abundance conjecture.
Then the generic Iitaka fiber of \(M\) admits an unramified covering by an abelian variety.
The restriction to the generic fiber of the induced holomorphic principal \(G\)-bundle \(\G_G:=\amal{\G_P}{P}{G}\) admits a flat holomorphic connection.
\end{theorem}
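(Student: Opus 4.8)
The plan is to assemble the statement from Theorem~\ref{theorem:drop}, Lemma~\ref{lemma:tori} and Lemma~\ref{lemma:BG}, the only genuinely new input being an identification of the general Iitaka fibre as a variety with torsion canonical bundle. First I would observe that, since $M$ is not uniruled, Theorem~\ref{theorem:drop} guarantees that $M$ contains no rational curves and that the parabolic geometry $\G_P\to M$ --- hence the induced holomorphic principal $G$-bundle $\G_G=\amal{\G_P}{P}{G}$ with its Cartan connection $\omega$, which in particular is an ordinary holomorphic principal connection --- is minimal, so that Lemma~\ref{lemma:tori} becomes applicable.

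Next I would make the Iitaka fibration concrete. By the abundance hypothesis some power $\cb[m]{M}$, $m>0$, is globally generated, so the Iitaka fibration $\phi\colon M\to\Ii{M}$ is a morphism with connected fibres and, for $m$ suitably divisible, $\cb[m]{M}=\phi^{*}\OO(1)$ where $\OO(1)$ is the restriction to $\Ii{M}$ of a hyperplane bundle, hence very ample on $\Ii{M}$. By generic smoothness, for general $y\in\Ii{M}$ the fibre $F=\phi^{-1}(y)$ is smooth and irreducible and $\phi$ is smooth along $F$, so the normal bundle $N_{F/M}$ is trivial; hence $\cb{F}=\cb{M}|_{F}$ by adjunction, while $\cb[m]{M}|_{F}=(\phi|_{F})^{*}\bigl(\OO(1)|_{y}\bigr)=\mathscr{O}_{F}$. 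Thus $\cb{F}$ is torsion: $\cb[m]{F}=\mathscr{O}_{F}$. Passing to the finite unramified cover $F_{1}\to F$ that trivialises $\cb{F}$, the composite $F_{1}\to F\hookrightarrow M$ is finite, so Lemma~\ref{lemma:tori} provides a finite unramified covering of $F_{1}$, hence of $F$, by an abelian variety $A$; this is the first assertion.

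For the second assertion I would restrict $\omega$ to $F$, obtaining a holomorphic connection on the principal $G$-bundle $\G_G|_{F}\to F$. Then I would choose a finite \'etale \emph{Galois} cover $\hat A\to F$ factoring through $A\to F$; being a finite \'etale cover of an abelian variety, $\hat A$ is again an abelian variety. The pullback of $\G_G|_{F}$ to $\hat A$ is a holomorphic principal bundle over a complex torus, with semisimple structure group $G$, carrying a holomorphic connection, so Lemma~\ref{lemma:BG} (the implication $(1)\Rightarrow(4)$) produces a \emph{unique} holomorphic flat connection on it. By its uniqueness this flat connection is $\operatorname{Gal}(\hat A/F)$-invariant, hence descends along $\hat A\to F$ to a flat holomorphic connection on $\G_G|_{F}$, which is exactly what is claimed.

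The step I expect to be the crux is the passage from ``$\kappa=0$ on the general fibre'' to ``$\cb{F}$ is torsion''. It relies on base-point-freeness of $\cb[m]{M}$ --- that is, on $\cb[m]{M}=\phi^{*}\OO(1)$ holding globally, not merely on $\kappa_{M}$ being computed by $\cb[m]{M}$ --- so that restriction to a contracted fibre is the trivial bundle; this is precisely the point at which the abundance hypothesis is used. Everything downstream (keeping the covering tower inside the class of abelian varieties, restricting the Cartan connection, and the Galois descent of the flat connection) is routine, so the real content of the theorem is the conjunction of the two cited lemmas with this remark about the Iitaka fibration.
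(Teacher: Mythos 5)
Your proof is correct and follows the same skeleton as the paper's: show the generic Iitaka fibre has torsion canonical bundle, invoke Lemma~\ref{lemma:tori} to get the covering by an abelian variety, then restrict the holomorphic connection on \(\G_G\) and invoke Lemma~\ref{lemma:BG}. The one genuinely different step is how you get the torsion canonical bundle of the fibre. The paper does not use adjunction; it instead shrinks to an open set \(U_S\) of the base over which \(L\) and \(\cb{S}\) admit holomorphic connections, uses the Weyl-structure machinery of section~\ref{section:Weyl} to promote a holomorphic connection on \(\cb{M}|_U\) to one on \(TU\) and on the relative canonical bundle, and concludes \(c_1(\cb{F})=0\) from the existence of a holomorphic connection. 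Your route --- \(\cb{F}=\cb{M}|_F\) by adjunction with trivial normal bundle, and \(\cb[m]{M}|_F=\mathscr{O}_F\) because \(\cb[m]{M}\) is pulled back from the base --- is shorter, more elementary, and avoids the forward reference to Weyl structures; it also delivers directly the statement Lemma~\ref{lemma:tori} actually needs, since that lemma is phrased for trivial canonical bundle and so must be applied to the cyclic cover \(F_1\) trivializing \(\cb{F}\), a reduction you make explicit and the paper elides. You are likewise more careful in the final step: Lemma~\ref{lemma:BG} concerns bundles on complex tori, so the flat connection has to be produced on a Galois abelian cover of \(F\) and descended using its uniqueness, which you spell out while the paper simply writes ``apply lemma~\ref{lemma:BG}.'' Both arguments are valid; yours is the cleaner of the two at these points.
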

\begin{proof}
Since some positive power \(p\cb{M}\) of the canonical bundle of \(M\) is spanned by global sections, \(M\) admits a regular Iitaka fibration \(M\to S\) and some positive power of \(\cb{M}\), which we can assume is also \(p\cb{M}\), is pulled back from a line bundle \(L\to S\) \cite[p.~129, Theorem 2.1.26]{Lazarsfeld:2004}.
Take a generic fiber \(M_s\).
So \(p\left.\cb{M}\right|_{M_s}\) is trivial.
By the Iitaka fibration theorem \cite{Fujino:2020} p. 20, \(M_s\) has Kodaira dimension zero.
By Mori's cone theorem \cite{Cascini:2013,Mori:1982} and \cite{Kollar/Mori:1998} p. 22, each fiber \(M_s\) has numerically effective canonical bundle.

Take a nonempty open set \(U_S\subseteq S\) of the smooth points of the base \(S\).
By Sard's lemma, we can pick \(U_S\) that \(M\to S\) is a submersion over \(U_S\).
We can further refine the choice of \(U_S\) so that both \(L\) and \(\cb{S}\) admit a holomorphic connection over \(U_S\).
Hence all powers of \(L\) have a holomorphic connection on \(U_S\).
On \(U:=\pi^{-1}U_S\subseteq M\), \(p\cb{U}\) admits a holomorphic connection, so \(\cb{U}\) admits a holomorphic connection.

By lemma~\vref{lemma:canon.conn}, that holomorphic connection gives us a holomorphic connection on \(TU\), and on \(\left.\G'\right|_U\).
Hence we have a holomorphic connection on \(\left.\cb{M}\right|_U\) and on the pullback bundle \(\pi^*\left.\cb{S}\right|_U\), hence on the relative canonical bundle of \(M\to S\) over \(U_S\), hence on the canonical bundle of the generic fiber \(M_s\).
Existence of a holomorphic connection on a line bundle is equivalent to vanishing of the first Chern class.
So \(c_1(TM_s)=0\) on the generic Iitaka fiber \(M_s\).
By lemma~\vref{lemma:tori}, the generic fiber admits an unramified covering by an abelian variety.
Apply lemma~\vref{lemma:BG} to get a flat connection on \(\G\) over each fiber.
\end{proof}

We prove theorem~\vref{thm:Iitaka.group.scheme}.
\begin{proof}
The argument is similar to that of Jahnke \& Radloff \cite{jahnke2009holomorphicnormalprojectiveconnections} p.~6, proposition~2.6, and to an argument which we quote intact from our previous paper \cite{Biswas.McKay:2024} p.~29.

Since the canonical bundle of \(M\) is spanned by global sections, \(M\) admits a regular Iitaka fibration \(M\to S\) and some tensor power \(p\cb{M}\) with \(p\ge 1\) is pulled back from a line bundle \(L\to S\) \cite[p.~129, Theorem 2.1.26]{Lazarsfeld:2004}.
Hence that power is trivial on the fibers.

By theorem~\vref{thm:Iitaka.fibers}, the generic fiber is an abelian variety, and the Cartan geometry bundle admits a holomorphic connection on that fiber.
All fibers of \(M \to S\) have the same dimension \cite[p.~610, Theorem 2]{Kawamata:1991}.
We will need to check that \(S\) is smooth.

Take any resolution of singularities \(S_0\to S\) \cite{Kollar:2007}.
The rational map \(\rationalMap{M}{S_0}\) becomes regular on some variety \(M_0\) birational to \(M\) \cite{Kollar:2007} p.~135 Corollary~3.18, and we can replace \(M_0\) by a resolution of singularities to arrange that \(M_0\) is smooth, to get a dominant morphism \(M_0\to S_0\) birational to \(M \to S\) with smooth \(M_0\) and \(S_0\).
The morphism \(M_0\to S_0\) has generic fiber connected, so connected fibers.

Since the generic fiber of the Iitaka fibration is an abelian variety with large fundamental group, after replacing \(M\) with a finite unramified covering space, there is some resolution of singularities \(S_0 \to 
S\) over which \(M \to S\) is birational to an abelian group scheme \cite[p.~198, Theorem 6.3]{Kollar:1993}:
\[
\begin{tikzcd}
\dot{M} \arrow[->]{d} \arrow[<->, dashed,densely dashed]{r} & M_0 \arrow[->]{r}\arrow[->]{d} & M \arrow[->]{d}\\
\dot{S} \arrow[<->, dashed,densely dashed]{r} & S_0 \arrow[->]{r} & S
\end{tikzcd}
\]
By lemma~\vref{lemma:dev.large}, rational maps to \(M\) extend to holomorphic maps, since \(M\) contains no rational curves.
Extend the rational map \(\rationalMap{\dot{M}}{M}\) to a holomorphic map \(\dot{M} \to M\).
Up to birational isomorphism, there is a section \(\dot{S}\to\dot{M}\) \cite[Theorem 6.3]{Kollar:1993}, and so a rational section \(\rationalMap{S}{M}\).
Extend the rational section to a holomorphic map \(S\to M\), and hence a holomorphic section.
This section maps curves in \(S\) to isomorphic curves in \(M\); but \(M\) contains no rational curves, and so \(S\) contains no rational curves.
Hence meromorphic maps to \(S\) extend to be holomorphic.
Holomorphically extend \(\rationalMap{\dot{S}}{S}\) and \(\rationalMap{M}{S}\):
\[
\begin{tikzcd}
\dot{M} \arrow[->,shift left]{r}\arrow[->]{d} & M \arrow[->]{d} \arrow[->, dashed,densely dashed,shift left]{l} \\
\dot{S} \arrow[->,shift left]{r} & S\arrow[->, dashed,densely dashed,shift left]{l}
\end{tikzcd}
\]

By Zariski's main theorem (see \cite[p.~280, Corollary 11.4]{Hartshorne:1977}), the map \(\dot{M} \to M\) has connected fibers, as does \(\dot{S} \to S\).
Hence \(\dot{M} \to \dot{S}\) is also an Iitaka fibration.
So the generic fiber has torsion canonical bundle.
The generic fiber \(\dot{M}_{\dot{s}}\) maps holomorphically and birationally to a fiber \(M_s\) of \(M\to S\).
Since \(M_s\) is an abelian variety, there is a global nowhere vanishing holomorphic section of the canonical bundle on \(M_s\), which pulls back to a holomorphic section of the canonical bundle on \(\dot{M}_{\dot{s}}\).
The vanishing locus of this section is a divisor representing the first Chern class of the canonical bundle.
Since the canonical bundle of \(\dot{M}_{\dot{s}}\) is torsion, the section is also globally nonvanishing, and the canonical bundle of \(\dot{M}_{\dot{s}}\) is trivial.
Moroever, \(\dot{M}_{\dot{s}}\to M_s\) is a local biholomorphism, hence a finite covering map.
By lemma~\vref{lemma:tori}, \(\dot{M}_{\dot{s}}\) is also an abelian variety.
Since \(\dot{M}_{\dot{s}}\to M_s\) is birational, this finite covering map is a biholomorphism, hence a biregular morphism of smooth projective varieties.
The generic fiber of \(\dot{M}\to \dot{S}\) is an abelian variety mapping biholomorphically to a fiber of \(M \to S\).

On every fiber \(\dot{M}_{\dot{s}}\), this map \(\dot{M}_{\dot{s}}\to M_s\) is regular and birational with exceptional locus covered in rational curves.
But neither fiber contains rational curves.
Therefore every fiber \(\dot{M}_{\dot{s}}\) of \(\dot{M}\to \dot{S}\) is mapped isomorphically to a fiber of \(M\to S\).

Suitable powers of the canonical bundles of \(\dot{M}\) and \(M\) are spanned by global sections, pulled back from sections of line bundles on \(\dot{S}\) and \(S\) respectively.
All sections on \(M\) and on \(\dot{M}\) are pulled back from \(S\) and \(\dot{S}\) respectively.
So the vanishing locus of any such section is a union of fibers.

The holomorphic map \(\dot{M} \to M\) is an isomorphism except on the ramification divisor, an effective divisor given by the vanishing of the determinant of the derivative of \(\dot{M} \to M\).
Applying the derivative of \(\dot{M} \to M\) to sections of the canonical bundle on \(M\), we get sections of the canonical bundle on \(\dot{M}\), with divisor the sum of the pullback canonical divisor and the ramification divisor.
But the sections vanish on unions of fibers.
Hence the ramification divisor of \(\dot{M}\to M\) is the preimage of some effective divisor on \(\dot{S}\).

Away from the image in \(S\) of the support of that divisor in \(\dot{S}\), our section of \(M \to S\) strikes a smooth fiber in a single smooth point, since \(\dot{M} \to M\) is biholomorphic on those fibers.
All fibers have intersection number \(1\) with the section, and so all fibers are smooth near the image of the section, and the image of the section is smooth.
The variety \(S\) is smooth, since the image of the section is isomorphic to \(S\).

Since the fibers of \(M \to S\) are all of the same dimension, at any point of any fiber at which the reduction of the fiber is smooth, we can pick a transverse complex submanifold, a local holomorphic section of \(M \to S\).
All nearby fibers intersect the local section transversely: the generic point of every fiber of \(M\to S\) is reduced.

If some fiber \(M_s\) of \(M\to S\) is reducible, its preimage in \(\dot{M}\) is reducible.
As the fibers of \(\dot{M}\to M\) are connected, the preimage in \(\dot{M}\) of \(M_s\) is a connected reducible subvariety.
Since the fibers of \(\dot{M} \to \dot{S}\) are irreducible, the preimage of \(M_s\) in \(\dot{M}\) is a union of fibers over a connected reducible subvariety of \(\dot{S}\).
This subvariety of \(\dot{S}\) maps to a point in \(S\).
Because the subvariety is connected, all of the fibers over the subvariety map to the same component of \(M_s\).
But \(\dot{M} \to M\) is surjective, since it is holomorphic and birational.
Therefore all fibers of \(M \to S\) are irreducible.

The fibers \(\dot{M}_{\dot{s}}\) of \(\dot{M} \to \dot{S}\) all have the same images in homology inside \(M\), all nonzero as they are algebraic cycles.
So the derivative of the holomorphic map \(\dot{M}_{\dot{s}} \to M\) drops rank on a ramification divisor.
All holomorphic sections of a suitable positive power of the canonical bundle of \(M\) are pulled back from sections of a suitable line bundle on \(S\).
Find a section of that suitable line bundle, nonzero at \(s\), and pullback to get a section of \(\cb{M}\) nowhere zero near \(M_s\).
Take a local holomorphic section of the canonical bundle of \(S\) defined and nonzero at \(s\), and divide the two sections to get a holomorphic section of the relative canonical bundle of \(M \to S\) defined near \(M_s\).
Plugging the derivative of \(\dot{M}_{\dot{s}} \to M\) into this section gives a section of the canonical bundle of the abelian variety \(\dot{M}_{\dot{s}}\), not vanishing somewhere and therefore not vanishing anywhere as the canonical bundle of an abelian variety is trivial.
Therefore \(\dot{M}_{\dot{s}} \to M\) is a local biholomorphism on every fiber, taking each fiber \(\dot{M}_{\dot{s}}\) to a fiber \(M_s\).
The map \(\dot{M} \to M\) is birational, so the map \(\dot{M}_{\dot{s}} \to M_s\) on each fiber is a biholomorphism to its image.

Take a local holomorphic section of \(M\to S\) transverse to the image of a fiber \(\dot{M}_{\dot{s}}\).
The section strikes the generic nearby fiber of \(M \to S\) in a single smooth point, since \(\dot{M}\to 
M\) is biholomorphic on generic fibers.
Generic fibers have intersection number \(1\) with the section, and so all fibers have intersection \(1\) with the section, so are smooth and reduced near the image of the section.
Hence all fibers of \(M \to S\) are everywhere smooth and reduced and irreducible, and so the maps \(\dot{M}_{\dot{s}} \to M_s\) are all isomorphisms.

Since \(S\) has big canonical bundle, its Iitaka fibration \(S\to\Ii{S}\) is birational, with exceptional fibers covered by rational curves \cite{Kawamata:1991} p.~610 Theorem~2.
But \(S\) contains no rational curves, so there are no exceptional fibers.
Since rational maps to \(S\) extend to become holomorphic, the Iitaka fibration \(S\to\Ii{S}\) has a holomorphic inverse.
The exceptional locus of the canonical morphism is then also covered by rational curves, so is empty, so \(S\) has ample canonical bundle.

The Chern classes of vector bundles are topological, and \(M\to S\) is a \(C^{\infty}\)-trivial torus bundle.
So the Chern classes of the tangent bundle of \(M\) are the pullbacks of the Chern classes of the tangent bundle of \(S\).
Hence the Chern classes \(c_k(S,TS)\) satisfy the same equations as \(c_k(M,TM)\).
\end{proof}

\section{Examples}%
\label{section:examples}
\begin{example}
Consider the \(G=G_2\)-adjoint flag variety \(X=X^5=\dynkin[parabolic=1]G2\), with roots 
\[
\drawroots{G}{1}
\]
with tally \(1\) and associated cominuscule \(\breve{X}=\Proj{1}\):
\[
\begin{tikzpicture}[baseline=-.5]
\begin{rootSystem}{G}
\roots
\parabolic{1}
\draw[/root system/grading,line width=.3cm] (hex cs:x=1,y=1) -- (hex cs:x=-1,y=-1);
\end{rootSystem}
\end{tikzpicture}
\]
Suppose that \(M=M^5\) is a smooth projective \(5\)-fold bearing a holomorphic \((X,G)\)-geometry.
Up to isomorphism, either 
\begin{itemize}
\item
\(M\) contains a rational curve, in which case \(M=X\) with its standard flat holomorphic \((X,G)\)-geometry \cite{Biswas.McKay:2016} or 
\item
\(M\) is not uniruled, in which case \(M\) contains no rational curve \cite{Biswas.McKay:2016}. The holomorphic parabolic geometry is bracket closed, so the ``contact'' \(4\)-planes are bracket closed.
The Kodaira dimension is:
\begin{enumerate}
\item[\(-\infty\)] 
It would follow from the abundance and minimal model conjectures that \(M^5\) is uniruled, hence \(M=X\), returning to the previous case.
\item[\(0\)]
It would follow from the abundance conjecture that \(M^5\) is a Calabi--Yau manifold.
Up to finite \'etale covering, \(M\) is therefore an abelian variety with a translation invariant \((X,G)\)-geometry by theorem~\vref{thm:cpt.Kaehler.c.1.0}; these are easily classified.
\item[\(1\)] 
The complex manifold \(M\) is a smooth projective variety and has a rational map to a curve with positive canonical bundle.
It would follow from the abundance conjecture that \(M\) (after perhaps replacing by a finite covering space) is an abelian group scheme over a smooth curve of genus at least \(2\), and every fiber is a \(4\)-torus.
The \(5\)-fold \(M\) carries a holomorphic foliation with \(4\)-dimensional leaves and a transverse foliation with \(1\)-dimensional leaves.
The universal covering space \(\tilde{M}\) splits into a product of complex manifolds \(\tilde{M}=Y^4\times Y^1\) where \(Y^1\) is the unit disk or \(\C{}\) and the \(4\)-dimensional leaves are the images of \(Y^4\times *\) \cite{Brunella/Pereira/Touzet:2006}.
\end{enumerate}
\end{itemize}
\end{example}
\begin{example}
Consider the \(G=G_2\)-flag variety \(X=X^5=\dynkin[parabolic=2]G2\), famous from Cartan's \emph{five variable paper} \cite{Cartan:30}, with roots
\[
\drawroots{G}{2}
\]
tally \(1\), and associated cominuscule \(\breve{X}=\Proj{2}\):
\[
\begin{tikzpicture}[baseline=-.5]
\begin{rootSystem}{G}
\roots
\parabolic{2}
\draw[/root system/grading] (hex cs:x=-1,y=2) -- (hex cs:x=1,y=1) -- (hex cs:x=2,y=-1) -- (hex cs:x=1,y=-2) -- (hex cs:x=-1,y=-1) -- (hex cs:x=-2,y=1) -- cycle;
\end{rootSystem}
\end{tikzpicture}
\]
Suppose that \(M=M^5\) is a smooth projective \(5\)-fold bearing a holomorphic \((X,G)\)-geometry.
Up to isomorphism, either 
\begin{itemize}
\item
\(M\) contains a rational curve, in which case \(M=X\) with its standard flat holomorphic \((X,G)\)-geometry \cite{Biswas.McKay:2016} or 
\item
\(M\) is not uniruled, in which case \(M\) contains no rational curve \cite{Biswas.McKay:2016}.
The Kodaira dimension is:
\begin{enumerate}
\item[\(-\infty\)] 
It would follow from the abundance and minimal model conjectures that \(M^5\) is uniruled, hence \(M=X\), returning to the previous case.
\item[\(0\)]
It would follow from the abundance conjecture that \(M^5\) is a Calabi--Yau manifold.
Suppose so.
Up to finite \'etale covering, \(M\) is an abelian variety with a translation invariant \((X,G)\)-geometry by theorem~\vref{thm:cpt.Kaehler.c.1.0}; these are easily classified.
\item[\(1\)]
The \(5\)-fold \(M\) carries transverse holomorphic foliations with \(2\)-dimensional, \(1\)-dimensional, and \(2\)-dimensional leaves.
The universal covering space \(\tilde{M}\) splits into a product of complex manifolds \(\tilde{M}=Y^4\times Y^1\) where \(Y^1\) is the unit disk and the \(4\)-dimensional leaves of the sum of the two rank \(2\) foliations are the images of \(Y^4\times *\) \cite{Brunella/Pereira/Touzet:2006}.
The complex manifold \(M\) is a smooth projective variety with a canonical Iitaka map to a curve of genus \(2\) or more.
The Chern classes satisfy \(0=c_1^2=c_2=c_3=c_4=c_5\).
It would follow from the abundance conjecture that the typical fiber is a \(4\)-torus.
\end{enumerate}
\end{itemize}
\end{example}
\begin{example}
The \(F_4\)-flag variety \dynkin F{***x} has Hasse diagram:
\begin{center}
\hasse[three D=false] F{***x}
\end{center}
This flag variety was the first that Cartan wrote about, and first written about by Cartan \cite{Cartan1893,Ishikawa.Machida:2025,Nurowski:2025} and \cite{Yamaguchi:1993} p.~482.
Counting the crosses, the model is \(15\)-dimensional, and has an \(8\)-dimensional subbundle of the tangent bundle (the component with \(8\) crosses).
Its associated cominuscule subvariety is the \(7\)-dimensional smooth quadric hypersurface \dynkin B{x***} with Hasse diagram
\begin{center}
\hasse B{x***}
\end{center}
Suppose that \(M\) is a smooth projective variety and has a minimal geometry with this model.
Counting the crosses, \(\dimC{M}=15\). 
Assuming the abundance conjecture, the manifold \(M\) is an abelian group scheme over a basis \(M\to\bar M\) with \(\dimC{\bar M}\le 7\).
Since the Chern classes are topological, if \(\dimC{\bar{M}}=7\) then \(\bar M\) is ample and satisfies the Chern class identities of the smooth \(7\)-dimensional quadric hypersurface in projective space.

Yamaguchi states (without proof) \cite{Yamaguchi:1993} p.~483 that if a complex manifold \(M\) of dimension \(15\) bears a rank \(8\) holomorphic subbundle of the tangent bundle, with a certain infinitesimal nondegeneracy condition (too complicated to make explicit here), then it arises from a unique regular normal geometry with this model, and is locally isomorphic to the model.
Our work above strengthens his result: if \(M\) is, in addition, a smooth projective variety, then \(M=X\) with its standard rank \(8\) subbundle.
If a smooth projective variety \(M\) of dimension \(15\) bears a geometry with this model, and either (i) it contains a rational curve or (ii) its associated rank \(8\) subbundle is not bracket closed, then \(M=X\) is the model.
Otherwise, up to finite \'etale covering, \(M\) is an abelian variety with translation invariant geometry (nowhere flat) or \(M\) is an abelian group scheme, with base of dimension at most \(7\) and nowhere flat geometry.
\end{example}
\begin{example}
We return to the example of the \(E_8\)-flag variety \dynkin E{******x*} from page~\vpageref{ex:E.8}.
Suppose that \(M\) is a smooth projective variety and has a minimal geometry with this model.
Counting the crosses, \(\dimC{M}=98\). 
Assuming the abundance conjecture, up to finite \'etale covering, the manifold \(M\) is an abelian group scheme over a smooth base \(M\to\bar M\) with \(\dimC{\bar M}\le 7\) and \(\bar M\) has ample canonical bundle and satisfies the Chern class identities of \(\Proj{7}\).
Suppose that \(\dimC{\bar M}=7\).
By the Bogomolov--Miyaoka--Yau inequality \cite{Greb.Kebekus.Taji:2018}, \(\bar{M}\) is a ball quotient.
So \(M\) is an abelian group scheme over a compact smooth quotient of the \(7\)-dimensional ball.
The fundamental group of \(\bar{M}^7\) is acting as automorphisms of the ball together with morphisms of that fundamental group to \(\SL{2}\times\SL{7}\), say thought of as left and right \(\SL{2,L}\times\SL{7,R}\), acting via irreducible representations on \(\C^7_R\), \(\C^2_L\otimes\C^7_R\), \(\C^{35}_R\) and \(\C^2_L\otimes\C^{21}_R\).
\end{example}

\begingroup
\NewDocumentCommand\LastFlag{smmmm}%
{
	\multirow{-#2}{*}%
		{%
			\(#3\)%
		}%
		\Flag{#4}{#5}%
		\IfBooleanF{#1}{\hline}%
}
\NewDocumentCommand\Flag{mm}{&#1&#2\\*}
\begin{longtabl}{@{}R>{\columncolor[gray]{.93}}>{$}r<{$}L@{}}%
\caption{The flag varieties whose associated cominuscule variety is a projective space}%
\label{table:Projective.Space.Associated.Cominiscules}\\
\toprule
&G/P&\breve{G}/\breve{P}\\
\midrule
\endfirsthead
\multicolumn{3}{c}{\dots continued}\\
\toprule
&G/P&\breve{G}/\breve{P}\\
\midrule
\endhead
\midrule
\multicolumn{2}{c}{continued \dots}\\
\endfoot
\bottomrule
\endlastfoot
\Flag{\dynkin {A}{xo.o}}{\dynkin {A}{xo.o}}
\LastFlag{2}{A_r}{\dynkin {A}{o.ox}}{\dynkin {A}{o.ox}}
\Flag{\dynkin{B}{ox}}{\dynkin{A}{x}}
\Flag{\dynkin{B}{oxo}}{\dynkin{A}{x}}
\Flag{\dynkin{B}{oxo.ooo}}{\dynkin{A}{x}}
\Flag{\dynkin{B}{x*x}}{\dynkin{A}{x*}}
\Flag{\dynkin{B}{x**x}}{\dynkin{A}{x**}}
\Flag{%
\begin{dynkinDiagram}{B}{x*.**x}
\end{dynkinDiagram}%
}{%
\begin{dynkinDiagram}{A}{x*.*}
\dynkinBrace[r-1]{1}{3}
\end{dynkinDiagram}%
}%
\Flag{%
\begin{dynkinDiagram}{B}{x*.**x*}
\end{dynkinDiagram}%
}{%
\begin{dynkinDiagram}{A}{x*.*}
\dynkinBrace[r-2]{1}{3}
\end{dynkinDiagram}%
}%
\Flag{%
\begin{dynkinDiagram}{B}{x*.*xo.ooo}
\dynkinBrace[1\le\ell\le r-1]{1}{3}
\end{dynkinDiagram}%
}{%
\begin{dynkinDiagram}{A}{x*.*}
\dynkinBrace[\ell]{1}{3}
\end{dynkinDiagram}%
}%
\Flag{\dynkin{B}{**x}}{\dynkin{A}{**x}}
\Flag{\dynkin{B}{**xo}}{\dynkin{A}{**x}}
\LastFlag{11}{B_r}{\dynkin{B}{**xo.oo}}{\dynkin{A}{**x}}
\Flag{\dynkin C{xo}}{\dynkin A{x}}
\Flag{\dynkin C{xoo}}{\dynkin A{x}}
\Flag{\dynkin C{xo.ooo}}{\dynkin{A}{x}}%
\Flag{\dynkin{D}{oxo.oooo}}{\dynkin{A}{x}}
\Flag{%
\begin{dynkinDiagram}{D}{x*.*xo.ooo}
\dynkinBrace[\ell]{1}{3}
\end{dynkinDiagram}%
}{%
\begin{dynkinDiagram}[parabolic=1]{A}{}
\dynkinBrace[\ell]{1}{4}
\end{dynkinDiagram}%
}%
\Flag{\dynkin D{x*.*xx}}{%
\begin{dynkinDiagram}{A}{x*.*}
\dynkinBrace[r-2]{1}{3}
\end{dynkinDiagram}%
}%
\Flag{%
\begin{dynkinDiagram}{D}{x*.*x*}
\end{dynkinDiagram}%
}{%
\begin{dynkinDiagram}[parabolic=1]{A}{}
\dynkinBrace[r-1]{1}{4}
\end{dynkinDiagram}%
}%
\LastFlag{5}{D_r}{%
\begin{dynkinDiagram}{D}{x*.**x}
\end{dynkinDiagram}%
}{%
\begin{dynkinDiagram}[parabolic=1]{A}{}
\dynkinBrace[r-1]{1}{4}
\end{dynkinDiagram}%
}%
\Flag{\dynkin[ordering=Carter]E{oooxoo}}{\dynkin A{x}}
\Flag{\dynkin[ordering=Carter]E{oox*oo}}{\dynkin A{x*}}
\Flag{\dynkin[ordering=Carter]E{ox**xo}}{\dynkin A{x**}}
\Flag{\dynkin[ordering=Carter]E{x***xo}}{\dynkin A{x***}}
\LastFlag{5}{E_6}{\dynkin[ordering=Carter]E{****xo}}{\dynkin A{x****}}
\Flag{\dynkin[ordering=Carter]E{oooooox}}{\dynkin A{x}}
\Flag{\dynkin[ordering=Carter]E{ooooox*}}{\dynkin A{x*}}
\Flag{\dynkin[ordering=Carter]E{oooxo**}}{\dynkin A{x**}}
\Flag{\dynkin[ordering=Carter]E{oox*x**}}{\dynkin A{x***}}
\Flag{\dynkin[ordering=Carter]E{oox****}}{\dynkin A{x****}}
\Flag{\dynkin[ordering=Carter]E{ox**x**}}{\dynkin A{x****}}
\Flag{\dynkin[ordering=Carter]E{x***x**}}{\dynkin A{x*****}}
\LastFlag{8}{E_7}{\dynkin[ordering=Carter]E{****x**}}{\dynkin A{x******}}
\Flag{\dynkin[ordering=Carter]{E}{xooooooo}}{\dynkin{A}{x}}
\Flag{\dynkin[ordering=Carter]{E}{*xoooooo}}{\dynkin{A}{x*}}
\Flag{\dynkin[ordering=Carter]{E}{**xooooo}}{\dynkin{A}{x**}}
\Flag{\dynkin[ordering=Carter]{E}{***xoooo}}{\dynkin{A}{X***}}
\Flag{\dynkin[ordering=Carter]{E}{****xooo}}{\dynkin{A}{x****}}
\Flag{\dynkin[ordering=Carter]{E}{*****xxo}}{\dynkin{A}{x*****}}
\Flag{\dynkin[ordering=Carter]{E}{*****x*x}}{\dynkin{A}{x******}}
\Flag{\dynkin[ordering=Carter]{E}{******xo}}{\dynkin{A}{x******}}
\LastFlag{9}{E_8}{\dynkin[ordering=Carter]{E}{*****x**}}{\dynkin{A}{x*******}}
\Flag{\dynkin[ordering=Carter]F{xooo}}{\dynkin A{x}}
\Flag{\dynkin[ordering=Carter]F{*xoo}}{\dynkin A{x*}}
\LastFlag{3}{F_4}{\dynkin[ordering=Carter]F{**xo}}{\dynkin A{x**}}
\Flag{\dynkin[parabolic=1]{G}{2}}{\dynkin[parabolic=1]{A}{1}}%
\Flag{\dynkin[parabolic=2]{G}{2}}{\dynkin[parabolic=2]{A}{2}}%
\LastFlag*{3}{G_2}{\dynkin[parabolic=3]{G}{2}}{\dynkin[parabolic=1]{A}{1}}%
\end{longtabl}
\endgroup

\begin{example}
In table~\vref{table:Projective.Space.Associated.Cominiscules}, we denote a flag variety by its Dynkin diagram, but with \(\dynkin A{o}\) to mark a root which can be either \(\dynkin A{x}\) or \(\dynkin A{*}\).
The table lists all effective flag varieties whose associated cominuscule flag variety is projective space, say \(\breve X=\Proj{d}\).
(Among the various flag varieties of a given rank, most of them fall into this category.)
Suppose that \(M\) is a smooth projective variety and has a minimal geometry with this model.
Assuming the abundance conjecture, up to finite \'etale covering, the manifold \(M\) is an abelian group scheme over a smooth base \(M\to\bar M\), and \(\bar{M}\) has dimension at most that of the projective space \(\breve X=\Proj{d}\) and \(\bar M\) has ample canonical bundle and no rational curves and satisfies the Chern class identities of \(\Proj{d}\).
Suppose that \(\dimC{\bar M}=\dimC{\breve X}\). 
By the Bogomolov--Miyaoka--Yau inequality \cite{Greb.Kebekus.Taji:2018}, \(\bar{M}\) is a ball quotient.
So \(M\) is an abelian group scheme over a compact smooth quotient of the \(d\)-dimensional ball.
Note that there are choices of flag variety model \((X,G)\) for which the associated cominuscule \((\breve X,\breve G)\) has \(\breve X=\Proj{1}\), in which case \(M\) could an abelian group scheme over a curve \(\bar M\), and we don't even yet know if there is one such \(M\) bearing a holomorphic \((X,G)\)-geometry.
\end{example}

\section{Conjecture on Fujiki maniolds}
A \emph{Fujiki space} is a compact connected complex space whose reduction is the meromorphic (or equivalently, holomorphic) image of a compact K\"ahler manifold \cite{Grauert:1994} p. 336 section 3.4, \cite{Abramovich/Karu/Matsuki/Wlodarczyk:2002} \S{}1.2.4.

\begin{conjecture} 
If a Fujiki manifold admits a holomorphic Cartan geometry, then it admits a flat holomorphic Cartan geometry with the same model, and the Fujiki manifold is K\"ahler.
\end{conjecture}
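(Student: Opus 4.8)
The plan is to reduce the conjecture to the K\"ahler and projective classifications already assembled in this paper, using the bimeromorphic geometry of Fujiki's class $\mathcal{C}$ to eliminate the exceptional locus and then feeding the resulting manifold into the structure theory of Sections~\ref{section:pseudoeff.parabolic}--\ref{section:Weyl}. First I would choose a modification $p\colon Z\to M$ with $Z$ a compact K\"ahler manifold: such a $p$ exists because $M$ lies in class $\mathcal{C}$, after resolving (Hironaka) a meromorphic surjection from a compact K\"ahler manifold \cite{Grauert:1994,Abramovich/Karu/Matsuki/Wlodarczyk:2002}. Then $p$ is a bimeromorphic morphism of compact complex manifolds, its exceptional divisor $E\subset Z$ maps onto $B:=p(E)\subset M$ with $\operatorname{codim}_M B\ge 2$, and off these loci $p$ restricts to a biholomorphism $Z\setminus E\xrightarrow{\sim}M\setminus B$, along which the holomorphic Cartan geometry $\G\to M$ pulls back to a holomorphic Cartan geometry on $Z\setminus E$.

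Next I would split into cases according to whether $M$ contains a rational curve. If it does, I expect the $(X,G)$-geometry to drop (as in the Moishezon case, theorem~\ref{theorem:Moishezon}, theorem~\ref{theorem:drop} and \cite{Biswas.McKay:2016}), so that either $M$ is biholomorphic to the flag variety model $X$ --- which is projective, hence K\"ahler, and carries the flat model geometry with that model, finishing the argument --- or $M$ is the total space of a holomorphic flag-variety bundle over a lower-dimensional compact complex manifold $M'$, which is again Fujiki (it is a holomorphic image of the K\"ahler manifold $Z$) and again carries a holomorphic Cartan geometry; since lifting preserves flatness, an induction on $\dim M$ reduces the whole statement to the rational-curve-free case. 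So assume $M$ contains no rational curve. Then the positive-dimensional fibers of $p$ over $B$, being contracted, are covered by rational curves of $Z$; I would argue that $M$ carries a \emph{minimal} geometry (no rational curves, theorem~\ref{theorem:drop}), hence meromorphic maps into $M$ extend to holomorphic maps (in the spirit of lemma~\ref{lemma:dev.large}), and use this rigidity --- applied to the bimeromorphic inverse $M\dashrightarrow Z$ and to the foliation/pseudoeffectivity structure on $M\setminus B$ extended across the codimension-two set $B$ by normality --- to conclude that $p$ has no exceptional divisor, i.e. $M=Z$ is K\"ahler. With $M$ K\"ahler and rational-curve-free, I would then invoke theorems~\ref{theorem:semipos}, \ref{thm:cpt.Kaehler.c.1.0}, \ref{thm:Iitaka.group.scheme} and \ref{theorem:general.type} (granting abundance where needed): up to finite \'etale cover, $M$ is a flag-variety bundle over a complex torus, a locally Hermitian symmetric variety, or an abelian group scheme over a Hermitian locally symmetric base, and in each case one writes down an explicit flat $(X,G)$-geometry with the same model --- the model geometry on flag fibers, the cominuscule translation-invariant geometry on the torus, the standard geometry $\Gamma\backslash X'$, and lifts and products thereof --- which proves the flatness assertion.

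The main obstacle is exactly the step ``rational-curve-free Fujiki manifold with a holomorphic Cartan geometry is K\"ahler.'' Pulling $\G$ back to $Z$ only produces a Cartan geometry on $Z\setminus E$, and because a Cartan connection is a holomorphic coframing of the principal bundle, the pulled-back connection degenerates along $E$ and does not extend as a Cartan geometry; one cannot simply spread the geometry out over all of $Z$. The right tool should be the subsidiary data built on $M$ itself --- bracket-closed foliations (Corollary~\ref{corollary:brackets}), pseudoeffective determinant line bundles via Campana--Peternell (theorem~\ref{theorem:Campana.Peternell}, which must be reworked in the K\"ahler/class-$\mathcal{C}$ setting), the boosh and its Atiyah class (Section~\ref{section:boosh}), and Weyl structures (Section~\ref{section:Weyl}) --- all of which live on $M\setminus B$, extend holomorphically across the codimension-two locus $B$, and should obstruct the existence of any $p$-exceptional divisor, equivalently force any K\"ahler modification of $M$ to be an isomorphism.

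A secondary difficulty, which I would address in parallel, is that theorem~\ref{thm:Iitaka.group.scheme} and the general-type classification are currently proved only for smooth \emph{projective} $M$; since a K\"ahler manifold with a holomorphic Cartan geometry need not be projective (complex tori), these must be re-proved directly in the K\"ahler category, together with the point used in the flatness step --- that the reduced model arising on the torus factor is cominuscule and hence admits a flat geometry (via the abelian unipotent radical), which rests on theorem~\ref{theorem:comini} through the regularity of flat geometries and therefore must be checked for exactly the models that occur. If all of these can be carried out, the conjecture follows; the genuinely new content beyond the projective theory of this paper is the K\"ahlerness of $M$ and the K\"ahler-category extension of the structure theorems.
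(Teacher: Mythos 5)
This statement is a \emph{conjecture} in the paper: it is stated without proof, and the surrounding text (the Conclusion and the remark on the non-Fujiki spaces $G/\Gamma$) makes clear the author regards it as open. So there is no proof in the paper against which to compare your proposal; what you have written is a research program, and it contains genuine gaps --- two of which you identify yourself, and at least two more that you do not.

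The gaps you name are real and fatal to the argument as it stands: (i) the step ``a rational-curve-free Fujiki manifold with a holomorphic Cartan geometry is K\"ahler'' is precisely the new content of the conjecture and is not delivered by anything in the paper (Theorem~\vref{theorem:Moishezon} handles the Moishezon subclass, where the conclusion is projectivity, but the bimeromorphic-image-of-K\"ahler case is strictly harder and the codimension-two extension argument you sketch does not by itself rule out an exceptional divisor); (ii) Theorems~\vref{thm:Iitaka.group.scheme} and \vref{theorem:general.type} are proved only for smooth projective varieties. Beyond these, note two further problems. First, the conjecture concerns arbitrary holomorphic Cartan geometries, not parabolic ones --- the example immediately following it is about holomorphic \emph{affine} connections --- so the entire parabolic apparatus (bracket closure via Campana--Peternell and dominant characters, the boosh, Lemma~\vref{lemma:Knapp.dominant}) is unavailable for general models, and your case analysis silently specializes to parabolic geometries. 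Second, even in the smooth projective parabolic case the flatness assertion does not follow from the paper's results: the Conclusion explicitly states that the abelian-group-scheme case ``resists classification'' and that the only parabolic geometries the author can construct on abelian group schemes are projective connections, so one cannot ``write down an explicit flat $(X,G)$-geometry with the same model'' in that branch of your trichotomy. Your proposal is a reasonable outline of how one might attack the conjecture, but it is not a proof, and should not be presented as one.
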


The non-Fujiki spaces \(G/\Gamma\), where \(G\) is a complex semisimple Lie group and \(\Gamma \subset G\) is a cocompact lattice, admit holomorphic affine connections, but not flat ones \cite{Dumitrescu:2009}.

\section{Conclusion}
Our main result is that any smooth projective variety admitting a holomorphic parabolic geometry, modulo dropping, is a torus with translation invariant geometry (classified!), an ample smooth projective variety with cominuscule geometry (classified!), or (after perhaps replacing by a finite covering space) an abelian group scheme over an ample smooth projective variety.
This last possibility remains mysterious.
Worse: we only know how to construct examples of holomorphic parabolic geometries on certain very special abelian group schemes, and the only holomorphic parabolic geometries we can construct on any abelian group scheme at the moment are projective connections \cite{Jahnke/Radloff:2015}.
The base of any abelian group scheme is mapped to the moduli space of abelian varieties by taking the fiber as variety.
So our main result builds a bridge between Cartan geometries and moduli spaces of abelian varieties.
\bibliographystyle{amsplain}
\bibliography{projective-parabolic}

\end{document}